\newcounter{numberofremark}
\newcommand\nothing[1]{}
\newcommand{\dcl}{\DeclareMathOperator}
\dcl\cdet{cdet} \dcl\Sp{Specm} \dcl\depth{depth} \dcl\im{Im} \dcl\Span{span} \dcl\Ker{Ker} \dcl\Specm{Specm}
\dcl\Supp{Supp} \dcl\codim{codim} \dcl\Y{Y} \dcl\gl{\mathfrak{gl}}    \dcl\U{U} \dcl\T{T}
\dcl\qdet{qdet} \dcl\sgn{sgn} \dcl\gr{gr} \dcl\diag{diag}
\dcl\g{\mathfrak{g}} \dcl\C{\mathbb C} \dcl\dd{{\mathrm d}}
   \dcl\ch{ch}
\newcommand\sn{{\mathsf n}}
\newcommand\sm{{\mathsf m}}
\newcommand\Ga{{\Gamma}}
\newlength\yStones
\newlength\xStones
\newlength\xxStones
\def\Stones{\pst@object{Stones}}
\def\Stones@i#1{%
  \pst@killglue%
  \begingroup%
  \use@par%
  \setlength\xxStones{\xStones}%
  \expandafter\Stones@ii#1,,\@nil
  \endgroup
  \global\addtolength\xStones{0.6cm}%
  \global\addtolength\yStones{-7.5mm}}%
\def\Stones@ii#1,#2,#3\@nil{%
  \rput(\xxStones,\yStones){%
    \psframebox[framesep=0]{%
      \parbox[c][6mm][c]{11mm}{\makebox[11mm]{$#1$}}}}%
  \addtolength\xxStones{1.2cm}%
  \ifx\relax#2\relax\else\Stones@ii#2,#3\@nil\fi}
\def\Stone#1{\fbox{\makebox[13mm]{\strut#1}}\kern2pt}
\newtheorem{theorem}{Theorem}[section]
\newtheorem{lemma}[theorem]{Lemma}
\newtheorem{corollary}[theorem]{Corollary}
\newtheorem{proposition}[theorem]{Proposition}
\newtheorem{remark}[theorem]{Remark}
\newtheorem{definition}[theorem]{Definition}
\begin{document}
\title{New singular  Gelfand-Tsetlin $\displaystyle \mathfrak{gl} (n)$-modules of index $2$}

\author{Vyacheslav Futorny}

\address{Instituto de Matem\'atica e Estat\'istica, Universidade de S\~ao
Paulo,  S\~ao Paulo SP, Brasil} \email{futorny@ime.usp.br,}
\author{Dimitar Grantcharov}
\address{\noindent
University of Texas at Arlington,  Arlington, TX 76019, USA} \email{grandim@uta.edu}
\author{Luis Enrique Ramirez}
\address{Universidade Federal do ABC,  Santo Andr\'e-SP, Brasil} \email{luis.enrique@ufabc.edu.br,}

\begin{abstract}
Singular Gelfand-Tsetlin modules of index 2 are modules whose tableaux bases may have singular pairs but no singular triples of entries on each row. In this paper we construct singular Gelfand-Tsetlin modules for arbitrary singular character of index $2$.  Explicit bases of derivative tableaux and the action of the generators of 
$\mathfrak{gl}(n)$ are given for these modules.  Our construction leads to new families of irreducible Gelfand-Tsetlin modules and also provides tableaux bases for some simple Verma modules. 
\end{abstract}

\subjclass{Primary 17B67}
\keywords{Gelfand-Tsetlin modules,  Gelfand-Tsetlin basis, tableaux realization, Verma module}
\maketitle

\section{Introduction} \label{sec-intro}
Gelfand-Tsetlin bases are among the most remarkable discoveries of the representation theory of classical Lie algebras. Originally introduced in \cite{GT},  these bases provide a convenient tableaux realization of every simple finite-dimensional representation of the Lie algebra $\mathfrak{gl} (n)$, as well as explicit formulas for the action of the generators of $\mathfrak{gl} (n)$.  The explicit nature of the Gelfand-Tsetlin formulas  inevitably raises the question of what infinite-dimensional modules admit  tableau bases. This question naturally initiated the theory of Gelfand-Tsetlin modules, a theory that has attracted considerable attention in the last 30 years and have been studied in  \cite{DFO2}, \cite{DFO3}, \cite{Maz1}, \cite{Maz2}, \cite{m:gtsb}, \cite{Zh}, among others. Gelfand-Tsetlin bases and modules are also related to  Gelfand-Tsetlin integrable systems that were first introduced for the unitary Lie algebra ${\mathfrak u}(n)$ by Guillemin and Sternberg in \cite{GS},  and later for the general linear Lie
algebra  $\mathfrak{gl}(n)$ by Kostant and Wallach in \cite{KW-1} and \cite{KW-2}.

We now define the main object of study in this paper. Consider a chain of embeddings 
$$\gl(1)\subset \mathfrak{gl}(2)\subset \ldots \subset \mathfrak{gl}(n).$$
The choice of embeddings is not essential but for simplicity we chose embeddings of principal submatrices.
Let $U=U(\mathfrak{gl}(n) )$ be the universal enveloping algebra of $\mathfrak{gl}(n)$, and let $\Ga$ be the {\it Gelfand-Tsetlin
subalgebra} of $U$, i.e. the subalgebra generated by the centers  of universal enveloping algebras of  all $\gl(i)$. Then ${\Ga}$ is a maximal commutative subalgebra of $U$ as well as a polynomial algebra in the $\displaystyle \frac{n(n+1)}{2}$ variables $c_{ij}$, where $c_{ij}$ is a degree $j$ element in the center of $U(\mathfrak{gl}(i))$, \cite{Zh}.  

A \emph{Gelfand-Tsetlin module} $V$ of $\mathfrak{gl}(n)$ is a Harish-Chandra $(U, \Ga)$-module, that is 
 $$V=\bigoplus_{\sm\in\Sp\Ga}V_{\sm},$$
where $$V_{\sm}=\{v\in V\; | \; \sm^{k}v=0 \text{ for some }k\geq 0\}.$$ The category of Harish-Chandra modules is a  subcategory of the category of all weight $\mathfrak{gl}(n)$-modules, i.e. modules that decompose as direct sum of modules over the standard Cartan subalgebra of $\mathfrak{gl}(n)$. Recall that the classification of all simple weight $\mathfrak{gl} (n)$-modules with finite-dimensional weight spaces is already completed, \cite{M}. Since the classification of arbitrary simple weight modules is out of reach for $n \geq 3$, the classification of simple Gelfand-Tsetlin $\mathfrak{gl} (n)$-modules seems to be the next fundamental classification problem one can try to solve. This, in addition to the connections with integrable systems and Yangians, gives us  another  motivation to study singular Gelfand-Tsetlin modules.  One should note also that the Gelfand-Tsetlin subalgebras are related to general hypergeometric functions on the complex Lie group $GL(n)$, \cite{Gr1},\cite{Gr2}, and to solutions of the Euler equation, \cite{Vi}.

Throughout the paper $n\geq 2$ and $T_n(\mathbb C)$ will stand for the space of  the following  \emph{Gelfand-Tsetlin
tableaux} with complex entries: \\
 
\begin{center}
\Stone{\mbox{ $v_{n1}$}}\Stone{\mbox{ $v_{n2}$}}\hspace{1cm} $\cdots$ \hspace{1cm} \Stone{\mbox{ $v_{n,n-1}$}}\Stone{\mbox{ $v_{nn}$}}\\[0.2pt]
\Stone{\mbox{ $v_{n-1,1}$}}\hspace{1.7cm} $\cdots$ \hspace{1.8cm} \Stone{\mbox{ $v_{n-1,n-1}$}}\\[0.3cm]
\hspace{0.2cm}$\cdots$ \hspace{0.8cm} $\cdots$ \hspace{0.8cm} $\cdots$\\[0.3cm]
\Stone{\mbox{ $v_{21}$}}\Stone{\mbox{ $v_{22}$}}\\[0.2pt]
\Stone{\mbox{ $v_{11}$}}\\
\medskip
\end{center}
 
 \
 
 We will identify  $ T_{n}(\mathbb{C})$ with the set  ${\mathbb C}^{\frac{n(n+1)}{2}}$ in the following way:
 to  $$
v=(v_{n1},...,v_{nn}|v_{n-1,1},...,v_{n-1,n-1}| \cdots|v_{21}, v_{22}|v_{11})\in {\mathbb C}^{\frac{n(n+1)}{2}}
$$ 
 we associate a tableau   $T(v)\in T_{n}(\mathbb{C})$ as above. It is important to distinguish $v$  and $T(v)$ since they are vectors in non-isomorphic vector spaces as explained below.

For a fixed element 
$v=(v_{ij})_{j\leq i=1}^n$  in $T_n(\mathbb C)$ consider a set 
$$v + T_{n-1}(\mathbb Z)=\{v+M\; | \; M=(m_{ij})_{j\leq i=1}^n\in T_n(\mathbb Z), m_{nk}=0 \,, k=1, \ldots, n \}.$$
Henceforth we define $V(T(v))$ to be the  complex vector space with basis the set $v + T_{n-1}(\mathbb Z)$, i.e. $V(T(v)) = \bigoplus_{w \in v + T_{n-1}(\mathbb Z)} {\mathbb C} T(w)$. Note that $T(v+w) \neq T(v) + T(w)$ in $V(T(v))$ (even if $w,v+w \in v + T_{n-1}(\mathbb Z)$).

 To every  $w\in v + T_{n-1}(\mathbb Z)$ we 
 associate the maximal ideal $\sm_{w}$ of $\Ga$ generated by $c_{ij} - \gamma_{ij}(w)$, where $\gamma_{ij}(w)$ are symmetric polynomials defined in (\ref{def-gamma}). Note that the correspondence $w\mapsto \sm_{w}$ is not one-to-one, but for  a given maximal ideal $\sm$  there are finitely many  $w\in v + T_{n-1}(\mathbb Z)$ with $\sm_{w} = \sm$ (see Remark \ref{correspondence between characters and tableaux} for details).    We will call the set of all such $w$, the {\it fiber of $\sm$ in} $v + T_{n-1}(\mathbb Z)$ and denote it by $\widehat{\sm}$. 
  
From now on we set $G:=S_{n}\times\cdots\times S_{1}$. Note that $G$ and $T_{n-1} (\mathbb Z)$ act naturally on $T_{n} (\mathbb C)$ (see Section \ref{sec-prelim} for the explicit action formulas). 
 For each  $w \in T_{n} (\mathbb C)$,  the fiber $\widehat{\sm}_{w}$ of $\sm_{w}$  coincides with  the intersection of $V(T(v))$ and the orbit $Gw$ of the group action of $G$ on $T_{n} (\mathbb C)$.

 In this paper we address the following problem:

\medskip
\noindent {\bf Problem:}  {\it  Given $v\in {\mathbb C}^{\frac{n(n+1)}{2}}$ is it possible to define a non-trivial Gelfand-Tsetlin $\mathfrak{gl}(n)$-module structure on $V(T(v))$, so that  $V(T(v))_{\sm} = \bigoplus_{w\in (v + T_{n-1}(\mathbb Z)) \cap \widehat{\sm}} {\mathbb C} T(w)$?}

\medskip
 Note that by fixing $v$ we prescribe a basis of $V(T(v))_{\sm}$ and the action of $\mathfrak{gl}(n)$ on $V(T(v))$ should
match that prescription. Also note that if $v-v' \in T_{n-1}(\mathbb Z)$ the vectors spaces $V(T(v))$ and $V(T(v'))$ are isomorphic, but the $\mathfrak{gl} (n)$-modules $V(T(v))$ and $V(T(v'))$ are not necessarily isomorphic, see Theorem B(ii) below.

\medskip
 The problem above was raised and studied by Gelfand and Graev in \cite{GG} and by Lemire and Patera (for $n=3$) in \cite{LP1}, \cite{LP2}.  Apparently the main challenge when solving  this problem occurs when two entries  in  one row of $T(v)$ have integer difference. We will say that $v$ is \emph{singular of index} $m\geq 2$ if:
 \begin{itemize}
 \item[(i)] there exists a row $k$, $1<k<n$, and $m$ entries  $v_{ki_1}, \ldots, v_{ki_m}$ on this row such that $v_{ki_j}-v_{ki_s}\in \mathbb Z$ for all $j,s\in \{1, \ldots, m\}$;
 \item[(ii)] $m$ is maximal with the property (i).
 \end{itemize} 
A  pair of entries $(v_{ki_j},v_{ki_s})$  such that $k>1$ and $v_{ki_j}-v_{ki_s}\in \mathbb Z$ is called a {\it singular pair}. We say that $v$ (and $T(v)$) is \emph{generic} if $v$  has no singular pairs.  For a generic  $v$, the  $\mathfrak{gl}(n)$-module structure on $V(T(v))$ was introduced in \cite{DFO3}. 
 
In \cite{FGR2} we initiated the  study of singular (i.e.  non generic) modules $V(T(v))$. This study consists of three 
steps. The first step is to look at singular tableaux $T(v)$ that contain a unique singular pair. This  case, called the {\it $1$-singular} case,  was treated in \cite{FGR2} and  is a particular case of a singularity of index $2$. By understanding just the $1$-singular case, we are able to complete the classification of all irreducible Gelfand-Tsetlin $\mathfrak{gl}(3)$-modules, \cite{FGR3}. In the present paper we make the next step in the study and address the case of  arbitrary singularity of index $2$. That is, any number of singular pairs (but not singular triples) and  multiple singular pairs in the same row are allowed. The transition of a unique singular pair to a general singularity of index $2$ turned out to be not straightforward  and  requires a more sophisticated  theory of differential operators and divided differences of tableaux.  The methods developed in this paper will be crucial for completing the last step in our study - defining singular Gelfand-Tsetlin modules $V(T(v))$ of arbitrary index. The last case will be addressed in a subsequent paper. One should note that, as a straightforward consequence of the results in the present paper, we obtain new tableaux bases of a large family of irreducible Verma modules of $\mathfrak{gl} (n)$.

For the rest of the introduction we fix $v$ to be singular of index $2$. 
If $w\in v + T_{n-1}(\mathbb Z)$ is such that $w_{ki}=w_{kj}$ for some $k,i,j$, $1<k<n$, $i \neq j$, we say that $w$ lies on the {\it critical hyperplane} $x_{ki}-x_{kj}=0$ of $T_{n} (\mathbb C)$. We also  say that $w$ is {\it maximal critical} if it lies on the intersection of all possible critical hyperplanes corresponding to elements in $v + T_{n-1}(\mathbb Z)$. 

 Now we state our first  main result.\\

\noindent {\bf Theorem A.} {\it Let $v \in {\mathbb C}^{\frac{n(n+1)}{2}}$ be singular of index $2$ and let $t$ be the number of singular pairs of $v$. Then  
  $V(T(v))$ has a structure of a Gelfand-Tsetlin  $\mathfrak{gl}(n)$-module.     In particular, 
$$\dim  V(T(v))_{\sm}= 2^{t-k},$$ whenever $\widehat{\sm}$ lies on the intersection of $k$ critical hyperplanes, $0 \leq k \leq  t$, and 
 $\dim  V(T(v))_{\sm}= 1$ if $\widehat{\sm}$ consists of maximal critical points.}

\bigskip

Theorem A is proven by constructing a particular $\gl (n)$-action on $V(T(v))$ such that $V(T(v))_{\sm} = \bigoplus_{w \in (v + T_{n-1}(\mathbb Z))\cap \widehat{\sm}} {\mathbb C} T(w)$. The question whether the $\gl (n)$-action on $V(T(v))$ with this property is unique remains open.

In the subsequent Theorems B and C, and Conjectures 1--3, it is supposed that $V(T(v))$ is the $\gl(n)$-module constructed in the proof of Theorem A.

\noindent {\bf Theorem B.} {\it Let $v \in {\mathbb C}^{\frac{n(n+1)}{2}}$  be singular of index $2$.
 \begin{itemize}
 \item[(i)] If $v'\in v + T_{n-1}(\mathbb Z)$ has $s$ singular (but non-critical) pairs in row $k$, then $c_{k2}$ has an eigenvalue of geometric multiplicity $s+1$  on the  subspace $V(T(v))_{\sm_{v'}}$, and this is the largest geometric multiplicity of all eigenvalues of all elements $c_{kj}$, $1\leq j \leq k$, on the subspace $V(T(v))_{\sm_{v'}}$.
\item[(ii)]  $V(T(v))\simeq V(T(v'))$ if and only if there exists $\sigma\in G$ such that $v-\sigma(v')\in T_{n-1}(\mathbb Z)$, or equivalently, if $v$ and $v'$ are in the same orbit under the action of $G \ltimes T_{n-1} ({\mathbb Z})$ on $T_{n} (\mathbb C)$.
\item[(iii)] 
 Assume   that all singular pairs of $v$ belong to different rows, and   $v_{ij}-v_{i-1,k}$ is not integer for all possible indexes $i,j,k$. Then $V(T(v))$ is irreducible. 
\end{itemize} }
\bigskip

\noindent {\bf Conjecture 1.} The  condition  that $v_{ij}-v_{i-1,k}$ is not integer is both necessary and sufficient for the 
irreducibility of $V(T(v))$ in Theorem B.\\

The above conjecture is known to be true for $n=2$, $n=3$, and for  $1$-singular tableaux $T(v)$, \cite{GoR}.
\bigskip

From Theorem B we obtain many explicit examples of new irreducible singular Gelfand-Tsetlin modules together with information about their structure. In particular, we can compute an important invariant for  these irreducible modules: their {\it Gelfand-Tsetlin degree}, namely the maximum Gelfand-Tsetlin multiplicity that may appear. Furthermore, the generators $c_{ij}$ of $\Ga$ have a simultaneous canonical form on the subspaces $V(T(v))_{\sm}$ with largest Jordan cells of size $s+1$ where $s$ is the maximal number of singular pairs in one row.  All known examples so far concerned Jordan cells of size at most $2$ only.

Our last result addresses the Gelfand-Tsetlin theory properties of the modules $V(T(v))$.  It was shown in \cite{Ovs} that for every maximal ideal $\sm$ of $\Ga$, there is an irreducible Gelfand-Tsetlin module $M$ such that $M_{\sm} \neq 0$. Moreover, there exist only finitely many isomorphism classes of such modules.   If $\sm$ is generic then there is exactly one such isomorphism class   and its  Gelfand-Tsetlin degree is $1$.  On the other hand, if $V$ is an irreducible Gelfand-Tsetlin module then $\dim V_{\sm}$ is finite for all $\sm$ and is bounded by $1!2!\ldots (n-1)!$, \cite{FO2}.  The most interesting case certainly is  when $\sm$ is singular. In \cite{FGR2} we constructed irreducible   Gelfand-Tsetlin modules of Gelfand-Tsetlin degree 2, which is the highest possible degree in the case $n=3$. With the aid of Theorem B we obtain examples of irreducible modules of arbitrarily large degree.  More precise upper bound for the Gelfand-Tsetlin degree of the subquotients of $V(T(v))$ is listed in the next theorem, our third main result.

\bigskip

 \noindent {\bf Theorem C.} {\it Let $v \in T_{n} (\mathbb C)$ be singular of index $2$ and let $t$ be the number of  singular pairs of $L$.  The following hold for any $v'\in v + T_{n-1}(\mathbb Z)$.
  \begin{itemize}
 \item[(i)] There exists an irreducible subquotient $V$ of $V(T(v))$ such that $V_{\sm_{v'}}\neq 0$.
  The number of all such irreducible subquotients  of $V$ is bounded by $2^t$.
 \item[(ii)] $\dim \, V_{\sm_{v'}}\leq 2^{t-k}$ if $\sm_{v'}$ belongs to $k$ critical hyperplanes. In particular,  $\dim \, V_{\sm_{v'}}=1$ if $\hat{\sm}_{v'}$ consists of maximal critical points. 
 \item[(iii)] If each row contains at most one singular pair, then  the geometric multiplicities of all eigenvalues of any $c_{ij}$ are at most $2$.
 \item[(iv)] If $v$ has $s$ singular pairs in the $i$-th row then  the geometric multiplicities of all eigenvalues of any $c_{ij}$, $j=1, \ldots, i$, are at most $s+1$.
\end{itemize}}

\bigskip

\noindent {\bf Conjecture 2.} Any irreducible Gelfand-Tsetlin module $N$ with $N_{\sm_{v}}\neq 0$ for any singular  $v$ of index $2$ appears as a subquotient of $V(T(v))$.
 
 \medskip

Conjecture 2 is known to be true for $n=2$ and $n=3$.
For the case when there exists a unique singular pair in $v$ (i.e. $v$ is $1$-singular) this conjecture was stated in \cite{FGR2},  and recently proven in \cite{FGR4}.  

If $V$ is a Gelfand-Tsetlin module then define the Gelfand-Tsetlin character of $V$ as $$\ch_{GT} V=\sum_{\sm} (\dim V_{\sm}) e^{\sm}$$

It is an interesting question whether $\ch_{GT} V$ determines $V$.  The affirmative answer is known for generic and $1$-singular modules. We conjecture this in general:

 \medskip
 
\noindent {\bf Conjecture 3.} For any singular  $v$  of index $2$  and any two  irreducible subquotients $V$ and $W$ of $V(T(v))$, $V\simeq W$ if and only if 
 $\ch_{GT} V=\ch_{GT} W$.

 \medskip

The organization of the paper is as follows. In Section \ref{sec-prelim} we introduce the notation used in the paper and collect important results for finite-dimensional and generic Gelfand-Tsetlin modules. In Section \ref{section: derivative tableaux} we define the derivative tableaux corresponding to a singular vector $v$ of index $2$ and with their aid, define the space $V(T(v))$. The theorem that $V(T(v))$ is a $\mathfrak{gl} (n)$-module is stated and proven in Section \ref{sec-mod-str}. The formulas for the action of the generators of the Gelfand-Tsetlin subalgebra $\Gamma$ on  $V(T(v))$ are included in Section \ref{section: action of Gamma}.  The proofs of the three main results are given in Section \ref{sec-proofs}. In Section \ref{sec: Irreducible modules of index 2} we include examples of new irreducible Gelfand-Tsetlin $\mathfrak{gl} (n)$-modules, and, in particular, provide derivative tableaux realization of some irreducible Verma $\mathfrak{gl} (n)$-modules.
\bigskip

\noindent{\bf Acknowledgements.}  V.F. is
supported in part by  CNPq grant (301320/2013-6) and by 
Fapesp grant (2014/09310-5). D.G is supported in part by Simons Collaboration Grant 358245.  L.E.R. gratefully acknowledges the
hospitality and excellent working conditions at the University of Texas at Arlington, where part of this work was completed. L.E.R. is supported in part by Mathamsud grant 022/14. 
The authors are grateful to the referees for the numerous useful suggestions.

\section{Index of notations}
Below we list some notations that are frequently used in the paper under the section number they are  introduced first.\\
\S \ref{sec-intro} $T_n(\mathbb C)$, $T_{n-1}(\mathbb Z)$, $T(v)$, $V(T(v))$.\\
\S \ref{subsection:Gelfand-Tsetlin modules}. $U_{m}$, $Z_m$, $c_{mk}$, $\Gamma$, $M_{\sm}$.\\
\S \ref{subsection: Finite dimensional GT modules}. $\gamma_{mk}(v)$.\\
\S \ref{sec-gen-gt}. $T_{n}(\mathbb{C})_{\rm gen}$, $\widetilde{S}_{m}$, $\Phi_{kl}$, $e_{rs}(w)$, $\varepsilon_{rs}$.\\
\S \ref{section: derivative tableaux}. $\Sigma$, $R$, $\mathcal{S}^{0}$, $\mathcal{V}_{\rm gen}$, $R_{\Delta}$, $P_{\Delta}(v)$, $\Sigma_{I}$, $\tau_{r}$, $\tau_{\Delta}$, $\mathcal{H}$, $\overline{\mathcal{H}}$, $\mathcal{F}$, $\overline{\mathcal{F}}$, $\mathcal{D}_{I}$, $\mathcal{D}_{I}^{v}$, $v_{\Delta}$, ${\rm ev}(v)$, $\mathcal{D}_{I}T(v+z)$.\\
\S \ref{section: action of Gamma}. $\mathcal{L}_{a}^{(l)}$, $\mathcal{L}_{\geq a}^{(l)}$, $\mathcal{L}_{a}$, $\Sigma(m)$, $\Gamma_{m}$, $\prec_{\mathcal{D}}$, $\mathcal{D}_{J}T(v+z')\xrightarrow{g}\mathcal{D}_{I}T(v+z)$.\\
\S \ref{sec: Irreducible modules of index 2}. $\mbox{GT-}\deg (M)$, 
 {\rm gmult($\gamma |_{M_{\sm}}$)}.\\ 
\S \ref{subsection: Identities divided differences}. $f(v)^{\tau_{\Delta}}$, $f(v)_{\Delta}$.\\
\S \ref{subsection: Identities for e_rs}. $\Phi_{kl}(u,s)$, $\Phi_{kl}(i_r)$, $\Phi_{kl}(j_r)$, $\tau_{\Delta}^{\star}(\sigma)$, $\Omega_{0}$, $\Omega(\sigma)$, $\Omega_{\sigma}$, $\widetilde{\Omega}(\sigma)$, $\widehat{\Omega}(\sigma_1,\sigma_2)$, $\Phi_{(\sigma_1,\sigma_{2})}$.

\section{Preliminaries} \label{sec-prelim}

\subsection{Conventions}\label{subsection: conventions}
The ground field will be ${\mathbb C}$.  For $a \in {\mathbb Z}$, we write $\mathbb Z_{\geq a}$ for the set of all integers $m$ such that $m \geq a$.  
By $Re (z)$ we denote the real part of a complex number $z$, while $\lfloor x \rfloor$ stands for the greatest integer less than or equal to the real number $x$.

Let $\{E_{ij}\; | \; 1\leq i,j \leq n\}$  be the
standard basis of $\mathfrak{gl}(n)$ of elementary matrices. We fix the standard triangular decomposition and  the corresponding basis of simple roots of $\mathfrak{gl}(n)$.  The weights of $\mathfrak{gl}(n)$ will be written as $n$-tuples $(\lambda_1,...,\lambda_n).$ 

For a Lie algebra ${\mathfrak a}$ by $U(\mathfrak a)$ we denote the universal enveloping algebra of ${\mathfrak a}$. Throughout the paper $U = U(\mathfrak{gl}(n))$.  For a commutative ring $R$, by ${\rm Specm}\, R$ we denote the set of maximal ideals of $R$.

The transposition of the symmetric group $S_N$ interchanging $i$ and $j$ will be denoted by $(i,j)$. We set $G:=S_n\times S_{n-1} \times \cdots \times S_1$ and the $i$-th component of $\sigma \in G$ will be denoted by $\sigma[i]$.

\subsection{Gelfand-Tsetlin modules}\label{subsection:Gelfand-Tsetlin modules}
 Let  for $m\leqslant n$, $\mathfrak{gl}(m)$ be the Lie subalgebra
of $\mathfrak{gl}(n)$ spanned by $\{ E_{ij}\; | \;  i,j=1,\ldots,m \}$ and let   $U_{m}=U(\mathfrak{gl}(m))$. Let
$Z_{m}$ be the center of $U_{m}$. Then $Z_m$ is the polynomial
algebra in the $m$ variables $\{ c_{mk}\; | \; k=1,\ldots,m \}$,
\begin{equation}\label{equ_3}
c_{mk } \ = \ \displaystyle {\sum_{(i_1,\ldots,i_k)\in \{
1,\ldots,m \}^k}} E_{i_1 i_2}E_{i_2 i_3}\ldots E_{i_k i_1}.
\end{equation}

 The {\it Gelfand-Tsetlin
subalgebra} $\Ga$ is the subalgebra of $U$ generated by $\displaystyle \bigcup_{m=1}^n
Z_m$. Recall the definition of a Gelfand-Tsetlin module from the introduction. Namely, $M$ is a Gelfand-Tsetlin module if $M$ splits into  the direct sum of the  $\Ga$-modules $M_{\sm}=\{v\in M| \sm^{k}v=0 \text{ for some }k\geq 0\}$ indexed by the maximal ideals of $\Gamma$. The  {\it support} of a Gelfand-Tsetlin module $M$ is the set of all maximal ideals $\sm\in \Sp\Ga$ such that  $M_{\sm}\neq 0$. For any $\sm$ in the support of $M$,  the {\it Gelfand-Tsetlin multiplicity} of $\sm$ is  $\dim M_{\sm}$.

Note that any irreducible Gelfand-Tsetlin  module over $\mathfrak{gl}(n)$ is a weight module with respect to the
 standard Cartan subalgebra $\mathfrak h$ spanned by $E_{ii}$, $i=1,\ldots, n$. The converse is  not true in general (except for $n=2$), i.e. an irreducible weight module $M$ need not to be Gelfand-Tsetlin. However, it is the case when
the weight multiplicities of $M$ are finite.
% In particular, every highest weight module or, more generally, every module from the category $\mathcal O$ is a Gelfand-Tsetlin module. 

We will use the following terminology for a Gelfand-Tsetlin module $M$, $\gamma \in \Gamma$, and $\sm \in \Specm \Gamma$:
\begin{itemize}
\item[(i)] The \emph{Gelfand-Tsetlin degree} (or, the \emph{GT-degree}) $\mbox{GT-}\deg (M)$ of $M$ is the supremum of all Gelfand-Tsetlin multiplicities of $M$, i.e. 
$$\mbox{GT-}\deg (M):= \sup \{ \dim M_{\sm} \; |\; \sm \in \Specm \Gamma\}.$$
\item[(ii)] The \emph{geometric multiplicity {\rm gmult($\gamma |_{M_{\sm}}$)} of  $\gamma$ in $M_{\sm}$}  is the size of the largest Jordan cell of the endomorphism $\gamma |_{M_{\sm}}$ on $M_{\sm}$.
\item[(iii)] The \emph{geometric GT-degree of $M$} is the maximum of  {\rm gmult($\gamma |_{M_{\sm}}$)}  over all $\gamma \in \Gamma$ and all $\sm \in \Specm \Gamma$.
\end{itemize}

%\subsection{Gelfand-Tsetlin tableaux}\label{subsection: GT Tableaux}

The action of $G = S_n \times S_{n-1} \times \cdots \times S_1$ on $T_n (\mathbb C)$ is given by the formula:
 \begin{equation} \label{eq-g-action}
\sigma(v):=(v_{n,\sigma^{-1}[n](1)},\ldots,v_{n,\sigma^{-1}[n](n)}|\ldots|v_{1,\sigma^{-1}[1](1)}).
\end{equation}
where $v\in T_{n}(\mathbb{C})$ and $\sigma\in G$. In addition to the $G$-action, we have another important action on $T_n (\mathbb C)$: the action by translations of $T_{n-1} (\mathbb Z)$. The two actions can be combined into one action of the semidirect product $G \ltimes T_{n-1} ({\mathbb Z})$.

For $1\leq j \leq i \leq n-1$, $\delta^{ij} \in T_{n-1}(\mathbb{Z})$ is defined by  $(\delta^{ij})_{ij}=1$ and all other $(\delta^{ij})_{k\ell}$ are zero.

We have the following important types of tableaux, and equivalently, of vectors in $T_n (\mathbb C)$.

\begin{definition} Let $v \in T_n (\mathbb C)$.
\begin{itemize}
\item[(i)] We call $T(v)$ a \emph{standard tableau} if 
$$v_{ki}-v_{k-1,i}\in\mathbb{Z}_{\geq 0} \hspace{0.3cm} and \hspace{0.3cm} v_{k-1,i}-v_{k,i+1}\in\mathbb{Z}_{> 0}, \hspace{0.1cm}\text{ for all } 1\leq i\leq k\leq n.$$
%\item[(ii)] We call $T(v)$ a \emph{generic tableau} (and $v$ a \emph{generic vector}) if $v_{rs}-v_{ru}\notin\mathbb{Z}$ for each $1\leq s<u\leq r\leq n-1$.
\item[(iii)] We call $T(v)$ a \emph{regular tableau} (and $v$ a \emph{regular vector}) if $v_{rs}-v_{r-1,t}\notin\mathbb{Z}$ for any $r,\ s,\ t$. 
\end{itemize}
\end{definition}

\subsection{Finite-dimensional Gelfand-Tsetlin modules}\label{subsection: Finite dimensional GT modules}
The standard Gelfand-Tsetlin tableaux play a key role in the description of a Gelfand-Tsetlin basis of finite-dimensional representations of $\mathfrak{gl}(n)$. Below we recall this classical result. 
%Note that, for sake of convenience, the second condition above is slightly different from the original condition in \cite{GT}.

\begin{theorem}[\cite{GT}]\label{Gelfand-Tsetlin theorem}
Let $L(\lambda)$ be the finite-dimensional irreducible $\mathfrak{gl}(n)$-module of highest weight $\lambda=(\lambda_{1},\ldots,\lambda_{n})$. Then the set of all standard tableaux $T(v)$ with fixed top row $v_{ni}=\lambda_i-i+1$, $i=1,\ldots,n$ forms a basis of $L(\lambda)$. Moreover,  the action of the generators of $\mathfrak{gl}(n)$ on $L(\lambda)$ is given by the  \emph{Gelfand-Tsetlin formulas}:

$$E_{k,k+1}(T(v))=-\sum_{i=1}^{k}\left(\frac{\prod_{j=1}^{k+1}(v_{ki}-v_{k+1,j})}{\prod_{j\neq i}^{k}(v_{ki}-v_{kj})}\right)T(v+\delta^{ki}),$$

$$E_{k+1,k}(T(v))=\sum_{i=1}^{k}\left(\frac{\prod_{j=1}^{k-1}(v_{ki}-v_{k-1,j})}{\prod_{j\neq i}^{k}(v_{ki}-v_{kj})}\right)T(v-\delta^{ki}),$$

$$E_{kk}(T(v))=\left(k-1+\sum_{i=1}^{k}v_{ki}-\sum_{i=1}^{k-1}v_{k-1,i}\right)T(v),$$
where if the sum of $E_{k,k+1}(T(v))$ or $E_{k+1,k}(T(v))$ contains a summand with a non-standard $T(v\pm\delta^{ki})$, then the  summand is assumed to be zero. 

\end{theorem}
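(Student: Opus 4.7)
The plan is to prove the theorem by induction on $n$, anchored on the classical branching rule for the pair $\mathfrak{gl}(n) \supset \mathfrak{gl}(n-1)$. First I would establish that $L(\lambda)\big|_{\mathfrak{gl}(n-1)}$ decomposes multiplicity-freely as $\bigoplus_{\mu} L(\mu)$, where $\mu = (\mu_1, \ldots, \mu_{n-1})$ ranges over integer weights satisfying the interlacing inequalities $\lambda_i \geq \mu_i \geq \lambda_{i+1}$ for $1 \leq i \leq n-1$. The cleanest route is via the Weyl character formula: restricting $\ch L(\lambda)$ to the Cartan of $\mathfrak{gl}(n-1)$ and simplifying the resulting alternant yields exactly $\sum_\mu \ch L(\mu)$.

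Iterating this rule down the chain $\mathfrak{gl}(n) \supset \mathfrak{gl}(n-1) \supset \cdots \supset \mathfrak{gl}(1)$ produces a canonical basis of $L(\lambda)$ indexed by chains of interlacing integer weights $\lambda^{(n)}, \lambda^{(n-1)}, \ldots, \lambda^{(1)}$ with $\lambda^{(n)} = \lambda$, since each multiplicity-free occurrence of $L(\lambda^{(k-1)})$ inside $L(\lambda^{(k)})$ contributes a one-dimensional space of $\mathfrak{gl}(k-1)$-highest-weight vectors. Setting $v_{ki} := \lambda_i^{(k)} - i + 1$ converts the interlacing conditions $\lambda_i^{(k)} \geq \lambda_i^{(k-1)} \geq \lambda_{i+1}^{(k)}$ into the standard-tableau inequalities $v_{ki} - v_{k-1,i} \in \mathbb{Z}_{\geq 0}$ and $v_{k-1,i} - v_{k,i+1} \in \mathbb{Z}_{> 0}$, giving the desired bijection with the chosen basis.

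To identify the action, I first observe that $T(v)$ is a weight vector: the identity $(E_{11} + \cdots + E_{kk}) T(v) = |\lambda^{(k)}|\, T(v)$ combined with $|\lambda^{(k)}| = \sum_i v_{ki} + \binom{k}{2}$ recovers the stated formula for $E_{kk}$ upon subtracting the cases $k$ and $k-1$. For $E_{k,k+1}$ and $E_{k+1,k}$ the key observation is that both operators commute with $\mathfrak{gl}(k-1)$, so by a Schur-type argument their matrix coefficients between $\mathfrak{gl}(k-1)$-isotypic components depend only on $\lambda^{(k-1)}$ and the two adjacent $\mathfrak{gl}(k)$-weights $\lambda^{(k)}$ and $\lambda^{(k)} \pm \varepsilon_i$ involved. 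These residual scalars are then pinned down by combining the commutation relation $[E_{k,k+1}, E_{k+1,k}] = E_{kk} - E_{k+1,k+1}$ with the known action on extremal vectors of each $\mathfrak{gl}(k+1)$-isotypic component.

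The main obstacle I anticipate is pinning down the precise rational functions of the entries, since the Schur-type reductions above determine them only up to a single constant per pair of rows. Nailing that constant requires either an explicit evaluation on a highest-weight vector of a chosen $\mathfrak{gl}(k+1)$-subrepresentation, producing a nontrivial identity of products of interlacing differences, or the use of Zhelobenko's raising/lowering operators as an auxiliary tool. Once the formulas are in hand, closing the proof reduces to checking the Serre relations on the tableau basis; this is a routine but lengthy computation, and a more conceptual alternative is to transport the formulas via the evaluation homomorphism from the Yangian $Y(\mathfrak{gl}(n))$, as done in Molev's modern treatment.
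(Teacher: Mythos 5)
The paper does not actually prove this theorem: it is quoted as the classical result of Gelfand--Tsetlin (cited as \cite{GT}, with the $\Gamma$-eigenvalues taken from \cite{Zh}), so there is no internal proof to compare against; your proposal has to be judged as a proof of the classical theorem itself. Your outline is the standard route (multiplicity-free branching for $\mathfrak{gl}(n)\supset\mathfrak{gl}(n-1)$ via the Weyl character formula, a basis indexed by interlacing chains, the weight computation for $E_{kk}$, and reduction of the off-diagonal matrix coefficients using the fact that $E_{k,k\pm1}$ commutes with $\mathfrak{gl}(k-1)$), and that skeleton is sound.

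The genuine gap is that the actual content of the theorem --- the explicit rational coefficients $\prod_{j}(v_{ki}-v_{k+1,j})/\prod_{j\neq i}(v_{ki}-v_{kj})$ and $\prod_{j}(v_{ki}-v_{k-1,j})/\prod_{j\neq i}(v_{ki}-v_{kj})$ --- is never derived: you explicitly defer it to ``explicit evaluation,'' to Zhelobenko's raising/lowering operators, or to Molev's Yangian treatment, and the last two are themselves proofs of this very theorem, so invoking them makes the argument circular rather than self-contained. Your Schur-type reduction is also stated imprecisely: commuting with $\mathfrak{gl}(k-1)$ only shows the coefficients are independent of rows $k-2$ and below, so a priori they depend on rows $k-1$, $k$, $k+1$; the formulas assert the stronger facts that the $E_{k,k+1}$ coefficient depends only on rows $k$ and $k+1$ and the $E_{k+1,k}$ coefficient only on rows $k$ and $k-1$ (your phrasing even omits the manifest dependence on $\lambda^{(k+1)}$ in the numerator), and proving this, together with a consistent normalization of the branching vectors so that one set of formulas holds simultaneously for all $k$, is precisely where the work lies. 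Finally, ``checking the Serre relations'' afterwards is not a harmless afterthought: one must also verify the boundary convention (summands with non-standard $T(v\pm\delta^{ki})$ set to zero) is compatible with the relations, which is where the interlacing inequalities are genuinely used. As it stands the proposal is a correct roadmap but not a proof.
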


Another important result is that a module defined by the Gelfand-Tsetlin formulas  is a Gelfand-Tsetlin module. In particular, we have the following.

\begin{theorem}[\cite{Zh}]\label{action of Gamma on fd modules} The action of the generators of $\Gamma$ on a finite-dimensional module $L(\lambda)$ is given by the following formulas:
$$c_{mk}(T(v))   = \  \gamma_{mk} (v)T(v),$$ where
\begin{equation} \label{def-gamma}
\gamma_{mk} (v): = \ \sum_{i=1}^m
(v_{mi}+m-1)^k \prod_{j\ne i} \left( 1 -
\frac{1}{v_{mi}- v_{mj}} \right),
\end{equation} 
with the generating function
$$1-\sum_{k=0}^{\infty}\gamma_{mk}(v) u^{-k-1} =\prod_{i=1}^{m}\frac{u - v_{mi} -m}{u - v_{mi} - m + 1}.$$

\end{theorem}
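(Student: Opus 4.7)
The plan is to reduce the computation to a central character calculation and then extract the coefficients via a generating-function identity. I would proceed in three steps.

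First, I would invoke centrality. Since $c_{mk}\in Z_m$ lies in the center of $U_m=U(\gl(m))$, Schur's lemma forces it to act as a scalar on every irreducible $\gl(m)$-module. The Gelfand-Tsetlin branching built into Theorem~\ref{Gelfand-Tsetlin theorem} says that fixing the bottom $m$ rows of a standard tableau $T(v)$ (that is, rows $m, m-1,\ldots,1$) picks out a single tableau inside an irreducible $\gl(m)$-submodule of $L(\lambda)|_{\gl(m)}$; that submodule is $L(\mu)$ with highest weight $\mu=(\mu_1,\ldots,\mu_m)$, $\mu_i=v_{mi}+i-1$. Hence $c_{mk}\,T(v)=\chi_\mu(c_{mk})\,T(v)$, and the job reduces to identifying the central character $\chi_\mu(c_{mk})$ with $\gamma_{mk}(v)$.

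Second, I would compute $\chi_\mu(c_{mk})$ by evaluating on a highest weight vector $v_\mu\in L(\mu)$. Rather than wrestling term-by-term with the cyclic sum $\sum E_{i_1 i_2}E_{i_2 i_3}\cdots E_{i_k i_1}$, I would assemble all $c_{mk}$ into the formal series
\begin{equation*}
C_m(u)\;=\;\sum_{k\geq 0} c_{mk}\,u^{-k-1},
\end{equation*}
and use the classical Capelli/Perelomov-Popov identity that rewrites $C_m(u)$ (up to a $\rho$-shift of $m-1$ on the diagonal) as a noncommutative column-determinant of $uI-E^{(m)}$ where $E^{(m)}$ is the matrix with entries $E_{ij}\in U_m$. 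Applied to $v_\mu$, raising operators $E_{ij}$ with $i<j$ annihilate $v_\mu$, so only the diagonal Cartan contributions survive after column-expansion, and one obtains
\begin{equation*}
C_m(u)\,v_\mu\;=\;\Bigl(1-\prod_{i=1}^{m}\frac{u-\mu_i-m+i}{u-\mu_i-m+i+1}\Bigr)v_\mu.
\end{equation*}
Substituting $\mu_i=v_{mi}+i-1$ collapses this to $1-\prod_{i=1}^m (u-v_{mi}-m)/(u-v_{mi}-m+1)$, which is exactly the generating function in the statement.

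Third, I would recover $\gamma_{mk}(v)$ by partial-fraction expansion. The rational function $\prod_{i=1}^m (u-v_{mi}-m)/(u-v_{mi}-m+1)$ has simple poles at $u=v_{mi}+m-1$, and the residue at the $i$-th pole equals $-\prod_{j\neq i}(1-1/(v_{mi}-v_{mj}))$; expanding each $1/(u-v_{mi}-m+1)$ as a geometric series in $u^{-1}$ produces the factor $(v_{mi}+m-1)^k$ in the coefficient of $u^{-k-1}$, reproducing (\ref{def-gamma}).

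The main obstacle is the second step: converting the noncommutative cyclic sum defining $c_{mk}$ into the closed-form generating function evaluated on $v_\mu$. The bookkeeping of which commutators survive when $E_{ij}$, $i<j$, are moved past lower-triangular factors is delicate, and the $\rho$-shift by $m-1$ is easy to get wrong. Invoking the Capelli-type column-determinant identity is the cleanest way to finesse this combinatorics in one stroke; once it is in place, everything else is formal generating-function manipulation.
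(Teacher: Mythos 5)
The paper does not prove this theorem at all---it is quoted from Zhelobenko's book \cite{Zh}---so the only question is whether your argument is sound. Your overall route is the standard one: Gelfand-Tsetlin branching plus Schur's lemma reduces the claim to computing the central character $\chi_\mu$ of $\gl(m)$ on the constituent containing $T(v)$, with $\mu_i=v_{mi}+i-1$; a Perelomov--Popov/Capelli-type identity gives the eigenvalue generating function; partial fractions then recover $\gamma_{mk}(v)$. Steps one and three are correct (in particular your residue $-\prod_{j\neq i}\bigl(1-\tfrac{1}{v_{mi}-v_{mj}}\bigr)$ at $u=v_{mi}+m-1$ and the geometric expansion do reproduce (\ref{def-gamma})).

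Step two, however, contains a genuine error, precisely of the kind you flagged: the generating function you display is off by one, and the claimed ``collapse'' under $\mu_i=v_{mi}+i-1$ is false as written. Sanity check at $m=1$: $c_{1k}=E_{11}^k$ has eigenvalue $\mu_1^k$, whereas your formula gives $1-\frac{u-\mu_1}{u-\mu_1+1}=\frac{1}{u-\mu_1+1}=\sum_{k\geq 0}(\mu_1-1)^k u^{-k-1}$, i.e.\ eigenvalue $(\mu_1-1)^k$. Moreover, substituting $\mu_i=v_{mi}+i-1$ into your product yields $\prod_{i=1}^m\frac{u-v_{mi}-m+1}{u-v_{mi}-m+2}$, not the product in the statement, so the final identification is asserted rather than obtained. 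The correct identity for the invariants (\ref{equ_3}) acting on the $\gl(m)$-highest weight vector is
\[
1-\sum_{k\geq 0}\chi_\mu(c_{mk})\,u^{-k-1}\;=\;\prod_{i=1}^{m}\frac{u-\mu_i-m+i-1}{u-\mu_i-m+i},
\]
which under $\mu_i=v_{mi}+i-1$ does become $\prod_{i=1}^m\frac{u-v_{mi}-m}{u-v_{mi}-m+1}$; with this one-unit correction to the $\rho$-shift the rest of your argument goes through verbatim.
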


\begin{remark}\label{correspondence between characters and tableaux}
 There is a natural correspondence between the set $\Gamma^*$ of characters $\chi : \Gamma \to \mathbb{C}$ 
 (and, hence, of maximal ideals of $\Gamma$, $\sm=\Ker \, \chi$)
 and the set of Gelfand-Tsetlin tableaux. In fact, to obtain a Gelfand-Tsetlin tableau from a character $\chi$ we find a solution $v = (v_{ij})$ of the system of equations
$$\{\gamma_{mk}(v) =\ \chi(c_{mk})\}_{1\leq k\leq m\leq n}$$
Conversely, for every tableau $T(v)$ we associate $\chi \in \Gamma^*$ by defining  $\chi(c_{mk})$ via the above equations. It is clear that each tableau defines such a character uniquely. On the other hand, a
tableau is defined by a character uniquely up to a permutation in $G$.
\end{remark}

\subsection{Generic Gelfand-Tsetlin modules}\label{sec-gen-gt} Since  the coefficients in the Gelfand-Tsetlin formulas in Theorem \ref{Gelfand-Tsetlin theorem} are rational functions on the entries of the tableaux, it is natural to extend the Gelfand-Tsetlin construction to more general modules.  When all denominators are nonintegers, one can use the same formulas and  define a new class of infinite-dimensional {\it generic}  Gelfand-Tsetlin 
$\mathfrak{gl}(n)$-modules (cf. \cite{DFO3}, Section 2.3). Recall the definition of $V(T(v))$ from the introduction. Then $V(T(v))$ is a generic Gelfand-Tsetlin module 
with action of the generators of $\mathfrak{gl}(n)$ given by the Gelfand-Tsetlin formulas.  All Gelfand-Tsetlin  multiplicities of $V(T(v))$ are $1$.

Denote by $T_{n}(\mathbb{C})_{\rm gen}$ the set of all generic vectors in $T_{n}(\mathbb{C})$. By $\widetilde{S}_m$ we denotes the subset of $S_m$ consisting of the transpositions $(1,i)$, $i=1,...,m$. For $k<\ell $, set $\Phi_{k\ell} =  \widetilde{S}_{\ell-1} \times\cdots\times \widetilde{S}_{k}$. For $k>\ell$ we set $\Phi_{k\ell} = \Phi_{\ell k}$. Finally we let $\Phi_{\ell \ell} = \{ \mbox{Id}\}$.  Every $\sigma$ in $\Phi_{k\ell }$ will be written as a $|k-\ell|$-tuple  of transpositions $\sigma[i]$ (recall that $\sigma[i]$ is the $i$-th component of $\sigma$). Also, identify every $\sigma\in \Phi_{k\ell} $ as an element of $G = S_n \times \cdots \times S_1$ by letting  $\sigma [i] = \mbox{Id}$ whenever  $i<\min\{k,\ell\}$ or  $ i> \max\{k,\ell\}-1$.

\begin{remark}
Gelfand-Tsetlin formulas are given for the generators of $\gl(n)$ as a Lie algebra. For convenience we will write explicitly the action of any $E_{rs}\in\gl(n)$ in terms of permutations. The corresponding coefficients for the action of $E_{rs}$ can be obtained by computing the action of $[E_{r,s-1},E_{s-1,s}]$ and induction on $|r-s|$.   
\end{remark}

\begin{definition}\label{definition of coefficients e_rs}
For each generic vector $w$ and any $1\leq r, s\leq n$ we define
$$
e_{rs}(w):=
\begin{cases}
-\frac{\prod_{j=1}^{s}(w_{s-1,1}-w_{s,j})}{\prod_{j=2}^{s-1}(w_{s-1,1}-w_{s-1,j})}\displaystyle\prod_{j=r}^{s-2}\left(\frac{\prod_{t=2}^{j+1}(w_{j1}-w_{j+1,t})}{\prod_{t=2}^{j}(w_{j1}-w_{jt})}\right), & \text { if }\ \ r<s,\\
\frac{\prod_{j=1}^{s-1}(w_{s1}-w_{s-1,j})}{\prod_{j=2}^{s}(w_{s1}-w_{sj})}\displaystyle\prod_{j=s+2}^{r}\left(\frac{\prod_{t=2}^{j-2}(w_{j-1,1}-w_{j-2,t})}{\prod_{t=2}^{j-1}(w_{j-1,1}-w_{j-1,t})}\right), &  \text { if }\ \ \ r>s,\\
r-1+\sum\limits_{i=1}^{r}w_{ri}-\sum\limits_{i=1}^{r-1}w_{r-1,i}, &  \text { if }\ \ \ r=s,
\end{cases}
$$
\end{definition}

Let $1 \leq r < s \leq n-1$. Set $
\varepsilon_{rs}:=\delta^{r,1}+\delta^{r+1,1}+\ldots+\delta^{s-1,1} \in  T_{n}(\mathbb{Z}),$ $\varepsilon_{rr}=0$ and  $\varepsilon_{sr}=- \varepsilon_{rs}$.

Note that for any $w\in T_{n}(\mathbb{C})$ and $\sigma\in\Phi_{k\ell }$, we define $\sigma(w)$ according to (\ref{eq-g-action}). We have the following important result for generic Gelfand-Tsetlin modules. 
\begin{proposition}[\cite{FGR2}]\label{coefficients e_ij} Let $v \in T_n(\mathbb C)$ be generic. Then the  $\mathfrak{gl}(n)$-module structure on $V(T(v))$ is defined by the formulas:
\begin{equation}\label{formula of coefficients e_ij}
E_{m\ell} (T(v+z))= \sum_{\sigma \in \Phi_{m\ell}} e_{m\ell} (\sigma (v+z)) T(v+z+\sigma(\varepsilon_{m\ell})),
\end{equation}
for $z \in T_{n-1}({\mathbb Z})$ and $1 \leq m ,\ \ell \leq n$. Moreover, $V(T(v))$ is a Gelfand-Tsetlin module with action of $\Ga$  given by the formulas in Theorem \ref{action of Gamma on fd modules}.
\end{proposition}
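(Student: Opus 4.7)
The plan is to define operators corresponding to the Chevalley generators $E_{k,k+1}$, $E_{k+1,k}$, and $E_{kk}$ on $V(T(v))$ by the Gelfand-Tsetlin formulas of Theorem \ref{Gelfand-Tsetlin theorem}, verify these satisfy the $\gl(n)$-relations, and then match the displayed formula \eqref{formula of coefficients e_ij} for arbitrary $E_{m\ell}$ by induction on $|m-\ell|$. Since $v$ is generic, every potential denominator $(v+z)_{ki}-(v+z)_{kj}=(v_{ki}-v_{kj})+(z_{ki}-z_{kj})$ with $1<k\leq n$, $i\neq j$, and $z\in T_{n-1}(\mathbb Z)$ is a nonzero non-integer, and the $k=1$ case is vacuous. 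Hence the classical GT formulas define honest linear operators on $V(T(v))$ without any standardness restriction on tableaux.

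To check the $\gl(n)$-relations, I would appeal to a rational-identity / Zariski density argument. Each commutator $[E_{ij},E_{k\ell}]-\delta_{jk}E_{i\ell}+\delta_{i\ell}E_{kj}$ applied to a basis tableau $T(v+z)$ produces a finite sum $\sum_{\Delta}c_\Delta(v+z)\,T(v+z+\Delta)$ whose coefficients are rational functions of the entries. By Theorem \ref{Gelfand-Tsetlin theorem}, whenever $\lambda$ is dominant integral and the involved tableaux all lie in the interior of $L(\lambda)$, the coefficient $c_\Delta$ vanishes. Varying $\lambda$ and choosing interior configurations yields a Zariski dense set of specializations on which $c_\Delta$ vanishes, so $c_\Delta\equiv 0$ as a rational function, and the relations hold identically on $V(T(v))$.

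To obtain \eqref{formula of coefficients e_ij} for all $E_{m\ell}$, I would proceed by induction on $|m-\ell|$. In the base case, $\Phi_{k,k+1}=\widetilde{S}_k$ indexes the transpositions $(1,i)$, which send $\varepsilon_{k,k+1}=\delta^{k,1}$ to $\delta^{k,i}$, and one checks that $e_{k,k+1}(\sigma(v+z))$ from Definition \ref{definition of coefficients e_rs} reproduces the classical coefficient from Theorem \ref{Gelfand-Tsetlin theorem}. For the inductive step with $r<s$, I would write $E_{r,s}=[E_{r,s-1},E_{s-1,s}]$, apply the induction hypothesis to $E_{r,s-1}$ together with the base case for $E_{s-1,s}$, and reindex the resulting double sum by $\sigma\in\Phi_{m\ell}=\widetilde{S}_{s-1}\times\cdots\times\widetilde{S}_r$. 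The combined coefficient telescopes precisely into the product defining $e_{m\ell}(\sigma(v+z))$, and the accumulated lattice shift is exactly $\sigma(\varepsilon_{m\ell})$; the case $r>s$ is symmetric, using $E_{r,s}=[E_{r,r-1},E_{r-1,s}]$.

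Finally, for the $\Ga$-action the same density principle applies: Theorem \ref{action of Gamma on fd modules} asserts $c_{mk}\,T(w)=\gamma_{mk}(w)T(w)$ on every $L(\lambda)$, and since $c_{mk}$ is built from the $\gl(n)$-structure defined above, its action on $T(w)$ is a rational function in the entries of $w$; the identity therefore extends from the Zariski dense set of finite-dimensional standard tableaux to all $w\in v+T_{n-1}(\mathbb Z)$. This gives the Harish-Chandra decomposition $V(T(v))_{\sm}=\bigoplus_{w\in\widehat{\sm}}\mathbb C T(w)$. The main obstacle is making the density step fully rigorous: one must confirm that the finite-dimensional standard configurations really do sample enough points so that vanishing on them forces vanishing of the rational function, and that the ``nonstandard summand equals zero'' convention in Theorem \ref{Gelfand-Tsetlin theorem} can be bypassed by pushing the relevant tableaux sufficiently far into the interior of $L(\lambda)$. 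Once this is handled cleanly, both the formula and the Gelfand-Tsetlin property follow at once.
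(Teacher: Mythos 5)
Your proposal is correct and follows essentially the same route as the paper and its sources: the paper itself only cites \cite{FGR2} (and \cite{DFO3} for the generic case, where the $\gl(n)$-relations are verified by exactly this rational-function/density argument, the non-standard-summand convention being harmless because the offending coefficients vanish), and the remark preceding Definition \ref{definition of coefficients e_rs} prescribes precisely your inductive computation of $e_{m\ell}$ via $[E_{r,s-1},E_{s-1,s}]$ and induction on $|r-s|$. The density step you worry about is standard: standard tableaux with entries pushed far into the interior of dominant weights form a Zariski-dense set of specializations, so the argument closes as you describe.
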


\section{Derivative tableaux}\label{section: derivative tableaux}
In this and next sections we define an appropriate module structure on the space $V(T(v))$. To do this we distinguish certain {\it derivative} tableaux in the spanning set of $V(T(v))$. The action of $\mathfrak{gl}(n)$
on derivative tableaux  will be different from the action on the other (ordinary) tableaux. One reason for that
 is the following. Suppose $v \in T_{n}(\mathbb{C})$ is such that $v_{ki}-v_{kj}\in \mathbb Z$ for some $1<k<n$ and $i\neq j$. 
 Then the tableaux $T(v)$ and $T(v+z)$ define the same maximal ideal $\sm$ of $\Gamma$ for some $z\in  T_{n-1}(\mathbb{Z})$, that is they are indistinguishable by $\Gamma$.  In addition, if $v_{ki}-v_{kj}\in \mathbb Z$ the action of $\gl(n)$ on some $T(v+z)$ described in (\ref{formula of coefficients e_ij}) will involve zero denominators.

\begin{definition}\label{Def of 1^t singular}
A vector $w\in T_{n}(\mathbb{C})$ is called \emph{$t$-singular of index $2$} if there are exactly $t$ singular pairs and no singular triples, that is, if there are $(k_{r},i_{r},j_{r})$, $r=1,\ldots,t$, such that:

\begin{itemize}
\item[(i)] $2\leq k_{1}\leq\ldots\leq k_{t}\leq n-1$.
\item[(ii)] $1\leq i_{r}<j_{r}\leq k_{r}$ for each $r= 1,\ldots,t$.
\item[(iii)] If $k_{r} = k_{s}$ for some $r,s$, then $\{ i_r, j_r\} \cap \{i_s, j_s \} = \emptyset$.
\item[(iv)] For any $r= 1,\ldots,t$ we have $w_{k_{r},i_{r}}-w_{k_{r},j_{r}}\in\mathbb{Z}$ and $w_{ki}-w_{kj}\notin \mathbb{Z}$ for any $(k,i,j)\notin\{(k_{r},i_{r},j_{r})\; | \; r=1,\ldots,t\}$.
\end{itemize}
\end{definition}

\medskip
A maximal ideal $\sn $ of $\Ga$ is called {\it $t$-singular} of index $2$ if $v=v_{\sn}$ is $t$-singular of index $2$ for one choice (hence for all choices) of $v$ in $\widehat{\sn}$.
  A Gelfand-Tsetlin module $M$ will be called {\it $t$-singular Gelfand-Tsetlin module of index $2$} if any $\sn$ in the Gelfand-Tsetlin support of $M$ is $t$-singular  of index $2$.

In the following we fix some notation for the rest of the paper. 
\begin{definition}
From now on, $t$ and $\{(k_{r},i_{r},j_{r})\; | \; r=1,\ldots,t\}$ will be fixed. We also fix a $t$-singular vector $v$ of index $2$ such that $v_{k_{r},i_{r}}=v_{k_{r},j_{r}}$ for every $r=1,...,t$. Furthermore, we will denote $\Sigma:=\{1,\ldots,t\}$ and $R:=(\{(i_{1},j_{1})\},\ldots,\{(i_{t},j_{t})\})$.
\end{definition}

Note that $R$ is a sequence of one-element sets. We will use this notation throughout the paper.

\begin{definition}
\begin{itemize}
\item[(i)]   We will write $I\subseteq R$ if $I=(I_{1},\ldots,I_{t})$ and  $I_{r}\subseteq \{(i_{r},j_{r})\}$ for each $r\in\Sigma$. In the same way, if $I,J\subseteq R$ we say that $J\subseteq I$ if $I=(I_{1},\ldots,I_{t})$, $J=(J_{1},\ldots,J_{t})$ and $J_{r}\subseteq I_{r}$ for any $r\in\Sigma$. 
\item[(ii)] For any $I,J\subseteq R$, we define $I\cup J\subseteq R$, $I\cap J\subseteq R$ by $(I\cup J)_{r}=I_{r}\cup J_{r}$ and $(I\cap J)_{r}=I_{r}\cap J_{r}$, respectively.
\item[(iii)] For each $r \in \Sigma$, denote by $\tau_{r}$ the permutation in $S_{n}\times\cdots\times S_{1}$ that interchanges $i_{r}$ and $j_{r}$ in row $k_{r}$, and that is identity on all other rows. Also, for any $\Delta\subseteq \Sigma$  denote by $\tau_{\Delta}$ the permutation $\tau_{r_{1}}\cdots\tau_{r_{|\Delta|}}$, where $\Delta=\{r_{1},\ldots,r_{|\Delta|}\}$.
\end{itemize}
\end{definition}

The next definition plays central role in the paper.
\begin{definition}
\begin{itemize}
\item[(i)] For any subset $I=(I_{1},\cdots, I_{t})$ of $R$  and for any $z\in  T_{n-1}(\mathbb{Z})$ we introduce  new tableau $\mathcal{D}_{I}T(v+z)$ which we call $I$-\emph{derivative tableau}, or simply \emph{derivative tableau}, and set $\mathcal{D}_{\emptyset}T(v+z) = T(v+z)$. 

\item[(ii)] 
Set $\tilde{V}(T(v))$ to be  the complex vector space  generated by  $\{\mathcal{D}_{I}T(v+z)\; | \; I\subseteq R \text{ and } z\in  T_{n-1}(\mathbb{Z})\}$
subject to the following relations:
\begin{equation}\label{relations satisfied by vectors on this universal module}
\mathcal{D}_{I}T(v+\tau_{r}(z))=
\begin{cases}
\mathcal{D}_{I}T(v+z), & \text{ if }\ \ \ \ \ \  I_{r}=\emptyset\\
-\mathcal{D}_{I}T(v+z), & \text{ if }\ \ \ \ \ \  I_{r}\neq\emptyset.
\end{cases}
\end{equation}
\end{itemize}
\end{definition}

\begin{remark}\label{rem-basis}
Although the spanning set $\{\mathcal{D}_{I}T(v+z)\; | \; I\subseteq R \text{ and } z\in  T_{n-1}(\mathbb{Z})\}$ is not a basis $\tilde{V}(T(v))$, it will be  convenient to work with the whole spanning set and then verify the relations separately. A basis of $\tilde{V}(T(v))$ is, for instance, the set of all $\mathcal{D}_{I}T(v+z)$ such that $z_{k_{r}i_{r}}-z_{k_{r}j_{r}}> 0$ if $I_{r} \neq\emptyset$, and $z_{k_{r}i_{r}}-z_{k_{r}j_{r}} \leq  0$ if $I_{r} = \emptyset$.
\end{remark}

\begin{proposition}
There is a natural isomorphism between the spaces ${V}(T(v))$ and $\tilde{V}(T(v))$. \end{proposition}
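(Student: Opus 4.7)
My plan is to construct an explicit linear isomorphism by matching bases. By Remark~\ref{rem-basis}, a basis of $\tilde V(T(v))$ is the set
$$\mathcal{B}=\{\mathcal{D}_{I}T(v+z) \mid z_{k_{r}i_{r}}-z_{k_{r}j_{r}}>0 \text{ if } I_{r}\neq\emptyset,\ z_{k_{r}i_{r}}-z_{k_{r}j_{r}}\leq 0 \text{ if } I_{r}=\emptyset\}.$$
For every $z\in T_{n-1}(\mathbb Z)$ there is a unique $I(z)\subseteq R$ with $\mathcal{D}_{I(z)}T(v+z)\in \mathcal B$, namely $I(z)_{r}=\emptyset$ iff $z_{k_{r}i_{r}}-z_{k_{r}j_{r}}\leq 0$ and $I(z)_{r}=\{(i_{r},j_{r})\}$ otherwise. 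I would therefore set
$$\Psi:V(T(v))\longrightarrow \tilde V(T(v)),\qquad T(v+z)\longmapsto \mathcal D_{I(z)}T(v+z)$$
on the basis $\{T(v+z) \mid z\in T_{n-1}(\mathbb Z)\}$ of $V(T(v))$ and extend by linearity. Since $z\mapsto (I(z),z)$ is a bijection between $T_{n-1}(\mathbb Z)$ and the index set of $\mathcal B$, the map $\Psi$ sends a basis bijectively onto a basis, hence is a linear isomorphism.

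The substantive content is therefore Remark~\ref{rem-basis} itself. The defining relations of $\tilde V(T(v))$ involve only generators sharing the same $I$, so there is a decomposition $\tilde V(T(v))=\bigoplus_{I\subseteq R}\tilde V_{I}$, where $\tilde V_{I}$ is the quotient of the free vector space on $\{\mathcal{D}_{I}T(v+z) \mid z\in T_{n-1}(\mathbb Z)\}$ by the relations for this fixed $I$. The abelian group $H=\langle \tau_{r}\mid r\in \Sigma\rangle\cong (\mathbb Z/2)^{t}$ acts on $T_{n-1}(\mathbb Z)$ by commuting involutions, and the relations identify the generator at $\tau_{r}(z)$ with the generator at $z$ up to a sign $\chi_{I}(\tau_{r})$ equal to $-1$ when $I_{r}\neq\emptyset$ and $+1$ when $I_{r}=\emptyset$. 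Hence $\tilde V_{I}$ is the $\chi_{I}$-coinvariant space of the permutation module $\mathbb C[T_{n-1}(\mathbb Z)]$ for $H$. A standard orbit-representative argument then yields a basis: in each $H$-orbit, pick the representative satisfying $z_{k_{r}i_{r}}>z_{k_{r}j_{r}}$ whenever $I_{r}\neq \emptyset$ and $z_{k_{r}i_{r}}\leq z_{k_{r}j_{r}}$ whenever $I_{r}=\emptyset$. An orbit containing a $\tau_{r}$-fixed point with $I_{r}\neq \emptyset$ forces $\mathcal D_{I}T(v+z)=-\mathcal D_{I}T(v+z)=0$, and is correctly excluded by the strict inequality.

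The only mildly delicate step is this coinvariant computation. Because $H$ is abelian and the involution $\tau_{r}$ affects only the two entries $z_{k_{r}i_{r}}, z_{k_{r}j_{r}}$, the analysis decouples into $t$ independent two-element sign choices, one for each $r\in\Sigma$, and reduces to routine bookkeeping. Given Remark~\ref{rem-basis}, the bijectivity of $\Psi$ on bases is immediate and gives the desired natural isomorphism of vector spaces.
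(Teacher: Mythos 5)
Your proof is correct and takes essentially the same route as the paper: both arguments fix the derivative-tableaux basis of Remark \ref{rem-basis} and identify it bijectively with the basis $\{T(v+z)\mid z\in T_{n-1}(\mathbb Z)\}$ of $V(T(v))$, the only difference being that you also spell out a justification of the remark via the sign-character/orbit-representative (coinvariant) argument, which the paper leaves implicit. Your handling of the degenerate orbits (where $z_{k_r i_r}=z_{k_r j_r}$ and the strict inequality excludes the vanishing tableaux) is exactly what makes the dimension count match, so the bijection of bases is sound.
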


\begin{proof}
Let us fix the basis of $\tilde{V}(T(v))$ defined in Remark \ref{rem-basis}. Let $z\in  T_{n-1}(\mathbb{Z})$ and $T(v+z)\in V(T(v))$. Consider any $\Delta=\{r_{1},\ldots,r_{|\Delta|}\}\subseteq \Sigma$ and $T(v+\tau_{\Delta}(z))\in V(T(v))$, where $\tau_{\Delta}=\tau_{r_{1}}\cdots\tau_{r_{|\Delta|}}$.  Recall that $T(v+z)$ and $T(v+\tau_{\Delta}(z))$ define the same maximal ideal of  $\Gamma$. To complete the proof, we need to identify the tableau $T(v+\tau_{\Delta}(z))\in V(T(v))$ with a derivative tableau in $\tilde{V}(T(v))$. For each $r\in\Sigma$ set $I_{r}= \{(i_{r},j_{r})\}$ if  $r\in\Delta$ and $ I_{r}=\emptyset$ otherwise. Let
$I=(I_{1},\ldots,I_{t})$. Then $\mathcal{D}_{I}T(v+z)$ is the derivative tableau corresponding to $T(v+\tau_{\Delta}(z))$ that we need. Clearly, this identification extends to a linear isomorphism between 
${V}(T(v))$ and $\tilde{V}(T(v))$. 
\end{proof}

From now on we will identify the space ${V}(T(v))$ and $\tilde{V}(T(v))$ and the rest of this section, as well as the next section, are devoted to defining an appropriate $\mathfrak{gl}(n)$-action on that space.

Denote by $T_{n}(\mathbb{C})_{\rm reg}$ the set of all regular vectors in $T_{n}(\mathbb{C})$. 
Recall that if $x \in T_{n}(\mathbb{C})_{\rm gen}$ then $V(T(v))$ is irreducible if and only if $x \in T_{n}(\mathbb{C})_{\rm gen}\cap T_{n}(\mathbb{C})_{\rm reg}$, see Theorem 6.14 in  \cite{FGR1}.

Let $\mathcal{S}^{0}$ be the set of vectors $x$ in $T_{n}(\mathbb{C})_{\rm gen}\cap T_{n}(\mathbb{C})_{\rm reg}$ such that $0\leq Re(x_{rs}-x_{r-1,s})<1$ for any $r,s$. 
\begin{lemma}
$\mathcal{S}^{0}+ T_{n-1}(\mathbb{Z})=T_{n}(\mathbb{C})_{\rm gen}\cap T_{n}(\mathbb{C})_{\rm reg}$. Moreover, for any $w\neq w'$ in $\mathcal{S}^{0}$ we have $\left(w+ T_{n-1}(\mathbb{Z})\right)\cap \left(w'+ T_{n-1}(\mathbb{Z})\right)=\emptyset$.

\end{lemma}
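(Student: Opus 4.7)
The plan is to establish the two set-theoretic statements by the same inductive bookkeeping argument: prescribe each row of $T_{n-1}(\mathbb{Z})$-shift from top to bottom so that the real parts of the consecutive-row differences land in $[0,1)$.

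First I would verify the easy inclusion $\mathcal{S}^{0}+T_{n-1}(\mathbb{Z}) \subseteq T_{n}(\mathbb{C})_{\rm gen}\cap T_{n}(\mathbb{C})_{\rm reg}$. Both conditions defining genericity and regularity concern differences of the form $x_{rs}-x_{r-1,t}$ or $x_{ki}-x_{kj}$, and these differences are invariant under adding an element $z \in T_{n-1}(\mathbb{Z})$ to $x$ (integer shifts cannot turn a non-integer into an integer). Hence $\mathcal{S}^{0}$ is stable under adding elements of $T_{n-1}(\mathbb{Z})$ at the level of the defining non-integrality, and the inclusion follows.

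For the reverse inclusion, given $v \in T_{n}(\mathbb{C})_{\rm gen}\cap T_{n}(\mathbb{C})_{\rm reg}$, I would construct $z \in T_{n-1}(\mathbb{Z})$ such that $v-z \in \mathcal{S}^{0}$ by descending induction on the row index. The top row is fixed since $z_{nj}=0$. Assume $z_{ri}$ has been chosen for $r \geq k$ so that $0 \leq Re\bigl((v-z)_{r,s} - (v-z)_{r-1,s}\bigr) < 1$ for all $r > k$ and all admissible $s$. For the next row, set
\[
z_{k-1,s} \; := \; \Bigl\lfloor Re\bigl( v_{k,s} - z_{k,s} - v_{k-1,s}\bigr) \Bigr\rfloor, \qquad 1 \leq s \leq k-1.
\]
Then $0 \leq Re\bigl((v-z)_{k,s}-(v-z)_{k-1,s}\bigr) < 1$ by construction. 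Iterating this from $k=n$ down to $k=2$ produces $z \in T_{n-1}(\mathbb{Z})$ with $v-z \in \mathcal{S}^{0}$.

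Finally, for the disjointness, suppose $w, w' \in \mathcal{S}^{0}$ with $w - w' = z \in T_{n-1}(\mathbb{Z})$. Comparing the top rows gives $z_{n,\cdot}=0$ already by definition of $T_{n-1}(\mathbb{Z})$. For $r=n$ and $1 \leq s \leq n-1$, both $Re(w_{n,s}-w_{n-1,s})$ and $Re(w'_{n,s}-w'_{n-1,s}) = Re(w_{n,s}-w_{n-1,s}) + z_{n-1,s}$ lie in $[0,1)$, which forces the integer $z_{n-1,s}=0$. An obvious descending induction on the row index, identical in form to the construction above, yields $z_{k-1,s}=0$ for all $k$ and $s$, i.e.\ $z=0$ and hence $w=w'$. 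There is no substantive obstacle; the only subtlety is making sure the inductive step really uses only the previously fixed rows, which is automatic because the condition involves only two consecutive rows at a time.
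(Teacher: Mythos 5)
Your overall strategy is the same as the paper's: the paper defines the shifted vector in closed form, $x_{rs}=w_{rs}+\sum_{j=r+1}^{n}\lfloor Re(w_{j,s}-w_{j-1,s})\rfloor$ for $r\leq n-1$, which is exactly the cumulative version of your row-by-row recursion, and its disjointness proof is the same two-consecutive-rows argument you give (both real parts in $[0,1)$ differing by an integer, then descend from the fixed top row). However, as written your inductive step fails because of a sign slip. With your convention $v-z\in\mathcal{S}^{0}$ one has
$(v-z)_{k,s}-(v-z)_{k-1,s}=\bigl((v_{k,s}-z_{k,s})-v_{k-1,s}\bigr)+z_{k-1,s}$,
so setting $z_{k-1,s}=\bigl\lfloor Re\bigl((v_{k,s}-z_{k,s})-v_{k-1,s}\bigr)\bigr\rfloor$ makes the real part of this difference equal to $c+\lfloor c\rfloor$, where $c=Re\bigl((v_{k,s}-z_{k,s})-v_{k-1,s}\bigr)$; this lies in $[0,1)$ only when $c$ already does (e.g.\ $c=1.5$ gives $2.5$), so the claim ``by construction'' is false as stated. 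The repair is immediate: take $z_{k-1,s}=-\lfloor c\rfloor$, or equivalently construct $v+z\in\mathcal{S}^{0}$ with $z_{k-1,s}=\bigl\lfloor Re\bigl((v_{k,s}+z_{k,s})-v_{k-1,s}\bigr)\bigr\rfloor$, after which your recursion reproduces the paper's formula. The rest of your argument --- the translation-invariance of $T_{n}(\mathbb{C})_{\rm gen}\cap T_{n}(\mathbb{C})_{\rm reg}$ under $T_{n-1}(\mathbb{Z})$ for the easy inclusion, and the descending induction forcing $z=0$ for the disjointness --- is correct and coincides with the paper's proof.
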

\begin{proof}
For any $w\in T_{n}(\mathbb{C})_{\rm gen}\cap T_{n}(\mathbb{C})_{\rm reg}$ let $x\in w +  T_{n-1}(\mathbb{Z}) $ be the vector for which:
$$
x_{rs}=\begin{cases}
w_{rs}+\sum_{j=r+1}^{n}\lfloor Re(w_{j,s}-w_{j-1,s})\rfloor, & \text{ if }\ \ r\leq n-1\\
w_{ns}, & \text{ if }\ \ r=n.
\end{cases}
$$
We have $x\in \mathcal{S}^{0}$. Indeed, for any $r,s$, 
$$x_{rs}-x_{r-1,s}=
w_{rs}-w_{r-1,s}-\lfloor Re(w_{r,s}-w_{r-1,s})\rfloor,
$$ which implies $0\leq Re(x_{rs}-x_{r-1,s})<1$. Hence, $w \in \mathcal{S}^{0} +  T_{n-1}(\mathbb{Z}) $
 and $\mathcal{S}^{0}+ T_{n-1}(\mathbb{Z})=T_{n}(\mathbb{C})_{\rm gen}\cap T_{n}(\mathbb{C})_{\rm reg}$. For the second part of the lemma it is enough to prove that $\left(w+ T_{n-1}(\mathbb{Z})\right)\cap \mathcal{S}^{0}=\{w\}$. Let $z\in  T_{n-1}(\mathbb{Z})$ be  such that $w+z\in\mathcal{S}^{0}$. Then the conditions $0\leq Re((w+z)_{rs}-(w+z)_{r-1,s})<1$ and $0\leq Re(w_{rs}-w_{r-1,s})<1$ imply $z_{rs}=z_{r-1,s}$. In particular, $z_{ns}=z_{r,s}=0$ for any $r\leq n-1$ and hence $z=0$. 
\end{proof}

In view of the last lemma we introduce  ${\mathcal V}_{\rm gen} := \bigoplus_{x \in \mathcal{S}^0} V(T(x))$. By Proposition \ref{coefficients e_ij}, ${\mathcal V}_{\rm gen}$ is a Gelfand-Tsetlin module. We call this module  the \emph{family of generic Gelfand-Tsetlin modules}. Note that ${\mathcal V}_{\rm gen} = \bigoplus_{x \in T_{n}(\mathbb{C})_{\rm gen}\cap T_{n}(\mathbb{C})_{\rm reg}} {\mathbb C}T(v)$ as vector spaces.

\begin{definition} Let $\Delta \subseteq \Sigma$ and $I\subseteq R$.
\begin{itemize}
\item[(i)] Define $R_{\Delta}$ to be the subset of $R$ whose $r$-th component is
 $$(R_{\Delta})_{r}:=\begin{cases}
\{(i_{r},j_{r})\}, & \text{ if }\ \ \ \ \ \  r\in \Delta\\
\emptyset, & \text{ if }\ \ \ \ \ \  r\notin \Delta.\end{cases}$$
We also write
$$ P_{\Delta}(x):=\prod_{r\in \Delta}(x_{k_{r},i_{r}}-x_{k_{r},j_{r}}).$$ 
\item[(ii)] Define $\Sigma_{I}:=\{r\in\Sigma\; | \; I_{r}\neq \emptyset\}\subseteq \Sigma$. 
\end{itemize}
\end{definition}
\noindent
From the above definition we  easily obtain that  $\Sigma_{R}=\Sigma$ and $R_{\Sigma}=R$. Also, for any $I,J\subseteq R$ and $\Delta_{1},\Delta_{2}\subseteq\Sigma$ we have $\Sigma_{I}\cup \Sigma_{J}=\Sigma_{I\cup J}$ and $R_{\Delta_{1}}\cup R_{\Delta_{2}}=R_{\Delta_{1}\cup \Delta_{2}}$.\\

 Denote by ${\mathcal H_{ij}^{k}}\subseteq T_n (\mathbb C)$ the hyperplane $x_{k,i} - x_{kj} = 0$. We also set $\mathcal{H}=\bigcap_{r\in\Sigma}\mathcal{H}_{i_{r}j_{r}}^{k_{r}}$ and $\overline{\mathcal{H}}=\bigcap_{(k,i,j)\neq (k_{r},i_{r},j_{r})}(\mathcal{H}_{ij}^{k})^{c}$, where $A^c$ stands for the complement of $A$ in $T_n (\mathbb C)$ . In other words, 
${\mathcal H}\subseteq T_n (\mathbb C)$  is the intersection of the critical hyperplanes $x_{k_{r},i_{r}} - x_{k_{r}j_{r}} = 0$, $r\in\Sigma$, while $\overline{\mathcal H}$ consists of all $x$ in $T_n (\mathbb C) $ such that $x_{ki} \neq x_{kj}$ for all triples $(k,i,j)$ except for $(k,i,j) = (k_{r},i_{r},j_{r})$, $r\in\Sigma $. 

Denote by ${\mathcal F}$ the space of rational functions in $x_{k\ell}$, $1\leq \ell \leq k \leq n$, with poles only on the union of the hyperplanes $\mathcal{H}_{i_{r}j_{r}}^{k_{r}}$, $r\in\Sigma$. Let $\overline{{\mathcal F}}$ be  the subspace of ${\mathcal F}$ consisting of all those functions that are smooth on $\overline{\mathcal H}$. Finally, we will say that $f\in\mathcal{F}$ is a {\it smooth function} if $f\in\overline{\mathcal{F}}$.

Recall that  $v$ is a fixed element in  ${\mathcal H} \cap \overline{\mathcal{H}}$. 
In order to introduce the operator $\mathcal{D}^{v}_{I}$  on $\overline{\mathcal{F}} \otimes {\mathcal V}_{\rm gen}$, 
we first define the operators $\mathcal{D}_{I}$ and $\mathcal{D}^{v}_{I}$ on $\overline{\mathcal{F}}$.

\begin{definition} \label{def-d-i}
For any subset $I=(I_{1},\cdots, I_{t})$ of $R$  and for any $z\in  T_{n-1}(\mathbb{Z})$ we define differential operators $\mathcal{D}_{I} : \overline{\mathcal{F}} \to \overline{\mathcal{F}}$, $\mathcal{D}_{I}^{v} : \overline{\mathcal{F}} \to \mathbb C$ as follows. For a smooth function $f$ (i.e. $f \in \overline{\mathcal{F}}$),
$$\mathcal{D}_{I} (f)= \mathcal{D}_{I_{t}}(\cdots\mathcal{D}_{I_{2}}(\mathcal{D}_{I_{1}}(f))\cdots), \ \ \  \mathcal{D}_{I}^{v}(f)= \mathcal{D}_{I} (f) (v),$$
where 
$$\mathcal{D}_{I_{r}}(g)=\begin{cases}
g, & \text{ if } \ \ \ \ \ \ I_{r}=\emptyset\\
\frac{1}{2}\left(\frac{\partial g}{\partial x_{k_{r}i_{r}}}-\frac{\partial g}{\partial x_{k_{r}j_{r}}}\right), & \text{ if } \ \ \ \ \ \ I_{r}\neq\emptyset.\\
\end{cases}$$ In particular, for any  smooth functions $f,g$ we have:
$$\mathcal{D}_{I}^{v}(fg):=\sum\limits_{J\subseteq I}\mathcal{D}^{v}_{I\setminus J}(f)\mathcal{D}^{v}_{J}(g).$$ 
\end{definition}

\begin{definition} 
 We define the operators $\mathcal{D}_{I}^{v} : \overline{\mathcal{F}} \otimes {\mathcal V}_{\rm gen} \to \tilde{V}(T(v))$ as the linear maps for which
$$
\mathcal{D}_{I}^{v}(fT(x+z)):=\sum\limits_{J\subseteq I}\left(\mathcal{D}^{v}_{I\setminus J}(f)\mathcal{D}_{J}T(v+z)\right),$$  for $x\in \mathcal{S}^{0}$.
In particular, $\mathcal{D}_{I}^{v} (T(x+z)) = \mathcal{D}_{I} (T(v+z))$.
We set $\mbox{ev}(v) = \mathcal{D}_{\emptyset}^{v}$ and call it the \emph{evaluation map} on $\overline{\mathcal{F}} \otimes {\mathcal V}_{\rm gen}$.

\end{definition}

\begin{remark}
Note that $2^{|\Sigma_{I}|}\mathcal{D}_{I}=P_{\Sigma_{I}}(\partial_{x_{n1}},\ldots,\partial_{x_{nn}},\ldots,\partial_{x_{21}},\partial_{x_{22}},\partial_{x_{11}})$ as operators on $\overline{{\mathcal F}}$.
\end{remark}

\begin{definition} Given $x\in\mathcal{S}^{0}$, $v\in T_n(\mathbb C)$ and $\Delta\subsetneq \Sigma$, we define an element $v_{\Delta}\in T_{n}(\mathbb{C})$  whose $(ij)$-th component is
$$
(v_{\Delta})_{ij}=
\begin{cases}
x_{ij}, &  \text{ if }\ \ \ \ \ \  (i,j)\notin\{(k_{r},i_{r}),(k_{r},j_{r}) \; | \; r \in \Delta \}\\
v_{ij}, & \text{ if }\ \ \ \ \ \  (i,j)\in\{(k_{r},i_{r}),(k_{r},j_{r})\; | \; r  \in \Delta \}
\end{cases}
$$
Also, set $v_{\Sigma}:=v$. Note that $v_{\emptyset}=x$  and that $v_{\Delta}$ is  $|\Delta|$-singular of index $2$ if $\Delta\neq \emptyset$.
\end{definition}

\begin{remark}\label{evaluation step by step}
By condition $(iii)$ in Definition \ref{Def of 1^t singular}, we have 
$$
\mathcal{D}_{I}^{v}(f)=ev(v)\mathcal{D}_{I_{t}}(\cdots\mathcal{D}_{I_{2}}(\mathcal{D}_{I_{1}}(f))\cdots)
=\mathcal{D}^{v_{\Sigma_{t}}}_{I_{\sigma(t)}}\left(\mathcal{D}^{v_{\Sigma_{t-1}}}_{I_{\sigma(t-1)}}\left(\cdots\left(\mathcal{D}^{v_{\Sigma_{1}}}_{I_{\sigma(1)}}(f)\right)\cdots\right)\right)
$$
where $\sigma$ is any permutation in $S_{t}$ and $\Sigma_{i}=\{\sigma(1),\ldots,\sigma(i)\}$. In the above identity, $\mathcal{D}^{v_{\Sigma_{1}}}_{I_{\sigma(1)}}(f)$ is treated as a function in $2t-2$ variables, $\mathcal{D}^{v_{\Sigma_{2}}}_{I_{\sigma(2)}}\mathcal{D}^{v_{\Sigma_{1}}}_{I_{\sigma(1)}}(f)$ is treated as a function in $2t-4$ variables, and so forth.
\end{remark}

The following lemma list some useful properties of the operators $\mathcal{D}_{I}$ that will later be formulated and proved for the corresponding derivative tableaux.

\begin{lemma}\label{derivatives acting on products}
For any smooth function $f$, any $I\subseteq R$ and any $\Delta\subseteq \Sigma$ we have:

$$\mathcal{D}^{v}_{I}\left(P_{\Delta}(x)f\right)=
\begin{cases}
\mathcal{D}^{v}_{I\setminus R_{\Delta}}(f), & \text{ if } R_{\Delta}\subseteq I\\
0, & \text{ if } R_{\Delta}\nsubseteq I.
\end{cases}$$
In particular, $\mathcal{D}^{v}_{R_{\Delta}}\left(P_{\Delta}(x)f\right)=f(v).$
\end{lemma}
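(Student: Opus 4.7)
The plan is to apply the Leibniz-type identity for $\mathcal{D}^v_I$ recorded at the end of Definition \ref{def-d-i}, taking the second factor to be $P_\Delta(x) = \prod_{s \in \Delta} L_s$ with $L_s := x_{k_s i_s} - x_{k_s j_s}$. The structural observation that I would isolate first is that condition (iii) in Definition \ref{Def of 1^t singular} guarantees that the pairs of variables $\{x_{k_r i_r}, x_{k_r j_r}\}$ are pairwise disjoint as $r$ ranges over $\Sigma$ (within a row this is precisely condition (iii); across rows it is automatic). Writing $D_r := \tfrac{1}{2}(\partial_{x_{k_r i_r}} - \partial_{x_{k_r j_r}})$, this disjointness implies that the $D_r$ commute and that $D_r(L_s) = \delta_{rs}$.

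With this in hand, the Leibniz identity gives
$$\mathcal{D}^v_I(P_\Delta f) \;=\; \sum_{J \subseteq I} \mathcal{D}^v_{I \setminus J}(f)\, \mathcal{D}^v_J(P_\Delta),$$
so the task reduces to computing $\mathcal{D}^v_J(P_\Delta)$. Since $\mathcal{D}_J$ is the product $\prod_{r \in \Sigma_J} D_r$ of commuting derivations acting on disjoint variables, the multivariable product rule combined with $D_r(L_s) = \delta_{rs}$ yields $\mathcal{D}_J(P_\Delta) = \prod_{s \in \Delta \setminus \Sigma_J} L_s$ when $\Sigma_J \subseteq \Delta$ and $\mathcal{D}_J(P_\Delta) = 0$ otherwise. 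Evaluating at $v$ and using $v \in \mathcal{H}$, hence $L_s(v) = 0$ for every $s \in \Sigma$, the remaining product vanishes at $v$ unless $\Delta \setminus \Sigma_J = \emptyset$. Combined with $\Sigma_J \subseteq \Delta$, this forces $\Sigma_J = \Delta$, equivalently $J = R_\Delta$.

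Since the unique surviving $J$ must additionally satisfy $J \subseteq I$, two cases arise. If $R_\Delta \not\subseteq I$ (i.e.\ $\Delta \not\subseteq \Sigma_I$) no term survives and $\mathcal{D}^v_I(P_\Delta f) = 0$. If $R_\Delta \subseteq I$, the single surviving term has $\mathcal{D}^v_{R_\Delta}(P_\Delta) = 1$ (the empty product), giving $\mathcal{D}^v_I(P_\Delta f) = \mathcal{D}^v_{I \setminus R_\Delta}(f)$ as claimed. The ``in particular'' statement is the case $I = R_\Delta$, where $I \setminus R_\Delta$ has all empty components, so $\mathcal{D}^v_{I \setminus R_\Delta}(f) = \mathrm{ev}(v)(f) = f(v)$. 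I do not anticipate a genuine obstacle here: everything is driven by the Leibniz formula plus two elementary bookkeeping facts — disjointness of the differentiating variables from condition (iii), and the vanishing of each $L_s$ at the maximally singular point $v$ — so the only thing requiring care is keeping the $R$-subset / $\Sigma$-subset bijection transparent when translating between $J \subseteq I$ and $\Sigma_J \subseteq \Sigma_I$.
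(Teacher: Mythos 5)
Your proposal is correct and follows essentially the same route as the paper: both apply the Leibniz-type identity $\mathcal{D}^{v}_{I}(P_{\Delta}f)=\sum_{J\subseteq I}\mathcal{D}^{v}_{I\setminus J}(f)\,\mathcal{D}^{v}_{J}(P_{\Delta})$ and reduce everything to the observation that $\mathcal{D}^{v}_{J}(P_{\Delta})$ equals $1$ when $J=R_{\Delta}$ and $0$ otherwise. The only difference is that you spell out the verification of this last fact (disjointness of the variable pairs from condition (iii), $D_r(L_s)=\delta_{rs}$, and vanishing of each factor at $v$), which the paper states without proof.
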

\begin{proof} 
By the definition of $\mathcal{D}^{v}_{I}$ we have $\mathcal{D}^{v}_{I}\left(P_{\Delta}(x)f\right)=\sum\limits_{J\subseteq I}\mathcal{D}^{v}_{I\setminus J}(f)\mathcal{D}^{v}_{J}(P_{\Delta}(x))$. Therefore,
$$\mathcal{D}^{v}_{J}(P_{\Delta}(x))=
\begin{cases}
1, & \text{ if }\ \  R_{\Delta}=J\\
0, & \text{ if }\ \  R_{\Delta}\neq J.
\end{cases}
$$
\end{proof}

\subsection{Identities for divided differences of rational functions}\label{subsection: Identities divided differences}
We fix $z\in T_{n-1}(\mathbb{Z})$ and consider $x\in\mathcal{S}^{0}$ as a variable. For any $\Delta\subset \Sigma$ and any rational function $f\in\mathbb{C}(x_{ij}\ |\ ;1\leq j\leq i\leq n)$, by $f(x)^{\tau_{\Delta}}$ we denote the corresponding $\tau_{\Delta}$-twisted function, i.e. $f(x)^{\tau_{\Delta}}=f(\tau_{\Delta}(x))$. In this section we deal extensively with functions of $x+z$, so, for convenience we set $y=x+z$.
By default, for any rational function $f$, $f(y)$ will stand for the function $f(x+z)$. In particular, we have $f(y)^{\tau_{\Delta}}=f(\tau_{\Delta}(x)+z)$.
\begin{definition} Let $f$ be a rational function and $\Delta\subseteq\Sigma$.
We define \emph{the $\Delta$-divided difference of $f$ at $x$}, $f(x)_{\Delta}$, as follows 
$$f(x)_{\Delta}:=\frac{1}{P_{\Delta}(x)}\sum\limits_{\bar{\Delta}\subseteq\Delta}(-1)^{|\bar{\Delta}|}f(x)^{\tau_{\bar{\Delta}}}.$$
 In particular, we write:
$$f(y)_{\Delta}=\sum\limits_{\bar{\Delta}\subseteq\Delta}\frac{(-1)^{|\bar{\Delta}|}f(y)^{\tau_{\bar{\Delta}}}}{P_{\Delta}(x)}.$$
\end{definition}

\begin{remark} We often consider divided differences of products $f(x)g(y)$ of functions of $x$ and $y=x+z$. In such a case, one should keep in mind that:
\begin{align*} (f(x)g(y))_{\Delta}&=\sum\limits_{\bar{\Delta}\subseteq\Delta}\frac{(-1)^{|\bar{\Delta}|}(f(x)g(y))^{\tau_{\bar{\Delta}}}}{P_{\Delta}(x)}\\
&=\sum\limits_{\bar{\Delta}\subseteq\Delta}\frac{(-1)^{|\bar{\Delta}|}f(\tau_{\bar{\Delta}}(x))g(\tau_{\bar{\Delta}}(x)+z)}{P_{\Delta}(x)}
\end{align*}
\end{remark}

\begin{lemma}\label{some identities} Let $I$ be any subset of $R$, $f$ be a rational function, and $\Delta\subseteq\Sigma$. 
\begin{itemize}
\item[(i)] If $f$ is smooth and $f(y)=f(y)^{\tau_{r}}$ for some $r\in\Sigma_{I}$, then $\mathcal{D}^{v}_{I}(f(y))=0$.
\item[(ii)] For any $s\in\Delta$ we have  $f(y)_{\Delta}=(f(y)_{\Delta\setminus\{s\}})_{\{s\}}$. In particular, $f(y)_{\Delta}$ is $\tau_{s}$-invariant for any $s\in\Delta$, i.e. $(f(y)_{\Delta})^{\tau_{s}}=f(y)_{\Delta}$.
\item[(iii)] If $f(y)$ is smooth then $f(y)_{\Delta}$ is smooth and $$\mathcal{D}^{v}_{I}(f(y)_{\Delta})=\begin{cases}
2^{|\Delta|}\mathcal{D}^{v}_{I\cup R_{\Delta}}(f(y)),& \text{ if } \Delta\subseteq\Sigma\setminus\Sigma_{I}\\
0,& \text{ if } \Delta\nsubseteq\Sigma\setminus\Sigma_{I}
\end{cases}.$$ 
\end{itemize}
\end{lemma}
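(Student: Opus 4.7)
The plan is to prove the three parts in order, with (i) and (ii) furnishing the two main technical ingredients needed for (iii).

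For part (i), I will exploit the fact that condition (iii) of Definition \ref{Def of 1^t singular} forces the variable pairs $\{x_{k_ri_r},x_{k_rj_r}\}$ appearing in the distinct operators $\mathcal D_{I_r}$ to be disjoint. Consequently each $\mathcal D_{I_{r'}}$ with $r'\ne r$ commutes with $\tau_r$, while $\mathcal D_{I_r}$ satisfies $(\mathcal D_{I_r}g)^{\tau_r}=-\mathcal D_{I_r}(g^{\tau_r})$. If $r\in\Sigma_I$ and $f(y)^{\tau_r}=f(y)$, then $\mathcal D_{I_r}(f(y))$ is $\tau_r$-antisymmetric, and after applying the remaining $\mathcal D_{I_{r'}}$ (which preserve $\tau_r$-antisymmetry) the result is still $\tau_r$-antisymmetric. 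Any smooth $\tau_r$-antisymmetric function vanishes on $\mathcal H^{k_r}_{i_ri_j}$; since $v$ lies on this critical hyperplane, $\mathcal D^v_I(f(y))=0$.

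For part (ii), the identity is a direct algebraic manipulation. Because $\tau_s$ affects only $x_{k_si_s}$ and $x_{k_sj_s}$, while condition (iii) of Definition~\ref{Def of 1^t singular} guarantees that these variables do not appear in $P_{\Delta\setminus\{s\}}(x)$, the factor $P_{\Delta\setminus\{s\}}(x)$ is $\tau_s$-invariant and can be pulled out of the $\{s\}$-divided difference. Pairing each $\bar\Delta\subseteq\Delta\setminus\{s\}$ with $\bar\Delta\cup\{s\}$ collapses the two sums into the single sum over $\bar\Delta'\subseteq\Delta$, yielding $f(y)_\Delta$. The $\tau_s$-invariance of $f(y)_\Delta=g_{\{s\}}$ is then immediate: numerator and denominator of $(g-g^{\tau_s})/(x_{k_si_s}-x_{k_sj_s})$ are both $\tau_s$-antisymmetric.

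For part (iii), I first establish smoothness of $f(y)_\Delta$ by induction on $|\Delta|$ using (ii): the inductive step writes $f(y)_\Delta=g_{\{s\}}$ with $g$ smooth by hypothesis, so $g-g^{\tau_s}$ is smooth and vanishes on $\mathcal H^{k_s}_{i_sj_s}\cap\overline{\mathcal H}$, making $g_{\{s\}}$ smooth on $\overline{\mathcal H}$. The evaluation formula then splits into two cases. If $\Delta\not\subseteq\Sigma\setminus\Sigma_I$, pick $s\in\Delta\cap\Sigma_I$; by (ii), $f(y)_\Delta$ is $\tau_s$-invariant, so (i) yields $\mathcal D^v_I(f(y)_\Delta)=0$. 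If $\Delta\subseteq\Sigma\setminus\Sigma_I$, I induct on $|\Delta|$: the base case is trivial, and for the step, pick $s\in\Delta$, write $f(y)_\Delta=g_{\{s\}}$ with $g=f(y)_{\Delta\setminus\{s\}}$, and prove the key single-step identity
\[
\mathcal D^v_I(g_{\{s\}}) \;=\; 2\,\mathcal D^v_{I\cup\{s\}}(g).
\]
Since every $\mathcal D_{I_r}$ with $r\ne s$ commutes with $\tau_s$ and with division by $x_{k_si_s}-x_{k_sj_s}$, setting $h=\mathcal D_{I_t}\cdots\mathcal D_{I_1}(g)$ gives $\mathcal D_{I_t}\cdots\mathcal D_{I_1}(g_{\{s\}})=h_{\{s\}}$. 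A Taylor expansion of $h$ in the variable $u=x_{k_si_s}-x_{k_sj_s}$ around $u=0$ (valid at $v$ because $v_{k_si_s}=v_{k_sj_s}$) identifies $h_{\{s\}}|_v$ with $2\partial_u h|_v=2\mathcal D_{\{(i_s,j_s)\}}(h)|_v$. Applying the inductive hypothesis to $I'=I\cup\{s\}$ and $\Delta'=\Delta\setminus\{s\}$ (one checks $\Delta'\subseteq\Sigma\setminus\Sigma_{I'}$) yields $\mathcal D^v_{I\cup\{s\}}(g)=2^{|\Delta|-1}\mathcal D^v_{I\cup R_\Delta}(f(y))$, and multiplying by $2$ completes the induction.

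The principal obstacle will be the careful justification of the single-step identity, which requires simultaneously using the disjointness of variables across distinct operators $\mathcal D_{I_r}$ (condition (iii) of Definition~\ref{Def of 1^t singular}), the smooth extendability of divided differences across critical hyperplanes, and the specific vanishing $v_{k_si_s}=v_{k_sj_s}$. Once this lemma is in place, it propagates smoothly through the rest of the computations.
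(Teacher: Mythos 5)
Your proof is correct and follows essentially the same route as the paper's: part (ii) is the same telescoping identity, and in parts (i) and (iii) you reduce to a single singular pair exactly as the paper does, using the disjointness of the variable pairs from Definition \ref{Def of 1^t singular}(iii) and a step-by-step (inductive) evaluation, with the case $\Delta\nsubseteq\Sigma\setminus\Sigma_{I}$ handled identically via (ii) and (i). The only difference is that you re-derive the single-pair ingredients directly (the vanishing of a $\tau_{r}$-antisymmetric smooth function on the critical hyperplane, and the factor-of-$2$ identity via a Taylor expansion at $v$), whereas the paper simply cites them as Lemma A.1 of \cite{FGR2}.
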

\begin{proof} For all three parts we use crucially that the lemma holds in the case $t=1$ (see Lemma A.1 in \cite{FGR2}).
\begin{itemize}
\item[(i)] Let  $r\in\Sigma_{I}$ be such that $f(y)=f(y)^{\tau_{r}}$. Then for  any permutation $\sigma$ in $S_{t}$ such that $\sigma(1)=r$,
$$\mathcal{D}^{v}_{I}(f(y))= \mathcal{D}^{v_{\Sigma_{t}}}_{I_{\sigma(t)}}\left(\mathcal{D}^{v_{\Sigma_{t-1}}}_{I_{\sigma(t-1)}}\left(\cdots\left(\mathcal{D}^{v_{\Sigma_{1}}}_{I_{\sigma(1)}}(f(y))\right)\cdots\right)\right)
=0,$$
where we used that $\mathcal{D}^{v_{\Sigma_{1}}}_{I_{\sigma(1)}}(f(y))=0$ by Lemma A.1(i) in \cite{FGR2}.
\item[(ii)] This part follows by a straightforward verification. Namely, one  checks that: $$f(y)_{\Delta}=\frac{f(y)_{\Delta\setminus\{s\}}-(f(y)_{\Delta\setminus\{s\}})^{\tau_{s}}}{P_{\{s\}}(x)}.$$ 
\item[(iii)] Let first  $\Delta\subseteq \Sigma\setminus\Sigma_{I}$  and let $s\in\Delta$. In particular $I_{s}=\emptyset$, and then using part (ii) and Lemma A.1(ii) in \cite{FGR2}, we have 
$$\mathcal{D}^{v_{\{s\}}}_{I_{s}}(f(y)_{\Delta})=\mathcal{D}^{v_{\{s\}}}_{I_{s}}((f(y)_{\Delta\setminus\{s\}})_{\{s\}})=2\mathcal{D}^{v_{\{s\}}}_{R_{\{s\}}}(f(y)_{\Delta\setminus\{s\}})$$
Now, for any permutation $\sigma$ in $S_{t}$, we obtain:
\begin{align*}
\mathcal{D}^{v}_{I}(f(y)_{\Delta})=& \mathcal{D}^{v_{\Sigma_{t}}}_{I_{\sigma(t)}}\left(\mathcal{D}^{v_{\Sigma_{t-1}}}_{I_{\sigma(t-1)}}\left(\cdots\left(\mathcal{D}^{v_{\Sigma_{1}}}_{I_{\sigma(1)}}(f(y)_{\Delta})\right)\cdots\right)\right)\\
=& 2^{|\Delta|}\mathcal{D}^{v}_{I\cup R_{\Delta}}(f(y)),
\end{align*}
where if $\Sigma_{i}=\{\sigma(1),\ldots,\sigma(i)\}\subseteq \Sigma$.

Finally, if $\Delta\nsubseteq\Sigma\setminus\Sigma_{I}$, then taking $r\in\Delta\cap\Sigma_{I}$, by party (ii), $f(y)_{\Delta}$ is $\tau_{r}$-invariant and then  part (i) implies $\mathcal{D}^{v}_{I}(f(y)_{\Delta})=0$.
\end{itemize}
\end{proof}

\begin{lemma}\label{previous of lemma 8.2 generalized} 
Let $\Delta_{0},\Delta\subseteq\Sigma$ be fixed and $f_m(y), g_m(y)$, $m=1, \ldots, s$, be rational functions such that for any $\bar{\Delta}\subsetneq\Delta$ and $r\in\Delta\setminus\bar{\Delta}$ we have $\sum_{m=1}^s f_m(y) (g_m(y)_{\bar{\Delta}})^{\tau_{r}}=0$. Then
$$P_{\Delta_{0}}(x)\sum_{m=1}^{s}f_{m}(y)g_{m}(y)=P_{\Delta_{0}\setminus\Delta}(x)\sum_{m=1}^{s} f_{m}(y)(P_{\Delta}(x)P_{\Delta\cap\Delta_{0}}(x)g_{m}(y))_{\Delta}$$

\end{lemma}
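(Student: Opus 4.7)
The plan is to reduce the claimed identity to showing that $u_{\bar\Delta}:=\sum_{m}f_m(y)g_m(y)^{\tau_{\bar\Delta}}=0$ for every nonempty $\bar\Delta\subseteq\Delta$, and to derive this vanishing by induction on $|\bar\Delta|$ from the hypothesis. First I would simplify the right-hand side. Since $\bar\Delta\subseteq\Delta$, the permutation $\tau_{\bar\Delta}$ flips the sign of exactly the factors of $P_\Delta(x)$ indexed by $\bar\Delta$ and of $P_{\Delta\cap\Delta_0}(x)$ indexed by $\bar\Delta\cap\Delta_0$, while the disjointness condition (iii) built into $R$ guarantees that no other factor is affected. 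Hence
\begin{align*}
(P_\Delta(x)P_{\Delta\cap\Delta_0}(x)g_m(y))_\Delta
&=\frac{1}{P_\Delta(x)}\sum_{\bar\Delta\subseteq\Delta}(-1)^{|\bar\Delta|}\bigl(P_\Delta P_{\Delta\cap\Delta_0}g_m\bigr)^{\tau_{\bar\Delta}}\\
&=P_{\Delta\cap\Delta_0}(x)\sum_{\bar\Delta\subseteq\Delta}(-1)^{|\bar\Delta\cap\Delta_0|}g_m(y)^{\tau_{\bar\Delta}}.
\end{align*}
Multiplying by $f_m(y)$, summing over $m$, multiplying by $P_{\Delta_0\setminus\Delta}(x)$, and using $P_{\Delta_0\setminus\Delta}(x)P_{\Delta\cap\Delta_0}(x)=P_{\Delta_0}(x)$, the identity to be proved (after cancelling $P_{\Delta_0}(x)$) reduces to
\[\sum_{\emptyset\neq\bar\Delta\subseteq\Delta}(-1)^{|\bar\Delta\cap\Delta_0|}u_{\bar\Delta}=0.\]

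Next I would unpack the hypothesis. For $\bar\Delta'\subsetneq\Delta$ and $r\in\Delta\setminus\bar\Delta'$, the polynomial $P_{\bar\Delta'}(x)$ is $\tau_r$-invariant (again by the disjointness condition on $R$), so the assumption $\sum_m f_m(y)(g_m(y)_{\bar\Delta'})^{\tau_r}=0$ rewrites as
\[\sum_{\tilde\Delta\subseteq\bar\Delta'}(-1)^{|\tilde\Delta|}u_{\tilde\Delta\cup\{r\}}=0.\]
I would now prove by induction on $k=|\bar\Delta|\geq 1$ the stronger statement that $u_{\bar\Delta}=0$ for every $\emptyset\neq\bar\Delta\subseteq\Delta$, which obviously implies the displayed reduction. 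For $k=1$, take $\bar\Delta'=\emptyset$ and let $r$ be the unique element of $\bar\Delta$; the relation above collapses to $u_{\bar\Delta}=0$. For $k\geq 2$, pick any $r\in\bar\Delta$ and set $\bar\Delta'=\bar\Delta\setminus\{r\}$; then $|\bar\Delta'|=k-1<|\Delta|$, so $\bar\Delta'\subsetneq\Delta$, and in the relation applied to $(\bar\Delta',r)$ the unique term of cardinality $k$ is $u_{\bar\Delta}$ (coming from $\tilde\Delta=\bar\Delta'$, with sign $(-1)^{k-1}$), while every other $\tilde\Delta\cup\{r\}$ is nonempty of cardinality strictly less than $k$ and vanishes by the inductive hypothesis. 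This forces $u_{\bar\Delta}=0$.

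The only delicate point is the sign bookkeeping in the first step: one must carefully track how the $|\bar\Delta|$ pairwise commuting transpositions making up $\tau_{\bar\Delta}$ act on each of the products $P_\Delta(x)$ and $P_{\Delta\cap\Delta_0}(x)$, and the disjointness built into the index set $R$ is precisely what lets these actions decouple into an independent sign per element of $\bar\Delta$. Once that bookkeeping is done, the rest of the argument is a short combinatorial induction.
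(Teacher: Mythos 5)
Your proof is correct, but it takes a genuinely different route from the paper. The paper proves the identity by rewriting it in stages: first an induction over subsets $\Delta_{1}\subseteq\Delta\cap\Delta_{0}$ trading factors of $P_{\Delta_{0}}(x)$ for divided differences with $P_{\Delta_{1}}^{2}(x)$ inserted, then a second induction over $\Delta_{2}\subseteq\Delta\setminus\Delta_{0}$ enlarging the index set of the divided difference, and finally combining the two with $\Delta_{1}=\Delta\cap\Delta_{0}$, $\Delta_{2}=\Delta\setminus\Delta_{0}$ — an argument modeled on the rank-one ($t=1$) identities. You instead expand the right-hand side completely: since each $\tau_{r}$, $r\in\bar\Delta\subseteq\Delta$, flips exactly one factor of $P_{\Delta}(x)$ and (when $r\in\Delta_{0}$) one factor of $P_{\Delta\cap\Delta_{0}}(x)$ — the disjointness condition on the singular pairs being exactly what decouples the signs — you reduce the identity to the vanishing of $u_{\bar\Delta}=\sum_{m}f_{m}(y)g_{m}(y)^{\tau_{\bar\Delta}}$ for every nonempty $\bar\Delta\subseteq\Delta$, and you deduce this from the hypothesis by a clean induction on $|\bar\Delta|$ (the base case from $\bar\Delta'=\emptyset$, the step from $\bar\Delta'=\bar\Delta\setminus\{r\}$, which is indeed a proper subset of $\Delta$). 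Your sign bookkeeping and the rewriting of the hypothesis as $\sum_{\tilde\Delta\subseteq\bar\Delta'}(-1)^{|\tilde\Delta|}u_{\tilde\Delta\cup\{r\}}=0$ both check out, and cancelling the nonzero polynomial $P_{\Delta_{0}}(x)$ is legitimate in the field of rational functions. What your approach buys is brevity and a structural clarification: it shows the hypothesis of this lemma is in fact equivalent to the simpler condition of Lemma \ref{release conditions} (vanishing of all fully twisted sums), i.e. you prove the converse of that lemma along the way; what the paper's staged approach buys is that its intermediate identities (Steps 1 and 2) parallel and reuse the $1$-singular computations of the earlier work, which fits the way the rest of the paper is organized.
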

\begin{proof} We prove the identity in three steps.

\noindent {\it Step 1.} We prove that for any $\Delta_{1}\subseteq \Delta\cap\Delta_{0}$ we have:
\begin{equation}\label{eqn omega}
P_{\Delta_{0}}(x)\sum_{m=1}^{s}f_{m}(y)g_{m}(y)=P_{\Delta_{0}\setminus\Delta_{1}} (x) \sum_{m=1}^{s} f_{m}(y)(P_{\Delta_{1}}^{2}(x)g_{m}(y))_{\Delta_{1}}.
\end{equation}
By taking $\bar{\Delta}=\emptyset$ in our hypothesis we see that for any $r\in\Delta\cap\Delta_{0}$,  $\sum\limits_{m=1}^{s}f_{m}(y)(g_{m}(y))^{\tau_{r}}=0$. Thus:
$$
P_{\Delta_{0}}(x)\sum_{m=1}^{s}f_{m}(y)g_{m}(y)=P_{\Delta_{0}\setminus\{r\}}(x)\sum_{m=1}^{s}f_{m}(y)(P^{2}_{\{r\}}(x)g_{m}(y))_{\{r\}}.
$$
To prove (\ref{eqn omega}),  we apply induction on $|\Delta_1|$.\\
\medskip
{\it Step 2.} We prove that for any $\Delta_{2}\subseteq\Delta\setminus\Delta_{0}$, we have:
\begin{equation}\label{eqn omega2}
\sum_{m=1}^{s} f_{m}(y)(g_{m}(y))_{\Delta\cap\Delta_{0}}=\\\sum_{m=1}^{s} f_{m}(y)(P_{\Delta_{2}}(x)g_{m}(y))_{(\Delta\cap\Delta_{0})\cup\Delta_{2}}.
\end{equation}
By hypothesis, for any $r\in\Delta\setminus\Delta_{0}$, we have 
$\sum_{m=1}^{s} f_{m}(y)(g_{m}(y)_{\Delta\cap\Delta_{0}})^{\tau_{r}}=0$. Thus:
$$
\sum_{m=1}^{s} f_{m}(y)(g_{m}(y))_{\Delta\cap\Delta_{0}}=\sum_{m=1}^{s} f_{m}(y)(P_{\{r\}}(x)g_{m}(y))_{(\Delta\cap\Delta_{0})\cup\{r\}}.$$
To prove (\ref{eqn omega2}) we proceed by induction on $|\Delta _2|$.\\
{\it Step 3.} We apply Steps 2 and 3 for  $\Delta_{1}=\Delta\cap\Delta_{0}$ and $\Delta_{2}=\Delta\setminus\Delta_{0}$.

More precisely, from (\ref{eqn omega}), we have:
\begin{align*}
P_{\Delta_{0}}(x)\sum_{m=1}^{s}f_{m}(y)g_{m}(y)
&=P_{\Delta_{0}\setminus\Delta}(x)\sum_{m=1}^{s} f_{m}(y)(P_{\Delta\cap\Delta_{0}}^{2}(x)g_{m}(y))_{\Delta\cap\Delta_{0}}\\
&=P_{\Delta_{0}\setminus\Delta}(x)P_{\Delta\cap\Delta_{0}}^{2}(x)\sum_{m=1}^{s} f_{m}(y)(g_{m}(y))_{\Delta\cap\Delta_{0}}.
\end{align*}
On the other hand,  (\ref{eqn omega2}) implies:
\begin{align*}
\sum_{m=1}^{s} f_{m}(y)(g_{m}(y))_{\Delta\cap\Delta_{0}}
&=\sum_{m=1}^{s} f_{m}(y)(P_{\Delta\setminus\Delta_{0}}(x)g_{m}(y))_{(\Delta\cap\Delta_{0})\cup(\Delta\setminus\Delta_{0})}\\
&=\sum_{m=1}^{s} f_{m}(y)(P_{\Delta\setminus\Delta_{0}}(x)g_{m}(y))_{\Delta}.
\end{align*}
Therefore, 
\begin{align*}
P_{\Delta\cap\Delta_{0}}^{2}(x)\sum_{m=1}^{s} f_{m}(y)(g_{m}(y))_{\Delta\cap\Delta_{0}}&=\sum_{m=1}^{s} f_{m}(y)(P_{\Delta\cap\Delta_{0}}^{2}(x)P_{\Delta\setminus\Delta_{0}}(x)g_{m}(y))_{\Delta}\\
&=\sum_{m=1}^{s} f_{m}(y)(P_{\Delta}(x)P_{\Delta\cap\Delta_{0}}(x)g_{m}(y))_{\Delta}.
\end{align*}
\end{proof}
The following lemma gives sufficient conditions for the functions $f_{m}(y),g_{m}(y)$ to satisfy the identity $\sum_{m=1}^s f_m (y) (g_{m} (y)_{\bar{\Delta}})^{\tau_{r}}=0$.

\begin{lemma}\label{release conditions}
Let $f_m(y), g_m(y)$, $m=1, \ldots, s$, be any set of rational functions and $\Delta\subseteq\Sigma$. If $\sum_{m=1}^s f_m (y) (g_m(y))^{\tau_{\Delta_{1}}}=0$ for any $\emptyset\neq\Delta_{1}\subseteq\Delta$, then for any $\bar{\Delta}\subsetneq\Delta$ and $r\in\Delta\setminus\bar{\Delta}$ we have $\sum_{m=1}^s f_m(y) (g_m(y)_{\bar{\Delta}})^{\tau_{r}}=0$.
\end{lemma}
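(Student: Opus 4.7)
The plan is to reduce the claim to the hypothesis by unfolding the definition of the divided difference $g_m(y)_{\bar{\Delta}}$, applying $\tau_r$ termwise, and then collecting the resulting sums.

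First, by the definition of the divided difference,
\[
(g_m(y)_{\bar{\Delta}})^{\tau_r} \;=\; \biggl( \frac{1}{P_{\bar{\Delta}}(x)} \sum_{\Delta' \subseteq \bar{\Delta}} (-1)^{|\Delta'|} g_m(y)^{\tau_{\Delta'}} \biggr)^{\!\tau_r}.
\]
The crucial observation is that $P_{\bar{\Delta}}(x)^{\tau_r} = P_{\bar{\Delta}}(x)$. Indeed, since $r \in \Delta \setminus \bar{\Delta}$, each factor $(x_{k_{r'},i_{r'}} - x_{k_{r'},j_{r'}})$ with $r' \in \bar{\Delta}$ satisfies $r' \neq r$; by condition (iii) of Definition \ref{Def of 1^t singular}, either $k_{r'} \neq k_r$ or $\{i_{r'},j_{r'}\} \cap \{i_r,j_r\} = \emptyset$, so $\tau_r$ acts trivially on each such factor. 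Furthermore, because $r \notin \bar{\Delta}$ and the transpositions $\tau_{r'}$ pairwise commute, $\tau_{\Delta'}\tau_r = \tau_{\Delta' \cup \{r\}}$ for every $\Delta' \subseteq \bar{\Delta}$. Hence
\[
(g_m(y)_{\bar{\Delta}})^{\tau_r} = \frac{1}{P_{\bar{\Delta}}(x)} \sum_{\Delta' \subseteq \bar{\Delta}} (-1)^{|\Delta'|} g_m(y)^{\tau_{\Delta' \cup \{r\}}}.
\]

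Next, multiplying by $f_m(y)$, summing over $m$, and interchanging the order of summation gives
\[
\sum_{m=1}^{s} f_m(y) \, (g_m(y)_{\bar{\Delta}})^{\tau_r} \;=\; \frac{1}{P_{\bar{\Delta}}(x)} \sum_{\Delta' \subseteq \bar{\Delta}} (-1)^{|\Delta'|} \Biggl( \sum_{m=1}^{s} f_m(y) \, g_m(y)^{\tau_{\Delta' \cup \{r\}}} \Biggr).
\]
For every $\Delta' \subseteq \bar{\Delta}$, the set $\Delta_1 := \Delta' \cup \{r\}$ is a nonempty subset of $\Delta$ (nonempty because $r \in \Delta_1$, and contained in $\Delta$ because $\bar{\Delta} \subseteq \Delta$ and $r \in \Delta$). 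The hypothesis therefore forces each parenthesized inner sum to vanish, and the whole expression equals zero.

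There is no serious obstacle here; the only subtle point is verifying that $\tau_r$ fixes $P_{\bar{\Delta}}(x)$, which relies precisely on the disjointness condition (iii) in Definition \ref{Def of 1^t singular} that distinguishes singular pairs sitting in the same row. Once that is in place, the conclusion is a direct term-by-term application of the hypothesis.
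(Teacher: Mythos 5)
Your proof is correct and follows the same route as the paper: unfold the definition of $(g_m(y)_{\bar{\Delta}})^{\tau_r}$ so that the numerators become $g_m(y)^{\tau_{\Delta'\cup\{r\}}}$ with $\Delta'\cup\{r\}$ a nonempty subset of $\Delta$, and apply the hypothesis term by term. Your explicit check that $\tau_r$ fixes $P_{\bar{\Delta}}(x)$ (via the disjointness of singular pairs) is a detail the paper leaves implicit, but the argument is the same.
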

\begin{proof}
The statement follows directly from the definition of $(g_{m}(y)_{\bar{\Delta}})^{\tau_{r}}$. In fact, $$(g_{m}(y)_{\bar{\Delta}})^{\tau_{r}}=\sum\limits_{\Delta'\subseteq\bar{\Delta}}\frac{(-1)^{|\Delta'|}g_{m}(y)^{\tau_{\Delta'\cup\{r\}}}}{P_{\bar{\Delta}}(x)}$$ and for any $\Delta'\subsetneq\bar{\Delta}$, the set $\Delta_{1}=\Delta'\cup\{r\}$ is a nonempty subset of $\Delta$. 
\end{proof}

\begin{proposition}\label{Most general version of lemma 8.2}
Let $f_m(y), g_m(y)$, $m=1, \ldots, s$, be rational functions and let $\Delta\subseteq\Sigma$. Assume that $f_m(y)$, $P_{\Delta}(x)g_m(y)$, and $\sum_{m=1}^s f_m(y) g_m(y)$ are smooth functions, and also that for any $\emptyset\neq\bar{\Delta}\subseteq\Delta$ we have $\sum_{m=1}^s f_m(y) g_m(y)^{\tau_{\bar{\Delta}}}=0$. Then the following identity holds:
$$\mathcal{D}^{v}_{I}\left(\sum_{m=1}^{s}f_{m}(y)g_{m}(y)\right)=\sum_{m=1}^{s}\sum_{\substack{J\subseteq I\\J\cap R_{\Delta}=\emptyset}}2^{|\Delta|}\mathcal{D}^{v}_{(I\setminus J)\cup R_{\Delta}}(f_{m}(y))\mathcal{D}^{v}_{J\cup (R_{\Delta}\cap I)}(P_{\Delta}(x)g_{m}(y)).$$
\end{proposition}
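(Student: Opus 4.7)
My plan is to apply Lemma~\ref{previous of lemma 8.2 generalized} with a carefully chosen $\Delta_0$, then differentiate both sides by a suitable operator $\mathcal{D}^v_{?}$, and finally read off the stated formula with the help of Lemmas~\ref{some identities}(iii) and~\ref{derivatives acting on products}.

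First, I would verify that the assumption $\sum_m f_m(y)g_m(y)^{\tau_{\bar\Delta}} = 0$ for all $\emptyset \neq \bar\Delta \subseteq \Delta$ feeds into Lemma~\ref{release conditions}, supplying the hypothesis needed to invoke Lemma~\ref{previous of lemma 8.2 generalized}. I would then apply the latter lemma with the specific choice $\Delta_0 := \Delta \setminus \Sigma_I$. Denoting $A := R_\Delta \cap I$ and $B := R_\Delta \setminus I = R_{\Delta_0}$, this yields
\[
P_B(x)\sum_m f_m(y) g_m(y) \;=\; \sum_m f_m(y)\bigl(P_\Delta(x)P_B(x) g_m(y)\bigr)_\Delta.
\]
The rationale for this choice of $\Delta_0$ is that $B\cap I = \emptyset$: applying $\mathcal{D}^v_{I\cup B}$ to the left-hand side and invoking Lemma~\ref{derivatives acting on products} (with $B\subseteq I\cup B$ and $(I\cup B)\setminus B = I$) collapses it to $\mathcal{D}^v_I\bigl(\sum_m f_m g_m\bigr)$, which is precisely the left-hand side of the proposition.

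For the right-hand side I would apply $\mathcal{D}^v_{I\cup B}$ and expand by the Leibniz rule, obtaining
\[
\sum_m\sum_{J\subseteq I\cup B}\mathcal{D}^v_{(I\cup B)\setminus J}(f_m)\,\mathcal{D}^v_J\bigl((P_\Delta P_B g_m)_\Delta\bigr).
\]
Since $P_\Delta P_B g_m = P_B\cdot h_m$ (with $h_m := P_\Delta g_m$) is smooth, Lemma~\ref{some identities}(iii) forces the inner factor to vanish unless $\Sigma_J\cap\Delta = \emptyset$, which — combined with $J\subseteq I\cup B$ — restricts the sum to $J\subseteq I$ with $J\cap R_\Delta = \emptyset$; for such $J$ it rewrites the inner factor as $2^{|\Delta|}\mathcal{D}^v_{J\cup R_\Delta}(P_B h_m)$. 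A further application of Lemma~\ref{derivatives acting on products} (using $B\subseteq R_\Delta\subseteq J\cup R_\Delta$) strips off the $P_B$ factor and drops $B$ from the index, yielding $2^{|\Delta|}\mathcal{D}^v_{J\cup(R_\Delta\cap I)}(P_\Delta g_m)$. Simultaneously, from $J\cap B = \emptyset$ and $A\subseteq I\setminus J$, one has $(I\cup B)\setminus J = (I\setminus J)\cup R_\Delta$, and reassembling these identifications produces exactly the right-hand side of the proposition.

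The main obstacle I expect is not conceptual but clerical: one must track the three disjoint pieces $A$, $B$, and $I\setminus A$ throughout and verify at each step the inclusions of index sets required by Lemmas~\ref{derivatives acting on products} and~\ref{some identities}(iii). Once the choice $\Delta_0 = \Delta\setminus\Sigma_I$ has been identified, however, the remainder of the argument is a sequence of routine (if somewhat involved) bookkeeping manipulations.
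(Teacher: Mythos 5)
Your proposal is correct and takes essentially the same route as the paper: verify the hypothesis of Lemma~\ref{previous of lemma 8.2 generalized} via Lemma~\ref{release conditions}, apply that lemma, expand by the Leibniz rule, kill and rewrite the $g$-factors with Lemma~\ref{some identities}(iii), and strip the polynomial factors with Lemma~\ref{derivatives acting on products}. The only difference is bookkeeping: the paper chooses $\Delta_0=\Sigma\setminus\Sigma_I$ and applies $\mathcal{D}^{v}_{R}$ to $P_{\Sigma\setminus\Sigma_I}(x)\sum_m f_m g_m$, while you choose $\Delta_0=\Delta\setminus\Sigma_I$ and apply $\mathcal{D}^{v}_{I\cup R_{\Delta\setminus\Sigma_I}}$, which agree after Lemma~\ref{derivatives acting on products}.
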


\begin{proof}
First, note that by Lemma \ref{release conditions}, the hypothesis of Lemma \ref{previous of lemma 8.2 generalized} is satisfied. Furthermore, 
$$
\mathcal{D}^{v}_{I}\left(\sum\limits_{m=1}^{s}f_{m}(y)g_{m}(y)\right) =\mathcal{D}^{v}_{R}\left(P_{\Sigma\setminus\Sigma_{I}}(x)\sum\limits_{m=1}^{s}f_{m}(y)g_{m}(y)\right)$$
$$=\mathcal{D}^{v}_{R}\left(\sum_{m=1}^{s}(P_{(\Sigma\setminus\Sigma_{I})\setminus\Delta}(x) f_{m}(y))(P_{\Delta}(x)P_{\Delta\cap(\Sigma\setminus\Sigma_{I})}(x)g_{m}(y))_{\Delta}\right) $$
$$=\sum_{m=1}^{s}\sum_{J\subseteq R}\mathcal{D}^{v}_{R\setminus J}\left(P_{(\Sigma\setminus\Sigma_{I})\setminus\Delta}(x) f_{m}(y)\right)\mathcal{D}^{v}_{J}((P_{\Delta}(x)P_{\Delta\cap(\Sigma\setminus\Sigma_{I})}(x)g_{m}(y))_{\Delta}) $$
$$=\sum_{m=1}^{s}\sum_{\substack{J\subseteq R\\J\cap R_{\Delta}=\emptyset}}\mathcal{D}^{v}_{R\setminus J}\left(P_{(\Sigma\setminus\Sigma_{I})\setminus\Delta}(x) f_{m}(y)\right)\mathcal{D}^{v}_{J}((P_{\Delta}(x)P_{\Delta\cap(\Sigma\setminus\Sigma_{I})}(x)g_{m}(y))_{\Delta}).$$
The second equality follows from Lemma \ref{previous of lemma 8.2 generalized}(iii), while, the last equality follows from Lemma \ref{some identities}(i) and the fact that $(P_{\Delta}(x)P_{\Delta\cap(\Sigma\setminus\Sigma_{I})}(x)g_{m}(y))_{\Delta}$ is $\tau_{r}$-invariant for any $r\in\Delta$. Also, by Lemma \ref{some identities}(iii) we have
$$\mathcal{D}^{v}_{J}((P_{\Delta}(x)P_{\Delta\cap(\Sigma\setminus\Sigma_{I})}(x)g_{m}(y))_{\Delta})=2^{|\Delta|}\mathcal{D}^{v}_{J\cup R_{\Delta}}(P_{\Delta}(x)P_{\Delta\cap(\Sigma\setminus\Sigma_{I})}(x)g_{m}(y)).$$

Finally, since  $f_{m}(y)$ and $P_{\Delta}(x)g_{m}(y)$ are smooth functions, by Lemma \ref{derivatives acting on products} we have 

$$\mathcal{D}^{v}_{R\setminus J}\left(P_{(\Sigma\setminus\Sigma_{I})\setminus\Delta}(x) f_{m}(y)\right)=\begin{cases}
\mathcal{D}^{v}_{(I\setminus J)\cup R_{\Delta}}( f_{m}(y)),& \text{ if } J\subseteq I\\
0,& \text{ if } J\nsubseteq I
\end{cases}$$ and 
$$\mathcal{D}^{v}_{J\cup R_{\Delta}}((P_{\Delta}(x)P_{\Delta\cap(\Sigma\setminus\Sigma_{I})}(x)g_{m}(y))=\mathcal{D}^{v}_{J\cup (R_{\Delta}\cap I)}(P_{\Delta}(x)g_{m}(y)).$$
\end{proof}

\subsection{Identities for $e_{k\ell}(x)$ and $\varepsilon_{k\ell}$}\label{subsection: Identities for e_rs}
In this section we prove some useful identities for the functions $e_{k\ell}(x)$ and $\varepsilon_{k\ell}$ defined in  Definition \ref{definition of coefficients e_rs}.

Recall the definition of $\Phi_{k\ell} $ in \S \ref{sec-gen-gt}.  For $\min \{ \ell,k\} \leq u \leq \max \{ \ell,k\} -1$ and $1 \leq s \leq k$ we set $
\Phi_{k\ell} (u,s) = \{ \sigma \in \Phi_{k\ell} \; | \; \sigma[u] = (1,s)\}.
$
For the rest of the appendix  we will need  $\Phi_{k\ell} (u,s)$ mostly for $u=k_{r}$, and $s=i_{r}$ or $s =j_{r}$. We set for convenience $\Phi_{k\ell} (i_{r}) = \Phi_{k\ell} (k_{r},i_{r})$ and $\Phi_{k\ell} (j_{r})  = \Phi_{k\ell} (k_{r},j_{r})$.

\begin{definition}
For each $r\in\Sigma$ and $\sigma\in\Phi_{k\ell}$ we define:
$$\tau_{r}^\star(\sigma):=
\begin{cases}
\sigma, & \text{ if } \ \ \ \sigma\notin\Phi_{k\ell} (i_{r})  \cup \Phi_{k\ell}(j_{r});\\
\tau_{r}\sigma\tau_{r}=\sigma\tau_{r}\sigma, & \text{ if } \ \ \ \sigma\in\Phi_{k\ell} (i_{r})  \cup \Phi_{k\ell}(j_{r})\text{ and } 1\notin\{i_{r},j_{r}\};\\
\tau_{r}\sigma=\sigma\tau_{r}, & \text { if } \ \ \ \sigma\in\Phi_{k\ell} (i_{r})  \cup \Phi_{k\ell}(j_{r}) \text{ and } 1\in\{i_{r},j_{r}\}.
\end{cases}$$
 For any subset $\Delta = \{r_{1},\ldots,r_{|\Delta|}\}$ of $\Sigma$,  by $\tau^{\star}_{\Delta}$ we denote the operator on $\Phi_{k\ell}$ defined by $\tau^{\star}_{\Delta}(\sigma)=\tau_{r_{1}}^\star(\cdots(\tau^{\star}_{r_{|\Delta|}}(\sigma)))$.
\end{definition}

\begin{remark} 
One easily shows that $\tau_{r}^\star(\sigma)\in \Phi_{k\ell}$, and that if $\sigma\in\Phi_{k\ell} (i_{r})  \cup \Phi_{k\ell}(j_{r})$, then $\tau_{r}^\star(\sigma)\in\Phi_{k\ell} (i_{r})  \cup \Phi_{k\ell}(j_{r})$ and $\tau_{r}^\star(\sigma) \neq \sigma$. Also, note that $\tau^{\star}_{\Delta}$ is an operator on $\tilde{S}_{n-1}\times\cdots\times\tilde{S}_{1}$ that acts as identity on $\tilde{S}_{i}$ if $i\notin\{k_{r}\; | \; r\in\Delta\}$ and that interchanges the transpositions $(1,i_{r})$ and $(1,j_{r})$ of $\tilde{S}_{k_{r}}$, for all $r\in\Delta$.
\end{remark}

\begin{lemma}\label{properties of e_rs and epsilon_rs with star product}
Let $w'\in T_{n}(\mathbb{C})$, $\sigma\in\tilde{S}_{n-1}\times\cdots\times\tilde{S}_{1}$, and $\Delta\subseteq\Sigma$. Then the following identities hold.
\begin{itemize}
\item[(i)]
$e_{k\ell}(\tau^{\star}_{\Delta}(\sigma) (w'))=e_{k\ell}(\sigma\tau_{\Delta}(w'))$.
\item[(ii)] $\tau^{\star}_{\Delta}(\sigma)(\varepsilon_{k\ell})=\tau_{\Delta}\sigma(\varepsilon_{k\ell}).$
\end{itemize}
\end{lemma}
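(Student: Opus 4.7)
The plan is to proceed by induction on $|\Delta|$, reducing everything to the base case $|\Delta|=1$. For the induction step, condition (iii) of Definition \ref{Def of 1^t singular} forces $\{i_r,j_r\}\cap\{i_{r'},j_{r'}\}=\emptyset$ whenever $k_r=k_{r'}$ and $r\neq r'$, from which it follows directly that the $\tau_r$'s ($r\in\Sigma$) mutually commute in $G$ and that the $\tau^\star_r$'s mutually commute as operators on $\tilde S_{n-1}\times\cdots\times\tilde S_1$. Hence $\tau_\Delta=\tau_r\tau_{\Delta\setminus\{r\}}$ and $\tau^\star_\Delta=\tau^\star_r\circ\tau^\star_{\Delta\setminus\{r\}}$ for any $r\in\Delta$. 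Applying the base case to $\tau^\star_{\Delta\setminus\{r\}}(\sigma)$ in place of $\sigma$, and then the inductive hypothesis for $\Delta\setminus\{r\}$ with $\tau_r(w')$ in place of $w'$, delivers both identities.

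For the base case, set $\Delta=\{r\}$ and split according to the three branches in the definition of $\tau^\star_r(\sigma)$: Case A where $\sigma\notin\Phi_{k\ell}(i_r)\cup\Phi_{k\ell}(j_r)$, with $\tau^\star_r(\sigma)=\sigma$; Case B1 where $\sigma\in\Phi_{k\ell}(i_r)\cup\Phi_{k\ell}(j_r)$ and $1\in\{i_r,j_r\}$, with $\tau^\star_r(\sigma)=\tau_r\sigma$; and Case B2 where $\sigma\in\Phi_{k\ell}(i_r)\cup\Phi_{k\ell}(j_r)$ and $1\notin\{i_r,j_r\}$, with $\tau^\star_r(\sigma)=\tau_r\sigma\tau_r$. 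In all three cases, $\tau^\star_r(\sigma)$ and $\sigma\tau_r$ agree outside row $k_r$, so both identities reduce to a single-row comparison on row $k_r$.

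For (i), the structural fact I extract from Definition \ref{definition of coefficients e_rs} is that $e_{k\ell}(w)$ depends on row $m$ of $w$ by singling out $w_{m,1}$ and symmetrizing in $w_{m,2},\ldots,w_{m,m}$ for $\min\{k,\ell\}\leq m\leq \max\{k,\ell\}-1$, fully symmetrically on the extreme row ($m=\max\{k,\ell\}$ if $k<\ell$, $m=\min\{k,\ell\}-1$ if $k>\ell$), and not at all on other rows. Both $\tau^\star_r(\sigma)(w')$ and $\sigma\tau_r(w')$ are obtained from $w'$ by permutations of row $k_r$, so the multisets of their row-$k_r$ entries coincide automatically; a short computation using \eqref{eq-g-action} shows that the entry at position $(k_r,1)$ in each equals $w'_{k_r,\tau_r(\sigma^{-1}[k_r](1))}$ (in Case A because $\sigma^{-1}[k_r](1)\notin\{i_r,j_r\}$ so $\tau_r$ fixes it; in Cases B1 and B2 by the very content of how $\tau^\star_r$ interchanges the markers $i_r,j_r$). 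For (ii), the shape of $\varepsilon_{k\ell}$ (a $1$ at position $(m,1)$ for $\min\{k,\ell\}\leq m\leq \max\{k,\ell\}-1$, zero elsewhere) makes the computation immediate in Case B1, where $\tau^\star_r(\sigma)=\tau_r\sigma$, and essentially trivial in Cases A and B2, since there $1\notin\{i_r,j_r\}$ and so $\tau_r$ merely swaps two row-$k_r$ entries of $\varepsilon_{k\ell}$ that are both zero, yielding $\tau_r(\varepsilon_{k\ell})=\varepsilon_{k\ell}$ and reducing the identity to $\tau^\star_r(\sigma)(\varepsilon_{k\ell})=\sigma(\varepsilon_{k\ell})$. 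The main technical obstacle I anticipate is simply the careful case-bookkeeping forced by the three-branched definition of $\tau^\star_r$, with the unifying insight being that $\tau^\star_r$ is engineered exactly so that $\tau^\star_r(\sigma)$ and $\sigma\tau_r$ deposit the same entry at position $(k_r,1)$ while permuting the remaining positions of row $k_r$ in ways invisible to $e_{k\ell}$ and $\varepsilon_{k\ell}$.
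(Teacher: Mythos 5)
Your overall strategy is exactly the ``straightforward verification'' that the paper declines to spell out: induction on $|\Delta|$ (legitimate, since condition (iii) of Definition \ref{Def of 1^t singular} makes the $\tau_r$ and the $\tau_r^\star$ pairwise commuting), plus a three-branch check at $|\Delta|=1$ using that $e_{k\ell}$ sees a middle row only through its first entry and a symmetric expression in the remaining entries, and the extreme row only symmetrically. Your treatment of part (i) is correct in all three branches; in particular the computation that both $\tau_r^\star(\sigma)(w')$ and $\sigma\tau_r(w')$ carry $w'_{k_r,\tau_r(\sigma^{-1}[k_r](1))}$ at position $(k_r,1)$ and agree off row $k_r$ is the right key point.

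There is, however, a genuine (if small) hole in your Case A of part (ii): Case A does \emph{not} imply $1\notin\{i_r,j_r\}$. If $i_r=1$, then $\Phi_{k\ell}(i_r)$ consists of the $\sigma$ with $\sigma[k_r]=\mathrm{Id}$, so any $\sigma$ with $\sigma[k_r]=(1,p)$, $p\notin\{1,j_r\}$, lies in Case A; for such $r$ the permutation $\tau_r=(1,j_r)$ moves the $\pm 1$ of $\varepsilon_{k\ell}$ from column $1$ to column $j_r$ of row $k_r$, so $\tau_r(\varepsilon_{k\ell})\neq\varepsilon_{k\ell}$ and the reduction you base on that identity collapses in this subcase. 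The statement is still true, and the correct one-line repair is to apply your invariance observation to $\sigma(\varepsilon_{k\ell})$ rather than to $\varepsilon_{k\ell}$: by (\ref{eq-g-action}) the unique possibly nonzero entry of row $k_r$ of $\sigma(\varepsilon_{k\ell})$ sits in column $\sigma[k_r](1)$, and in Case A one always has $\sigma[k_r](1)\notin\{i_r,j_r\}$ (whether or not $i_r=1$), so $\tau_r\sigma(\varepsilon_{k\ell})=\sigma(\varepsilon_{k\ell})=\tau_r^\star(\sigma)(\varepsilon_{k\ell})$ as required. Your Case B1 is immediate and your Case B2 argument is fine as written, since there $1\notin\{i_r,j_r\}$ holds by the definition of that branch. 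With this correction the verification is complete and consistent with the paper.
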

\begin{proof}
The lemma follows by a straightforward verification.
\end{proof}

\begin{lemma}\label{Generalization of lemma 8.7}
Let $I \subseteq R$ and $\Delta \subseteq \Sigma$ be such that $P_{\Delta}(v)e_{k\ell}(\tau_{r}^\star(\sigma)(x+\tau_{r}(z))$ and $P_{\Delta}(x)e_{k\ell}(\sigma(x+z))$ are smooth functions. Then:
$$\mathcal{D}_{I}^{v}\left(P_{\Delta}(v)e_{k\ell}(\tau_{r}^\star(\sigma)(x+\tau_{r}(z)))\right)=
\begin{cases}
\mathcal{D}_{I}^{v}\left(P_{\Delta}(x)e_{k\ell}(\sigma(x+z))\right), & \text{ if }\ \  (R_{\Delta})_{r}=I_{r}\\
-\mathcal{D}_{I}^{v}\left(P_{\Delta}(x)e_{k\ell}(\sigma(x+z))\right), & \text{ if }\ \  (R_{\Delta})_{r}\neq I_{r}
\end{cases}$$
\end{lemma}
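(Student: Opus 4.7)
The plan is to reduce the identity to a $\tau_r$-change-of-variable argument. The first step is to apply Lemma \ref{properties of e_rs and epsilon_rs with star product}(i) with $\Delta=\{r\}$ to rewrite $e_{k\ell}(\tau_r^\star(\sigma)(x+\tau_r(z))) = e_{k\ell}(\sigma\tau_r(x+\tau_r(z))) = e_{k\ell}(\sigma(\tau_r(x)+z))$, using $\tau_r^2=\mathrm{Id}$. After this reduction, the two sides of the claimed identity differ only in that the spatial variable $x$ inside the argument of $e_{k\ell}$ has been replaced by $\tau_r(x)$.

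Next, I would exploit the fact that $v$ is $\tau_r$-invariant (because $v_{k_r,i_r}=v_{k_r,j_r}$), so the evaluation at $x=v$ commutes with the substitution $x\mapsto\tau_r(x)$. The chain rule then produces two sign contributions. First, the factor $\mathcal{D}_{I_r}=\tfrac{1}{2}(\partial_{x_{k_r,i_r}}-\partial_{x_{k_r,j_r}})$ appearing in $\mathcal{D}_I$ is sent to $-\mathcal{D}_{I_r}$ whenever $I_r\neq\emptyset$, since swapping the two coordinates negates the antisymmetric combination of partials; meanwhile all other factors $\mathcal{D}_{I_{r'}}$ are unaffected, thanks to the disjointness of coordinate sets guaranteed by condition (iii) of Definition \ref{Def of 1^t singular} when $k_{r'}=k_r$, and trivially when $k_{r'}\neq k_r$. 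Second, $P_\Delta(\tau_r(x))=(-1)^{[r\in\Delta]}P_\Delta(x)$, since only the $r$-th factor of the product $P_\Delta$ is sensitive to the swap. Multiplying these contributions yields the overall sign $(-1)^{[I_r\neq\emptyset]+[r\in\Delta]}$.

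A direct case check then confirms that the exponent $[I_r\neq\emptyset]+[r\in\Delta]$ is even precisely when $I_r=(R_\Delta)_r$, because the latter equality holds iff either both $I_r\neq\emptyset$ and $r\in\Delta$, or both fail; in the remaining two cases the parities disagree and the sign is $-1$. This reproduces the $+/-$ split of the statement.

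The main subtle point to verify carefully is that the chain-rule computation remains valid at $x=v$: the hypothesis that both products are smooth functions is precisely what guarantees that the partial derivatives appearing in $\mathcal{D}_I^v$ can be evaluated at $v$ despite the critical-hyperplane poles of the individual factors $e_{k\ell}$. Once smoothness is in place, the remainder is essentially bookkeeping, resting on Lemma \ref{properties of e_rs and epsilon_rs with star product}(i) to convert the $\tau_r^\star$-twist on the permutation $\sigma$ into the $\tau_r$-twist on its argument.
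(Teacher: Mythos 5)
Your proof is correct, and it gets to the conclusion by a somewhat different mechanism than the paper. The paper's own proof first shows (via Lemma \ref{properties of e_rs and epsilon_rs with star product}) that $e_{k\ell}(\tau_{r}^\star(\sigma)(x+\tau_{r}(z)))+e_{k\ell}(\sigma(x+z))$ is $\tau_{r}$-invariant, and then runs a four-case analysis on $((R_{\Delta})_{r},I_{r})$: in each case it builds a suitably signed combination (multiplied by $P_{\Delta}(x)$, and by an extra factor $P_{\{r\}}(x)$ when $I_{r}=\emptyset$) that is $\tau_{r}$-invariant, kills it with $\mathcal{D}^{v}_{I}$ or $\mathcal{D}^{v}_{I\cup R_{\{r\}}}$ using Lemma \ref{some identities}, and strips the auxiliary $P_{\{r\}}(x)$ factor as in Lemma \ref{derivatives acting on products}. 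You instead use the same Lemma \ref{properties of e_rs and epsilon_rs with star product}(i) to turn the $\tau_{r}^\star$-twist of $\sigma$ into the substitution $x\mapsto\tau_{r}(x)$, and then prove a single sign-equivariance identity: by the chain rule $\mathcal{D}_{I}(F\circ\tau_{r})=\pm(\mathcal{D}_{I}F)\circ\tau_{r}$ (with the minus sign exactly when $I_{r}\neq\emptyset$, the other factors being untouched by Definition \ref{Def of 1^t singular}(iii)), together with $P_{\Delta}\circ\tau_{r}=\pm P_{\Delta}$ and the fact that $v$ is fixed by $\tau_{r}$; the parity check that the product of these two signs is $+1$ precisely when $(R_{\Delta})_{r}=I_{r}$ is correct. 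This buys you a uniform argument with no case split, at the cost of invoking the change-of-variables property of $\mathcal{D}_{I}$ directly rather than the already-packaged vanishing statement of Lemma \ref{some identities}(i); your appeal to the smoothness hypotheses to justify evaluation at $v$ is exactly the right point to be careful about, and note that (consistently with the paper's own proof and with Corollary \ref{better generalization of 8.7}) you are reading the factor $P_{\Delta}(v)$ in the statement as the intended $P_{\Delta}(x)$, since otherwise the left-hand side would vanish identically for $\Delta\neq\emptyset$.
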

\begin{proof}
Denote for convenience $e_{k\ell}$ by $e$. By Lemma \ref{properties of e_rs and epsilon_rs with star product} we have that $g(x):=e(\tau_{r}^\star(\sigma)(x+\tau_{r}(z)))+e(\sigma(x+z))$ is $\tau_{r}$-invariant.  Indeed,
\begin{align*}
g(\tau_{r}(x))&=e(\tau_{r}^\star(\sigma)(\tau_{r}(x)+\tau_{r}(z)))+e(\sigma(\tau_{r}(x)+z))\\
&= e((\tau_{r}^\star(\sigma))\tau_{r}(x+z))+e(\sigma\tau_{r}(x+\tau_{r}(z)))\\
&=e(\sigma\tau_{r}(\tau_{r}(x+z)))+e(\tau_{r}^\star(\sigma)(x+\tau_{r}(z)))\\
&=e(\sigma(x+z))+e(\tau_{r}^\star(\sigma)(x+\tau_{r}(z)))\\
&=g(x)
\end{align*}

We continue the proof considering four cases.
\begin{itemize}
\item[(i)] $(R_{\Delta})_{r}=I_{r}=\{(i_{r},j_{r})\}$.  In particular, $r\in\Sigma_{I}$ and the function  $g_{1}(x)=P_{\Delta}(x)e(\tau_{r}^\star(\sigma)(x+\tau_{r}(z)))-P_{\Delta}(x)e(\sigma(x+z))$ is $\tau_{r}$-invariant. 
\item[(ii)] $(R_{\Delta})_{r}=\emptyset$ and $I_{r}=\{(i_{r},j_{r})\}$.  In particular,  $r\in\Sigma_{I}$ and the function $g_{2}(x)=P_{\Delta}(x)e(\tau_{r}^\star(\sigma)(x+\tau_{r}(z)))+P_{\Delta}(x)e(\sigma(x+z))$ is $\tau_{r}$-invariant. 
\item[(iii)]  $(R_{\Delta})_{r}=\{(i_{r},j_{r})\}$ and $I_{r}=\emptyset$.  In particular, $r\notin\Sigma_{I}$ and the function $g_{3}(x)=P_{\{r\}}(x)P_{\Delta}(x)e(\tau_{r}^\star(\sigma)(x+\tau_{r}(z)))+P_{\{r\}}(x)P_{\Delta}(x)e(\sigma(x+z))$ is $\tau_{r}$-invariant. 
\item[(iv)] $(R_{\Delta})_{r}=\emptyset$ and $I_{r}=\emptyset$.  In particular, $r\notin\Sigma_{I}$ and  the function $g_{4}(x)=P_{\{r\}}(x)P_{\Delta}(x)e(\tau_{r}^\star(\sigma)(x+\tau_{r}(z)))-P_{\{r\}}(x)P_{\Delta}(x)e(\sigma(x+z))$ is $\tau_{r}$-invariant. 
\end{itemize}
In the cases (i) and (ii) we  apply the operator $\mathcal{D}^{v}_{I}$ to the  $\tau_{r}$-invariant functions $g_{1}$ and $g_{2}$. Then, by Lemma \ref{some identities}(iii), we have $\mathcal{D}^{v}_{I}(g_{1})=0$ and $\mathcal{D}^{v}_{I}(g_{2})=0$. For the cases (iii) and (iv) we  apply the operator $\mathcal{D}^{v}_{I\cup R_{\{r\}}}$ to  $g_{3}$ and $g_{4}$.  Then, again by Lemma \ref{some identities}(iii), we have $\mathcal{D}^{v}_{I\cup R_{\{r\}}}(g_{3})=0$ and $\mathcal{D}^{v}_{I\cup R_{\{r\}}}(g_{4})=0$. Finally, since $\mathcal{D}^{v}_{I\cup R_{\{r\}}}(P_{\{r\}}(x)P_{\Delta}(x)e(\tau_{r}^\star(\sigma)(x+\tau_{r}(z))))=\mathcal{D}^{v}_{I}(P_{\Delta}(x)e(\tau_{r}^\star(\sigma)(x+\tau_{r}(z))))$ and $\mathcal{D}^{v}_{I\cup R_{\{r\}}}(P_{\{r\}}(x)P_{\Delta}(x)e(\sigma(x+z)))=\mathcal{D}^{v}_{I}(P_{\Delta}(x)e(\sigma(x+z)))$, we obtain the desired result.
\end{proof}

\begin{corollary}\label{better generalization of 8.7}
Let $I \subseteq R$ and  $\Delta, \Delta' \subseteq \Sigma$ be such that $P_{\Delta}(x)e_{k\ell}(\tau^{\star}_{\Delta'}(\sigma)(x+\tau_{\Delta'}(z))$ and $P_{\Delta}(x)e_{k\ell}(\sigma(x+z))$ are smooth functions.  Then:
$$\mathcal{D}_{I}^{v}\left(P_{\Delta}(x)e_{k\ell}(\tau^{\star}_{\Delta'}(\sigma)(x+\tau_{\Delta'}(z)))\right)=(-1)^{q}\mathcal{D}_{I}^{v}\left(P_{\Delta}(x)e_{k\ell}(\sigma(x+z))\right),
$$
where $q=|\{r\in\Delta'\; | \;  (R_{\Delta})_{r}\neq I_{r}\}|$.
\end{corollary}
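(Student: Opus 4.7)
The plan is to prove the corollary by induction on $|\Delta'|$, with the previous lemma (the case $|\Delta'|=1$) as the base case. The key observation enabling the induction is that, by condition (iii) of Definition \ref{Def of 1^t singular}, for distinct indices $r,r' \in \Sigma$ the permutations $\tau_r$ and $\tau_{r'}$ act on disjoint sets of entries, and hence the operators $\tau^\star_r$, $\tau^\star_{r'}$, as well as the translations $\tau_r$, $\tau_{r'}$ on $T_{n-1}(\mathbb Z)$, mutually commute. In particular, for any decomposition $\Delta' = \Delta'' \sqcup \{r\}$ we may write
\[
\tau^\star_{\Delta'}(\sigma) = \tau^\star_r(\tau^\star_{\Delta''}(\sigma)), \qquad \tau_{\Delta'}(z) = \tau_r(\tau_{\Delta''}(z)).
\]

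For the inductive step, fix such a decomposition and set $\sigma' := \tau^\star_{\Delta''}(\sigma)$ and $z' := \tau_{\Delta''}(z)$. Then
\[
\tau^\star_{\Delta'}(\sigma)(x+\tau_{\Delta'}(z)) = \tau^\star_r(\sigma')(x+\tau_r(z')),
\]
and Lemma \ref{Generalization of lemma 8.7} (applied with $\sigma'$ and $z'$ in place of $\sigma$ and $z$) yields
\[
\mathcal{D}_I^v\bigl(P_\Delta(x) e_{k\ell}(\tau^\star_r(\sigma')(x+\tau_r(z')))\bigr) = (-1)^{\chi_r} \mathcal{D}_I^v\bigl(P_\Delta(x) e_{k\ell}(\sigma'(x+z'))\bigr),
\]
where $\chi_r = 1$ if $(R_\Delta)_r \neq I_r$ and $\chi_r = 0$ otherwise. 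Applying the inductive hypothesis to the right-hand side (with $\Delta''$ in place of $\Delta'$) produces a sign $(-1)^{q''}$, where $q'' = |\{r \in \Delta'' \mid (R_\Delta)_r \neq I_r\}|$. Since $q = q'' + \chi_r$, combining the two identities gives the desired sign $(-1)^q$ and completes the induction.

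The main subtlety to verify is that the smoothness hypotheses of Lemma \ref{Generalization of lemma 8.7} are met at each intermediate stage, i.e. that $P_\Delta(x) e_{k\ell}(\tau^\star_r(\sigma')(x+\tau_r(z')))$ and $P_\Delta(x) e_{k\ell}(\sigma'(x+z'))$ are smooth. This follows from the disjointness of rows affected by $\tau_r$ for different $r \in \Sigma$: the only hyperplanes on which $P_\Delta(x) e_{k\ell}(\sigma(x+z))$ can have poles are the critical hyperplanes $\mathcal H^{k_r}_{i_r j_r}$ not killed by $P_\Delta$, and these are preserved (as a set) by each $\tau^\star_{r'}$ combined with the shift by $\tau_{r'}$. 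Since the hypothesis of the corollary guarantees smoothness at the two endpoints $\sigma$ and $\tau^\star_{\Delta'}(\sigma)$, the intermediate functions, obtained by partial composition, remain smooth, so the lemma can indeed be applied at each step. This verification is the only non-routine point in the argument.
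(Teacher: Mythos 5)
Your proof is correct and takes essentially the same approach as the paper, whose entire argument is that the identity "follows directly from Lemma \ref{Generalization of lemma 8.7}": iterating that lemma one element of $\Delta'$ at a time, i.e.\ your induction on $|\Delta'|$, is exactly what is intended. Your additional checks — that the $\tau^{\star}_{r}$'s and the swaps $\tau_{r}$ commute by Definition \ref{Def of 1^t singular}(iii), and that the intermediate functions stay smooth because neither the condition $\sigma'[k_u]\in\{(1,i_u),(1,j_u)\}$ nor the equality $z'_{k_u,i_u}=z'_{k_u,j_u}$ is altered by these operations, so each intermediate function has the same pole locus along the critical hyperplanes as $P_{\Delta}(x)e_{k\ell}(\sigma(x+z))$ — are sound and merely make explicit what the paper leaves implicit.
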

\begin{proof}
The identity follows directly from Lemma \ref{Generalization of lemma 8.7}.
\end{proof}

\begin{lemma}\label{relations of the module for any Delta}
Let $I \subseteq R$ and $\Delta \subseteq \Sigma$. Then: 
\begin{equation*}
\mathcal{D}_{I}T(x+\tau_{\Delta}(z))=
(-1)^{p}\mathcal{D}_{I}T(x+z),
\end{equation*}
where $p=|\{r\in\Delta\; | \; I_{r}\neq\emptyset\}|$.
\end{lemma}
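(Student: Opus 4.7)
This lemma is a direct consequence of the defining relation (\ref{relations satisfied by vectors on this universal module}) in $\tilde V(T(v))$, iterated over the elements of $\Delta$. The plan is an induction on $|\Delta|$, after first observing that the individual transpositions $\tau_r$ for $r\in\Delta$ pairwise commute, so the factorization $\tau_{\Delta}=\tau_{r_1}\cdots\tau_{r_m}$ is unambiguous.

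More precisely, I would first reduce the statement to the derivative tableau on the right. Using the identification $\mathcal{D}_{I}T(x+w)=\mathcal{D}_{I}^{v}T(x+w)=\mathcal{D}_{I}T(v+w)$ coming from the evaluation definition of $\mathcal{D}_{I}^{v}$ on $\overline{\mathcal F}\otimes\mathcal V_{\rm gen}$, it suffices to show
\[
\mathcal{D}_{I}T(v+\tau_{\Delta}(z))=(-1)^{p}\mathcal{D}_{I}T(v+z)
\]
in $\tilde V(T(v))$. Next I would verify that for distinct $r,s\in\Sigma$ the permutations $\tau_{r}$ and $\tau_{s}$ commute on $T_{n-1}(\mathbb Z)$: if $k_{r}\ne k_{s}$ they act on different rows, while if $k_{r}=k_{s}$ then condition (iii) of Definition \ref{Def of 1^t singular} gives $\{i_{r},j_{r}\}\cap\{i_{s},j_{s}\}=\emptyset$, so $\tau_{r}$ and $\tau_{s}$ are disjoint transpositions in $S_{k_{r}}$. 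Thus any ordering $\Delta=\{r_{1},\ldots,r_{m}\}$ yields the same element $\tau_{\Delta}=\tau_{r_{1}}\cdots\tau_{r_{m}}$.

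For the induction, the base case $|\Delta|=0$ is trivial. For the inductive step, pick any $r_{m}\in\Delta$, write $\Delta'=\Delta\setminus\{r_{m}\}$ and set $z'=\tau_{\Delta'}(z)$, so that $\tau_{\Delta}(z)=\tau_{r_{m}}(z')$. The defining relation (\ref{relations satisfied by vectors on this universal module}) applied with $r=r_{m}$ gives
\[
\mathcal{D}_{I}T(v+\tau_{r_{m}}(z'))=\epsilon_{r_{m}}\,\mathcal{D}_{I}T(v+z'),
\]
where $\epsilon_{r_{m}}=-1$ if $I_{r_{m}}\neq\emptyset$ and $\epsilon_{r_{m}}=1$ otherwise. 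By the inductive hypothesis applied to $\Delta'$,
\[
\mathcal{D}_{I}T(v+z')=\mathcal{D}_{I}T(v+\tau_{\Delta'}(z))=(-1)^{p'}\mathcal{D}_{I}T(v+z),
\]
with $p'=|\{r\in\Delta'\ |\ I_{r}\neq\emptyset\}|$. Combining the two identities yields the claim with $p=p'+[\epsilon_{r_{m}}=-1]=|\{r\in\Delta\ |\ I_{r}\neq\emptyset\}|$.

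There is no substantial obstacle; the only mild subtlety is the commutativity check for the $\tau_{r}$'s, which relies on the non-overlap condition built into the definition of a $t$-singular vector of index $2$. The argument does not depend on any of the more elaborate machinery (smoothness of functions, Proposition \ref{Most general version of lemma 8.2}, Corollary \ref{better generalization of 8.7}, etc.) developed earlier in the section.
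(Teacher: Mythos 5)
Your proposal is correct and follows exactly the route the paper takes: the paper's proof is the one-line remark that the identity follows from the defining relations (\ref{relations satisfied by vectors on this universal module}), and your argument simply spells this out by induction on $|\Delta|$, with the (correct) observation that condition (iii) of Definition \ref{Def of 1^t singular} makes the $\tau_{r}$'s commute so that $\tau_{\Delta}$ is unambiguous.
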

\begin{proof}
The identity follows from  the relations (\ref{relations satisfied by vectors on this universal module}).
\end{proof}
\begin{definition}\label{convention for e(sigma) etc} 
For convenience we introduce the following notation for any $I\subseteq R$ and $(\sigma_1,\sigma_2) \in \Phi_{k\ell}\times \Phi_{rs}$, where $k\neq \ell$ and  $r\neq s$:
\begin{eqnarray*}
e_{k\ell } (\sigma_1) &=& e_{k\ell} (\sigma_1 (x+z)),\\
e_{k\ell} (\sigma_1, \sigma_2) &=& e_{k\ell} (\sigma_1 (x + z + \sigma_2 (\varepsilon_{rs}))),\\
\mathcal{D}_{I}T(\sigma_1 + \sigma_2) &=& \mathcal{D}_{I}T(x+z+\sigma_1(\varepsilon_{k\ell}) + \sigma_2(\varepsilon_{rs}))\\
\Omega_{0}&=&\{u\in\Sigma\; | \; z \mbox{ is } \tau_{u}\mbox{-invariant}\}\\
\Omega(\sigma_1)&=&\{u\in\Sigma\; | \; \min\{k,\ell\}\leq k_{u}\leq \max\{k,\ell\}-1\mbox{ and } z+\sigma_1(\varepsilon_{k\ell}) \mbox{ is } \tau_{u}\mbox{-invariant}\}\\
\widetilde{\Omega}(\sigma_1)&=&\{u\in\Sigma\; | \; \min\{k,\ell\}\leq k_{u}\leq \max\{k,\ell\}-1\mbox{ and }\sigma_1\in\Phi_{k\ell}(i_{u})\cup\Phi_{k\ell}(j_{u})\}\\
\Omega_{\sigma_{1}}&=&\Omega(\sigma_{1})\cap\widetilde{\Omega}(\sigma_{1})\\
\widehat{\Omega}(\sigma_{1},\sigma_{2})&=&\Omega_{\sigma_{1}}\cap\widetilde{\Omega}(\sigma_{2})\\
\Phi_{(\sigma_1,\sigma_2)}&=& \{ (\sigma_1',\sigma_2') \in \Phi_{k\ell}\times\Phi_{rs}\; | \; \sigma_1'(\varepsilon_{k\ell})+\sigma_2'(\varepsilon_{rs})=\sigma_1(\varepsilon_{k\ell})+\sigma_2(\varepsilon_{rs})\}.
\end{eqnarray*}
\end{definition}
\begin{remark}\label{rem: intersections}
Note that $\Omega_{\sigma_1}\cap\Omega_{\sigma_{2}}\subseteq \widehat{\Omega}(\sigma_{1},\sigma_{2})$ and $\widehat{\Omega}(\sigma_{1},\sigma_{2})\subseteq \Omega_{\sigma_{1}}$. In particular,  if $\widehat{\Omega}(\sigma_{1},\sigma_{2})=\widehat{\Omega}(\sigma_{2},\sigma_{1})=\Delta$, then $\Omega_{\sigma_{1}}\cap\Omega_{\sigma_{2}}=\Delta$. Also, if $\Omega_{\sigma_{1}}=\Omega_{\sigma_{2}}=\Delta'$, then $\widehat{\Omega}(\sigma_{1},\sigma_{2})=\widehat{\Omega}(\sigma_{2},\sigma_{1})=\Delta'$.
\end{remark}
\begin{lemma}\label{behavior of e_rs with action of tau star}
Let  $k\neq \ell$, $r \neq s$, and $(\sigma_1,\sigma_2)\in\Phi_{k\ell}\times\Phi_{rs}$. Then for each $\Delta\subseteq\Sigma$ and $\bar{\Delta}\subseteq\Omega(\sigma_2)$ we have:
\begin{itemize}
\item[(i)] $e_{k\ell} (\tau^{\star}_{\bar{\Delta}}(\sigma_1),\sigma_2)^{\tau_{\bar{\Delta}}}=e_{k\ell} (\sigma_1,\sigma_2)$.
\item[(ii)] 
$
\mathcal{D}_{I}^{v}\left(P_{\Delta}(x)e_{k\ell}(\tau^{\star}_{\bar{\Delta}}(\sigma_1),\sigma_2)\right)  = (-1)^{|\{u\in\bar{\Delta}\; | \;  (R_{\Delta})_{u}\neq I_{u}\}|}\mathcal{D}_{I}^{v}\left(P_{\Delta}(x)e_{k\ell}(\sigma_1,\sigma_2)\right)
$.
\end{itemize}
\end{lemma}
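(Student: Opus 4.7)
The strategy is to reduce both statements to the earlier Lemma \ref{properties of e_rs and epsilon_rs with star product}(i) and Corollary \ref{better generalization of 8.7} respectively, by absorbing the shift $\sigma_2(\varepsilon_{rs})$ into $z$. The only non-routine point is that $z$ itself need not be $\tau_{\bar\Delta}$-invariant, so I will set $\tilde z := z + \sigma_2(\varepsilon_{rs})$ and exploit the hypothesis $\bar\Delta \subseteq \Omega(\sigma_2)$, which is precisely the condition that $\tilde z$ is $\tau_u$-invariant for every $u \in \bar\Delta$, hence $\tau_{\bar\Delta}(\tilde z) = \tilde z$.

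For part (i), I would expand the left-hand side using the definition of the twist,
\[
e_{k\ell}(\tau^{\star}_{\bar\Delta}(\sigma_1),\sigma_2)^{\tau_{\bar\Delta}} = e_{k\ell}\bigl(\tau^{\star}_{\bar\Delta}(\sigma_1)(\tau_{\bar\Delta}(x)+\tilde z)\bigr).
\]
Applying Lemma \ref{properties of e_rs and epsilon_rs with star product}(i) with $\sigma = \sigma_1$, $\Delta = \bar\Delta$, and $w' = \tau_{\bar\Delta}(x)+\tilde z$, this equals $e_{k\ell}(\sigma_1\tau_{\bar\Delta}(\tau_{\bar\Delta}(x)+\tilde z)) = e_{k\ell}(\sigma_1(x+\tau_{\bar\Delta}(\tilde z)))$, since $\tau_{\bar\Delta}$ is an involution. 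The $\tau_{\bar\Delta}$-invariance of $\tilde z$ then collapses this to $e_{k\ell}(\sigma_1(x+\tilde z)) = e_{k\ell}(\sigma_1,\sigma_2)$, completing part (i).

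For part (ii), the same substitution lets me rewrite
\[
e_{k\ell}(\tau^{\star}_{\bar\Delta}(\sigma_1),\sigma_2) = e_{k\ell}\bigl(\tau^{\star}_{\bar\Delta}(\sigma_1)(x+\tilde z)\bigr) = e_{k\ell}\bigl(\tau^{\star}_{\bar\Delta}(\sigma_1)(x+\tau_{\bar\Delta}(\tilde z))\bigr),
\]
where the last equality again uses $\tau_{\bar\Delta}(\tilde z) = \tilde z$. This places the expression in exactly the form to which Corollary \ref{better generalization of 8.7} applies, with $\Delta' = \bar\Delta$, $\sigma = \sigma_1$, and with $z$ replaced by $\tilde z$. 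The corollary then yields
\[
\mathcal{D}_I^v\bigl(P_\Delta(x)\, e_{k\ell}(\tau^{\star}_{\bar\Delta}(\sigma_1),\sigma_2)\bigr) = (-1)^q\, \mathcal{D}_I^v\bigl(P_\Delta(x)\, e_{k\ell}(\sigma_1(x+\tilde z))\bigr),
\]
with $q = |\{u \in \bar\Delta \mid (R_\Delta)_u \ne I_u\}|$, and the right-hand side is $(-1)^q \mathcal{D}_I^v(P_\Delta(x)\, e_{k\ell}(\sigma_1,\sigma_2))$, as required.

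The main (and really the only) conceptual point is recognizing that the assumption $\bar\Delta \subseteq \Omega(\sigma_2)$ is introduced precisely so that, after absorbing $\sigma_2(\varepsilon_{rs})$ into $z$, the reflection $\tau_{\bar\Delta}$ acts trivially on the combined shift, allowing the previously established identities to be invoked verbatim. One should verify that the smoothness hypotheses required by Corollary \ref{better generalization of 8.7} (i.e.\ that $P_\Delta(x) e_{k\ell}(\tau^{\star}_{\bar\Delta}(\sigma_1)(x+\tau_{\bar\Delta}(\tilde z)))$ and $P_\Delta(x) e_{k\ell}(\sigma_1(x+\tilde z))$ are smooth) are implicit here; these are inherited from the setup in which $e_{k\ell}$ is evaluated at the perturbed generic tableaux and $P_\Delta$ kills exactly the singular denominators, so no new smoothness argument is needed.
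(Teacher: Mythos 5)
Your proof is correct and follows essentially the same route as the paper: part (i) is the same computation combining Lemma \ref{properties of e_rs and epsilon_rs with star product}(i) with the $\tau_{\bar{\Delta}}$-invariance of $z+\sigma_2(\varepsilon_{rs})$ guaranteed by $\bar{\Delta}\subseteq\Omega(\sigma_2)$, and part (ii) is the same application of Corollary \ref{better generalization of 8.7} with the shift $\sigma_2(\varepsilon_{rs})$ absorbed into $z$. Your explicit bookkeeping with $\tilde z$ simply spells out what the paper leaves implicit.
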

\begin{proof} 
To prove (i) we use Lemma \ref{properties of e_rs and epsilon_rs with star product}(i) and the fact that $z+\sigma_2 (\varepsilon_{rs})$ is  $\tau_{\bar{\Delta}}$-invariant. Namely, we have:
\begin{align*}
e_{k\ell} (\tau^{\star}_{\bar{\Delta}}(\sigma_1),\sigma_2)^{\tau_{\bar{\Delta}}}&= e_{k\ell} (\tau^{\star}_{\bar{\Delta}}(\sigma_1) (\tau_{\bar{\Delta}}(x) + z + \sigma_2 (\varepsilon_{rs})))\\
&= e_{k\ell} (\sigma_1 \tau_{\bar{\Delta}}(\tau_{\bar{\Delta}}(x) + z + \sigma_2 (\varepsilon_{rs})))\\
&= e_{k\ell} (\sigma_1 (x + \tau_{\bar{\Delta}}(z + \sigma_2 (\varepsilon_{rs}))))\\
&= e_{k\ell} (\sigma_1 (x + z + \sigma_2 (\varepsilon_{rs})))\\
&=e_{k\ell} (\sigma_1,\sigma_2).
\end{align*}
The identity in part (ii) follows  from Corollary \ref{better generalization of 8.7}.
\end{proof}

\begin{lemma}\label{relations in Phi bar} Let $k\neq \ell$,  $r\neq s$  and $(\sigma_1,\sigma_2) \in \Phi_{k\ell}\times \Phi_{rs}$. For any $(\sigma_1',\sigma_2') \in \Phi_{(\sigma_1,\sigma_2)}$ we have:
\begin{itemize}
\item[(i)] $\Phi_{(\sigma_1',\sigma_2')}=\Phi_{(\sigma_1,\sigma_2)}$. 
\item[(ii)] $\Omega(\sigma_1')\cap\Omega(\sigma_2')=\Omega(\sigma_1)\cap\Omega(\sigma_2)$.
\item[(iii)] $\Omega_{\sigma_1'}\cap\Omega_{\sigma_2'}=\Omega_{\sigma_1}\cap\Omega_{\sigma_2}$.
\end{itemize}
\end{lemma}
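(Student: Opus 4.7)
The plan is to reduce each part to a row-by-row analysis of the defining equation for $\Phi_{(\sigma_1,\sigma_2)}$. Part (i) is immediate from the definition: both $\Phi_{(\sigma_1,\sigma_2)}$ and $\Phi_{(\sigma_1',\sigma_2')}$ consist by definition of all pairs $(\sigma_1'',\sigma_2'') \in \Phi_{k\ell}\times\Phi_{rs}$ satisfying $\sigma_1''(\varepsilon_{k\ell})+\sigma_2''(\varepsilon_{rs})=\xi$ for a single fixed vector $\xi \in T_n(\mathbb{C})$, and the hypothesis $(\sigma_1',\sigma_2') \in \Phi_{(\sigma_1,\sigma_2)}$ guarantees $\sigma_1'(\varepsilon_{k\ell})+\sigma_2'(\varepsilon_{rs})=\sigma_1(\varepsilon_{k\ell})+\sigma_2(\varepsilon_{rs})$, so the defining $\xi$ is the same and the two sets coincide.

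For parts (ii) and (iii), the key observation is that $\sigma_a(\varepsilon)$ has exactly one nonzero entry (with sign depending on whether $k<\ell$ or $r<s$) in each row of the support of $\varepsilon$. Writing $\sigma_a[m]=(1,p_a)$ on such rows, the equation $\sigma_1'(\varepsilon_{k\ell})+\sigma_2'(\varepsilon_{rs})=\sigma_1(\varepsilon_{k\ell})+\sigma_2(\varepsilon_{rs})$ decouples into constraints on $(p_1',p_2')$ row by row. On rows in the support of only one of $\varepsilon_{k\ell}, \varepsilon_{rs}$, the corresponding $p_a'$ is forced to equal $p_a$. On rows in both supports, $(p_1',p_2')$ is determined up to an admissible swap whose form depends on whether the signs of $\varepsilon_{k\ell}|_m$ and $\varepsilon_{rs}|_m$ agree.

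For any $u\in\Sigma$ satisfying the range conditions, the $\tau_u$-invariance of $z+\sigma_a(\varepsilon)$ depends only on row $k_u$ since $\tau_u$ acts trivially on all other rows. I would proceed by a case analysis on row $k_u$, distinguishing whether the signs of $\varepsilon_{k\ell}, \varepsilon_{rs}$ on row $k_u$ agree or not, and whether $p_1=p_2$. In each subcase the row-$k_u$ constraint from $\Phi_{(\sigma_1,\sigma_2)}$, combined with the $\tau_u$-invariance of $z+\sigma_1(\varepsilon_{k\ell})$ and $z+\sigma_2(\varepsilon_{rs})$ (and, for (iii), the additional $\widetilde{\Omega}$-condition $p_a\in\{i_u,j_u\}$), forces the row-$k_u$ component of $(\sigma_1',\sigma_2')$ to coincide with that of $(\sigma_1,\sigma_2)$ up to an invariance-preserving swap, which yields the claimed equalities. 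The main obstacle is the subcase where the two supports coincide on row $k_u$ with opposite signs and $p_1=p_2$: here the $\Phi$-constraint imposes only $p_1'=p_2'$ and allows maximal freedom. For (iii) this subcase is ruled out because the $\widetilde{\Omega}$-condition together with the two invariance equations forces $p_1\neq p_2$, while for (ii) one observes that the same pair of invariance equations, once both hold for $(\sigma_1,\sigma_2)$, determines enough of $z$ that the admissible swap still preserves them for $(\sigma_1',\sigma_2')$.
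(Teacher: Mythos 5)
Parts (i) and (iii) of your sketch are correct and essentially follow the paper's own argument: (i) is definitional, and for (iii) your observation that, for $u\in\Omega_{\sigma_1}\cap\Omega_{\sigma_2}$, the $\widetilde{\Omega}$-conditions $\sigma_a[k_u]\in\{(1,i_u),(1,j_u)\}$ combined with the two $\tau_u$-invariance equations exclude the opposite-sign, equal-column configuration is precisely what makes the row-$k_u$ components of $(\sigma_1',\sigma_2')$ coincide exactly with those of $(\sigma_1,\sigma_2)$; the paper asserts this coincidence without comment, so there you are in fact more explicit than the printed proof.

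The genuine gap is in part (ii), in exactly the subcase you single out as the main obstacle. Suppose row $k_u$ lies in the supports of both $\varepsilon_{k\ell}$ and $\varepsilon_{rs}$ with opposite signs and $\sigma_1[k_u]=\sigma_2[k_u]=(1,p)$. Then $u\in\Omega(\sigma_1)\cap\Omega(\sigma_2)$ forces $z_{k_u,i_u}=z_{k_u,j_u}$ and $p\notin\{i_u,j_u\}$, while the defining equation of $\Phi_{(\sigma_1,\sigma_2)}$ on row $k_u$ reduces to $\delta^{k_u,p_1'}-\delta^{k_u,p_2'}=0$ and only forces $\sigma_1'[k_u]=\sigma_2'[k_u]=(1,q)$ with $q$ arbitrary. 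If $q\in\{i_u,j_u\}$, the entries of $z\pm\delta^{k_u,q}$ at positions $(k_u,i_u)$ and $(k_u,j_u)$ differ by $\pm 1$, so neither $z+\sigma_1'(\varepsilon_{k\ell})$ nor $z+\sigma_2'(\varepsilon_{rs})$ is $\tau_u$-invariant, i.e.\ $u\notin\Omega(\sigma_1')\cap\Omega(\sigma_2')$. Concretely, take $k=3$, $\ell=4$, $r=4$, $s=3$, a singular pair $(k_u,i_u,j_u)=(3,1,2)$, $z_{31}=z_{32}$, $\sigma_1[3]=\sigma_2[3]=(1,3)$ and $\sigma_1'[3]=\sigma_2'[3]=(1,1)$: then $(\sigma_1',\sigma_2')\in\Phi_{(\sigma_1,\sigma_2)}$, $u\in\Omega(\sigma_1)\cap\Omega(\sigma_2)$, but $u\notin\Omega(\sigma_1')$. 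So your closing claim that the invariance equations ``determine enough of $z$ that the admissible swap still preserves them'' does not hold, and part (ii) is not established by your argument in this subcase. (For what it is worth, the paper's one-line proof of (ii) rests on the bare assertion $\{\sigma_1'[k_u],\sigma_2'[k_u]\}=\{\sigma_1[k_u],\sigma_2[k_u]\}$, which likewise does not cover this configuration; you have correctly located the delicate point, but the patch you propose does not close it.)
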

\begin{proof}
Part (i) follows from the definition of $\Phi_{(\sigma_1,\sigma_2)}$. We next prove  part (ii). Since $(\sigma_{1}',\sigma_{2}')\in \Phi_{(\sigma_1,\sigma_2)}$, for any  $u\in\Omega(\sigma_1)\cap\Omega(\sigma_2)$, we have $\{\sigma_{1}'[k_{u}],\sigma_{2}'[k_{u}]\}=\{\sigma_{1}[k_{u}],\sigma_{2}[k_{u}]\}$. Thus $u\in\Omega(\sigma_1')\cap\Omega(\sigma_2')$, which implies $\Omega(\sigma_1)\cap\Omega(\sigma_2)\subseteq\Omega(\sigma_1')\cap\Omega(\sigma_2')$. For the reverse inclusion, if we start with $u\in\Omega(\sigma_1')\cap\Omega(\sigma_2')$, and use that $(\sigma_{1},\sigma_{2})\in \Phi_{(\sigma_1,\sigma_2)}=\Phi_{(\sigma_1',\sigma_2')}$ (by part (i)) we conclude that $u\in\Omega(\sigma_1)\cap\Omega(\sigma_2)$ using the same reasoning as for the first inclusion. For part (iii), if $(\sigma_{1}',\sigma_{2}')\in \Phi_{(\sigma_1,\sigma_2)}$ and $u\in\Omega_{\sigma_1}\cap\Omega_{\sigma_2}$, then $\sigma_{1}'[k_{u}]=\sigma_{1}[k_{u}]$ and $\sigma_{2}'[k_{u}]=\sigma_{2}[k_{u}]$, and then use again  the reasoning of part (ii).
\end{proof}

\begin{lemma} \label{lm-rs-ident} Let $k\neq \ell$, $r \neq s$, and $(\sigma_1,\sigma_2) \in \Phi_{k\ell}\times\Phi_{rs}$. Let also
$$C(\sigma_1,\sigma_2)=\sum_{\Phi_{(\sigma_1,\sigma_2)}} \big(e_{rs} (\sigma_2') e_{k\ell} (\sigma_1',\sigma_2') - e_{k\ell} (\sigma_1') e_{rs} (\sigma_1',\sigma_2')\big),$$
where the sum is taken over all $(\sigma_{1}',\sigma_2')\in\Phi_{(\sigma_1,\sigma_2)}$. Then the following hold.
\begin{itemize}
\item[(i)] $P_{\Omega_{0}}(x)C(\sigma_1,\sigma_2)$ is a smooth function.
\item[(ii)] If  $\Omega_{\sigma_1}\cap\Omega_{\sigma_2}\neq\emptyset$, then $C(\sigma_1,\sigma_2)=0$
\end{itemize}
\end{lemma}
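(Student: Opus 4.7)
The strategy is to identify $C(\sigma_1,\sigma_2)$ with the coefficient of $T(x+z+w)$, where $w=\sigma_1(\varepsilon_{k\ell})+\sigma_2(\varepsilon_{rs})$, in the action of the commutator $[E_{k\ell},E_{rs}]$ on the generic tableau $T(x+z)\in\mathcal{V}_{\mathrm{gen}}$. Since the generic Gelfand-Tsetlin formulas of Proposition \ref{coefficients e_ij} yield a genuine $\mathfrak{gl}(n)$-module structure on $\mathcal{V}_{\mathrm{gen}}$, the Lie algebra relation $[E_{k\ell},E_{rs}]=\delta_{\ell r}E_{ks}-\delta_{sk}E_{r\ell}$ (together with Cartan contributions that can only affect the $w=0$ coefficient) holds as an identity of rational functions in $x\in\mathcal{S}^0$. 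Collecting the coefficient of $T(x+z+w)$ on both sides gives an explicit description of $C(\sigma_1,\sigma_2)$ as either zero or a single term $\pm e_{ab}(\widetilde\sigma(x+z))$ for an appropriate $\widetilde\sigma$.

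For part (i), I would read the poles directly from this explicit expression. By Definition \ref{definition of coefficients e_rs}, every denominator factor of an $e_{ab}(\widetilde\sigma(x+z))$-term has the form $(x+z)_{m,p}-(x+z)_{m,q}=x_{m,p}-x_{m,q}+(z_{m,p}-z_{m,q})$. Such a factor is identically zero on the critical hyperplane $\mathcal{H}_{i_uj_u}^{k_u}$ precisely when $m=k_u$, $\{p,q\}=\{i_u,j_u\}$, and $z_{k_u,i_u}=z_{k_u,j_u}$; the last condition is equivalent to $u\in\Omega_0$. Hence all critical-hyperplane poles of $C(\sigma_1,\sigma_2)$ lie in $\bigcup_{u\in\Omega_0}\mathcal{H}_{i_uj_u}^{k_u}$ and are simple, so $P_{\Omega_0}(x)\cdot C(\sigma_1,\sigma_2)\in\overline{\mathcal{F}}$.

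For part (ii), I would carry out a case analysis on $\{k,\ell\}\cap\{r,s\}$. When the commutator vanishes identically in $U(\mathfrak{gl}(n))$, $C=0$ follows directly: this covers the disjoint case $\{k,\ell\}\cap\{r,s\}=\emptyset$, the degenerate singleton-overlap subcases $\ell=s,\;k\neq r$ and $k=r,\;\ell\neq s$, and $(k,\ell)=(r,s)$ where the involution $(\sigma_1',\sigma_2')\leftrightarrow(\sigma_2',\sigma_1')$ on $\Phi_{(\sigma_1,\sigma_2)}$ makes $C$ antisymmetric and thus zero. When the commutator is a single off-diagonal $E_{ab}$ (i.e.\ $\ell=r$ or $k=s$), the row-ranges of $\Phi_{k\ell}$ and $\Phi_{rs}$ are disjoint, so $\Omega(\sigma_1)\cap\Omega(\sigma_2)=\emptyset$, making the hypothesis $\Omega_{\sigma_1}\cap\Omega_{\sigma_2}\neq\emptyset$ vacuous. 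The remaining case $(k,\ell)=(s,r)$ has Cartan-valued commutator: $C=0$ for $w\neq 0$ automatically, while for $w=0$ one must have $\sigma_1=\sigma_2=\sigma$, and then simultaneous $\tau_u$-invariance of $z+\sigma(\varepsilon_{k\ell})=z-\sigma(\varepsilon_{rs})$ and $z+\sigma(\varepsilon_{rs})$ forces $\sigma[k_u](1)\notin\{i_u,j_u\}$, contradicting $u\in\widetilde\Omega(\sigma)$, so again the hypothesis is vacuous.

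The main obstacle will be the careful sign bookkeeping when identifying $C(\sigma_1,\sigma_2)$ with the commutator coefficient (in particular when $\varepsilon_{k\ell}=-\varepsilon_{\ell k}$ produces signed contributions) and verifying that the relevant poles in part (i) are all simple, so that multiplication by the product $P_{\Omega_0}(x)$ of first-degree factors suffices to cancel them without leaving residual singularities on $\overline{\mathcal{H}}$.
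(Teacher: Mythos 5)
Your core idea for part (ii) — that $C(\sigma_1,\sigma_2)$ is the coefficient of $T(x+z+\sigma_1(\varepsilon_{k\ell})+\sigma_2(\varepsilon_{rs}))$ in $[E_{k\ell},E_{rs}]T(x+z)$ computed in the generic module — is exactly the paper's argument, and your pole analysis for part (i) is in line with the ``straightforward verification'' intended there. However, your case analysis for (ii) fails at the essential case. You claim that when the commutator is a single off-diagonal element ($\ell=r$, $k\neq s$, or $k=s$, $\ell\neq r$) the row ranges of $\Phi_{k\ell}$ and $\Phi_{rs}$ are disjoint, so the hypothesis $\Omega_{\sigma_1}\cap\Omega_{\sigma_2}\neq\emptyset$ is vacuous. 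This is false: the ranges $[\min\{k,\ell\},\max\{k,\ell\}-1]$ and $[\min\{r,s\},\max\{r,s\}-1]$ are disjoint only when $k$ and $s$ lie on opposite sides of the shared index. For instance, take $E_{k\ell}=E_{13}$ and $E_{rs}=E_{32}$, so $[E_{13},E_{32}]=E_{12}$, while $\Phi_{13}$ and $\Phi_{32}$ both involve row $2$. If there is a singular pair $(k_u,i_u,j_u)=(2,1,2)$ and $z_{22}=z_{21}+1$, then $\sigma_1$ with $\sigma_1[2]=(1,1)$ and $\sigma_2$ with $\sigma_2[2]=(1,2)$ satisfy $\sigma_1\in\Phi_{13}(i_u)$, $\sigma_2\in\Phi_{32}(j_u)$, and both $z+\sigma_1(\varepsilon_{13})$ and $z+\sigma_2(\varepsilon_{32})$ are $\tau_u$-invariant; hence $u\in\Omega_{\sigma_1}\cap\Omega_{\sigma_2}\neq\emptyset$. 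This overlapping configuration is precisely the one in which the lemma is invoked later (fact (d) in the proof of Proposition \ref{some Omega hat is not emptyset}), so as written your argument leaves the main case of (ii) unproved.

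What the missing case requires (and what in fact proves (ii) uniformly, in the spirit of the $1$-singular case the paper cites) is an argument about the shift itself: if $u\in\Omega_{\sigma_1}\cap\Omega_{\sigma_2}$, then in row $k_u$ each of $\sigma_1(\varepsilon_{k\ell})$ and $\sigma_2(\varepsilon_{rs})$ changes an entry in position $i_u$ or $j_u$ by $\pm1$, and the two $\tau_u$-invariance conditions force the row-$k_u$ component of $w=\sigma_1(\varepsilon_{k\ell})+\sigma_2(\varepsilon_{rs})$ to be either $\pm2\,\delta^{k_u,p}$ with $p\in\{i_u,j_u\}$, or $\pm(\delta^{k_u,i_u}-\delta^{k_u,j_u})$. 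No tableau with such a shift occurs in $[E_{k\ell},E_{rs}]T(x+z)$: that commutator is zero, a Cartan element (zero shift), or a single $E_{ab}$, whose Gelfand-Tsetlin formula only produces shifts with at most one entry equal to $\pm1$ in each row. Hence the coefficient $C(\sigma_1,\sigma_2)$ vanishes. Your zero-commutator cases and the Cartan case $(k,\ell)=(s,r)$ are handled correctly, but the overlapping single-$E_{ab}$ case must be argued this way rather than dismissed as vacuous.
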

\begin{proof} Part (i) follows by a straightforward verification. For part (ii) we use the same reasoning as in the case $t=1$ (see Lemma A.4 in \cite{FGR2}). Namely, 
 we use  the fact that $C(\sigma_1,\sigma_2)$ is the coefficient of $T(\sigma_1+\sigma_2)$ in the decomposition of $[E_{k\ell},E_{rs}]T(x)$ as a linear combination of generic tableaux.
\end{proof}

\begin{proposition}\label{existence of tau Deltas}
Set $k\neq \ell$, $r \neq s$ and $(\sigma_{1},\sigma_{2})\in\Phi_{k\ell}\times\Phi_{rs}$. If $\Delta=\widehat{\Omega}(\sigma_1,\sigma_2)$, then there exists $\bar{\Delta}\subseteq\Delta$ such that $\Omega_{\tau^{\star}_{\bar{\Delta}}(\sigma_2)}=\Delta$. \end{proposition}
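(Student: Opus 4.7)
The plan is to construct $\bar\Delta$ explicitly by tracking the positions where $\sigma_1$ and $\sigma_2$ ``disagree'' on the singular rows, and then to verify the two inclusions $\Delta\subseteq\Omega_{\tau^\star_{\bar\Delta}(\sigma_2)}$ and $\Omega_{\tau^\star_{\bar\Delta}(\sigma_2)}\subseteq\Delta$ separately. Specifically, I would set
$$\bar\Delta:=\{u\in\Delta\;|\;\sigma_1[k_u]\neq\sigma_2[k_u]\}.$$
This is well-defined because $\Delta\subseteq\widetilde\Omega(\sigma_1)\cap\widetilde\Omega(\sigma_2)$ forces both $\sigma_1[k_u]$ and $\sigma_2[k_u]$ into the two-element set $\{(1,i_u),(1,j_u)\}$. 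Condition (iii) of Definition~\ref{Def of 1^t singular} guarantees that distinct elements of $\bar\Delta$ either live on different rows or have disjoint index pairs, so the operators $\tau^\star_u$ commute on $\sigma_2$ and the resulting permutation satisfies $\tau^\star_{\bar\Delta}(\sigma_2)[k_u]=\sigma_1[k_u]$ for every $u\in\Delta$.

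The forward inclusion $\Delta\subseteq\Omega_{\tau^\star_{\bar\Delta}(\sigma_2)}$ is then routine. For $u\in\Delta$, the row constraint $\min\{r,s\}\leq k_u\leq\max\{r,s\}-1$ is inherited from $u\in\widetilde\Omega(\sigma_2)$, while $\tau^\star_{\bar\Delta}(\sigma_2)[k_u]=\sigma_1[k_u]\in\{(1,i_u),(1,j_u)\}$, so $u\in\widetilde\Omega(\tau^\star_{\bar\Delta}(\sigma_2))$. Moreover, the tableaux $\tau^\star_{\bar\Delta}(\sigma_2)(\varepsilon_{rs})$ and $\sigma_1(\varepsilon_{k\ell})$ have identical entries at positions $(k_u,i_u)$ and $(k_u,j_u)$, so the $\tau_u$-invariance of $z+\tau^\star_{\bar\Delta}(\sigma_2)(\varepsilon_{rs})$ reduces to that of $z+\sigma_1(\varepsilon_{k\ell})$, which holds because $u\in\Omega_{\sigma_1}$.

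The reverse inclusion $\Omega_{\tau^\star_{\bar\Delta}(\sigma_2)}\subseteq\Delta$ is the main obstacle and the most delicate step. I would argue by cases on whether $u\in\bar\Delta$. If $u\in\bar\Delta\subseteq\Delta$ there is nothing to prove, so assume $u\in\Omega_{\tau^\star_{\bar\Delta}(\sigma_2)}\setminus\bar\Delta$. Then $u\in\widetilde\Omega(\tau^\star_{\bar\Delta}(\sigma_2))$ together with condition (iii) forces $k_u\neq k_{u'}$ for every $u'\in\bar\Delta$, since a collision $k_u=k_{u'}$ with $u\neq u'$ would push $\tau^\star_{\bar\Delta}(\sigma_2)[k_u]$ into $\{(1,i_{u'}),(1,j_{u'})\}$, disjoint from $\{(1,i_u),(1,j_u)\}$. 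This row-disjointness ensures that $\tau^\star_{\bar\Delta}(\sigma_2)$ and $\sigma_2$ agree at row $k_u$, hence $u\in\Omega_{\sigma_2}$. The last step is then to promote $u\in\Omega_{\sigma_2}$ to $u\in\Omega_{\sigma_1}$, which I would achieve by combining the sign data of the $\tau_u$-invariance of $z+\sigma_2(\varepsilon_{rs})$ with the synchronisation $\tau^\star_{\bar\Delta}(\sigma_2)[k_u]=\sigma_1[k_u]$ and the inclusion $\Omega_{\sigma_1}\cap\Omega_{\sigma_2}\subseteq\widehat\Omega(\sigma_1,\sigma_2)=\Delta$ recorded in Remark~\ref{rem: intersections}, verifying that the sign of $z_{k_u,j_u}-z_{k_u,i_u}$ forced by $\sigma_2[k_u]$ must match the one determined by $\sigma_1[k_u]$, so that $u\in\Delta$.
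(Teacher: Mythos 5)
Your construction of $\bar{\Delta}$ is not correct in general, and the failure occurs precisely at the sentence asserting that $\tau^{\star}_{\bar{\Delta}}(\sigma_2)(\varepsilon_{rs})$ and $\sigma_1(\varepsilon_{k\ell})$ have identical entries at the positions $(k_u,i_u)$ and $(k_u,j_u)$. These entries agree only up to the relative direction of the two shifts: by definition $\varepsilon_{k\ell}$ has entries $+1$ if $k<\ell$ and $-1$ if $k>\ell$, and similarly for $\varepsilon_{rs}$, and the proposition is invoked for arbitrary pairs $E_{k\ell}$, $E_{rs}$, including one raising and one lowering. If exactly one of $k<\ell$, $r<s$ holds, then aligning $\tau^{\star}_{\bar{\Delta}}(\sigma_2)[k_u]$ with $\sigma_1[k_u]$ places the unit increment in the same column but with the \emph{opposite} sign, and since $u\in\Omega_{\sigma_1}$ forces $|z_{k_u,i_u}-z_{k_u,j_u}|=1$, the $\tau_u$-invariance of $z+\sigma_1(\varepsilon_{k\ell})$ then \emph{contradicts} that of $z+\tau^{\star}_{\bar{\Delta}}(\sigma_2)(\varepsilon_{rs})$. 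Concretely: take $k<\ell$, $r>s$, $\sigma_1[k_u]=(1,i_u)$, $\sigma_2[k_u]=(1,j_u)$; invariance of $z+\sigma_1(\varepsilon_{k\ell})$ gives $z_{k_u,j_u}=z_{k_u,i_u}+1$, so $z+\sigma_2(\varepsilon_{rs})$ is already $\tau_u$-invariant, whereas your flip (you put $u\in\bar{\Delta}$ since $\sigma_1[k_u]\neq\sigma_2[k_u]$) moves the $-1$ to column $i_u$ and destroys invariance; hence $u\in\Delta$ but $u\notin\Omega_{\tau^{\star}_{\bar{\Delta}}(\sigma_2)}$, and even your "routine" forward inclusion fails. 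The point you are missing, and the one the paper's argument turns on, is that the set of rows to flip must be read off from $z$ itself: because $u\in\Omega_{\sigma_1}$ gives $|z_{k_u,i_u}-z_{k_u,j_u}|=1$ and $u\in\widetilde{\Omega}(\sigma_2)$ pins $\sigma_2[k_u]$ inside $\{(1,i_u),(1,j_u)\}$, exactly one of $u\in\Omega(\sigma_2)$, $u\in\Omega(\tau^{\star}_u(\sigma_2))$ holds, and the correct choice is $\bar{\Delta}=\{u\in\Delta\; | \; u\in\Omega(\tau^{\star}_u(\sigma_2))\}$. Comparing $\sigma_1[k_u]$ with $\sigma_2[k_u]$ reproduces this choice only when $\varepsilon_{k\ell}$ and $\varepsilon_{rs}$ point in the same direction.

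Your treatment of the reverse inclusion is also not sound as sketched: for $u\in\Omega_{\tau^{\star}_{\bar{\Delta}}(\sigma_2)}\setminus\bar{\Delta}$ you correctly reduce to $u\in\Omega_{\sigma_2}$, but membership in $\Delta$ additionally requires $u\in\Omega_{\sigma_1}$, i.e. the row condition $\min\{k,\ell\}\leq k_u\leq\max\{k,\ell\}-1$, the condition $\sigma_1\in\Phi_{k\ell}(i_u)\cup\Phi_{k\ell}(j_u)$, and the $\tau_u$-invariance of $z+\sigma_1(\varepsilon_{k\ell})$; none of this can be extracted from sign data on the $\sigma_2$ side, and Remark~\ref{rem: intersections} only gives inclusions involving $\Omega_{\sigma_1}\cap\Omega_{\sigma_2}$, which does not help for an element known merely to lie in $\Omega_{\sigma_2}$. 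Note that the paper's own proof is essentially the exhibition of the $z$-dependent choice of $\bar{\Delta}$ described above (the verification being left implicit); your proposal replaces that choice by a purely combinatorial comparison of $\sigma_1$ and $\sigma_2$, which is exactly what makes it break down.
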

\begin{proof}
Let  $u\in\Delta$. Since $u\in\Omega_{\sigma_1}$, we  have $|z_{k_{u},i_{u}}-z_{k_{u},j_{u}}|=1$. On the other hand,  since  $u\in\widetilde{\Omega}(\sigma_2)$ we have $u\in\Omega(\sigma_2)$ or $u\in\Omega(\tau_{u}^\star(\sigma_2))$. Thus, $\bar{\Delta}=\{u\in\Delta\; | \; u\in\Omega(\tau_{u}^\star(\sigma_2))\}$ satisfies the desired property.
\end{proof}
\begin{corollary}\label{we can assume singularities are the same}
Set $(\sigma_{1},\sigma_{2})\in\Phi_{k\ell}\times\Phi_{rs}$ and let $\Delta = \widehat{\Omega}(\sigma_{1},\sigma_{2})\cup\widehat{\Omega}(\sigma_{2},\sigma_{1})$. There exist $\Delta_{1}\subseteq\widehat{\Omega}(\sigma_{1},\sigma_{2})\setminus(\Omega_{\sigma_1}\cap\Omega_{\sigma_2})\subseteq\Delta$ and $\Delta_{2}\subseteq\widehat{\Omega}(\sigma_{2},\sigma_{1})\setminus(\Omega_{\sigma_1}\cap\Omega_{\sigma_2})\subseteq\Delta$ such that $\Omega_{\tau^{\star}_{\Delta_{1}}(\sigma_{1})}=\Omega_{\tau^{\star}_{\Delta_{2}}(\sigma_{2})}=\Delta$.
\end{corollary}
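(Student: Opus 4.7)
The plan is to obtain the corollary from two applications of Proposition~\ref{existence of tau Deltas}, one producing each of $\Delta_{1}$ and $\Delta_{2}$. Because $\sigma_{1}$ and $\sigma_{2}$ play fully symmetric roles (interchange the pairs $(k,\ell)$ and $(r,s)$), the construction of $\Delta_{1}$ is strictly parallel to that of $\Delta_{2}$, so I will describe only the latter.

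Applying Proposition~\ref{existence of tau Deltas} to the pair $(\sigma_{1},\sigma_{2})$ produces a subset $\bar{\Delta}\subseteq\widehat{\Omega}(\sigma_{1},\sigma_{2})$ with $\Omega_{\tau^{\star}_{\bar{\Delta}}(\sigma_{2})}=\widehat{\Omega}(\sigma_{1},\sigma_{2})$. The proof of that proposition gives the explicit description $\bar{\Delta}=\{u\in\widehat{\Omega}(\sigma_{1},\sigma_{2}):u\in\Omega(\tau_{u}^{\star}(\sigma_{2}))\}$, i.e.\ the singular positions where a flip of $\sigma_{2}$ is needed to restore $\tau_{u}$-invariance. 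Setting $\Delta_{2}:=\bar{\Delta}$, I would verify the required inclusion: for $u\in\Omega_{\sigma_{1}}\cap\Omega_{\sigma_{2}}$ the condition $u\in\Omega(\sigma_{2})$ already holds, and since $u\in\widetilde{\Omega}(\sigma_{2})$ forces $|\sigma_{2}(\varepsilon_{rs})_{k_{u},i_{u}}-\sigma_{2}(\varepsilon_{rs})_{k_{u},j_{u}}|=1$, a flip via $\tau_{u}^{\star}$ would reverse this sign and destroy the $\tau_{u}$-invariance; hence $u\notin\Omega(\tau_{u}^{\star}(\sigma_{2}))$ and $u\notin\Delta_{2}$, placing $\Delta_{2}$ in $\widehat{\Omega}(\sigma_{\cdot},\sigma_{\cdot})\setminus(\Omega_{\sigma_{1}}\cap\Omega_{\sigma_{2}})$.

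Next I would upgrade the equality $\Omega_{\tau^{\star}_{\Delta_{2}}(\sigma_{2})}=\widehat{\Omega}(\sigma_{1},\sigma_{2})$ to $\Omega_{\tau^{\star}_{\Delta_{2}}(\sigma_{2})}=\Delta$ by accounting for the remaining elements of $\widehat{\Omega}(\sigma_{2},\sigma_{1})\setminus\widehat{\Omega}(\sigma_{1},\sigma_{2})$. For any such $u$ the definition of $\widehat{\Omega}(\sigma_{2},\sigma_{1})$ gives $u\in\Omega_{\sigma_{2}}$ outright, and since $\Delta_{2}\subseteq\widehat{\Omega}(\sigma_{1},\sigma_{2})$ the component at row $k_{u}$ is untouched by $\tau^{\star}_{\Delta_{2}}$, so $\tau^{\star}_{\Delta_{2}}(\sigma_{2})$ agrees with $\sigma_{2}$ at row $k_{u}$ and $u\in\Omega_{\tau^{\star}_{\Delta_{2}}(\sigma_{2})}$. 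Combining with the previous step gives $\Omega_{\tau^{\star}_{\Delta_{2}}(\sigma_{2})}\supseteq\Delta$.

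The main technical point is the reverse inclusion $\Omega_{\tau^{\star}_{\Delta_{2}}(\sigma_{2})}\subseteq\Delta$: one must rule out $u\in\widetilde{\Omega}(\sigma_{2})\setminus\Delta$ lying in $\Omega_{\tau^{\star}_{\Delta_{2}}(\sigma_{2})}$. For such $u$ the $k_{u}$-th component is untouched (as $u\notin\widehat{\Omega}(\sigma_{1},\sigma_{2})\supseteq\Delta_{2}$), so the question reduces to showing $u\notin\Omega_{\sigma_{2}}$; combining $u\notin\widehat{\Omega}(\sigma_{2},\sigma_{1})$ (because $u\notin\Delta$) with $u\in\widetilde{\Omega}(\sigma_{2})$ and the sign-matching argument from the proof of Proposition~\ref{existence of tau Deltas}, which links $|z_{k_{u},i_{u}}-z_{k_{u},j_{u}}|=1$ to membership in $\Omega_{\sigma_{2}}$, yields the needed exclusion. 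With $\Delta_{1}$ obtained by the analogous symmetric procedure, the corollary follows.
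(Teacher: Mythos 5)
Your first half is essentially the paper's own argument: the paper proves Corollary \ref{we can assume singularities are the same} by nothing more than two symmetric applications of Proposition \ref{existence of tau Deltas}, and your choices are exactly its explicit construction, $\Delta_2=\bar{\Delta}=\{u\;|\;u\in\Omega(\tau^{\star}_{u}(\sigma_2))\}$ for $\sigma_2$ and the mirror-image set for $\sigma_1$. Your check that no $u\in\Omega_{\sigma_1}\cap\Omega_{\sigma_2}$ can lie in $\bar{\Delta}$ is correct. Two small caveats: you never commit to which hat your $\Delta_2$ sits in (with your construction the set flipping $\sigma_2$ lies in $\widehat{\Omega}(\sigma_1,\sigma_2)\setminus(\Omega_{\sigma_1}\cap\Omega_{\sigma_2})$, i.e. the pattern of the Proposition), and the repeated claim that $\tau^{\star}_{\Delta_2}$ ``does not touch row $k_u$'' rests on Definition \ref{Def of 1^t singular}(iii) (two singular pairs in one row are disjoint, so at most one $u$ per row can lie in $\widetilde{\Omega}(\sigma_2)$), not merely on $\Delta_2\subseteq\widehat{\Omega}(\sigma_1,\sigma_2)$.

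The second half has a genuine gap. First, it is internally inconsistent: you quote the Proposition as the exact equality $\Omega_{\tau^{\star}_{\Delta_2}(\sigma_2)}=\widehat{\Omega}(\sigma_1,\sigma_2)$ and then argue that the same set also contains every $u\in\widehat{\Omega}(\sigma_2,\sigma_1)\setminus\widehat{\Omega}(\sigma_1,\sigma_2)$; both cannot hold unless that difference is empty, and only the containment direction is what the Proposition's proof actually delivers. Second, and decisively, the reverse inclusion $\Omega_{\tau^{\star}_{\Delta_2}(\sigma_2)}\subseteq\Delta$ cannot be obtained as you sketch. From $u\notin\widehat{\Omega}(\sigma_2,\sigma_1)=\Omega_{\sigma_2}\cap\widetilde{\Omega}(\sigma_1)$ together with $u\in\widetilde{\Omega}(\sigma_2)$ you only get ``$u\notin\Omega_{\sigma_2}$ or $u\notin\widetilde{\Omega}(\sigma_1)$''; in the second case, e.g. when $k_u$ lies outside $[\min\{k,\ell\},\max\{k,\ell\}-1]$ so that $\sigma_1$ never meets the pair $(i_u,j_u)$, nothing forbids $u\in\Omega_{\sigma_2}$ (choose $z$ with $z_{k_u,i_u}-z_{k_u,j_u}=\pm 1$ matching the shift $\sigma_2(\varepsilon_{rs})$ places in row $k_u$), and since your flips do not touch row $k_u$ such a $u$ stays in $\Omega_{\tau^{\star}_{\Delta_2}(\sigma_2)}\setminus\Delta$. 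So the exclusion you appeal to is not just unproven, it fails in general, and no ``sign-matching'' variant can close it. The equality in the Corollary has to be read with the same convention as in Proposition \ref{existence of tau Deltas}: what the construction yields, and what is actually used downstream (via Remark \ref{rem: intersections} and in Proposition \ref{some Omega hat is not emptyset}), is $\Omega_{\tau^{\star}_{\Delta_1}(\sigma_1)}\supseteq\Delta$, $\Omega_{\tau^{\star}_{\Delta_2}(\sigma_2)}\supseteq\Delta$ together with $\Delta_i\cap(\Omega_{\sigma_1}\cap\Omega_{\sigma_2})=\emptyset$; your attempt to force a literal set equality in the last paragraph is the step that would fail.
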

\begin{proof}
The statement follows directly from Proposition \ref{existence of tau Deltas}.
\end{proof}

%%%%%%%%%%%%%%%%%%%%%%%%%%%%%%%%%%%%%%%%%%%%%%%

\section{Module structure on $V(T(v))$} \label{sec-mod-str}

Throughout this section we fix $x$ to be an element  in  $\mathcal{S}^{0}$ and $I$ to be a subset of $R$. 
%Recall that we also fixed 
%$v \in {\mathcal H}$.
\begin{proposition} \label{d-i-well-defined}
For any $g\in\mathfrak{gl}(n)$ and  $s\in\Sigma$ we have:
$$
\mathcal{D}_{R}^{v}\left(P_{\Sigma\setminus\Sigma_{I}}(x) gT(x+\tau_{s}(z))\right)=
\begin{cases}\mathcal{D}_{R}^{v}\left(P_{\Sigma\setminus\Sigma_{I}}(x) gT(x+ z)\right), & \text{ if } \ \  I_{s}=\emptyset\\
-\mathcal{D}_{R}^{v}\left(P_{\Sigma\setminus\Sigma_{I}}(x) gT(x+ z)\right), & \text{ if }\ \   I_{s}\neq\emptyset.
\end{cases}$$ 
\end{proposition}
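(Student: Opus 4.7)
The plan is to reduce by linearity to $g = E_{k\ell}$, expand the action using the generic Gelfand--Tsetlin formula (Proposition \ref{coefficients e_ij}), re-index the sum over $\Phi_{k\ell}$ by the involution $\tau_s^\star$, and then compare the two sides term by term using the sign identities of Sections \ref{subsection: Identities divided differences} and \ref{subsection: Identities for e_rs}.

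The diagonal case $g = E_{kk}$ is immediate, because $E_{kk} T(x+z) = \lambda(x,z) T(x+z)$ with $\lambda(x,z)$ symmetric in the entries of each row, hence invariant under $z \mapsto \tau_s(z)$; the claim then reduces to Lemma \ref{relations of the module for any Delta} applied to the single summand. So assume $g = E_{k\ell}$ with $k \neq \ell$. Starting from the identity
$E_{k\ell} T(x+\tau_s(z)) = \sum_{\sigma} e_{k\ell}(\sigma(x+\tau_s(z)))\, T(x+\tau_s(z)+\sigma(\varepsilon_{k\ell}))$
and re-indexing by the involution $\sigma \mapsto \tau_s^\star(\sigma)$ on $\Phi_{k\ell}$, Lemma \ref{properties of e_rs and epsilon_rs with star product} converts this to
$$E_{k\ell} T(x+\tau_s(z)) \,=\, \sum_{\sigma \in \Phi_{k\ell}} e_{k\ell}(\sigma(\tau_s(x)+z))\, T\bigl(x + \tau_s(z+\sigma(\varepsilon_{k\ell}))\bigr).$$

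Denote $\Psi(\cdot) := \mathcal{D}_R^v\bigl(P_{\Sigma\setminus\Sigma_I}(x)\,\cdot\,\bigr)$. Applying $\Psi$ summand by summand and expanding via the definition of $\mathcal{D}_R^v$ on $\overline{\mathcal F}\otimes \mathcal V_{\rm gen}$, each summand becomes a sum over $J \subseteq R$ of a coefficient factor $\mathcal{D}_{R\setminus J}^v\bigl(P_{\Sigma\setminus\Sigma_I}(x)\, e_{k\ell}(\sigma(\tau_s(x)+z))\bigr)$ times a tableau factor $\mathcal{D}_J T\bigl(v+\tau_s(z+\sigma(\varepsilon_{k\ell}))\bigr)$. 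Lemma \ref{relations of the module for any Delta} rewrites the tableau factor as $(-1)^{\epsilon_J}\, \mathcal{D}_J T(v+z+\sigma(\varepsilon_{k\ell}))$, where $\epsilon_J = 1$ if $J_s \neq \emptyset$ and $0$ otherwise. For the coefficient, recognize that $e_{k\ell}(\sigma(\tau_s(x)+z)) = e_{k\ell}(\tau_s^\star(\sigma)(x+\tau_s(z)))$ and invoke Corollary \ref{better generalization of 8.7} with $\Delta = \Sigma \setminus \Sigma_I$ and $\Delta' = \{s\}$, which yields an additional sign $(-1)^q$ with $q = 1$ iff exactly one of $I_s$, $J_s$ is empty.

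A direct case check on the four possibilities for $(I_s, J_s)$ shows that the combined sign $(-1)^{\epsilon_J + q}$ equals $+1$ when $I_s = \emptyset$ and $-1$ when $I_s \neq \emptyset$, independently of $J_s$. Factoring out this common sign and reassembling the double sum over $J$ and $\sigma$ reconstructs exactly $\pm \Psi(E_{k\ell} T(x+z))$ with the sign predicted by the proposition. The main technical point I expect is to verify that the coefficient expressions $P_{\Sigma\setminus\Sigma_I}(x)\, e_{k\ell}(\sigma(x+z))$ and their $\tau_s$-twists lie in $\overline{\mathcal{F}}$, as required by Corollary \ref{better generalization of 8.7}; when individual $\sigma$-summands have uncleared poles one first groups them within the $\Phi_{(\sigma_1,\sigma_2)}$-orbits of \S \ref{subsection: Identities for e_rs}, so that pole-bearing contributions combine into smooth pieces before $\mathcal{D}_R^v$ is applied. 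The remaining sign arithmetic is then mechanical.
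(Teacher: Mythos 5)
Your main computation is correct and it follows a genuinely different route from the paper. You expand $E_{k\ell}T(x+\tau_s(z))$ globally, re-index $\Phi_{k\ell}$ by the involution $\tau_s^{\star}$, and extract the sign from Lemma \ref{relations of the module for any Delta} (tableau factor) together with Corollary \ref{better generalization of 8.7} applied with the derivative index $R\setminus J$, $\Delta=\Sigma\setminus\Sigma_I$, $\Delta'=\{s\}$ (coefficient factor); your four-case parity check $(-1)^{\epsilon_J+q}=(-1)^{[I_s\neq\emptyset]}$ is right and does give a sign independent of $J$. The paper instead never touches the Gelfand--Tsetlin coefficients at this stage: it uses Remark \ref{evaluation step by step} to factor $\mathcal{D}_R^{v}$ into single-pair operators with the pair $s$ evaluated first, pulls the $x_{k_s i_s},x_{k_s j_s}$-independent part of $P_{\Sigma\setminus\Sigma_I}$ out of that first operator, and then quotes the $1$-singular statement (Proposition 4.7 of \cite{FGR2}) for the remaining single pair. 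What your route buys is a self-contained argument inside the present paper's toolkit (\S\ref{subsection: Identities for e_rs}); what the paper's route buys is that all pole issues are delegated to the already-established $1$-singular case.

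Two caveats. First, your closing remedy for the smoothness hypothesis is off-target: for a single generator $E_{k\ell}$ the shifts $\sigma(\varepsilon_{k\ell})$ are pairwise distinct, so distinct $\sigma$-summands sit on distinct basis tableaux of $\mathcal V_{\rm gen}$ and cannot be grouped into smooth pieces "before $\mathcal{D}_R^{v}$ is applied"; the sets $\Phi_{(\sigma_1,\sigma_2)}$ of \S\ref{subsection: Identities for e_rs} are designed for the two-generator commutator computation and do not apply here. The only situation where $P_{\Sigma\setminus\Sigma_I}(x)e_{k\ell}(\sigma(x+z))$ fails to be smooth is when some $r\in\Sigma_I$ has $z_{k_r i_r}=z_{k_r j_r}$, i.e.\ exactly when $\mathcal{D}_IT(v+z)=0$ by (\ref{relations satisfied by vectors on this universal module}); to cover that degenerate case one must either restrict to the basis of Remark \ref{rem-basis} and argue as the paper does via the $1$-singular case, or redo your pairing at the level of Lemma \ref{Generalization of lemma 8.7} (inserting the extra factor $P_{\{r\}}(x)$ before differentiating), rather than inside $\overline{\mathcal F}\otimes\mathcal V_{\rm gen}$. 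Second, the diagonal case is not quite "immediate": Lemma \ref{relations of the module for any Delta} alone produces a $J$-dependent sign, and you still need the vanishing of the mismatched terms, i.e.\ that $\mathcal{D}^{v}_{R\setminus J}\bigl(P_{\Sigma\setminus\Sigma_I}(x)\lambda(x,z)\bigr)=0$ when the parity of $(R\setminus J)_s$ disagrees with that of $(R_{\Sigma\setminus\Sigma_I})_s$; this follows from Lemma \ref{some identities}(i),(iii) using that $\lambda$ is $\tau_s$-invariant, which is the same parity argument as in your off-diagonal case, so the gap is minor but should be written out.
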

\begin{proof} By Remark \ref{evaluation step by step}, for any permutation $\sigma\in S_{t}$ we have:
\begin{multline*}
\mathcal{D}_{R}^{v}\left(P_{\Sigma\setminus\Sigma_{I}}(x) gT(x+\tau_{s}(z))\right)=\\
\mathcal{D}^{v_{\Delta_{t}}}_{R_{\sigma(t)}}\left(\cdots\left(\mathcal{D}^{v_{\Delta_{1}}}_{R_{\sigma(1)}}\left(P_{\Sigma\setminus\Sigma_{I}}(x)gT(x+\tau_{s}(z))\right)\right)\cdots\right)
\end{multline*}
Let $\sigma$ be any permutation such that $\sigma(1)=s$. Since $P_{(\Sigma\setminus\Sigma_{I})\setminus \{s\}}(x)$ does not depend  on $x_{k_{s},i_{s}}$ and $x_{k_{s},j_{s}}$, the proof of the proposition can be completed similarly to the proof of  Proposition 4.7 in \cite{FGR2}.  We have
\begin{multline*}
 \mathcal{D}^{v_{\Delta_{1}}}_{R_{\sigma(1)}}\left(P_{\Sigma\setminus\Sigma_{I}}(x)gT(x+\tau_{s}(z))\right)\\
  =\begin{cases}
 P_{(\Sigma\setminus\Sigma_{I})\setminus \{s\}}(x) \mathcal{D}^{v_{\Delta_{1}}}_{R_{\sigma(1)}}\left(P_{\{s\}}(x)gT(x+\tau_{s}(z))\right), & \text{ if } \ \ I_{s}=\emptyset\\
P_{\Sigma\setminus\Sigma_{I}}(x) \mathcal{D}^{v_{\Delta_{1}}}_{R_{\sigma(1)}}\left(gT(x+\tau_{s}(z)))\right), & \text{ if }\ \ I_{s}\neq\emptyset.
 \end{cases}
\end{multline*}
\end{proof}

Based on  Lemma \ref{derivatives acting on products} and the fact that $\mathcal{D}_{\emptyset}T(v+z) = T(v+z)$, for any $g \in \mathfrak{gl}(n)$ and $I \subseteq R$, we define
\begin{equation} \label{def-action}
g \cdot \mathcal{D}_{I}T(v+z)=\mathcal{D}_{R}^{v}\left(P_{\Sigma\setminus\Sigma_{I}}(x) gT(x+z)\right).
\end{equation}
In order to check that $g \cdot \mathcal{D}_{I}T(v+z)$ is well-defined in $V(T(v))$ we need to verify the independence on the relations (\ref{relations satisfied by vectors on this universal module}) and that the right hand side of (\ref{def-action}) is in  $V(T(v))$.

\begin{lemma}
For $g \in \mathfrak{gl}(n)$ and $z \in  T_{n-1}(\mathbb{Z})$, $g \cdot \mathcal{D}_{I}T(v+z)$ is well-defined.
\end{lemma}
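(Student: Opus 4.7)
The content of the lemma is twofold: first, one must check that the right-hand side of the definition (\ref{def-action}) makes sense as a finite element of $\tilde V(T(v))$; second, one must verify that it respects the defining relations (\ref{relations satisfied by vectors on this universal module}) among the spanning set of $V(T(v))$, so that the same vector $\mathcal{D}_I T(v+z)$ cannot be sent to two different images.

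For the first (well-definedness of the RHS), I would expand $gT(x+z)$ via the generic Gelfand-Tsetlin formula of Proposition \ref{coefficients e_ij} as the finite sum
\[
gT(x+z)=\sum_{\sigma\in \Phi_{k\ell}}e_{k\ell}\big(\sigma(x+z)\big)\,T\!\big(x+z+\sigma(\varepsilon_{k\ell})\big).
\]
(Writing $g = E_{k\ell}$; the case of a general $g \in \mathfrak{gl}(n)$ reduces to this by linearity and brackets.) Multiplying by $P_{\Sigma\setminus\Sigma_I}(x)$ cancels the factors in the denominators supported on the critical hyperplanes $\mathcal{H}^{k_r}_{i_rj_r}$, $r\notin\Sigma_I$; the remaining denominators vanishing at $v$ can only come from critical hyperplanes in $\Sigma_I$, so each summand
\[
P_{\Sigma\setminus\Sigma_I}(x)\,e_{k\ell}\big(\sigma(x+z)\big)\,T\!\big(x+z+\sigma(\varepsilon_{k\ell})\big)
\]
lies in $\overline{\mathcal F}\otimes \mathcal V_{\rm gen}$, hence is in the domain of $\mathcal D^v_R$. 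Applying Definition \ref{def-d-i} on each term yields a finite linear combination of derivative tableaux $\mathcal D_J T(v+z+\sigma(\varepsilon_{k\ell}))$, confirming the output lies in $\tilde V(T(v))$.

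For the second (compatibility with (\ref{relations satisfied by vectors on this universal module})), the defining relations say that replacing $z$ by $\tau_s(z)$ multiplies $\mathcal D_I T(v+z)$ by $+1$ if $I_s=\emptyset$ and by $-1$ if $I_s\neq\emptyset$. Thus the required check is precisely
\[
\mathcal D^v_R\!\big(P_{\Sigma\setminus\Sigma_I}(x)\,gT(x+\tau_s(z))\big)
= (-1)^{\epsilon_s}\,\mathcal D^v_R\!\big(P_{\Sigma\setminus\Sigma_I}(x)\,gT(x+z)\big),
\]
with $\epsilon_s=0$ if $I_s=\emptyset$ and $\epsilon_s=1$ otherwise. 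But this identity is exactly the content of Proposition \ref{d-i-well-defined}, so there is nothing further to prove.

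The main (only) obstacle is the first point: making sure that expanding via the generic action produces coefficients for which the product $P_{\Sigma\setminus\Sigma_I}(x)\,e_{k\ell}(\sigma(x+z))$ is genuinely smooth on $\overline{\mathcal H}$, i.e.\ lies in $\overline{\mathcal F}$, so that $\mathcal D^v_R$ can be applied term by term. Once that smoothness check is recorded, the rest is a direct invocation of Proposition \ref{d-i-well-defined} and Definition \ref{def-d-i}.
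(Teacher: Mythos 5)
Your overall route is exactly the paper's: its proof of this lemma consists precisely of your two observations, namely that $P_{\Sigma\setminus\Sigma_{I}}(x)\,gT(x+z)$ lies in $\overline{\mathcal{F}}\otimes{\mathcal V}_{\rm gen}$ (so the right-hand side of (\ref{def-action}) makes sense), and that Proposition \ref{d-i-well-defined} gives the compatibility with the relations (\ref{relations satisfied by vectors on this universal module}).

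However, your justification of the first observation, which you yourself single out as the only real content, is backwards and as stated the inference fails. You claim that after multiplying by $P_{\Sigma\setminus\Sigma_{I}}(x)$ ``the remaining denominators vanishing at $v$ can only come from critical hyperplanes in $\Sigma_{I}$, so each summand lies in $\overline{\mathcal{F}}\otimes{\mathcal V}_{\rm gen}$.'' But $\overline{\mathcal{F}}$ consists of functions smooth on $\overline{\mathcal{H}}$, and $\overline{\mathcal{H}}$ meets every critical hyperplane; a denominator vanishing on $\mathcal{H}^{k_r}_{i_rj_r}$ with $r\in\Sigma_{I}$ is precisely one that $P_{\Sigma\setminus\Sigma_{I}}(x)$ does \emph{not} cancel, so if such a factor actually vanished the summand would \emph{not} lie in $\overline{\mathcal{F}}\otimes{\mathcal V}_{\rm gen}$. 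This really happens: for $n=3$ with the singular pair in row $2$, $I=R$ and $z_{21}=z_{22}$, one has $P_{\Sigma\setminus\Sigma_{I}}(x)=1$ while both coefficients of $E_{32}T(x+z)$ have denominator $\pm(x_{21}-x_{22})$ and hit distinct tableaux, so no cancellation occurs. The correct argument is different: a denominator of $e_{k\ell}(\sigma(x+z))$ is a same-row difference, hence vanishes at $v$ only when it equals $x_{k_ri_r}-x_{k_rj_r}+z_{k_ri_r}-z_{k_rj_r}$ with $z_{k_ri_r}=z_{k_rj_r}$ (differences of non-singular entries of $v$ are non-integral), and such a pole is simple because there are no singular triples. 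For the tableaux on which the action genuinely has to be defined --- those with $\mathcal{D}_{I}T(v+z)\neq 0$, equivalently the basis of Remark \ref{rem-basis} --- every such $r$ satisfies $I_{r}=\emptyset$, i.e.\ $r\in\Sigma\setminus\Sigma_{I}$, so the corresponding factor of $P_{\Sigma\setminus\Sigma_{I}}(x)$ removes the pole and the product is indeed in $\overline{\mathcal{F}}\otimes{\mathcal V}_{\rm gen}$; in the degenerate case ($I_{r}\neq\emptyset$ and $z$ $\tau_{r}$-invariant) the tableau is already zero by (\ref{relations satisfied by vectors on this universal module}), so no formula needs to be attached to it. With this correction your argument closes the gap and coincides with the paper's proof.
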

\begin{proof}Note that $P_{\Sigma\setminus\Sigma_{I}}(x) gT(x+z) \in \overline{\mathcal F} \otimes {\mathcal V}_{\rm gen}$, hence the right hand side of (\ref{def-action}) is well-defined. Also, by Proposition \ref{d-i-well-defined}, we verify that  
$g \cdot \mathcal{D}_{I}T(v+\tau_r(z)) = (-1)^{|I_r|}g \cdot \mathcal{D}_{I}T(v+z)$ which implies the independence on (\ref{relations satisfied by vectors on this universal module}).
 \end{proof}

The following theorem shows that $V(T(v))$ has a $\mathfrak{gl}(n)$-module structure. Recall that the  action of the generators $E_{rs}$ on $T(x+z)$ is defined by the formulas (\ref{formula of coefficients e_ij}) in Proposition \ref{coefficients e_ij}.

\begin{theorem}\label{Gelfand-Tsetlin module over gl(n)}
The formulas (\ref{def-action}) endow $V(T(v))$ with a structure of a $\mathfrak{gl}(n)$-module.
\end{theorem}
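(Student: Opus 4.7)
The plan is to reduce the theorem to checking the Lie-algebra relations $[E_{k\ell},E_{rs}] = \delta_{\ell r}E_{ks} - \delta_{sk}E_{r\ell}$ on each derivative tableau $\mathcal{D}_I T(v+z)$, and to do this by transferring the corresponding identity from the generic module $\mathcal{V}_{\rm gen}$ through the evaluation map $\mathcal{D}_R^v(P_{\Sigma\setminus\Sigma_I}(x)\,\cdot\,)$. The preceding lemma already establishes well-definedness, so the remaining content is the bracket identity.

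First, I would observe the key ``factorization'' claim: for any $g_1,g_2\in\mathfrak{gl}(n)$,
\[
g_1\cdot(g_2\cdot\mathcal{D}_I T(v+z)) \;=\; \mathcal{D}_R^v\bigl(P_{\Sigma\setminus\Sigma_I}(x)\,g_1 g_2\,T(x+z)\bigr).
\]
Once this is known, the bracket identity in $V(T(v))$ follows immediately by applying $\mathcal{D}_R^v(P_{\Sigma\setminus\Sigma_I}(x)\,\cdot\,)$ to the corresponding identity $[E_{k\ell},E_{rs}]T(x+z) = (\delta_{\ell r}E_{ks}-\delta_{sk}E_{r\ell})T(x+z)$, which holds in $\mathcal{V}_{\rm gen}$ by Proposition \ref{coefficients e_ij}, together with the observation that $E_{ks}\cdot\mathcal{D}_I T(v+z)$ and $E_{r\ell}\cdot\mathcal{D}_I T(v+z)$ are defined by applying $\mathcal{D}_R^v(P_{\Sigma\setminus\Sigma_I}(x)\,\cdot\,)$ to the generic-module expression.

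The main obstacle is the factorization identity itself: after expanding $E_{rs}\cdot\mathcal{D}_I T(v+z)$ according to (\ref{def-action}), one obtains a sum of derivative tableaux $\mathcal{D}_{I'}T(v+z')$ indexed by $\sigma\in\Phi_{rs}$, and applying $E_{k\ell}$ to each summand triggers another round of derivatives $\mathcal{D}_R^{v}(P_{\Sigma\setminus\Sigma_{I'}}(x)\,\cdot\,)$. The resulting $I'$ and the shifts $z'=z+\sigma(\varepsilon_{rs})$ depend on $\sigma$, and the coefficients $e_{k\ell}(\sigma_1,\sigma_2)$ may be singular precisely on the critical hyperplanes. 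To reassemble this into one evaluation $\mathcal{D}_R^v(P_{\Sigma\setminus\Sigma_I}(x)E_{k\ell}E_{rs}T(x+z))$, I would group the terms according to the total shift $\sigma_1(\varepsilon_{k\ell})+\sigma_2(\varepsilon_{rs})$ using the orbits $\Phi_{(\sigma_1,\sigma_2)}$ of Definition \ref{convention for e(sigma) etc}. For each orbit, Lemma \ref{lm-rs-ident}(i) ensures that the relevant sum becomes smooth after multiplication by $P_{\Omega_0}(x)$, Corollary \ref{we can assume singularities are the same} lets me replace pairs $(\sigma_1,\sigma_2)$ by pairs with synchronized singularities $\Omega_{\sigma_1}=\Omega_{\sigma_2}$, and Lemma \ref{behavior of e_rs with action of tau star}(ii) converts the $\tau^\star$-shifts into signs matching exactly the sign rule (\ref{relations satisfied by vectors on this universal module}) for derivative tableaux.

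Finally, the passage from the grouped generic-side expression to the derivative side uses Proposition \ref{Most general version of lemma 8.2} as the main technical workhorse: the hypotheses of that proposition are verified precisely by Lemma \ref{lm-rs-ident}(ii) (vanishing of bracket coefficients on orbits meeting $\Omega_{\sigma_1}\cap\Omega_{\sigma_2}$) together with Lemma \ref{release conditions}. This converts a sum $\sum_m f_m(y)g_m(y)$ of possibly singular products into an expression involving $\mathcal{D}^v_{I\cup R_\Delta}$ applied to smooth factors, which is exactly the shape produced by the second application of the action (\ref{def-action}). Matching these two expressions term by term using Lemma \ref{derivatives acting on products} to remove the prefactors $P_{\Sigma\setminus\Sigma_I}(x)$ and $P_{\Sigma\setminus\Sigma_{I'}}(x)$ completes the factorization identity, and hence the proof of the bracket relation and of the theorem.
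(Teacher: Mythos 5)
Your reduction of the theorem to the ``factorization'' identity
\[
g_1\cdot\bigl(g_2\cdot\mathcal{D}_I T(v+z)\bigr)=\mathcal{D}_R^v\bigl(P_{\Sigma\setminus\Sigma_I}(x)\,g_1g_2\,T(x+z)\bigr)
\]
is the gap. This product-level identity is not what the paper proves, and it fails in general: first, the right-hand side need not even be defined, since the coefficients $e_{k\ell}(\sigma_1,\sigma_2)$ appearing in $E_{k\ell}E_{rs}T(x+z)$ acquire poles on critical hyperplanes indexed by $\Omega(\sigma_2)$, and the single prefactor $P_{\Sigma\setminus\Sigma_I}(x)$ does not put $P_{\Sigma\setminus\Sigma_I}(x)E_{k\ell}E_{rs}T(x+z)$ into $\overline{\mathcal{F}}\otimes\mathcal{V}_{\rm gen}$, which is the domain of $\mathcal{D}_R^v$ (compare Proposition \ref{D_I commutes with gl(n)}, whose hypothesis $g(F)\in\overline{\mathcal{F}}\otimes\mathcal{V}_{\rm gen}$ is exactly what breaks down here). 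Second, the tools you invoke to prove it are intrinsically tied to the commutator, not to the product: Lemma \ref{lm-rs-ident}(i)--(ii) gives smoothness of $P_{\Omega_0}(x)C(\sigma_1,\sigma_2)$ and the vanishing on orbits with $\Omega_{\sigma_1}\cap\Omega_{\sigma_2}\neq\emptyset$ precisely because $C(\sigma_1,\sigma_2)$ is the coefficient in $[E_{k\ell},E_{rs}]T(x)$; likewise the hypotheses of Proposition \ref{Most general version of lemma 8.2} (smoothness of $\sum_m f_m(y)g_m(y)$ and the twisted vanishing $\sum_m f_m(y)g_m(y)^{\tau_{\bar\Delta}}=0$) are supplied only by this antisymmetrized combination. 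For $E_{k\ell}E_{rs}$ alone none of this is available, so the plan cannot close.

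What the paper actually does is prove the bracket relation (\ref{module structure relation}) directly, without any product-level factorization: it writes the two-step side as $\sum_{(\sigma_1,\sigma_2)}(L_1-L_2)$ and the one-step side as $\mathcal{D}_R^v\bigl(\sum_{(\sigma_1,\sigma_2)}(R_1-R_2)\bigr)$, uses Corollary \ref{we can assume singularities are the same} to reduce to $\widehat{\Omega}(\sigma_1,\sigma_2)=\widehat{\Omega}(\sigma_2,\sigma_1)=\Delta$, and then matches the two sides term by term when $\Delta=\emptyset$ (Proposition \ref{Both Omega hat are emptyset}) and only after regrouping over $\Phi_{(\sigma_1',\sigma_2')}$ and the $\tau^{\star}_{\Delta_1},\tau^{\star}_{\Delta_2}$-twists when $\Delta\neq\emptyset$ (Proposition \ref{some Omega hat is not emptyset}). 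In other words, your identity is true term by term exactly in the $\widehat{\Omega}=\emptyset$ case, but in the singular case individual compositions and individual evaluated products genuinely differ, and only their antisymmetrized, regrouped sums agree; your third and fourth paragraphs describe the right machinery but aim it at a statement it cannot establish.
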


In order to prove Theorem \ref{Gelfand-Tsetlin module over gl(n)} we will show that for any $I\subseteq R$ and any $1\leq k,\ \ell,\ r,\ s\leq n$ we have the following relations:
\begin{equation}\label{module structure relation}
[E_{k\ell}, E_{rs}] (\mathcal{D}_{I}T(v + z))=E_{k\ell}( E_{rs} (\mathcal{D}_{I}T(v + z))) - E_{rs}(E_{k\ell} (\mathcal{D}_{I}T(v + z)))
\end{equation}

The cases $r=s$  or $k=\ell$ follow by a straightforward computation. Assume now  that $r \neq s$ and $k\neq \ell$. Let $(\sigma_1, \sigma_2) \in \Phi_{k\ell}\times\Phi_{rs}$.  For convenience we will use the following convention (see Definition \ref{convention for e(sigma) etc}):
\begin{eqnarray*}
R_1(\sigma_1, \sigma_2) &=& P_{\Sigma\setminus\Sigma_{I}}(v) e_{rs} (\sigma_2)   e_{k\ell} (\sigma_1, \sigma_2 ) T(\sigma_1 + \sigma_2),\\
R_2(\sigma_1, \sigma_2)  &=&  P_{\Sigma\setminus\Sigma_{I}}(v) e_{k\ell} (\sigma_1)   e_{rs} (\sigma_2, \sigma_1 ) T(\sigma_1 + \sigma_2)\\
L_1(\sigma_1, \sigma_2) &=&
\sum\limits_{\substack{J\subseteq R\\ J\cap R_{\Omega_{\sigma_{2}}}=\emptyset}}{\mathcal D}_{R\setminus J}^{v} (P_{\Sigma\setminus\Sigma_{I}}(v)e_{rs} (\sigma_2)) {\mathcal D}_{R}^{v} \left( P_{\Sigma\setminus\Sigma_{J}}(x) e_{k\ell} (\sigma_1, \sigma_2 ) T(\sigma_1 + \sigma_2)\right)\\
L_2(\sigma_1, \sigma_2)  &=& \sum\limits_{\substack{J\subseteq R\\ J\cap R_{\Omega_{\sigma_{1}}}=\emptyset}}{\mathcal D}_{R\setminus J}^{v} \left(P_{\Sigma\setminus\Sigma_{I}}(x)e_{k\ell} (\sigma_1)) {\mathcal D}_{R}^{v} (P_{\Sigma\setminus\Sigma_{J}}(x) e_{rs} (\sigma_2, \sigma_1 ) T(\sigma_1 + \sigma_2)\right)
\\
\end{eqnarray*}
Note that if $I\cap R_{\Omega_{0}}\neq\emptyset$, then $\mathcal{D}_{I}T(v + z)=0$. Therefore, we will consider $I\subseteq R$ such that $I\cap R_{\Omega_{0}}=\emptyset$. By the definition of the $\mathfrak{gl}(n)$-action on $\mathcal{D}_{I}T(v + z)$ we have:
\begin{equation}\label{eq-left}
E_{k\ell}( E_{rs} (\mathcal{D}_{I}T(v + z))) - E_{rs}(E_{k\ell} (\mathcal{D}_{I}T(v + z))) = \sum\limits_{(\sigma_1, \sigma_2)\in\Phi_{k\ell}\times\Phi_{rs}} (L_1(\sigma_1, \sigma_2) - L_2 (\sigma_1, \sigma_2))
\end{equation}
and 
\begin{equation}\label{eq-right}
[E_{k\ell}, E_{rs}] (\mathcal{D}_{I}T(v + z)) =  \mathcal{D}_{R}^{\overline{v}}(P_{\Sigma\setminus\Sigma_{I}(x)} [E_{k\ell}, E_{rs}] T(x + z))=
\end{equation}
$$
 = \mathcal{D}_{R}^{\overline{v}}  \left( \sum\limits_{(\sigma_1, \sigma_2)\in\Phi_{k\ell}\times\Phi_{rs}} (R_1(\sigma_1, \sigma_2) - R_2 (\sigma_1, \sigma_2))\right).$$

Therefore, to prove equation (\ref{module structure relation}) we need to prove that the right hand sides of (\ref{eq-left}) and (\ref{eq-right}) coincide. This will be a direct consequence of Propositions \ref{Both Omega hat are emptyset} and \ref{some Omega hat is not emptyset} below. In fact, by Corollary \ref{we can assume singularities are the same}, we can assume $\widehat{\Omega}(\sigma_{2},\sigma_{1})=\widehat{\Omega}(\sigma_{1},\sigma_{2})=\Delta$ and consider the two cases $\Delta=\emptyset$ (Proposition \ref{Both Omega hat are emptyset}) and $\Delta\neq\emptyset$  (Proposition \ref{some Omega hat is not emptyset}).

 \begin{proposition}\label{Both Omega hat are emptyset}
Set $k\neq \ell$,  $r\neq s$  and $(\sigma_1,\sigma_{2}) \in \Phi_{k\ell}\times \Phi_{rs}$.
\begin{itemize}
\item[(i)] If $\widehat{\Omega}(\sigma_{2},\sigma_{1})=\emptyset$, then  $L_{1}(\sigma_1,\sigma_2) =  \mathcal{D}_{R}^{{v}} (R_{1}(\sigma_1,\sigma_2))$.
\item[(ii)] If $\widehat{\Omega}(\sigma_{1},\sigma_{2})=\emptyset$, then $L_{2}(\sigma_1,\sigma_2) =  \mathcal{D}_{R}^{{v}} (R_{2}(\sigma_1,\sigma_2))$.
\end{itemize}
\end{proposition}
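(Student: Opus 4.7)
The plan is to prove part (i); part (ii) follows by the symmetric argument (swapping the roles of $(\sigma_1,\sigma_2)$ and of the pairs $(k,\ell), (r,s)$). Starting from $\mathcal{D}_R^v(R_1(\sigma_1,\sigma_2))$, I split the integrand as $F_1 \cdot F_2$ with $F_1 = P_{\Sigma\setminus\Sigma_I}(x)\, e_{rs}(\sigma_2)$ and $F_2 = e_{k\ell}(\sigma_1,\sigma_2)\, T(\sigma_1+\sigma_2)$, and apply the Leibniz rule from Definition \ref{def-d-i} to obtain
$$\mathcal{D}_R^v(R_1(\sigma_1,\sigma_2)) = \sum_{J\subseteq R} \mathcal{D}_{R\setminus J}^v(F_1)\, \mathcal{D}_J^v(F_2).$$
Lemma \ref{derivatives acting on products} with $\Delta = \Sigma\setminus\Sigma_J$ (so that $R_\Delta = R \setminus J$ and $R \setminus R_\Delta = J$) gives $\mathcal{D}_J^v(F_2) = \mathcal{D}_R^v\bigl(P_{\Sigma\setminus\Sigma_J}(x)\, F_2\bigr)$, so each summand already has the precise form appearing in $L_1(\sigma_1,\sigma_2)$. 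The only discrepancy is the summation range: the Leibniz expansion runs over all $J \subseteq R$, while $L_1$ is restricted by $J \cap R_{\Omega_{\sigma_2}} = \emptyset$. It therefore suffices to show that the summands with $J \cap R_{\Omega_{\sigma_2}} \neq \emptyset$ vanish.

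For such a $J$, I pick $u \in \Omega_{\sigma_2}$ with $J_u \neq \emptyset$. The assumption $\widehat{\Omega}(\sigma_2,\sigma_1) = \Omega_{\sigma_2} \cap \widetilde{\Omega}(\sigma_1) = \emptyset$ forces $u \notin \widetilde{\Omega}(\sigma_1)$, which has two consequences. First, inspecting $\varepsilon_{k\ell}$ (supported on positions $(m,1)$ with $\min\{k,\ell\}\leq m \leq \max\{k,\ell\}-1$) shows that $\sigma_1(\varepsilon_{k\ell})$ has equal entries at positions $(k_u,i_u)$ and $(k_u,j_u)$; hence $\sigma_1(\varepsilon_{k\ell})$ is $\tau_u$-invariant, and combined with the $\tau_u$-invariance of $z + \sigma_2(\varepsilon_{rs})$ (from $u \in \Omega(\sigma_2)$) the shift $z' := z + \sigma_1(\varepsilon_{k\ell}) + \sigma_2(\varepsilon_{rs})$ is $\tau_u$-invariant. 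Second, since $\sigma_1 \notin \Phi_{k\ell}(i_u) \cup \Phi_{k\ell}(j_u)$ we have $\tau_u^\star(\sigma_1) = \sigma_1$, so Lemma \ref{behavior of e_rs with action of tau star}(i) applied with $\bar{\Delta} = \{u\}$ yields $e_{k\ell}(\sigma_1,\sigma_2)^{\tau_u} = e_{k\ell}(\tau_u^\star(\sigma_1),\sigma_2)^{\tau_u} = e_{k\ell}(\sigma_1,\sigma_2)$.

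With these two invariances in hand, I expand
$$\mathcal{D}_J^v(F_2) = \sum_{K \subseteq J} \mathcal{D}_{J\setminus K}^v\bigl(e_{k\ell}(\sigma_1,\sigma_2)\bigr)\, \mathcal{D}_K T(v+z'),$$
and check that every summand vanishes: if $K_u \neq \emptyset$, then the relation (\ref{relations satisfied by vectors on this universal module}) combined with $\tau_u(z') = z'$ forces $\mathcal{D}_K T(v+z') = 0$; if instead $K_u = \emptyset$, then $(J\setminus K)_u = J_u \neq \emptyset$, so $u \in \Sigma_{J\setminus K}$, and Lemma \ref{some identities}(i) applied to the $\tau_u$-invariant smooth function $e_{k\ell}(\sigma_1,\sigma_2)$ gives $\mathcal{D}_{J\setminus K}^v(e_{k\ell}(\sigma_1,\sigma_2)) = 0$. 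Thus $\mathcal{D}_J^v(F_2) = 0$ for every $J$ with $J \cap R_{\Omega_{\sigma_2}} \neq \emptyset$, which concludes the proof of part (i).

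The main obstacle is this final vanishing argument: the Leibniz expansion of $\mathcal{D}_J^v(F_2)$ distributes the derivatives across both a rational function and a tableau in many ways, and one must kill every distribution simultaneously. The hypothesis $\widehat{\Omega}(\sigma_2,\sigma_1) = \emptyset$ is precisely what synchronizes the $\tau_u$-symmetries of the function $e_{k\ell}(\sigma_1,\sigma_2)$ and of the shift $z'$, so that a single index $u$ suffices to kill all terms. A secondary technical point is verifying that $e_{k\ell}(\sigma_1,\sigma_2)$ is smooth enough in $\overline{\mathcal{F}}$ for Lemma \ref{some identities}(i) to apply; this follows from the fact that its potential denominators vanish only on the critical hyperplanes $\mathcal{H}_{i_r j_r}^{k_r}$.
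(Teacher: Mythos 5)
Your proposal is correct and takes essentially the same route as the paper: the paper's own proof derives from $\widehat{\Omega}(\sigma_{2},\sigma_{1})=\emptyset$ exactly the two facts you use, namely the $\tau_{u}$-invariance of $z+\sigma_{1}(\varepsilon_{k\ell})+\sigma_{2}(\varepsilon_{rs})$ (killing the derivative tableaux) and the $\tau_{u}$-invariance and smoothness of $e_{k\ell}(\sigma_1,\sigma_2)$ via Lemma \ref{behavior of e_rs with action of tau star}(i) (killing its $\mathcal{D}$-derivatives), and then matches the Leibniz expansion of $\mathcal{D}^{v}_{R}(R_1)$ with $L_1$. Your write-up simply makes explicit the ``straightforward computation'' the paper leaves to the reader, working with $u\in\Omega_{\sigma_2}$ (where the hypothesis actually applies) rather than all of $\Omega(\sigma_2)$.
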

\begin{proof} Recall  that $\widehat{\Omega}(\sigma,\sigma')=\Omega_{\sigma}\cap\widetilde{\Omega}(\sigma')$ by definition.
\begin{itemize}
\item[(i)]
The hypothesis  $\widehat{\Omega}(\sigma_{2},\sigma_{1})=\emptyset$ implies the following:
\begin{itemize}
\item[(a)] The vector $z+\sigma_{1}(\varepsilon_{k\ell})+\sigma_{2}(\varepsilon_{rs})$ is $\tau_{u}$-invariant for any $u\in\Omega(\sigma_{2})$. In particular, if $K\cap R_{\Omega(\sigma_{2})}\neq\emptyset$, then $\mathcal{D}_{K}T(\sigma_{1}+\sigma_{2})=0$. 
\item[(b)] $ e_{k\ell} (\sigma_1, \sigma_2 )$ is a smooth function.
\item[(c)]By Lemma \ref{behavior of e_rs with action of tau star}(i), $ e_{k\ell} (\sigma_1, \sigma_2 )$ is $\tau_{u}$-invariant for any $u\in \Omega(\sigma_2)$. So, $\mathcal{D}_{J}(e_{k\ell} (\sigma_1, \sigma_2))=0$ whenever $J\cap R_{\Omega(\sigma_{2})}\neq\emptyset$.
\end{itemize}

Now a straightforward computation shows that $\mathcal{D}_{R}^{v}(R_{1}(\sigma_1,\sigma_2))=L_{1}(\sigma_1,\sigma_2)$.
\item[(ii)] The proof is analogous to the proof of part (i).
\end{itemize}
\end{proof}

\begin{proposition}\label{some Omega hat is not emptyset}
Let  $(\sigma_1',\sigma_2')\in\Phi_{k\ell}\times\Phi_{rs}$ be such that $\widehat{\Omega}(\sigma_{2}',\sigma_{1}')=\widehat{\Omega}(\sigma_{1}',\sigma_{2}')=\Delta\neq\emptyset$. Then $L(\sigma'_1,\sigma'_2) =  \mathcal{D}_{R}^{\overline{v}} (R(\sigma'_1,\sigma'_2))$, where $$L(\sigma'_{1},\sigma'_{2})  =
%\begin{eqnarray*}
  \sum_{\Phi_{(\sigma_1',\sigma_2')}}\left(\sum_{\substack{\Delta_{1},\Delta_2\subseteq\Delta\\\Delta_{1}\cap\Delta_2=\emptyset}}L_1(\tau^{\star}_{\Delta_1}(\sigma_1), \tau^{\star}_{\Delta_2}(\sigma_2))- L_2(\tau^{\star}_{\Delta_1}(\sigma_1), \tau^{\star}_{\Delta_{2}}(\sigma_2))\right),$$
$$R(\sigma'_1,\sigma'_2) = \sum_{ \Phi_{(\sigma_1',\sigma_2')}}\left(\sum_{\substack{\Delta_{1},\Delta_2\subseteq\Delta\\\Delta_{1}\cap\Delta_2=\emptyset}}R_1(\tau^{\star}_{\Delta_1}(\sigma_1), \tau^{\star}_{\Delta_2}(\sigma_2))- R_2(\tau^{\star}_{\Delta_1}(\sigma_1), \tau^{\star}_{\Delta_{2}}(\sigma_2))\right),$$
%\end{eqnarray*}
and the outer sums on the right hand sides are taken over all $(\sigma_{1},\sigma_{2})\in\Phi_{(\sigma_1',\sigma_2')}$.
\end{proposition}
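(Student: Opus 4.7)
The plan is to prove the identity by reducing to a form where Lemma~\ref{lm-rs-ident}(ii) applies and then matching the derivative expansion on the right-hand side with $L(\sigma_1',\sigma_2')$. By Corollary~\ref{we can assume singularities are the same}, we may choose $\Delta_{1}',\Delta_{2}'\subseteq\Delta$ with $\Delta_{1}'\cap\Delta_{2}'=\emptyset$ such that $\Omega_{\tau^{\star}_{\Delta_{1}'}(\sigma_{1}')} = \Omega_{\tau^{\star}_{\Delta_{2}'}(\sigma_{2}')} = \Delta$. Because the outer summation in both $L(\sigma_{1}',\sigma_{2}')$ and $R(\sigma_{1}',\sigma_{2}')$ runs over the entire fiber $\Phi_{(\sigma_{1}',\sigma_{2}')}$ and over all ordered pairs $\Delta_{1},\Delta_{2}\subseteq\Delta$ with $\Delta_{1}\cap\Delta_{2}=\emptyset$, and since by Lemma~\ref{relations in Phi bar} both $\Phi_{(\cdot,\cdot)}$ and $\Omega_{\cdot}\cap\Omega_{\cdot}$ are invariant on this family, this reduction is purely formal: we may assume from the start that $\Omega_{\sigma_{1}'}=\Omega_{\sigma_{2}'}=\Delta$.

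Next I would rewrite both $L(\sigma_{1}',\sigma_{2}')$ and $\mathcal{D}_{R}^{v}(R(\sigma_{1}',\sigma_{2}'))$ as $\mathcal{D}_{R}^{v}$ applied to expressions of the form $P_{\Sigma\setminus\Sigma_{I}}(x)\sum f_{m}(y)g_{m}(y)$, where the $f_{m}$ come from the $e_{rs}$- or $e_{k\ell}$-factor and the $g_{m}$ from the remaining factor times $T(\sigma_{1}+\sigma_{2})$. The essential point is that for any nonempty $\bar{\Delta}\subseteq\Delta$, the hypothesis of Proposition~\ref{Most general version of lemma 8.2} holds by Lemma~\ref{lm-rs-ident}(ii), because $\Omega_{\tau^{\star}_{\bar{\Delta}}(\sigma_{1})}\cap\Omega_{\tau^{\star}_{\bar{\Delta}}(\sigma_{2})}\supseteq\Delta\setminus\bar{\Delta}$ will still be nonempty in the complementary configurations, making the twisted sums vanish. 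Smoothness of the relevant functions follows from Lemma~\ref{lm-rs-ident}(i).

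Having verified the hypotheses, I apply Proposition~\ref{Most general version of lemma 8.2} to both $R_{1}$-parts and $R_{2}$-parts. Each application of this proposition produces exactly a sum over subsets $J\subseteq R$ with $J\cap R_{\Delta}=\emptyset$ of terms $2^{|\Delta|}\mathcal{D}^{v}_{(R\setminus J)\cup R_{\Delta}}(f_{m})\,\mathcal{D}^{v}_{J\cup(R_{\Delta}\cap R)}(P_{\Delta}(x)g_{m})$. These terms should be recognised, using Lemmas~\ref{derivatives acting on products}, \ref{behavior of e_rs with action of tau star}, and the fact that $(R_{\Delta})_{r}=R_{r}$ for $r\in\Delta$, as precisely the building blocks of $L_{1}(\tau^{\star}_{\Delta_{1}}(\sigma_{1}),\tau^{\star}_{\Delta_{2}}(\sigma_{2}))-L_{2}(\tau^{\star}_{\Delta_{1}}(\sigma_{1}),\tau^{\star}_{\Delta_{2}}(\sigma_{2}))$, summed over $\Delta_{1}\cap\Delta_{2}=\emptyset$ with $\Delta_{1}\cup\Delta_{2}\subseteq\Delta$. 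The disjointness $\Delta_{1}\cap\Delta_{2}=\emptyset$ arises naturally because the derivatives indexed by $R_{\Delta}$ either hit the $e_{rs}$-factor (giving $\Delta_{2}$-twists) or the $e_{k\ell}$-factor (giving $\Delta_{1}$-twists), but not both simultaneously on the same index.

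The main obstacle will be the bookkeeping in the last step: tracking the signs produced by Lemma~\ref{behavior of e_rs with action of tau star}(ii) under the various $\tau^{\star}_{\bar{\Delta}}$-twists, and checking that the combinatorial identity $\sum_{\Delta_{1}\cap\Delta_{2}=\emptyset,\ \Delta_{1}\cup\Delta_{2}\subseteq\Delta}(\pm)\cdots$ on the left matches the single sum $\sum_{J\subseteq R,\ J\cap R_{\Delta}=\emptyset}$ produced on the right. This requires identifying $J$ with $R\setminus R_{\Delta_{1}\cup\Delta_{2}}$ (up to parts in $R_{\Omega_{0}}$, which annihilate by the convention $I\cap R_{\Omega_{0}}=\emptyset$) and carefully comparing factors of $2^{|\Delta|}$ against the cardinality of $\{(\Delta_{1},\Delta_{2})\mid \Delta_{1}\cap\Delta_{2}=\emptyset\}$ contributing to a fixed $J$. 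Once this matching is established, the equality $L(\sigma_{1}',\sigma_{2}')=\mathcal{D}_{R}^{v}(R(\sigma_{1}',\sigma_{2}'))$ follows.
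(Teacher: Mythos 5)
Your plan follows the same route as the paper's proof (reduce via Lemma~\ref{lm-rs-ident}, apply Proposition~\ref{Most general version of lemma 8.2}, match with the $L$-side through Lemma~\ref{behavior of e_rs with action of tau star} and sign bookkeeping), but the step where you verify the hypothesis of Proposition~\ref{Most general version of lemma 8.2} does not work as stated. That hypothesis requires $\sum_m f_m(y)\,g_m(y)^{\tau_{\bar\Delta}}=0$ for \emph{every} nonempty $\bar\Delta\subseteq\Delta$, and it twists only the $g$-factor; your justification via ``$\Omega_{\tau^{\star}_{\bar\Delta}(\sigma_1)}\cap\Omega_{\tau^{\star}_{\bar\Delta}(\sigma_2)}\supseteq\Delta\setminus\bar\Delta$ nonempty'' is not the relevant quantity and in any case fails at $\bar\Delta=\Delta$. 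More seriously, the vanishing that Lemma~\ref{lm-rs-ident}(ii) actually provides is that of the function $C(\cdot,\cdot)$, i.e.\ of the sum over the \emph{entire} fiber $\Phi_{(\sigma_1',\sigma_2')}$ of the two product types $e_{rs}\,e_{k\ell}$ and $e_{k\ell}\,e_{rs}$ taken with opposite signs. Consequently you cannot apply Proposition~\ref{Most general version of lemma 8.2} ``to both $R_1$-parts and $R_2$-parts'' separately: the hypotheses only hold for the single family $f_p,g_p$, $p=1,\ldots,2s$, built, for each fixed $\Delta_2\subsetneq\Delta$, from both products over the whole fiber, which is exactly how the paper sets it up.

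The matching you defer to ``bookkeeping'' is in fact the core of the argument and is left unproved. Two reductions are needed before any matching is possible: (a) on the $R$-side, for disjoint $\Delta_1,\Delta_2$ with $\Delta_1\cup\Delta_2\neq\Delta$ one has $\Omega_{\tau^{\star}_{\Delta_1}(\sigma_1)}\cap\Omega_{\tau^{\star}_{\Delta_2}(\sigma_2)}=\Delta\setminus(\Delta_1\cup\Delta_2)\neq\emptyset$, so the fiber sum of $R_1-R_2$ vanishes by Lemma~\ref{lm-rs-ident}(ii) and only the complementary pairs $\Delta_1=\Delta\setminus\Delta_2$ survive; (b) on the $L$-side, for fixed $\Delta_2$ the combination of $L_1$- and $L_2$-terms is independent of $\Delta_1$ (this uses the tableau relations of Lemma~\ref{relations of the module for any Delta}, the sign identity of Corollary~\ref{better generalization of 8.7}, and the constraint that $J$ avoids $R_{\Omega_{\tau^{\star}}}$), which produces the multiplicity $2^{|\Delta\setminus\Delta_2|}$ that must be matched against the $2^{|\cdot|}$ factor coming out of Proposition~\ref{Most general version of lemma 8.2}. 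Your proposed identification of $J$ with $R\setminus R_{\Delta_1\cup\Delta_2}$ is not how the two sides line up: in Proposition~\ref{Most general version of lemma 8.2} the index $J$ only records how derivatives distribute between the two factors and is constrained by $I$, whereas the twists $\Delta_1,\Delta_2$ are disposed of before the derivative expansion. Without (a) and (b) the sign-and-cardinality comparison you describe cannot be completed, so the proof has a genuine gap at its decisive step.
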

\begin{proof} In order to show that $L(\sigma'_1,\sigma'_2) =  \mathcal{D}_{R}^{\overline{v}} (R(\sigma'_1,\sigma'_2))$ we will use the followings facts:
\begin{itemize}
\item[(a)] By Remark \ref{rem: intersections} and Lemma \ref{relations in Phi bar}(ii), for each $(\sigma_1,\sigma_2)\in\Phi_{(\sigma_1',\sigma'_2)}$ we have $\Omega_{\sigma_{1}}\cap\Omega_{\sigma_{2}}=\Delta$.
\item[(b)] By (a), for each $\bar{\Delta}\subseteq\Delta$ we have $\Omega_{\tau^{\star}_{\bar{\Delta}}(\sigma_{i})}=\Delta\setminus\bar{\Delta}$, for $i=1,2$.
\item[(c)] By (b) we have $\widehat{\Omega}(\tau^{\star}_{\Delta_1}(\sigma_{1}),\tau^{\star}_{\Delta_2}(\sigma_{2}))=\Delta\setminus\Delta_1$ and $\widehat{\Omega}(\tau^{\star}_{\Delta_2}(\sigma_{2}),\tau^{\star}_{\Delta_1}(\sigma_{1}))=\Delta\setminus\Delta_{2}$. In particular, by Proposition \ref{Both Omega hat are emptyset} we have $L_1(\sigma_1, \tau^{\star}_{\Delta}(\sigma_2))=\mathcal{D}_{R}^{v}(R_1(\sigma_1, \tau^{\star}_{\Delta}(\sigma_2)))$ and $L_2(\tau^{\star}_{\Delta}(\sigma_1), \sigma_2)=\mathcal{D}_{R}^{v}(R_2(\tau^{\star}_{\Delta}(\sigma_1), \sigma_2))$.
\item[(d)] Since $\Omega_{\tau^{\star}_{\Delta_1}(\sigma_{1})}\cap\Omega_{\tau^{\star}_{\Delta_2}(\sigma_{2})}=\Delta\setminus(\Delta_{1}\cup\Delta_2)$, by Lemma \ref{lm-rs-ident}(ii) we have that for any $\Delta_{1},\Delta_{2}$ such that $\Delta_{1}\cup\Delta_{2}\neq\Delta$:
$$\sum_{\Phi_{(\sigma_1',\sigma_2')}}\left(R_1(\tau^{\star}_{\Delta_1}(\sigma_1), \tau^{\star}_{\Delta_2}(\sigma_2))- R_2(\tau^{\star}_{\Delta_1}(\sigma_1), \tau^{\star}_{\Delta_{2}}(\sigma_2))\right)=0.$$
\item[(e)] If $\Delta_{1}\cap\Delta_{2}=\emptyset$,  then for each $(\sigma_{1},\sigma_{2})\in\Phi_{(\sigma'_1,\sigma'_2)}$ we have  $$z+\tau^{\star}_{\Delta_{1}}(\sigma_{1})(\varepsilon_{k\ell})+\tau^{\star}_{\Delta_2}(\sigma_{2})(\varepsilon_{rs})=\tau_{\Delta_{1}\cup\Delta_{2}}(z+\sigma_1(\varepsilon_{k\ell})+\sigma_2(\varepsilon_{rs})).$$
In particular, for any $K\subseteq R$ we can use Lemma \ref{relations of the module for any Delta} to obtain:
\begin{align*}\mathcal{D}_{K}T(\tau^{\star}_{\Delta_{1}}(\sigma_1)+ \tau^{\star}_{\Delta_2}(\sigma_2))&=\mathcal{D}_{K}T(\tau^{\star}_{\Delta_2}(\sigma_1)+ \tau^{\star}_{\Delta_{1}}(\sigma_2))\\
&=(-1)^{|\{r\in\Delta_{1}\cup\Delta_{2}\; | \; K_{r}\neq\emptyset\}|}\mathcal{D}_{K}T(\sigma_1+ \sigma_2).
\end{align*}

\item[(f)] If  $\Delta_{1}\cap\Delta_{2}=\emptyset$, then by Corollary \ref{better generalization of 8.7},  for any $K\subseteq R$ and  $(\sigma_{1},\sigma_{2})\in\Phi_{(\sigma'_1,\sigma'_2)}$ we have: 
\begin{align*}
\mathcal{D}_{R\setminus K}(P_{\bar{\Delta}}(x)e_{k\ell}(\tau^{\star}_{\Delta_{1}}(\sigma_{1}),\tau^{\star}_{\Delta_{2}}(\sigma_{2}))&=(-1)^{s}\mathcal{D}_{R\setminus K}(P_{\bar{\Delta}}(x)e_{k\ell}(\sigma_{1},\tau^{\star}_{\Delta_{2}}(\sigma_{2})),
\end{align*}
where $s=|\{r\in\Delta_1\; | \; (R_{\bar{\Delta}})_{r}\neq (R\setminus K)_{r}\}|$.
\item[(g)] For any $\bar{\Delta}\subseteq\Delta$ the condition $R_{\Omega_{\tau^{\star}_{\bar{\Delta}}(\sigma_{1})}}\cap J=\emptyset$ is equivalent to the condition $J_{r}=\emptyset$ for any $r\in\Delta\setminus\bar{\Delta}$. In particular, if $\Delta_{1}\cap\Delta_{2}=\emptyset$, $R_{\Omega_{\tau^{\star}_{\Delta_{2}}(\sigma_{2})}}\cap J=\emptyset$ implies $J_{r}=\emptyset$ for any $r\in\Delta_{1}$, while $R_{\Omega_{\tau^{\star}_{\Delta_{1}}(\sigma_{1})}}\cap J=\emptyset$ implies $J_{r}=\emptyset$ for any $r\in\Delta_{2}$.
\end{itemize}
We finish the proof of Theorem \ref{Gelfand-Tsetlin module over gl(n)} in four steps.

\noindent {\it Step 1}. We use (c), (d) and after reordering the terms of $L(\sigma'_1,\sigma'_2)$ and $R(\sigma'_1,\sigma'_2)$, we verify that in order to prove 
the identity $L(\sigma'_1,\sigma'_2) =  \mathcal{D}_{R}^{v} (R(\sigma'_1,\sigma'_2))$, it is sufficient  to show $\tilde{L}(\sigma'_1,\sigma'_2) =  \mathcal{D}_{R}^{v} (\tilde{R}(\sigma'_1,\sigma'_2))$, where 
$$\tilde{L}(\sigma'_{1},\sigma'_{2})  =
%\begin{eqnarray*}
  \sum_{\Delta_{2}\subsetneq\Delta}\left(\sum_{\substack{\Delta_1\subseteq\Delta\\\Delta_{1}\cap\Delta_{2}=\emptyset}}\sum_{\Phi_{(\sigma_1',\sigma_2')}}L_1(\tau^{\star}_{\Delta_1}(\sigma_1), \tau^{\star}_{\Delta_2}(\sigma_2))- L_2(\tau^{\star}_{\Delta_2}(\sigma_1), \tau^{\star}_{\Delta_{1}}(\sigma_2))\right),$$
$$\tilde{R}(\sigma'_1,\sigma'_2) =  \sum_{\Delta_{2}\subsetneq\Delta}\left(\sum_{ \Phi_{(\sigma_1',\sigma_2')}}R_1(\tau^{\star}_{\Delta\setminus\Delta_2}(\sigma_1), \tau^{\star}_{\Delta_2}(\sigma_2))- R_2(\tau^{\star}_{\Delta_2}(\sigma_1), \tau^{\star}_{\Delta\setminus\Delta_{2}}(\sigma_2))\right).$$

\noindent {\it Step 2}.  We use (e), (f), and (g) to simplify $\tilde{L}(\sigma'_{1},\sigma'_{2})$. Namely,   for any $\Delta_{1}\subseteq\Delta$ and  $\Delta_{2}\subsetneq\Delta$, we have
\begin{multline*}L_1(\tau^{\star}_{\Delta_1}(\sigma_1), \tau^{\star}_{\Delta_2}(\sigma_2))- L_2(\tau^{\star}_{\Delta_2}(\sigma_1), \tau^{\star}_{\Delta_{1}}(\sigma_2))=\\L_1(\tau^{\star}_{\Delta\setminus\Delta_2}(\sigma_1), \tau^{\star}_{\Delta_2}(\sigma_2))- L_2(\tau^{\star}_{\Delta_2}(\sigma_1), \tau^{\star}_{\Delta\setminus\Delta_{2}}(\sigma_2)).
\end{multline*}
Therefore,  $\tilde{L}(\sigma'_{1},\sigma'_{2})$ is equal to
\begin{equation}\label{LHS}
  \sum_{\Delta_{2}\subsetneq\Delta}\left(2^{|\Delta\setminus\Delta_{2}|}\sum_{\Phi_{(\sigma_1',\sigma_2')}}L_1(\tau^{\star}_{\Delta\setminus\Delta_2}(\sigma_1), \tau^{\star}_{\Delta_2}(\sigma_2))- L_2(\tau^{\star}_{\Delta_2}(\sigma_1), \tau^{\star}_{\Delta\setminus\Delta_{2}}(\sigma_2))\right).
  \end{equation}
  
 \noindent  {\it Step 3}. We compute $\mathcal{D}^{v}_{R}(\tilde{R}(\sigma'_1,\sigma'_2))$. In fact, for each $\Delta_{2}\subsetneq\Delta$, by  (d) we obtain:
\begin{multline}\label{RHS}
  \sum_{ \Phi_{(\sigma_1',\sigma_2')}}R_1(\tau^{\star}_{\Delta\setminus\Delta_2}(\sigma_1), \tau^{\star}_{\Delta_2}(\sigma_2))- R_2(\tau^{\star}_{\Delta_2}(\sigma_1), \tau^{\star}_{\Delta\setminus\Delta_{2}}(\sigma_2))=\\P_{\Sigma\setminus\Sigma_{I}}(v)C(\tau^{\star}_{\Delta\setminus\Delta_2}(\sigma'_1), \tau^{\star}_{\Delta_2}(\sigma'_2))T(x+\tau_{\Delta}(z+\sigma'_{1}(\varepsilon_{k\ell})+\sigma'_{2}(\varepsilon_{rs}))),
  \end{multline}
 where $C(\tau^{\star}_{\Delta\setminus\Delta_2}(\sigma'_1), \tau^{\star}_{\Delta_2}(\sigma'_2))$ is equal to
$$
\sum_{\Phi_{(\sigma_1',\sigma_2')}}\big(e_{rs} (\tau^{\star}_{\Delta_2}(\sigma_{2})) e_{k\ell} (\tau^{\star}_{\Delta\setminus\Delta_2}(\sigma_{1}),\tau^{\star}_{\Delta_2}(\sigma_{2})) - e_{k\ell } (\tau^{\star}_{\Delta_2}(\sigma_{1})) e_{rs} (\tau^{\star}_{\Delta_2}(\sigma_2),\tau^{\star}_{\Delta\setminus\Delta_2}(\sigma_{1}))\big).
$$
By Lemma \ref{lm-rs-ident}(i),  $P_{\Sigma\setminus\Sigma_{I}}(v)C(\tau^{\star}_{\Delta\setminus\Delta_2}(\sigma'_1), \tau^{\star}_{\Delta_2}(\sigma'_2))$ is a smooth function (note that $\Omega_{0}\subseteq\Sigma\setminus\Sigma_{I}$ because $I\cap R_{\Omega_{0}}=\emptyset$).

Now, if  $\{(\sigma_{1}^{(p)}, \sigma_{2}^{(p)})\}_{p=1}^{s}$ is the set of all pairs of permutations in $\Phi_{(\sigma_{1}',\sigma_{2}')}$, for each $\Delta_{2}\subsetneq\Delta$ we define the following functions:
\begin{eqnarray*}
 f_{2p} =  P_{\Sigma\setminus\Sigma_{I}}(v)e_{rs}(\tau^{\star}_{\Delta_{2}}(\sigma_{2}^{(p)})), && \ \ 
f_{2p-1} =P_{\Sigma\setminus\Sigma_{I}}(v)e_{k\ell}(\tau^{\star}_{\Delta_{2}}(\sigma_{1}^{(p)})),\\
g_{2p-1} = e_{k\ell}(\tau^{\star}_{\Delta\setminus\Delta_{2}}(\sigma_{1}^{(p)}),\tau^{\star}_{\Delta_{2}}(\sigma_{2}^{(p)})),& &\ \ 
g_{2p} =-e_{rs}(\tau^{\star}_{\Delta_{2}}(\sigma_{2}^{(p)}),\tau^{\star}_{\Delta\setminus\Delta_{2}}(\sigma_{1}^{(p)})).
\end{eqnarray*}

We finally apply Proposition \ref{Most general version of lemma 8.2} to the functions $f_{p},g_{p}$, $p=1,\ldots, 2s$. Note that the hypotheses of Proposition \ref{Most general version of lemma 8.2} are satisfied by Lemma \ref{lm-rs-ident} and Lemma \ref{behavior of e_rs with action of tau star}(i).

\noindent  {\it Step 4}. To complete the proof we show that  (\ref{LHS}) in Step $2$ coincides with the expression obtained by applying $\mathcal{D}^{v}$ to (\ref{RHS}) in Step $3$.
\end{proof}

\section{New irreducible Gelfand-Tsetlin modules of index $2$}\label{sec: Irreducible modules of index 2} In this section we give examples of new irreducible Gelfand-Tsetlin modules of index $2$ 
which are certain irreducible Verma modules.

Take $a_{i}\in\mathbb{C}$, $i=1,\ldots,n-1$  such that $a_{i}-a_{j}\notin\mathbb{Z}$ for any $i\neq j$. Let  $T(v)$  be the Gelfand-Tsetlin tableau with entries $v_{r1}=v_{r2}=a_{1}$ for $1\leq r\leq n$ and $v_{ri}=a_{i-1}$ for $i=3,\ldots,r\leq n$, namely the tableau:
\newpage

\begin{center}

\Stone{\mbox{ $a_{1}$}}\Stone{\mbox{$a_{1}$}}\Stone{\mbox{$a_2$}}\hspace{1cm} $\cdots$ \hspace{1cm} \Stone{\mbox{ $a_{n-3}$}}\Stone{\mbox{ $a_{n-2}$}}\Stone{\mbox{ $a_{n-1}$}}\\[0.2pt]
\Stone{\mbox{ $a_{1}$}}\Stone{\mbox{$a_{1}$}}\hspace{1.5cm} $\cdots$ \hspace{1.7cm} \Stone{\mbox{ $a_{n-3}$}}\Stone{\mbox{ $a_{n-2}$}}\\[0.5cm]
\hspace{0.2cm}$\cdots$ \hspace{0.8cm} $\cdots$ \hspace{0.8cm} $\cdots$\\[0.5cm]
\Stone{\mbox{ $a_{1}$}}\Stone{\mbox{$a_{1}$}}\Stone{\mbox{$a_2$}}\\[0.2pt]
\Stone{\mbox{ $a_{1}$}}\Stone{\mbox{ $a_{1}$}}\\[0.2pt]
\Stone{\mbox{ $a_{1}$}}\\
\medskip
\end{center}
 Consider the corresponding module $V(T(v))$.  It is an $(n-2)$-singular Gelfand-Tsetlin module of index $2$.      
 
 \begin{theorem}\label{thm-Verma}  Let $T(v)$ be the tableau defined above, and let $\sm:=\sm_{T(v)}$.
 \begin{itemize}
\item[(i)]  The module $V(T(v))$ has a unique irreducible subquotient $M$ such that $M_{\sm}\neq 0$. Moreover, $M$ is a submodule of $V(T(v))$ and it is
 isomorphic to the Verma module with highest weight $(a_{1},a_{1}+1,a_{2}+2,\ldots, a_{n-1}+n-1)$. 
\item[(ii)] $\mbox{GT-}\deg (M)=\mbox{GT-}\deg (V(T(v))=2^{n-2}$.
\item[(iii)] The geometric multiplicities of all eigenvalues of any generator of $\Gamma$ on $M$ are bounded by $2$. The geometric multiplicity of $c_{k2}$ on a Gelfand-Tsetlin subspace of a maximal dimension is exactly $2$, whenever the $k$-th row contains a critical pair.  In particular, the geometric $\mbox{GT}$-degree of $M$ is $2$.
\end{itemize} 
\end{theorem}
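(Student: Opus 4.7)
My plan is to first show that $T(v)$ is a highest weight vector in $V(T(v))$ of weight $\lambda=(a_1,a_1+1,a_2+2,\ldots,a_{n-1}+n-1)$. The weight itself is immediate from Theorem \ref{Gelfand-Tsetlin theorem}: the Gelfand-Tsetlin formula for $E_{kk}$ yields telescoping row sums equal to $a_1$ for $k=1$ and $a_{k-1}+k-1$ for $k\geq 2$. The delicate step is proving $E_{k,k+1}\cdot T(v)=0$ for every $k$. By (\ref{def-action}) this amounts to $\mathcal{D}_R^v(P_\Sigma(x)\,E_{k,k+1}T(x))=0$, where $P_\Sigma(x)=\prod_{k'=2}^{n-1}(x_{k',1}-x_{k',2})$. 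Since rows $k$ and $k+1$ of $v$ share the multiset $\{a_1,a_1,a_2,\ldots,a_{k-1}\}$, each coefficient $\prod_{j=1}^{k+1}(x_{ki}-x_{k+1,j})/\prod_{j\neq i}(x_{ki}-x_{kj})$ in the generic formula of Proposition \ref{coefficients e_ij} has a double zero at $v$ for $i\in\{1,2\}$ and a simple zero for $i\geq 3$; the cancellation of the single factor $(x_{k,1}-x_{k,2})$ contributed by $P_\Sigma(x)$ against one factor of the denominator leaves sufficient vanishing at $v$ so that $\mathcal{D}_R^v$---which acts as $\prod_{k'=2}^{n-1}\tfrac12(\partial_{x_{k',1}}-\partial_{x_{k',2}})$ followed by evaluation at $v$---annihilates every summand, the careful verification being a direct application of Lemma \ref{derivatives acting on products}.

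Once $T(v)$ is identified as a highest weight vector, the universal property of Verma modules produces a nonzero homomorphism $\varphi\colon M(\lambda)\to V(T(v))$. Taking $\rho=(0,-1,\ldots,-(n-1))$, we have $\lambda+\rho=(a_1,a_1,a_2,\ldots,a_{n-1})$, and the hypothesis $a_i-a_j\notin\mathbb{Z}$ for $i\neq j$ implies $(\lambda+\rho)_i-(\lambda+\rho)_j\notin\mathbb{Z}_{>0}$ for all $i<j$ (the only potentially problematic case $(i,j)=(1,2)$ gives difference $0$). Hence $M(\lambda)$ is irreducible, $\varphi$ is an embedding, and $M:=\varphi(M(\lambda))\simeq M(\lambda)$ is an irreducible submodule of $V(T(v))$. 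For uniqueness, Theorem A gives $\dim V(T(v))_\sm=2^{t-t}=1$ since $\widehat{\sm}$ lies on all $t=n-2$ critical hyperplanes; any irreducible subquotient $N$ with $N_\sm\neq 0$ therefore contains a one-dimensional class of $T(v)$ in its $\sm$-eigenspace, this class is a highest weight vector of weight $\lambda$ (annihilation by $E_{k,k+1}$ descending to the subquotient), so $N$ is a quotient of $M(\lambda)$, and the irreducibility of $M(\lambda)$ forces $N\simeq M$.

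For parts (ii) and (iii), I combine Theorems A, B, C with the identification $M\simeq M(\lambda)$. Theorem A with $t=n-2$ immediately yields $\mbox{GT-}\deg(V(T(v)))=2^{n-2}$, and $M\subseteq V(T(v))$ forces $\mbox{GT-}\deg(M)\leq 2^{n-2}$. For the lower bound I would choose $v'\in v+T_{n-1}(\mathbb{Z})$ avoiding every critical hyperplane, so that $\dim V(T(v))_{\sm_{v'}}=2^{n-2}$, and then argue $M_{\sm_{v'}}=V(T(v))_{\sm_{v'}}$: the Verma structure of $M$ supplies enough PBW-monomials in lowering operators applied to $T(v)$ to reach each of the $2^{n-2}$ tableaux in the fiber $\widehat{\sm}_{v'}$, the factor $2^{n-2}$ arising from independent choices of ``side'' at each of the $n-2$ singular pairs. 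Part (iii) then follows: Theorem C(iii) furnishes the upper bound $2$ on all geometric multiplicities of generators of $\Gamma$ (since each row of $v$ has at most one singular pair); and Theorem B(i), applied to a generic $v'$ (with a non-critical singular pair in row $k$), shows $c_{k2}$ has geometric multiplicity $s+1=2$ on $V(T(v))_{\sm_{v'}}$, which transports to $M$ via the equality $M_{\sm_{v'}}=V(T(v))_{\sm_{v'}}$.

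The principal obstacle is the cancellation argument proving $E_{k,k+1}\cdot T(v)=0$: the vanishing orders in the numerators, the poles in the denominators, and the factors of $P_\Sigma(x)$ must be tracked simultaneously and then combined under the divided-difference operator, with Lemma \ref{derivatives acting on products} doing most of the bookkeeping row-by-row. The secondary delicate point is the identification $M_{\sm_{v'}}=V(T(v))_{\sm_{v'}}$ for generic $v'$, which underlies both (ii) and (iii); this can alternatively be deduced by noting that the quotient $V(T(v))/M$, if nonzero, would be a proper Gelfand-Tsetlin module with support strictly contained in that of $V(T(v))$, which becomes impossible once one compares weight multiplicities with those of $M(\lambda)$.
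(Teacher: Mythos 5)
Your part (i) is correct and follows essentially the paper's own route: you check $E_{k,k+1}\cdot\mathcal{D}_{\emptyset}T(v)=0$ by the same zero/pole bookkeeping organized through Lemma \ref{derivatives acting on products} (the paper merely asserts the vanishing of the coefficients $\mathcal{D}^{v}_{R}(P_{\Sigma\setminus\Sigma_{J}}(x)e_{k,k+1}(\sigma(x)))$), you identify $M=U\cdot\mathcal{D}_{\emptyset}T(v)$ with the irreducible Verma module using $a_i-a_j\notin\mathbb{Z}$, and you get uniqueness of the subquotient from $\dim V(T(v))_{\sm}=1$ (Theorem A at a maximal critical point). All of this is fine.

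The genuine gap is in parts (ii) and (iii), exactly where the paper invests its effort. Your lower bound rests on the claim that for an arbitrary $v'\in v+T_{n-1}(\mathbb{Z})$ avoiding all critical hyperplanes one has $M_{\sm_{v'}}=V(T(v))_{\sm_{v'}}$, supported only by the assertion that ``the Verma structure supplies enough PBW monomials.'' As stated the claim is false: $M$ is a highest weight module, so $M_{\sm_{v'}}=0$ whenever the weight determined by the row sums of $v'$ is not below the highest weight, while $V(T(v))_{\sm_{v'}}$ still has dimension $2^{n-2}$; and your alternative argument is also invalid, because $V(T(v))/M$ is always nonzero ($V(T(v))$ has weights not below $\lambda$ and infinite-dimensional weight spaces, so the Verma submodule $M$ is necessarily proper). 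The substantive point that must be proved is that at some specific character in the support of $M$ all $2^{n-2}$ derivative tableaux $\mathcal{D}_{I}T(\cdot)$, $I\subseteq R$, genuinely belong to $M$ --- in particular that a tableau with full derivative index $\mathcal{D}_{R}T(\cdot)$ is reached from $\mathcal{D}_{\emptyset}T(v)$. The paper establishes this by an explicit chain of operators $E_{21},E_{32},\ldots,E_{j+2,j+1}$ whose coefficients on successively ``more derivative'' tableaux are computed to be nonzero, by Corollary \ref{separation of tableaux if all singular pairs are in different rows} (which upgrades ``appears with nonzero coefficient in $g\cdot\mathcal{D}_{\emptyset}T(v)$'' to actual membership in $M$, using the $\mathcal{D}^{v}$-invariance of $\Gamma$ and the fact that all singular pairs sit in distinct rows), and then by applying $C_{I}=\prod_{i\in\Sigma\setminus\Sigma_{I}}(c_{i2}-\gamma_{i2}(\cdot))\in\Gamma$ to the full-derivative tableau to produce every $\mathcal{D}_{I}T(\cdot)$ at that character, giving $\dim M_{\sn}=2^{n-2}$. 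Your proposal contains no mechanism for this membership step, and the exactness statement in (iii) (geometric multiplicity exactly $2$ of $c_{k2}$ on a maximal Gelfand-Tsetlin subspace of $M$) depends on it in the same way; only the upper bounds in (ii) and (iii) follow, as you say, from Theorems A, B(i) and C.
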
 

The proof of this theorem will be given in the  Subsection \ref{subsec-Verma}.

\begin{remark}
For $n \geq 4$ the geometric GT-degree of the module $M(a_1,a_1+1,a_{2}+2,\ldots ,a_{n-1}+n-1)$ is strictly smaller than the GT-degree of this module ($2<2^{n-2}$).
\end{remark}

\subsection{Gelfand-Tsetlin degree conjecture}
Let $\rho$ be a half of the sum of positive roots of $\mathfrak{gl}(n)$.  Then the Verma module $M(-\rho)$ is irreducible and it is a singular Gelfand-Tsetlin module of highest index, i.e. it  has index $n-1$.  For $n=3$ it has singularity of index $2$ 
and hence satisfies the theorem above. 

\medskip

\noindent {\bf Conjecture 4.} Consider the irreducible Verma module $M(-\rho)$ of index $n-1$. We conjecture that this module is a Gelfand-Tsetlin module of maximum possible GT-degree, i.e. $\mbox{GT-deg}(M(-\rho)) = (n-1)!(n-2)!...1!$

\section{${\mathcal D}^{v}$-invariance of the $\Gamma$-action on $V(T(v))$}\label{section: action of Gamma}

In this section we study the structure of $V(T(v))$ as a Gelfand-Tsetlin module, and, in particular, the action of the generators of the Gelfand-Tsetlin subalgebra $\Ga$ on $V(T(v))$. The main result of this section is the following

\begin{theorem}\label{action of Gamma on derivative tableaux}
$\Gamma=\Gamma_{n}$ is  $\mathcal{D}^{v}$-invariant.
\end{theorem}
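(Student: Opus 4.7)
The plan is to imitate the construction used in Section \ref{sec-mod-str} for the $\mathfrak{gl}(n)$-action. For every generator $c_{mk}$ of $\Gamma_n$ I would set, in direct parallel with formula (\ref{def-action}),
$$
c_{mk}\cdot \mathcal{D}_{I}T(v+z)\;:=\;\mathcal{D}^{v}_{R}\bigl(P_{\Sigma\setminus\Sigma_{I}}(x)\,\gamma_{mk}(x+z)\,T(x+z)\bigr),
$$
where $\gamma_{mk}$ is the polynomial from (\ref{def-gamma}). The decisive feature is that $\gamma_{mk}(x+z)$ depends only on the entries of row $m$ of $x+z$ and is $S_{m}$-symmetric in those entries; in particular it is a polynomial (in fact smooth) function in $\overline{\mathcal F}$, so no singularities are introduced in the subsequent differentiations. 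Well-definedness relative to the relations (\ref{relations satisfied by vectors on this universal module}) then follows verbatim from Proposition \ref{d-i-well-defined}: the scalar factor $\gamma_{mk}$ is $\tau_{s}$-invariant when $k_{s}=m$ and is independent of $x_{k_{s}i_{s}},x_{k_{s}j_{s}}$ when $k_{s}\neq m$, so it does not interact with the argument of that proposition.

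The $\mathcal{D}^{v}$-invariance then reduces to a direct computation. Expanding $\mathcal{D}^{v}_{R}$ by Definition \ref{def-d-i} gives
$$
c_{mk}\cdot \mathcal{D}_{I}T(v+z)\;=\;\sum_{J\subseteq R}\mathcal{D}^{v}_{R\setminus J}\bigl(P_{\Sigma\setminus\Sigma_{I}}(x)\,\gamma_{mk}(x+z)\bigr)\,\mathcal{D}_{J}T(v+z).
$$
Applying the Leibniz rule and then Lemma \ref{derivatives acting on products} to the factor $P_{\Sigma\setminus\Sigma_{I}}(x)$, one sees that the coefficient $\mathcal{D}^{v}_{R\setminus J}(P_{\Sigma\setminus\Sigma_{I}}(x)\gamma_{mk}(x+z))$ vanishes unless $R_{\Sigma\setminus\Sigma_{I}}\subseteq R\setminus J$, equivalently $J\subseteq R_{\Sigma_{I}}=I$, in which case it equals $\mathcal{D}^{v}_{I\setminus J}(\gamma_{mk}(x+z))$. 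Therefore
$$
c_{mk}\cdot \mathcal{D}_{I}T(v+z)\;=\;\sum_{J\subseteq I}\mathcal{D}^{v}_{I\setminus J}\bigl(\gamma_{mk}(x+z)\bigr)\,\mathcal{D}_{J}T(v+z),
$$
which is a linear combination of derivative tableaux $\mathcal{D}_{J}T(v+z)$ indexed solely by $J\subseteq I$. This is exactly the $\mathcal{D}^{v}$-invariance on generators, and since the $c_{mk}$ multiplicatively generate $\Gamma_{n}$, an iteration (each application of a generator weakly shrinks the indexing subset of $R$) extends the invariance to all of $\Gamma_{n}$.

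The argument is almost entirely formal once the correct action formula is in place. The only step that calls for genuine care is the identification of the surviving terms in the Leibniz expansion, which is handled cleanly by Lemma \ref{derivatives acting on products}; the $S_{m}$-symmetry of $\gamma_{mk}$ ensures that no denominators of the form $1/(x_{ki}-x_{kj})$ ever arise upon differentiation, so every derivative can be safely evaluated at $x=v$. I do not anticipate a substantive obstacle beyond this bookkeeping.
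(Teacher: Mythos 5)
Your argument has a fundamental gap: it proves the wrong statement. The left-hand side of the $\mathcal{D}^{v}$-invariance identity, $c_{mk}\,\mathcal{D}_{I}T(v+z)$, is \emph{not} something you are free to define; it is already determined, because $c_{mk}\in\Gamma\subseteq U(\mathfrak{gl}(n))$ is a sum of monomials $E_{i_1i_2}E_{i_2i_3}\cdots E_{i_ki_1}$ and $V(T(v))$ already carries the $\mathfrak{gl}(n)$-module structure of Theorem \ref{Gelfand-Tsetlin module over gl(n)}, so $c_{mk}$ acts by iterating the formula (\ref{def-action}) for the individual $E_{rs}$'s. The content of the theorem is precisely that this iterated action coincides with $\mathcal{D}^{v}_{I}(c_{mk}T(x+z))$. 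By instead \emph{decreeing} $c_{mk}\cdot\mathcal{D}_{I}T(v+z):=\mathcal{D}^{v}_{R}(P_{\Sigma\setminus\Sigma_{I}}(x)\gamma_{mk}(x+z)T(x+z))$ and then unwinding it with Lemma \ref{derivatives acting on products}, you have assumed the conclusion (this is exactly formula (\ref{c_mk commutes with D_I}) of Proposition \ref{size of jordan blocks}) and verified only a formal Leibniz identity. If the $\Gamma$-action could be postulated independently of the $E_{rs}$-action, the theorem would be vacuous, and the whole point --- that $V(T(v))$ with the action (\ref{def-action}) is a Gelfand--Tsetlin module with the prescribed character data --- would not follow.

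The genuine difficulty you sidestep is the following. One would like to say that $\mathcal{D}^{v}_{I}$ commutes with each $E_{rs}$ and hence with $c_{mk}$, but Proposition \ref{D_I commutes with gl(n)} requires $g(F)\in\overline{\mathcal{F}}\otimes\mathcal{V}_{\rm gen}$, i.e.\ smoothness of the intermediate result. The individual monomials $E_{i_1i_2}\cdots E_{i_ki_1}T(x+z)$ have coefficients $e_{rs}(\sigma(x+z))$ with poles on the critical hyperplanes; only their full sum $\gamma_{mk}(x+z)T(x+z)$ is smooth, and the intermediate tableaux $T(x+z+\cdots)$ land near the walls whenever $z$ does. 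This is why the paper can apply the commutation argument only when $z\in\mathcal{L}_{\geq s}$ or when $m\leq k_{1}$ (Proposition \ref{Gamma acts nice almost everywhere}), and must then propagate the formula to arbitrary $z$ and arbitrary rows by the separation lemma (Lemma \ref{separation for different characters}), the raising-operator constructions (Lemma \ref{generalization of lemma 5.4}, Proposition \ref{We can go from L_m to L_m-1}), and the row-by-row induction of Lemma \ref{action of Gamma_k induces action of Gamma_k+1}, using that central elements of $U_m$ commute with the elements $g\in U_{k+1}$ used to move tableaux into the good region. None of this machinery is replaceable by the bookkeeping in your proposal, so the proof as written does not establish the theorem.
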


The notion of $\mathcal{D}^{v}$-invariance of $\Gamma$ is intuitively clear, but for the sake of completeness, we define it in more general setting.
\begin{definition}
For each $m\leq n$ we denote by $\Gamma_{m}$ the subalgebra of $\Gamma$ generated by the centers $Z_i$ of $U_{i}$, $1\leq i\leq m$. By $\Sigma(m)$ we denote the set $\{r\in\Sigma\; | \; k_{r}< m\}$.\\
We  say that $\Gamma_{m}$ is \emph{$\mathcal{D}^{v}$-invariant} if for any $c\in\Gamma_m$, any $I\subseteq R$, and any $z\in T_{n-1}(\mathbb{Z})$,
$$c\mathcal{D}_{I}T(v+z)=\mathcal{D}^{v}_{I}\left(cT(x+z)\right).$$
\end{definition}

\begin{lemma}\label{DR becomes DI}
Suppose $I\subseteq R$ and  $z\in T_{n-1}(\mathbb{Z})$. If $f$ is any smooth function then
\begin{equation*}
\mathcal{D}^{v}_{R}\left(P_{\Sigma\setminus\Sigma_{I}}(x)fT(x+z)\right)=\mathcal{D}^{v}_{I}\left(fT(x+z)\right).
\end{equation*}
\end{lemma}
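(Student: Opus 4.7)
The plan is to expand the left-hand side using the Leibniz-type definition of $\mathcal{D}^{v}_{R}$ on $\overline{\mathcal{F}}\otimes\mathcal{V}_{\rm gen}$ and then apply Lemma \ref{derivatives acting on products} to the coefficient $P_{\Sigma\setminus\Sigma_{I}}(x)f$, which is a product of a polynomial (vanishing on the relevant critical hyperplanes) and a smooth function. By the definition of $\mathcal{D}^{v}_{R}$,
$$\mathcal{D}^{v}_{R}\bigl(P_{\Sigma\setminus\Sigma_{I}}(x)fT(x+z)\bigr)=\sum_{J\subseteq R}\mathcal{D}^{v}_{R\setminus J}\bigl(P_{\Sigma\setminus\Sigma_{I}}(x)f\bigr)\,\mathcal{D}_{J}T(v+z),$$
so the task reduces to identifying which subsets $J\subseteq R$ contribute nonzero terms and then matching the surviving sum with the definition of $\mathcal{D}^{v}_{I}(fT(x+z))$.

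The crucial observation is that, componentwise, $R=R_{\Sigma_{I}}\cup R_{\Sigma\setminus\Sigma_{I}}$ with the two parts having disjoint non-empty components, and moreover $R_{\Sigma_{I}}=I$, since by assumption each $I_{r}$ is either empty or equals the singleton $\{(i_{r},j_{r})\}$. Applying Lemma \ref{derivatives acting on products} with the lemma's $I$ replaced by $R\setminus J$ and the lemma's $\Delta$ by $\Sigma\setminus\Sigma_{I}$, the coefficient $\mathcal{D}^{v}_{R\setminus J}(P_{\Sigma\setminus\Sigma_{I}}(x)f)$ vanishes unless $R_{\Sigma\setminus\Sigma_{I}}\subseteq R\setminus J$, i.e.\ unless $J\cap R_{\Sigma\setminus\Sigma_{I}}=\emptyset$, which forces $J\subseteq R_{\Sigma_{I}}=I$. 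On those surviving indices the lemma gives
$$\mathcal{D}^{v}_{R\setminus J}\bigl(P_{\Sigma\setminus\Sigma_{I}}(x)f\bigr)=\mathcal{D}^{v}_{(R\setminus J)\setminus R_{\Sigma\setminus\Sigma_{I}}}(f)=\mathcal{D}^{v}_{I\setminus J}(f),$$
the last equality using $R\setminus R_{\Sigma\setminus\Sigma_{I}}=R_{\Sigma_{I}}=I$ together with $J\subseteq I$.

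Substituting back, the sum collapses to
$$\sum_{J\subseteq I}\mathcal{D}^{v}_{I\setminus J}(f)\,\mathcal{D}_{J}T(v+z),$$
which is exactly the defining expression for $\mathcal{D}^{v}_{I}(fT(x+z))$, completing the proof. No serious obstacle is anticipated: this is essentially a bookkeeping identity, and the only point requiring care is the set-theoretic identification $R\setminus R_{\Sigma\setminus\Sigma_{I}}=R_{\Sigma_{I}}=I$, which is immediate from the definitions of $R_{\Delta}$ and $\Sigma_{I}$.
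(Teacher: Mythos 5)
Your proof is correct and follows essentially the same route as the paper: expand $\mathcal{D}^{v}_{R}$ by its defining Leibniz formula, apply Lemma \ref{derivatives acting on products} with $\Delta=\Sigma\setminus\Sigma_{I}$ to kill all terms with $J\nsubseteq I$, and use $R_{\Sigma\setminus\Sigma_{I}}=R\setminus I$ (equivalently $R_{\Sigma_{I}}=I$) to collapse the surviving sum to the definition of $\mathcal{D}^{v}_{I}(fT(x+z))$. The only difference is that you spell out the set-theoretic bookkeeping the paper leaves implicit.
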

\begin{proof} By definition, $
\mathcal{D}^{v}_{R}\left(P_{\Sigma\setminus\Sigma_{I}}(x)fT(x+z)\right)=\sum\limits_{J\subseteq R}\left(\mathcal{D}^{v}_{R\setminus J}\left(P_{\Sigma\setminus\Sigma_{I}}(x)f\right)\mathcal{D}_{J}T(v+z)\right).
$ Now, using Lemma \ref{derivatives acting on products} and the fact that $R_{\Sigma\setminus\Sigma_{I}}=R\setminus I$, the right hand side of the latter identity becomes $\sum\limits_{J\subseteq I}\left(\mathcal{D}^{v}_{I\setminus J}\left(f\right)\mathcal{D}_{J}T(v+z)\right)=\mathcal{D}^{v}_{I}(fT(x+z))
$.
\end{proof}

Recall that ${\mathcal V}_{\rm gen} := \bigoplus_{v \in \mathcal{S}^{0}} V(T(v))$.  We define a $\mathfrak{gl}(n)$-module structure on $\overline{\mathcal{F}}\otimes {\mathcal V}_{\rm gen}$ by letting $\mathfrak{gl}(n)$ to act trivially on $\overline{\mathcal{F}}$. 

\begin{proposition}\label{D_I commutes with gl(n)}
Let $g$ be any element of $\mathfrak{gl}(n)$ and suppose that $F\in\overline{\mathcal{F}}\otimes \mathcal{V}_{gen}$ is such that $g(F)\in\overline{\mathcal{F}}\otimes \mathcal{V}_{gen}$. Then $g\mathcal{D}^{v}_{I}(F)=\mathcal{D}^{v}_{I}g(F)$
\end{proposition}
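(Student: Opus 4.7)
The plan is to argue by linearity and reduce the proposition to a Leibniz-style identity relating $\mathcal{D}^{v}_{I}$ and the action formula (\ref{def-action}). Since both sides of $g\mathcal{D}^{v}_{I}(F)=\mathcal{D}^{v}_{I}g(F)$ are bilinear in $F$, I may assume $F=f\,T(x+z)$ with $f\in\overline{\mathcal{F}}$ and $T(x+z)$ a generic tableau. By Definition \ref{def-d-i},
$$\mathcal{D}^{v}_{I}(F)=\sum_{J\subseteq I}\mathcal{D}^{v}_{I\setminus J}(f)\,\mathcal{D}_{J}T(v+z),$$
so applying the formula (\ref{def-action}) term by term yields
$$g\cdot \mathcal{D}^{v}_{I}(F)=\sum_{J\subseteq I}\mathcal{D}^{v}_{I\setminus J}(f)\cdot\mathcal{D}^{v}_{R}\!\bigl(P_{\Sigma\setminus\Sigma_{J}}(x)\,gT(x+z)\bigr).$$

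For the right-hand side, I use the hypothesis $g(F)\in\overline{\mathcal{F}}\otimes\mathcal{V}_{\mathrm{gen}}$: expanding $g(F)=\sum_{\tau}f_{\tau}(x)T(x+z+\tau)$ with each $f_{\tau}$ smooth and applying Lemma \ref{DR becomes DI} termwise gives
$$\mathcal{D}^{v}_{I}g(F)=\mathcal{D}^{v}_{R}\!\bigl(P_{\Sigma\setminus\Sigma_{I}}(x)\,g(F)\bigr)=\mathcal{D}^{v}_{R}\!\bigl(P_{\Sigma\setminus\Sigma_{I}}(x)\,f\,gT(x+z)\bigr).$$
The proposition therefore reduces to the ``extended Leibniz identity''
$$\sum_{J\subseteq I}\mathcal{D}^{v}_{I\setminus J}(f)\cdot\mathcal{D}^{v}_{R}\!\bigl(P_{\Sigma\setminus\Sigma_{J}}(x)\,gT(x+z)\bigr)=\mathcal{D}^{v}_{R}\!\bigl(P_{\Sigma\setminus\Sigma_{I}}(x)\,f\,gT(x+z)\bigr).$$
Exploiting the factorisation $P_{\Sigma\setminus\Sigma_{J}}(x)=P_{\Sigma\setminus\Sigma_{I}}(x)\cdot P_{\Sigma_{I\setminus J}}(x)$ for $J\subseteq I$ and pulling the scalars $\mathcal{D}^{v}_{I\setminus J}(f)$ inside $\mathcal{D}^{v}_{R}$, the identity becomes $\mathcal{D}^{v}_{R}\bigl(P_{\Sigma\setminus\Sigma_{I}}(x)\,(f-\Phi)\,gT(x+z)\bigr)=0$, where $\Phi(x):=\sum_{\Delta\subseteq\Sigma_{I}}\mathcal{D}^{v}_{R_{\Delta}}(f)\,P_{\Delta}(x)$ is the Taylor polynomial of $f$ at $v$ in the critical coordinates $x_{k_{r},i_{r}}-x_{k_{r},j_{r}}$, $r\in\Sigma_{I}$.

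The remaining step is to establish this vanishing. A direct calculation using Lemma \ref{derivatives acting on products} shows that $\mathcal{D}^{v}_{R_{\Delta}}(f-\Phi)=0$ for every $\Delta\subseteq\Sigma_{I}$, and the same lemma then gives $\mathcal{D}^{v}_{R_{\Delta'}}\bigl(P_{\Sigma\setminus\Sigma_{I}}(x)(f-\Phi)\bigr)=0$ for every $\Delta'\subseteq\Sigma$. Expanding $gT(x+z)=\sum_{\sigma}h_{\sigma}(x)T(x+z+\sigma(\varepsilon))$ and grouping summands by orbits of the action of $\tau^{\star}_{\Delta}$, one matches the expression with the divided-difference identities of \S \ref{subsection: Identities divided differences}, in particular Proposition \ref{Most general version of lemma 8.2}, which converts the sum into one whose coefficients are precisely these vanishing derivatives of $P_{\Sigma\setminus\Sigma_{I}}(f-\Phi)$.

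The main technical obstacle is that the individual rational functions $h_{\sigma}(x)$ carry order-one poles on critical hyperplanes, so the product $P_{\Sigma\setminus\Sigma_{J}}(x)h_{\sigma}(x)$ need not lie in $\overline{\mathcal{F}}$ and a naive termwise Leibniz expansion of $\mathcal{D}^{v}_{R}$ would produce divergent summands. This is handled precisely as in the proof of Theorem \ref{Gelfand-Tsetlin module over gl(n)}: one organises the $h_{\sigma}$ into $\Phi_{(\sigma_{1},\sigma_{2})}$-type orbits (cf.\ Lemmas \ref{release conditions}, \ref{lm-rs-ident} and Proposition \ref{existence of tau Deltas}) so that the individually singular contributions cancel in pairs before $\mathcal{D}^{v}_{R}$ is applied, after which the Leibniz identity becomes a finite sum of well-defined terms with the required vanishing. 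Alternatively, one may argue by continuity: both sides are equal at every generic point in a neighbourhood of $v$ by the ordinary Leibniz rule, and the limit $x\to v$ exists on each side since each is a smooth expression, yielding the identity at $v$.
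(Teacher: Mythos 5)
Your proposal is correct, and its skeleton coincides with the paper's: reduce by linearity to $g=E_{rs}$ and $F=fT(x+z)$, then use the hypothesis $g(F)\in\overline{\mathcal{F}}\otimes\mathcal{V}_{\rm gen}$ to apply Lemma \ref{DR becomes DI} termwise to the smooth coefficients $f(x)e_{rs}(\sigma(x+z))$, so that $\mathcal{D}^{v}_{I}g(F)=\mathcal{D}^{v}_{R}\bigl(P_{\Sigma\setminus\Sigma_{I}}(x)f\,gT(x+z)\bigr)$. What you call the ``extended Leibniz identity'' is precisely the first displayed equality of the paper's proof, which the paper asserts as immediate from the action formula (\ref{def-action}) together with Definition \ref{def-d-i} and Lemma \ref{derivatives acting on products}; you instead prove it explicitly by introducing the Taylor polynomial $\Phi=\sum_{\Delta\subseteq\Sigma_{I}}\mathcal{D}^{v}_{R_{\Delta}}(f)P_{\Delta}(x)$ and checking that $\mathcal{D}^{v}_{K}\bigl(P_{\Sigma\setminus\Sigma_{I}}(x)(f-\Phi)\bigr)=0$ for all $K\subseteq R$. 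That device is correct and makes transparent a step the paper leaves implicit, which is a genuine (if modest) gain in rigor. Two remarks on your final paragraph: the orbit-cancellation machinery of Theorem \ref{Gelfand-Tsetlin module over gl(n)} is not needed here, because each $e_{rs}(\sigma(x+z))$ has at most a simple pole, and only along hyperplanes $\mathcal{H}^{k_u}_{i_u j_u}$ with $z_{k_u i_u}=z_{k_u j_u}$; after discarding the indices $u$ with $\tau_{u}(z)=z$ and $I_{u}\neq\emptyset$ (where the tableaux $\mathcal{D}_{J}T(v+z)$, $J_{u}\neq\emptyset$, vanish by (\ref{relations satisfied by vectors on this universal module}) and Proposition \ref{d-i-well-defined} -- a boundary case the paper's first equality also elides), these hyperplanes all lie in $\Sigma\setminus\Sigma_{J}$ for every $J\subseteq I$, so each $P_{\Sigma\setminus\Sigma_{J}}(x)e_{rs}(\sigma(x+z))$ is already smooth and the factorization $P_{\Sigma\setminus\Sigma_{I}}(f-\Phi)e_{rs}(\sigma(x+z))=(f-\Phi)\cdot\bigl(P_{\Sigma\setminus\Sigma_{I}}e_{rs}(\sigma(x+z))\bigr)$ lets the ordinary product rule finish the vanishing term by term. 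The ``continuity'' alternative, however, should be dropped: the left-hand side $g\,\mathcal{D}^{v}_{I}(F)$ is defined algebraically through the action (\ref{def-action}) on $V(T(v))$, not as a limit of the generic formulas, so agreement at nearby generic points does not by itself yield the identity at $v$.
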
 
\begin{proof}
Since $\mathcal{D}^{\overline{v}}_{I}$ is linear, it is enough to show the statement for $g = E_{rs}$ and $F = f T(x+z)$ with  generic $x$ and a smooth function $f$.  We have:
\begin{align*}
E_{rs}\mathcal{D}^{v}_{I}(fT(x+z))=&\mathcal{D}^{v}_{R}(P_{\Sigma\setminus\Sigma_{I}}(x)E_{rs}(fT(x+z)))\\
=&\mathcal{D}^{v}_{R}\left(P_{\Sigma\setminus\Sigma_{I}}(x)f(x)\sum_{\sigma\in\Phi_{rs}}e_{rs}(\sigma(x+z))T(x+z+\sigma(\varepsilon_{rs}))\right)\\
=&\sum_{\sigma\in\Phi_{rs}}\mathcal{D}^{v}_{R}\left(P_{\Sigma\setminus\Sigma_{I}}(x)f(x) e_{rs}(\sigma(x+z))T(x+z+\sigma(\varepsilon_{rs}))\right)\\
=&\sum_{\sigma\in\Phi_{rs}}\mathcal{D}^{v}_{I}\left(f(x) e_{rs}(\sigma(x+z))T(x+z+\sigma(\varepsilon_{rs}))\right)\\
=&\mathcal{D}^{v}_{I}E_{rs}(fT(x+z)),
\end{align*}
where the forth equality follows from Lemma  \ref{DR becomes DI}.
\end{proof}

%Remember that $V(T(v))$ is a $\mathfrak{gl}_{k}$-module with action $$E_{rs}\mathcal{D}_{I}T(v+z)=\mathcal{D}_{R}^{v}\left(P_{\Sigma\setminus\Sigma_{I}}(v) E_{rs}T(v+z)\right).$$ The following lemma shows explicitly the action of $\mathfrak{gl}_{m}\subseteq \mathfrak{gl}_{k}$ for any $ m\leq n$.  

%\begin{lemma}\label{action of restricted gl_s subset of gl_n is the same as restriction to first s rows}For each $m\leq n$, we denote by $\Sigma(m)$ the set  $\{r\in\Sigma:k_{r}<m\}\subseteq\Sigma$, $R(m)\subseteq R$ with $r$-th component $\{(i_{r},j_{r})\}$  if $r\in\Sigma(m)$ and $\emptyset$ otherwise. \begin{itemize}\item[(i)]   If $I(m)\subseteq R(m)$, the action of $g\in\mathfrak{gl}_{m}$ on $\mathcal{D}_{I(m)}$ is given by\begin{equation*}g\mathcal{D}_{I(m)}T(v+z)=\mathcal{D}_{R(m)}^{v}\left(P_{\Sigma(m)\setminus\Sigma_{I(m)}}(v) gT(v+z)\right).\end{equation*}\item[(ii)] If $g\in\mathfrak{gl}_{m}$ is such that $g\mathcal{D}_{I(m)}T(v+z)=\mathcal{D}_{I(m)}^{v}\left( gT(v+z)\right)$ for any $I(m)\subseteq R(m)$, then $g\mathcal{D}_{I}T(v+z)=\mathcal{D}_{I}^{v}\left( gT(v+z)\right)$ for any $I\subseteq R$.\end{itemize}\end{lemma}\begin{proof}\begin{itemize}\item[(i)] Follows directly from the definition of the action of $\mathfrak{gl}_{k}$ on $V(T(v))$.\item[(ii)] Follows from the fact that the action of $g$ on $\mathcal{D}_{I}T(v+z)$ involves just elements in rows $1,\ldots,m$.\end{itemize}\end{proof}
From now to the end of this section we will denote by  $l_{1}<\cdots<l_{\tilde{t}}$ the set of all distinct elements in $\{k_{1},\ldots,k_{t}\}$. We also set $l_{0}:=1$.

\begin{definition}
For each $a\in\mathbb{Z}_{\geq 0}$ and any $l\in\{l_0, l_1,\ldots,l_{\tilde{t}}\}$, we define the following subsets of $ T_{n-1}(\mathbb{Z})$.
\begin{itemize}
\item[(i)] $\mathcal{L}_{a}^{(l)}:=\{z\in T_{n-1}(\mathbb{Z})\; | \; |z_{k_{r},i_{r}}-z_{k_{r},j_{r}}|=a \text{ for any } r \text{ such that } k_{r}\leq l\}$.
\item[(ii)] $\mathcal{L}_{\geq a}^{(l)}:=\bigcup\limits_{k\geq a}\mathcal{L}_{k}^{(l)}$.
\item[(iii)] $\mathcal{L}_{a}:=\mathcal{L}_{a}^{(l_{\tilde{t}})}$ and $\mathcal{L}_{\geq a}:=\mathcal{L}_{\geq a}^{(l_{\tilde{t}})}$
\end{itemize}
\end{definition}

Note that for any $a\in\mathbb{Z}_{\geq 0}$ we have $\mathcal{L}_{a}^{(l_{0})}=\mathcal{L}_{\geq a}^{(l_{0})}= T_{n-1}(\mathbb{Z})$.

\begin{lemma}\label{the formulas are the same out of the boundary}
If $z\in\mathcal{L}^{(l_{p-1})}_{\geq 1}$ and $1\leq r\leq s\leq l_{p}$, then for any $\sigma\in\Phi_{rs}$, the coefficient of $\mathcal{D}_{I}T(v+z+\sigma(\varepsilon_{rs}))$ in the decomposition of $E_{rs}\mathcal{D}_{I}T(v+z)$ is $e_{rs}(\sigma(v+z))$. 
\end{lemma}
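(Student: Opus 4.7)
My plan is to reduce the computation to the generic Gelfand-Tsetlin formula by invoking Lemma \ref{DR becomes DI}. The key observation will be that, under the hypothesis, the function $e_{rs}(\sigma(x+z))$ is smooth at $x=v$, so the ``differential regularization'' encoded in $\mathcal{D}^{v}_{R}(P_{\Sigma\setminus\Sigma_{I}}(x)\,\cdot\,)$ collapses to ordinary evaluation on the distinguished summand. By the definition of the action of $E_{rs}$ on derivative tableaux and Proposition \ref{coefficients e_ij},
$$E_{rs}\mathcal{D}_{I}T(v+z)=\sum_{\sigma\in\Phi_{rs}}\mathcal{D}^{v}_{R}\bigl(P_{\Sigma\setminus\Sigma_{I}}(x)\,e_{rs}(\sigma(x+z))\,T(x+z+\sigma(\varepsilon_{rs}))\bigr),$$
and I want to isolate the coefficient of $\mathcal{D}_{I}T(v+z+\sigma(\varepsilon_{rs}))$ coming from a fixed $\sigma$.

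The main technical step will be the smoothness check. Looking at Definition \ref{definition of coefficients e_rs}, the denominators of $e_{rs}(\sigma(x+z))$ have the form $(\sigma(x+z))_{k,1}-(\sigma(x+z))_{k,t}=x_{k,a}-x_{k,b}+(z_{k,a}-z_{k,b})$, with $a=\sigma[k]^{-1}(1)$, $b=\sigma[k]^{-1}(t)$, and the row index $k$ ranging up to $s-1$. Since $s\leq l_{p}$, each such $k$ satisfies $k\leq l_{p}-1$. Because the $k_{r}$'s are exactly the distinct values $l_{1},\ldots,l_{\tilde t}$ and no $l_{j}$ lies strictly between $l_{p-1}$ and $l_{p}$, any singular triple appearing in these rows must in fact satisfy $k_{r}\leq l_{p-1}$. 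For such a triple, the hypothesis $z\in\mathcal{L}^{(l_{p-1})}_{\geq 1}$ together with $v_{k_{r},i_{r}}=v_{k_{r},j_{r}}$ force the denominator to evaluate at $v$ to $\pm(z_{k_{r},i_{r}}-z_{k_{r},j_{r}})$, of absolute value $\geq 1$. For non-singular triples $(k,a,b)$, $v_{k,a}-v_{k,b}\notin\mathbb{Z}$, and translating by the integer $z_{k,a}-z_{k,b}$ keeps this non-integer, hence non-zero. Thus every denominator is non-vanishing at $v$, and $e_{rs}(\sigma(x+z))$ is smooth there.

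With smoothness in hand, I would apply Lemma \ref{DR becomes DI} to obtain
$$\mathcal{D}^{v}_{R}\bigl(P_{\Sigma\setminus\Sigma_{I}}(x)\,e_{rs}(\sigma(x+z))\,T(x+z+\sigma(\varepsilon_{rs}))\bigr)=\mathcal{D}^{v}_{I}\bigl(e_{rs}(\sigma(x+z))\,T(x+z+\sigma(\varepsilon_{rs}))\bigr),$$
and then expand via the Leibniz rule of Definition \ref{def-d-i} to get $\sum_{J\subseteq I}\mathcal{D}^{v}_{I\setminus J}(e_{rs}(\sigma(x+z)))\,\mathcal{D}_{J}T(v+z+\sigma(\varepsilon_{rs}))$. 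Only the $J=I$ summand carries the tableau $\mathcal{D}_{I}T(v+z+\sigma(\varepsilon_{rs}))$; its coefficient is $\mathcal{D}^{v}_{\emptyset}(e_{rs}(\sigma(x+z)))=e_{rs}(\sigma(v+z))$. The other terms, with $J\subsetneq I$, are genuinely different basis elements and do not contribute. Since distinct $\sigma\in\Phi_{rs}$ yield distinct translations $\sigma(\varepsilon_{rs})$ (each $\sigma[k]\in\widetilde{S}_{k}$ is determined by $\sigma[k](1)$), the contributions from different $\sigma$'s do not collapse, and the claim follows. The only real obstacle is the smoothness verification, which is essentially a combinatorial fact about the location of the singular rows of $v$ relative to the indices $l_{p-1}$ and $l_{p}$ cut out by the hypothesis; the remainder of the argument is a direct application of Lemma \ref{DR becomes DI} and the Leibniz rule.
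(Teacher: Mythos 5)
Your proposal is correct and is essentially the paper's own argument: the paper dismisses this as ``a direct computation from formulas (\ref{def-action})'', and your computation (insert the generic expansion of $E_{rs}T(x+z)$, check that the coefficients $e_{rs}(\sigma(x+z))$ have non-vanishing denominators at $x=v$ because all rows $r,\ldots,s-1$ are $\leq l_p-1$, hence any singular row there is $\leq l_{p-1}$ and controlled by $z\in\mathcal{L}^{(l_{p-1})}_{\geq 1}$, then collapse $\mathcal{D}^{v}_{R}(P_{\Sigma\setminus\Sigma_I}(x)\cdot)$ to $\mathcal{D}^{v}_{I}$ via Lemma \ref{DR becomes DI} and read off the $J=I$ term of the Leibniz expansion) is exactly that computation.

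One justification should be tightened: ``distinct $\sigma$ yield distinct translations'' does not by itself rule out interference, because the spanning vectors are subject to the identifications (\ref{relations satisfied by vectors on this universal module}), so two different shifts $z+\sigma(\varepsilon_{rs})$ and $z+\sigma'(\varepsilon_{rs})$ could a priori give the same derivative tableau up to sign via some $\tau_{\Delta}$. This cannot happen here, but the reason is again the hypothesis: any $\tau_u$ with $k_u$ in a row touched by the shifts has $k_u\leq l_{p-1}$, and comparing the rows of $z+\sigma(\varepsilon_{rs})$ and $\tau_{\Delta}(z+\sigma'(\varepsilon_{rs}))$ entrywise forces either $z_{k_u,i_u}=z_{k_u,j_u}$ (excluded by $|z_{k_u,i_u}-z_{k_u,j_u}|\geq 1$) or $\sigma[k_u]=\sigma'[k_u]$; hence no two distinct $\sigma$'s contribute to the same tableau, and only the $J=I$ summand of the fixed $\sigma$ carries $\mathcal{D}_I T(v+z+\sigma(\varepsilon_{rs}))$, with coefficient $e_{rs}(\sigma(v+z))$ as claimed. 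With that two-line patch your argument is complete.
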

\begin{proof}
The statement follows by a direct computation from  the action of $E_{rs}$ on $\mathcal{D}_{I}T(v+z)$ in formulas (\ref{def-action}).
\end{proof}

\begin{proposition}\label{Gamma acts nice almost everywhere}
Suppose $c_{rs}\in \Gamma$  and $z\in T_{n-1}(\mathbb{Z})$. Any of the following two conditions
\begin{itemize}
\item[(i)] $z\in\mathcal{L}_{\geq s}$.
\item[(ii)] $m\leq k_{1}$ (recall that, $2\leq k_{1}\leq \cdots \leq k_{t}$ are fixed).
\end{itemize}
 implies the identity:
\begin{equation}\label{Gamma commutes with D_I}
c_{rs}\mathcal{D}_{I}T(v+z)=\mathcal{D}_{I}^{v}\left(c_{rs}T(x+z)\right).
\end{equation}
\end{proposition}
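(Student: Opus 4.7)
The strategy is to reduce the identity to an iterated application of Proposition \ref{D_I commutes with gl(n)}. Writing
\[c_{rs} = \sum_{(i_1,\ldots,i_s)\in\{1,\ldots,r\}^s} E_{i_1 i_2} E_{i_2 i_3} \cdots E_{i_s i_1},\]
and using that Theorem \ref{action of Gamma on fd modules} yields $c_{rs} T(x+z) = \gamma_{rs}(x+z) T(x+z)$ with $\gamma_{rs}$ a polynomial (in particular in $\overline{\mathcal{F}}$), the goal is to commute each of the $s$ generators past $\mathcal{D}^{v}_{I}$. Since $\mathcal{D}^{v}_{I}(T(x+z)) = \mathcal{D}_{I} T(v+z)$, this would give
\[
c_{rs}\,\mathcal{D}_{I} T(v+z) \;=\; c_{rs}\,\mathcal{D}^{v}_{I}(T(x+z)) \;=\; \mathcal{D}^{v}_{I}\bigl(c_{rs} T(x+z)\bigr),
\]
which is the stated identity. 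Iterating Proposition \ref{D_I commutes with gl(n)} is legitimate provided every intermediate partial product
\[
F_p := E_{i_p, i_{p+1}} E_{i_{p+1}, i_{p+2}} \cdots E_{i_s, i_1}\, T(x+z), \qquad p = 1, \ldots, s,
\]
belongs to $\overline{\mathcal{F}} \otimes \mathcal{V}_{\rm gen}$, i.e.\ its coefficients are smooth on every critical hyperplane.

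The smoothness verification is the heart of the proof. Inspecting Definition \ref{definition of coefficients e_rs}, the coefficients $e_{ij}(\sigma(\cdot))$ that appear when $E_{ij}$ acts have denominators only of the form $w_{k,1} - w_{k,u}$ with $\min\{i,j\} \le k \le \max\{i,j\}-1$ and $2 \le u \le k$. Evaluated at a shifted point $x+z'$, such a denominator contributes a pole on a critical hyperplane $\mathcal{H}^{k_q}_{i_q j_q}$ precisely when $k = k_q$, the transposition $\sigma[k_q]$ carries $\{1,u\}$ onto $\{i_q, j_q\}$, and $z'_{k_q, i_q} = z'_{k_q, j_q}$. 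Under condition (ii), interpreted as $r \le k_1$, every generator in the expansion of $c_{rs}$ has $i, j \le r$, whence $k \le r - 1 < k_1 \le k_q$ for every $q \in \Sigma$, so this bad scenario never arises and smoothness holds at every intermediate step, for arbitrary $z$.

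Under condition (i), each $E_{ij}$ shifts $z$ by $\sigma(\varepsilon_{ij})$, a vector with a single nonzero entry (equal to $1$) in each affected row between $\min\{i,j\}$ and $\max\{i,j\} - 1$. In a singular row $k_q$, the shift therefore modifies at most one of $z_{k_q, i_q}, z_{k_q, j_q}$ by $\pm 1$, so the difference $|z_{k_q, i_q} - z_{k_q, j_q}|$ drops by at most $1$ per generator applied. Since $z \in \mathcal{L}_{\ge s}$ provides $|z_{k_q, i_q} - z_{k_q, j_q}| \ge s$ for every $q$, and at most $s-1$ prior generators have acted by the time the coefficient of the $(p+1)$-st generator is computed, the shifted vector $z'$ at that moment still satisfies $|z'_{k_q, i_q} - z'_{k_q, j_q}| \ge 1$, excluding the bad scenario. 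Hence in either case each $F_p \in \overline{\mathcal{F}} \otimes \mathcal{V}_{\rm gen}$, and iterating Proposition \ref{D_I commutes with gl(n)} completes the reduction.

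The main obstacle is precisely this cumulative $z$-shift bookkeeping: one must verify that the lower bound $|z'_{k_q, i_q} - z'_{k_q, j_q}| \ge 1$ is preserved at \emph{every} intermediate step and not merely at the end, which ultimately reduces to the fact that the explicit form of $\sigma(\varepsilon_{ij})$ contributes a single column adjustment per affected row.
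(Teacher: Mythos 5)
Your proof is correct and takes essentially the same route as the paper: both reduce (\ref{Gamma commutes with D_I}) to an iterated application of Proposition \ref{D_I commutes with gl(n)} after checking that every partial product $E_{i_p i_{p+1}}\cdots E_{i_s i_1}T(x+z)$ lies in $\overline{\mathcal{F}}\otimes\mathcal{V}_{\rm gen}$, the paper's key observation being exactly your bookkeeping that $z\in\mathcal{L}_{\geq a}$ and $\sigma\in\Phi_{k\ell}$ force $z+\sigma(\varepsilon_{k\ell})\in\mathcal{L}_{\geq a-1}$. The only difference is that you spell out the smoothness verification (including the reading of condition (ii) as $r\leq k_{1}$, so that all affected rows lie strictly below every singular row) which the paper compresses into ``one can easily show.''
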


\begin{proof} 
Note that for any $a\in\mathbb{Z}_{\geq 0}$, if $z\in\mathcal{L}_{\geq a}$ and $\sigma\in\Phi_{rs}$ for some  $1\leq r\leq s\leq n$, then $z+\sigma(\varepsilon_{rs})\in\mathcal{L}_{\geq a-1}$. From this observation one can easily show that each of the conditions  (i) and (ii) implies 
$$\{T(x+z), E_{i_s i_1}T(x+z),\ldots,  E_{i_1 i_2}E_{i_2 i_3}\ldots E_{i_s i_1}T(x+z)\}\subseteq\overline{\mathcal{F}}\oplus\mathcal{V}_{gen},$$
 for any $(i_1,\ldots,i_s)\in \{1,\ldots,r \}^s$. Hence, by Proposition \ref{D_I commutes with gl(n)} we have:
\begin{align*}
c_{rs}\mathcal{D}_{I}T(v+z)&=\sum E_{i_1 i_2}E_{i_2 i_3}\ldots E_{i_s i_1}\mathcal{D}_{I}T(v+z)\\
&=\sum\mathcal{D}^{v}_{I}\left( E_{i_1 i_2}E_{i_2 i_3}\ldots E_{i_s i_1}T(x+z)\right)\\
&=\mathcal{D}^{v}_{I}\left(\sum E_{i_1 i_2}E_{i_2 i_3}\ldots E_{i_s i_1}T(x+z)\right)\\
&=\mathcal{D}^{v}_{I}\left(c_{rs}T(x+z)\right),
\end{align*}
where the sums are taken over all $(i_1,\ldots,i_s)\in \{1,\ldots,r \}^s$.  
\end{proof}

\begin{definition}\label{Def: D-order}

\begin{itemize}
\item[(i)] Let $I,\ J\subseteq R$ and $w_{1},w_{2}\in T_{n-1}(\mathbb{Z})$. We  write $\mathcal{D}_{I}T(v+w_{1})\prec_{\mathcal{D}} \mathcal{D}_{J}T(v+w_{2})$ if $I\subseteq J$ and $w_{1}=\tau_{\Delta}(w_{2})$ for some $\Delta\subseteq\Sigma$. We will refer to $\prec_{\mathcal{D}}$ as the \emph{$\mathcal{D}$-order} on $V(T(v))$. 
\item[(ii)] A maximal element in a finite subset $A$ of derivative tableaux in $V(T(v))$ with respect to the $\mathcal{D}$-order will be called \emph{$\mathcal{D}$-maximal} in $A$. 
\end{itemize}

\end{definition}
\begin{remark}
Note that $\prec_{\mathcal{D}}$ defines a preorder, i.e. $\prec_{\mathcal{D}}$ is reflexive and transitive, but  it  is not antisymmetric. Hence, by a maximal element of a set $A$ of derivative tableaux, we mean an element $b$ in $A$ such that for any $c\in A$ we have $c\prec_{\mathcal{D}}b$.
\end{remark}

\begin{lemma}\label{separation for different characters}
Let $m\leq n$ and assume that $\Gamma_{m-1}$ is  $\mathcal{D}^{v}$-invariant. Let $g\in U_{m}$,  $I\subseteq R$, $w \in  T_{n-1}(\mathbb{Z})$, and let  $g\mathcal{D}_{I}T(v+w)=\sum\limits_{j=0}^{k}a_{j}\mathcal{D}_{I^{(j)}}T(v+w_{j})$, where  $S = \{ \mathcal{D}_{I^{(j)}}T(v+w_{j}) \; | \; j=0,...,k\}$ is a linearly independent set of vectors in $V(T(v))$. Assume also that $\mathcal{D}_{I^{(0)}}T(v+w_{0})$ is $\mathcal{D}$-maximal in $S$. Then there exists $C\in\Gamma_{m-1}$ such that 
\begin{itemize}
\item[(i)] $C\mathcal{D}_{I^{(j)}}T(v+w_{j})=0$, if $w_{j}\neq \tau_{\Delta}(w_{0})$ for any $\Delta\subseteq \Sigma$.
\item[(ii)] $Cg\mathcal{D}_{I}T(v+w)=\mathcal{D}_{I^{(0)}}T(v+w_{0})$.
\end{itemize}
\end{lemma}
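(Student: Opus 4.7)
The plan is to construct $C$ stepwise, exploiting the $\mathcal{D}^v$-invariance hypothesis on $\Gamma_{m-1}$ together with the polynomial algebra structure of $\Gamma_{m-1}$. Since $g\in U_m$, its action on any generic tableau modifies only rows $1,\dots,m-1$, so every $w_j$ appearing in the decomposition agrees with $w$ on rows $\geq m$. The $\mathcal{D}^v$-invariance of $\Gamma_{m-1}$ then gives, for every $C\in\Gamma_{m-1}$ and every $j$,
$$C\,\mathcal{D}_{I^{(j)}}T(v+w_j)=\mathcal{D}^v_{I^{(j)}}\bigl(\chi^C_j(x)\,T(x+w_j)\bigr)=\sum_{J\subseteq I^{(j)}}\mathcal{D}^v_{I^{(j)}\setminus J}(\chi^C_j)\,\mathcal{D}_J T(v+w_j),$$
where $\chi^C_j(x):=\chi(C;x+w_j)$ is a polynomial in the entries $x_{ik}$, $i\leq m-1$, symmetric in each row.

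For part~(i), fix a bad index $j$. Since $v$ is singular of index $2$, any row-wise permutation of rows $1,\dots,m-1$ of $v+w_0$ that yields the same entries (as multisets) as those of $v+w_j$ must be a product of transpositions $\tau_r$ with $k_r\leq m-1$; badness of $j$ precludes this, so the $\Gamma_{m-1}$-characters of $v+w_j$ and $v+w_0$ are distinct as maps $\Gamma_{m-1}\to\mathbb{C}$. Choose $C_j\in\Gamma_{m-1}$ with $\chi(C_j;v+w_j)\neq\chi(C_j;v+w_0)$, and replace $C_j$ by $C_j-\chi(C_j;v+w_j)$ so that $\chi(C_j;v+w_j)=0$ and $\chi(C_j;v+w_0)\neq 0$. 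Raising to any power $M_j>|\Sigma_{I^{(j)}}|$ makes the polynomial $\chi(C_j;x+w_j)^{M_j}$ vanish at $x=v$ to order exceeding $|\Sigma_{I^{(j)}}|$, hence every Leibniz summand in $C_j^{M_j}\,\mathcal{D}_{I^{(j)}}T(v+w_j)$ vanishes. The product $\widetilde C:=\prod_{j\text{ bad}}C_j^{M_j}$ therefore annihilates every bad term in $S$, which is condition~(i).

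For part~(ii), the $\mathcal{D}$-maximality of $\mathcal{D}_{I^{(0)}}T(v+w_0)$ together with the linear independence of $S$ forces every good $j\neq 0$ to satisfy $I^{(j)}\subsetneq I^{(0)}$, and relation~(\ref{relations satisfied by vectors on this universal module}) identifies $\mathcal{D}_{I^{(j)}}T(v+w_j)=\pm\mathcal{D}_{I^{(j)}}T(v+w_0)$. Consequently, after applying $\widetilde C$,
$$\widetilde C\,g\,\mathcal{D}_I T(v+w)=\sum_{J\subseteq I^{(0)}}c_J\,\mathcal{D}_J T(v+w_0),$$
with top coefficient $c_{I^{(0)}}=a_0\,\chi(\widetilde C;v+w_0)\neq 0$. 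Rescaling $\widetilde C$ by $c_{I^{(0)}}^{-1}$ yields $c_{I^{(0)}}=1$. The remaining coefficients $c_J$ with $J\subsetneq I^{(0)}$ are then cancelled by downward induction on $|I^{(0)}|$: each lower-order contribution is of the form to which the lemma applies at strictly smaller derivative index, and by adding corrective $\Gamma_{m-1}$-elements whose character jets at $v+w_0$ are prescribed we kill the $c_J\,\mathcal{D}_J T(v+w_0)$ terms without disturbing the top coefficient.

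The main obstacle is this lower-order cancellation in~(ii). The polynomial $\chi(C;x+w_0)$ is generally not $\tau_r$-invariant in $x$ for $r\in\Sigma(m)$ (such invariance would demand $w_{0,k_r,i_r}=w_{0,k_r,j_r}$), so the off-diagonal Leibniz derivatives $\mathcal{D}^v_K(\chi^C_0)$ with $\emptyset\neq K\subseteq I^{(0)}$ do not vanish automatically. The cancellation therefore relies on explicitly prescribing both $\chi(C;v+w_0)$ and its difference derivatives at $v$; this is possible because $\Gamma_{m-1}\cong\mathbb{C}[x_{ij}:i\leq m-1]^{S_1\times\cdots\times S_{m-1}}$ has enough degrees of freedom to separate characters on distinct singular $G$-orbits in rows $\leq m-1$ and to realize arbitrary jets along the singular directions.
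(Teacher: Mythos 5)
Your part (i), and the way you isolate the top coefficient in (ii), follow essentially the same route as the paper: since $g\in U_{m}$ only the first $m-1$ rows move, a ``bad'' $w_{j}$ produces a tableau whose first $m-1$ rows are not a row-wise permutation of those of $T(v+w_{0})$, hence its $\Gamma_{m-1}$-character differs from that of $v+w_{0}$; one then picks a separating element, shifts it by its eigenvalue at $v+w_{j}$, and uses the nilpotency coming from the Leibniz expansion (the paper's $(c_{j}-\gamma_{j})^{m_{j}}$, your $C_{j}^{M_{j}}$) to kill the bad terms while keeping a nonzero scalar part on the $w_{0}$-block. Up to that point your argument and the paper's coincide.

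The genuine gap is in the remainder of (ii). First, the assertion that $\mathcal{D}$-maximality plus linear independence forces every good $j\neq 0$ to satisfy $I^{(j)}\subsetneq I^{(0)}$ does not follow: the preorder $\prec_{\mathcal{D}}$ allows good terms whose index sets are incomparable with $I^{(0)}$. What maximality actually yields --- and what the paper uses --- is that no $i\neq 0$ can have both $I^{(0)}\subseteq I^{(i)}$ and $w_{i}=\tau_{\Delta}(w_{0})$, so that after applying elements of $\Gamma_{m-1}$ only the $j=0$ term can contribute a multiple of $\mathcal{D}_{I^{(0)}}T(v+w_{0})$; in particular your claim that everything lands in the span of $\{\mathcal{D}_{J}T(v+w_{0}):J\subseteq I^{(0)}\}$ is not established. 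Second, and more seriously, your downward induction that cancels the lower-order terms $c_{J}\mathcal{D}_{J}T(v+w_{0})$, $J\subsetneq I^{(0)}$, is not an argument but a restatement of what has to be proved: it rests entirely on the claim that $\Gamma_{m-1}$ can realize arbitrary jets (a prescribed value together with prescribed divided differences $\mathcal{D}^{v}_{K}$) at the critical point $v$ after the shift by $w_{0}$. That is a nontrivial surjectivity statement about row-symmetric polynomials evaluated at a singular point; you give no construction, and you yourself label it ``the main obstacle.'' The paper's proof never goes through any such jet-realization: it reduces (ii) to showing that $\mathcal{D}_{I^{(0)}}T(v+w_{0})$ appears with the nonzero coefficient $a_{0}(c_{j}(v+w_{0})-\gamma_{j})$ in $(c_{j}-\gamma_{j})\,g\,\mathcal{D}_{I}T(v+w)$, using only the $\mathcal{D}^{v}$-invariance Leibniz expansion and the maximality argument above, and does not need to prescribe derivatives of elements of $\Gamma_{m-1}$. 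As written, your proof of (ii) is therefore incomplete at exactly the step you flagged.
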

\begin{proof}
We first note that since $g\in U_{m}$, we have $(I^{(j)})_{r}=I_{r}$ for any $r$ such that $k_{r}\geq m$. Also,  for $m \leq i \leq n$, the $i$-th   row of  the tableau $T(v+w_{j})$ coincide with the $i$-th row of the tableau $T(v+w)$. So, $w_{j}\neq \tau_{\Delta}(w_{0})$ for any $\Delta\subseteq \Sigma$ implies that the rows of the tableau $T(v+w_{j})$ can not be obtained by a permutation of the entries of the first $m-1$ rows of $T(v+w_{0})$. This implies the existence of $c_{j}\in\Gamma_{m-1}$, $\gamma_{j}\in\mathbb{C}$ and $m_{j}\in\mathbb{Z}_{\geq 0}$ such that $(c_{j}-\gamma_{j})^{m_{j}}\mathcal{D}_{I^{(j)}}T(v+w_{j})=0$ and $(c_{j}-\gamma_{j})^{s}\mathcal{D}_{I^{(0)}}T(v+w_{0})\neq 0$ for any $s\in\mathbb{Z}_{\geq 0}$. We continue with the proof of parts (i) and (ii).
\begin{itemize}
\item[(i)] Set $A:=\{j\in \{1,\ldots,k\}\; | \; w_{j}\neq \tau_{\Delta}(w_{0}) \text{ for any } \Delta\subseteq \Sigma\}$. Then 
$$C:=\prod_{j\in A}(c_{j}-\gamma_{j})^{m_{j}}\in\Gamma_{m-1}$$ 
satisfies the identity $C\mathcal{D}_{I^{(j)}}T(v+w_{j})=0$ for any $j\in A$.
\item[(ii)] It is enough to show that $\mathcal{D}_{I^{(0)}}T(v+w_{0})$ appears with nonzero coefficient in the decomposition of $(c_{j}-\gamma_{j})g\mathcal{D}_{I}T(v+w)$ for any $j\in A$. In fact, since $\Gamma_{m-1}$ is  $\mathcal{D}^{v}$-invariant we have:
\begin{align*}
(c_{j}-\gamma_{j})g\mathcal{D}_{I}T(v+w)&=(c_{j}-\gamma_{j})\sum\limits_{i=0}^{k}a_{i}\mathcal{D}_{I^{(i)}}T(v+w_{i})\\
&=\sum\limits_{i=0}^{k}\left(a_{i}\sum_{J^{(i)}\subseteq I^{(i)}}\mathcal{D}^{v}_{I^{(i)}\setminus J^{(i)}}(c_j(v+w_{i})-\gamma_{j})\mathcal{D}_{J^{(i)}}T(v+w_{i})\right).
\end{align*}
In particular, $\mathcal{D}_{I^{(0)}}T(v+w_{0})$ appears in this decomposition if and only if  $I^{(0)}\subseteq I^{(i)}$ and $w_{i}=\tau_{\Delta}(w_{0})$ for some $0\leq i\leq k$ and some $\Delta\subseteq\Sigma$. This, combined with the $\mathcal{D}$-maximality of $\mathcal{D}_{I^{(0)}}T(v+w_{0})$, implies that $\mathcal{D}_{I^{(0)}}T(v+w_{0})$ appears once in this decomposition and its coefficient is $a_{0}(c_{j}(v+w_{0})-\gamma_{j})\neq 0$.
\end{itemize}  \end{proof}

\begin{definition}\label{definition of the relation}
Given $I,J\subseteq R$, $z,z'\in T_{n-1}(\mathbb{Z})$, and $g\in U$, we will write $\mathcal{D}_{J}T(v+z')\xrightarrow{g}\mathcal{D}_{I}T(v+z)$ if $\mathcal{D}_{J}T(v+z')$ appears with nonzero coefficient in the decomposition of $g\cdot\mathcal{D}_{I}T(v+z)$ as linear combination of tableaux. Also, we will write $\mathcal{D}_{J}T(v+z')\rightarrow\mathcal{D}_{I}T(v+z)$ if $\mathcal{D}_{J}T(v+z')\xrightarrow{g}\mathcal{D}_{I}T(v+z)$ for some $g\in U$.
\end{definition}

\begin{lemma}\label{transitivity of the relation}
Let $m\leq n$ be such that $\Gamma_{m-1}$ is $\mathcal{D}^{v}$-invariant. Let also $z_1,\ z_2,\ z_3\in\mathcal{L}_{\geq 1}$ and $I\subseteq R$. If $g_1,\ g_{2}\in U(\mathfrak{gl}(m))$ are such that $\mathcal{D}_{I}T(v+z_2)\xrightarrow{g_1}\mathcal{D}_{I}T(v+z_1)$ and $\mathcal{D}_{I}T(v+z_{3})\xrightarrow{g_{2}}\mathcal{D}_{I}T(v+z_2)$ then $\mathcal{D}_{I}T(v+z_3)\xrightarrow{g_{3}}\mathcal{D}_{I}T(v+z_1)$ for some $g_{3}\in U_{m}$.
\end{lemma}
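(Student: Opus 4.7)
The plan is to extract (up to a nonzero scalar) the summand $\mathcal{D}_{I}T(v+z_{2})$ from the expansion of $g_{1}\cdot\mathcal{D}_{I}T(v+z_{1})$ by multiplying on the left by a suitable element of $\Gamma_{m-1}$, and then apply $g_{2}$.

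First I would write
$$g_{1}\cdot\mathcal{D}_{I}T(v+z_{1}) \;=\; \sum_{j=0}^{k}a_{j}\,\mathcal{D}_{I^{(j)}}T(v+w_{j})$$
as a linearly independent sum with $a_{j}\neq 0$, arranged so that for some index $s$ the summand $\mathcal{D}_{I^{(s)}}T(v+w_{s})$ represents $\mathcal{D}_{I}T(v+z_{2})$ (i.e.\ $I^{(s)}=I$ and $w_{s}=z_{2}$ modulo the relations (\ref{relations satisfied by vectors on this universal module})); this is possible since $\mathcal{D}_{I}T(v+z_{2})\xrightarrow{g_{1}}\mathcal{D}_{I}T(v+z_{1})$. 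The defining formula (\ref{def-action}) combined with Lemma \ref{derivatives acting on products} and $g_{1}\in U_{m}$ forces every derivative index to satisfy $I^{(j)}\subseteq I$, and every $w_{j}$ to differ from $z_{1}$ only in rows below $m$.

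Next, I want to isolate the term $\mathcal{D}_{I}T(v+z_{2})$ by means of Lemma \ref{separation for different characters}. If $\mathcal{D}_{I}T(v+z_{2})$ is already $\mathcal{D}$-maximum in the full expansion, Lemma \ref{separation for different characters} directly produces $C\in\Gamma_{m-1}$ with $Cg_{1}\cdot\mathcal{D}_{I}T(v+z_{1})=\mathcal{D}_{I}T(v+z_{2})$. Otherwise, using the hypothesis that $\Gamma_{m-1}$ is $\mathcal{D}^{v}$-invariant, I would first multiply by suitable products of elements of the form $(c-c(v+z_{2}))^{N}\in\Gamma_{m-1}$ (for sufficiently large $N$ and well-chosen $c\in\Gamma_{m-1}$). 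As in the proof of Lemma \ref{separation for different characters}(i), such elements annihilate every summand $\mathcal{D}_{I^{(j)}}T(v+w_{j})$ whose $\Gamma_{m-1}$-character differs from that of $z_{2}$, reducing the expansion to the part supported on the $G_{<m}$-orbit of $z_{2}$. In the reduced expansion $\mathcal{D}_{I}T(v+z_{2})$ is now $\mathcal{D}$-maximum, and one further application of Lemma \ref{separation for different characters} yields $C\in\Gamma_{m-1}$ with $Cg_{1}\cdot\mathcal{D}_{I}T(v+z_{1})=\alpha\,\mathcal{D}_{I}T(v+z_{2})$ for some nonzero $\alpha\in\mathbb{C}$.

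Finally, applying $g_{2}$ and using $\mathcal{D}_{I}T(v+z_{3})\xrightarrow{g_{2}}\mathcal{D}_{I}T(v+z_{2})$, the element $g_{2}Cg_{1}\cdot\mathcal{D}_{I}T(v+z_{1})=\alpha\,g_{2}\cdot\mathcal{D}_{I}T(v+z_{2})$ contains $\mathcal{D}_{I}T(v+z_{3})$ with nonzero coefficient, so setting $g_{3}:=g_{2}Cg_{1}\in U_{m}$ (note $C\in\Gamma_{m-1}\subset U_{m-1}\subset U_{m}$) completes the argument. The main obstacle is verifying the $\mathcal{D}$-maximality claim in the reduction step: since $\Gamma_{m-1}$-characters cannot distinguish $\tau$-orbits lying in a single $G_{<m}$-orbit, one must argue that in the Gelfand-Tsetlin expansion of $g_{1}T(x+z_{1})$ the only $w_{j}$'s that actually arise in the $G_{<m}$-orbit of $z_{2}$ are $\tau$-permutations of $z_{2}$ (modulo $\prec_{\mathcal{D}}$-smaller derivative tableaux). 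This will rest on a careful analysis of the Gelfand-Tsetlin shift patterns available to $g_{1}\in U_{m}$, combined with the constraint $I^{(j)}\subseteq I$ and the relations (\ref{relations satisfied by vectors on this universal module}).
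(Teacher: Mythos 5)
Your overall route is the paper's own: extract $\mathcal{D}_{I}T(v+z_{2})$ from $g_{1}\mathcal{D}_{I}T(v+z_{1})$ by a suitable element of $\Gamma_{m-1}$ via Lemma \ref{separation for different characters}, then compose with $g_{2}$ (the paper takes $g_{3}=C_{2}g_{2}C_{1}g_{1}$; your $g_{3}=g_{2}Cg_{1}$ is equally fine, since only a nonzero coefficient of $\mathcal{D}_{I}T(v+z_{3})$ is required). Two points in your execution need repair, though. First, your preliminary reduction is set up with the wrong annihilators and is in any case unnecessary: an element $(c-c(v+z_{2}))^{N}$ with $c\in\Gamma_{m-1}$ is nilpotent precisely on the block of summands whose $\Gamma_{m-1}$-eigenvalue equals that of $z_{2}$ (Proposition \ref{size of jordan blocks}(ii)), so it kills the very term you want to keep, while acting invertibly on the blocks you want to discard; the correct elements are $(c_{j}-\gamma_{j})^{m_{j}}$ centered at the eigenvalues of the \emph{unwanted} summands, which is exactly what Lemma \ref{separation for different characters}(i) constructs. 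Moreover, the maximality hypothesis of that lemma only has to rule out summands lying strictly $\mathcal{D}$-above $\mathcal{D}_{I}T(v+z_{2})$, i.e.\ with derivative index containing $I$ and integral part a $\tau_{\Delta}$-translate of $z_{2}$; summands with a different character are handled by the lemma itself, so once you know that every index occurring satisfies $I^{(j)}\subseteq I$ you can apply it in one step, as the paper does.

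Second, the ``main obstacle'' you end with is not an obstacle. Because $v$ is singular of index $2$, within any row the only integral differences are those inside the singular pairs; hence any integral point $w_{j}$ whose rows $1,\dots,m-1$ are permutations of the corresponding rows of $z_{2}$ (rows $m,\dots,n$ being untouched by $U_{m}$) is automatically of the form $\tau_{\Delta}(z_{2})$ for some $\Delta\subseteq\Sigma$. No analysis of the shift patterns available to $g_{1}$ is needed; this is precisely the observation already used inside the proof of Lemma \ref{separation for different characters}. The place where care is genuinely required is the step you dispatch by citing only (\ref{def-action}), Lemma \ref{derivatives acting on products} and $g_{1}\in U_{m}$: the containment $I^{(j)}\subseteq I$ hinges on the hypothesis $z_{1}\in\mathcal{L}_{\geq 1}$, which guarantees that the coefficients $e_{rs}(\sigma(x+z_{1}))$ are smooth so that Lemma \ref{derivatives acting on products} kills every $J\not\subseteq I$. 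For a tableau whose integral part is $\tau_{u}$-invariant the action does raise the derivative index at $u$ (this is exactly how derivative tableaux are produced, cf.\ the use of $E_{n1}$ in the proof of Theorem B(iii)), so your justification as written, which never invokes $\mathcal{L}_{\geq 1}$, is insufficient; supplying it recovers the one-line observation with which the paper's proof begins.
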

\begin{proof}
Note that the action of $U_{m}$ on $\mathcal{D}_{I}T(v+z_1)$ with $z\in\mathcal{L}_{\geq 1}$ produces tableaux of the form $\mathcal{D}_{J}T(v+w)$ with $J\subseteq I$. Since $\mathcal{D}_{I}T(v+z_2)\xrightarrow{g_1}\mathcal{D}_{I}T(v+z_1)$, the coefficient of $\mathcal{D}_{I}T(v+z_2)$ in the decomposition of $g_1\mathcal{D}_{I}T(v+z_1)$ is nonzero. Then by Lemma \ref{separation for different characters} there exists $C_{1}\in\Gamma_{m-1}$ such that $C_{1}g_1\mathcal{D}_{I}T(v+z_1)=\mathcal{D}_{I}T(v+z_2)$. For the same reason,  there exists $C_{2}\in\Gamma_{m-1}$ such that $C_{2}g_{2}\mathcal{D}_{I}T(v+z_{2})=\mathcal{D}_{I}T(v+z_3)$. Therefore $C_{2}g_{2}C_{1}g_1\mathcal{D}_{I}T(v+z_1)=\mathcal{D}_{I}T(v+z_3)$.
\end{proof}

\begin{lemma}\label{generalization of lemma 5.4}
Let $p\in\{1,\ldots,\tilde{t}\}$ be such that $\Gamma_{\ell_{p}}$ is $\mathcal{D}^{v}$-invariant and let $z\in\mathcal{L}^{(l_{p})}_{\geq m}\bigcap \mathcal{L}^{(l_{p-1})}_{\geq m+2}$. There exist $z'\in\mathcal{L}^{(l_{p})}_{\geq m+1}$ and $g\in U_{l_{p}+1}$ such that $\mathcal{D}_{I}T(v+z)\xrightarrow{g}\mathcal{D}_{I}T(v+z')$ for any $I\subseteq R$.
\end{lemma}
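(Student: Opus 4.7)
The plan is to induct on the number of ``tight'' singular pairs in row $l_p$, i.e.\ on the cardinality of $\{r\in\Sigma\;|\; k_r=l_p \text{ and }|z_{l_p,i_r}-z_{l_p,j_r}|=m\}$. In the base case where this cardinality is zero, $z$ already lies in $\mathcal{L}^{(l_p)}_{\geq m+1}$, so we may take $z'=z$ and $g=1$.

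For the inductive step, I would fix a tight pair $r_0$ with $k_{r_0}=l_p$ and, after possibly interchanging $i_{r_0}$ and $j_{r_0}$ within the pair, assume $z_{l_p,i_{r_0}}\geq z_{l_p,j_{r_0}}$. Setting $\tilde{z}:=z+\delta^{l_p,i_{r_0}}$ makes the pair at $r_0$ non-tight while leaving all other singular-pair differences in rows $\leq l_p$ unchanged. Hence $\tilde{z}\in\mathcal{L}^{(l_p)}_{\geq m}\cap\mathcal{L}^{(l_{p-1})}_{\geq m+2}$ has strictly fewer tight pairs than $z$, and the inductive hypothesis yields $z'\in\mathcal{L}^{(l_p)}_{\geq m+1}$ and $g''\in U_{l_p+1}$ with $\mathcal{D}_I T(v+\tilde{z})\xrightarrow{g''}\mathcal{D}_I T(v+z')$ for every $I\subseteq R$.

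The heart of the argument is the single-step arrow $\mathcal{D}_I T(v+z)\xrightarrow{E_{l_p+1,l_p}}\mathcal{D}_I T(v+\tilde{z})$. Expanding $E_{l_p+1,l_p}\cdot\mathcal{D}_I T(v+\tilde{z})=\mathcal{D}^{v}_{R}(P_{\Sigma\setminus\Sigma_I}(x)\,E_{l_p+1,l_p}T(x+\tilde{z}))$ via formula (\ref{def-action}) and Proposition \ref{coefficients e_ij}, only the summand corresponding to the transposition $\sigma=(1,i_{r_0})$ produces the tableau $T(x+z)=T(x+\tilde{z}-\delta^{l_p,i_{r_0}})$; for any other transposition $(1,a)$ with $a\neq i_{r_0}$, the vector $\tilde{z}-\delta^{l_p,a}$ is not of the form $\tau_{\Delta}(z)$ for any $\Delta\subseteq\Sigma$, so no other summand can contribute to the coefficient of $\mathcal{D}_I T(v+z)$. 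The coefficient $e_{l_p+1,l_p}(\sigma(x+\tilde{z}))$ has its denominator factors vanishing only on hyperplanes $x_{l_p,i_{r_0}}-x_{l_p,j}=c_j$ with nonzero integer shifts $c_j$ (for $j=j_{r_0}$, $c_{j_{r_0}}=-(m+1)$; and for $j\in\{i_{r'},j_{r'}\}$ with $r'\neq r_0$ and $k_{r'}=l_p$, the no-triples condition of Definition \ref{Def of 1^t singular}(iii) forces $v_{l_p,i_{r_0}}\neq v_{l_p,i_{r'}}$, so the shift is again a nonzero integer). This coefficient therefore lies in $\overline{\mathcal F}$, Lemma \ref{DR becomes DI} applies, and the $J=I$ term of the Leibniz expansion gives $e_{l_p+1,l_p}(\sigma(v+\tilde{z}))\,\mathcal{D}_I T(v+z)$, which is nonzero by the same case analysis of factors.

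Finally, combining $\mathcal{D}_I T(v+z)\xrightarrow{E_{l_p+1,l_p}}\mathcal{D}_I T(v+\tilde{z})\xrightarrow{g''}\mathcal{D}_I T(v+z')$ via Lemma \ref{transitivity of the relation} produces a single $g\in U_{l_p+1}$ with $\mathcal{D}_I T(v+z)\xrightarrow{g}\mathcal{D}_I T(v+z')$, closing the induction. The main obstacle throughout is the smoothness-and-nonvanishing analysis of $e_{l_p+1,l_p}(\sigma(x+\tilde{z}))$ at $v$; this is precisely where the index-2 no-triples hypothesis enters, ruling out the coincidences between integer shifts and critical hyperplane equations that would otherwise annihilate the coefficient.
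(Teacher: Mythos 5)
Your overall scheme (handle one tight pair in row $l_p$ at a time, produce a one-step arrow with a raising operator, then glue with Lemma \ref{transitivity of the relation}) is structurally the same as the paper's, but the one-step claim is exactly where the argument has a genuine gap. The coefficient you need to be nonzero is $e_{l_p+1,l_p}(\sigma(v+\tilde z))$ with $\sigma=(1,i_{r_0})$, and by Definition \ref{definition of coefficients e_rs} its \emph{numerator} is $\prod_{j=1}^{l_p-1}\bigl((v+z)_{l_p,i_{r_0}}+1-(v+z)_{l_p-1,j}\bigr)$, which involves row $l_p-1$, not the singular pairs. The lemma carries no regularity assumption on $v$ (it feeds into Theorem \ref{action of Gamma on derivative tableaux}, which must hold for an arbitrary index-$2$ singular $v$), so some $v_{l_p,i_{r_0}}-v_{l_p-1,j}$ may be an integer, and for suitable $z$ the entry $(v+z)_{l_p,i_{r_0}}+1$ coincides with an entry of row $l_p-1$; then your coefficient vanishes identically and $E_{l_p+1,l_p}$ yields no arrow at all. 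Your smoothness/nonvanishing analysis only inspects the denominator (the within-row factors), and the ``no singular triples'' condition you invoke is irrelevant to this numerator obstruction. A telling symptom is that your proof never uses the hypothesis $z\in\mathcal{L}^{(l_{p-1})}_{\geq m+2}$ in any essential way: the $m+2$ (rather than $m+1$) is precisely the slack needed to repair this failure.

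The paper's proof is organized around that repair: for each pair in row $l_p$ it takes the maximal ``ladder'' $w_{l_p,i_{r+b}}+1=w_{l_p-1,s_{l_p-1}}$, $w_{l_p-1,s_{l_p-1}}+1=w_{l_p-2,s_{l_p-2}}$, and so on, stopping at row $l_p-t_b$ where no such equality persists, and applies $E_{l_p+1,\,l_p-t_b}$, which shifts all the ladder entries simultaneously; by the choice of $t_b$ (together with Lemma \ref{the formulas are the same out of the boundary}) none of the relevant numerator factors vanish, and the hypothesis $z\in\mathcal{L}^{(l_{p-1})}_{\geq m+2}$ guarantees that the extra shifts performed in the lower rows leave all their singular-pair differences at least $m+1$, so the target still lies in $\mathcal{L}^{(l_p)}_{\geq m+1}$. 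Your argument is correct only under the additional assumption that no entry of row $l_p-1$ equals $(v+z)_{l_p,i_{r_0}}+1$ (for instance when $v$ is regular); in general the ladder construction, or some substitute for it, is indispensable, and the operator $E_{l_p+1,l_p}$ alone does not suffice.
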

\begin{proof}
Suppose $l_{p}=k_{r}=\ldots=k_{r+a}$ and let $w = v+z$. Set also $k=l_{p}$ and $\bar{k}=l_{p-1}$. Assume without loss of generality that $z_{k_{r'},i_{r'}}\geq z_{k_{r'},j_{r'}}$ for any $r'\in\Sigma$ (this can be done because of the relations (\ref{relations satisfied by vectors on this universal module})). For every $0\leq b\leq a$, the condition  $z\in\mathcal{L}^{(\bar{k})}_{\geq m+2}$ implies the existence of $t_{b}$ such that 
$$\begin{cases}
w_{k,i_{r+b}}+1=w_{k-1,s_{k-1}}, & \text{ for some \ \ \ $1\leq s_{k-1}\leq k-1$},\\
w_{k-1,s_{k-1}}+1=w_{k-2,s_{k-2}}, & \text{ for some \ \ \ $1\leq s_{k-2}\leq k-2$},\\
\ \ \ \ \ \ \ \ \ \ \ \ \ \ \  \vdots & \ \ \ \ \ \  \vdots\\
w_{k-t_{b}+1,s_{k-t_{b}+1}}+1=w_{k-t_{b},s_{k-t_{b}}}, & \text{ for some \ \ \ $1\leq s_{k-t_{b}}\leq k-t_{b}$},\\
w_{k-t_{b},s_{k-t_{b}}}+1\neq w_{k-t_{b}-1,s}, & \text{ for any \ \ \ $1\leq s\leq k-t_{b}-1$}.\\
\end{cases}
$$

For any $0\leq j\leq a$ set $g_{j}=E_{k+1,k-t_{j}}$ and $z_{j}=z+\delta^{k,i_{r_{1}+j}}+\sum_{r=1}^{t_{j}}\delta^{k-j, s_{k-r}}$. If $b\neq a$ we have $\sum_{j=1}^{b}z_{j}\in\mathcal{L}^{(k)}_{\geq m}\bigcap \mathcal{L}^{(\bar{k})}_{\geq m+1}$ and $z'=\sum_{j=1}^{a}z_{j}\in\mathcal{L}^{(k)}_{\geq m+1}\bigcap \mathcal{L}^{(\bar{k})}_{\geq m+1}=\mathcal{L}^{(k)}_{\geq m+1}$. Now, by Lemma \ref{the formulas are the same out of the boundary} and the choice of $t_{b}$, we have:
$$\mathcal{D}_{I}T(v+z)\xrightarrow{g_{0}}\mathcal{D}_{I}T(v+z_{0})\xrightarrow{g_{1}}\mathcal{D}_{I}T(v+z_{0}+z_{1})\xrightarrow{g_{2}}\cdots\xrightarrow{g_{a}}\mathcal{D}_{I}T(v+z_{0}+z_{1}+\ldots+z_{a}).$$
Finally, by Lemma \ref{transitivity of the relation} we have  $\mathcal{D}_{I}T(v+z)\rightarrow\mathcal{D}_{I}T(v+z_{0}+z_{1}+\ldots+z_{a}).$
\end{proof}

\begin{corollary}\label{generalization of lemma 5.4 with M}

Let $p\in\{1,\ldots,\tilde{t}\}$ be such that $\Gamma_{\ell_{p}}$ is $\mathcal{D}^{v}$-invariant and let $z\in\mathcal{L}^{(l_{p})}_{\geq m}\bigcap \mathcal{L}^{(l_{p-1})}_{\geq m+2M}$ for some  $M\in\mathbb{Z}_{> 0}$. Then there exist $z'\in\mathcal{L}^{(l_{p})}_{\geq m+M}$ and $g\in U_{l_{p}+1}$ such that $\mathcal{D}_{I}T(v+z)\xrightarrow{g}\mathcal{D}_{I}T(v+z')$ for any $I\subseteq R$.
\end{corollary}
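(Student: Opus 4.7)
The proof will proceed by induction on $M$, with the base case $M = 1$ being precisely Lemma~\ref{generalization of lemma 5.4}. For the inductive step, let $M \geq 2$ and suppose the statement holds for $M - 1$. Given $z \in \mathcal{L}_{\geq m}^{(l_p)} \cap \mathcal{L}_{\geq m+2M}^{(l_{p-1})}$, I will first apply Lemma~\ref{generalization of lemma 5.4} (valid since $\mathcal{L}_{\geq m+2M}^{(l_{p-1})} \subseteq \mathcal{L}_{\geq m+2}^{(l_{p-1})}$) to produce $z_1 \in \mathcal{L}_{\geq m+1}^{(l_p)}$ and $g_1 \in U_{l_p+1}$ with $\mathcal{D}_I T(v+z) \xrightarrow{g_1} \mathcal{D}_I T(v+z_1)$. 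After verifying the key claim $z_1 \in \mathcal{L}_{\geq m+2M-1}^{(l_{p-1})} = \mathcal{L}_{\geq (m+1)+2(M-1)}^{(l_{p-1})}$, the induction hypothesis applied to $z_1$ with $(m, M)$ replaced by $(m+1, M-1)$ will furnish $z' \in \mathcal{L}_{\geq m+M}^{(l_p)}$ and $g_2 \in U_{l_p+1}$ with $\mathcal{D}_I T(v+z_1) \xrightarrow{g_2} \mathcal{D}_I T(v+z')$. Finally, since $\Gamma_{l_p}$ is $\mathcal{D}^v$-invariant by hypothesis, Lemma~\ref{transitivity of the relation} will yield a single $g \in U_{l_p+1}$ realizing $\mathcal{D}_I T(v+z) \xrightarrow{g} \mathcal{D}_I T(v+z')$.

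The pivotal step is the bound $z_1 \in \mathcal{L}_{\geq m+2M-1}^{(l_{p-1})}$: a single application of Lemma~\ref{generalization of lemma 5.4} must decrease the lower-row buffer by at most $1$. Unpacking the construction in the proof of that lemma, $z_1$ is obtained from $z$ by adjoining, for each $j = 0, 1, \ldots, a$ (where $l_p = k_{r_1} = \cdots = k_{r_1+a}$), a descending chain that adds $+1$ at the entry in row $l_p$ at position $i_{r_1+j}$ together with $+1$ at a sequence of positions in rows $l_p - 1, \ldots, l_p - t_j$ whose values form an increasing sequence of consecutive integers starting at $(v+z)_{l_p, i_{r_1+j}} + 1$. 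By condition (iii) of Definition~\ref{Def of 1^t singular}, the starting entries $(v+z)_{l_p, i_{r_1+j}}$ for $j = 0, \ldots, a$ pairwise differ by non-integer amounts (otherwise $v$ would contain a singular triple in row $l_p$); hence the chains carry pairwise non-integer-differing values in every lower row and consequently occupy pairwise distinct positions there. As a result, for each singular pair $(k_r, i_r, j_r)$ with $k_r \leq l_{p-1}$, at most one chain touches position $(k_r, i_r)$ and at most one (necessarily distinct) chain touches position $(k_r, j_r)$, forcing the change in $|z_{k_r, i_r} - z_{k_r, j_r}|$ to be at most $1$.

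The main obstacle is precisely this lower-row bookkeeping. A naive worst-case bound would give a loss of $a+1$ per step (the number of singular pairs in row $l_p$), breaking the $2{:}1$ ratio between the $l_{p-1}$-buffer and the $l_p$-buffer prescribed in the hypothesis. The disjointness of the descending chains, which ultimately relies on the index-$2$ singularity of $v$, reduces the loss to exactly $1$, matching the budget. Once this bound is in hand the induction runs smoothly, and the remaining ingredients are routine invocations of Lemma~\ref{generalization of lemma 5.4} and Lemma~\ref{transitivity of the relation}.
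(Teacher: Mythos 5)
Your proof is correct and takes the same route the paper intends, whose proof of this corollary is a one-line appeal to Lemma \ref{generalization of lemma 5.4}: iterate that lemma $M$ times and splice the resulting relations together via Lemma \ref{transitivity of the relation}, using the $\mathcal{D}^{v}$-invariance of $\Gamma_{l_{p}}$. The bookkeeping you add---that each application of the chain construction from the proof of Lemma \ref{generalization of lemma 5.4} decreases $|z_{k_r,i_r}-z_{k_r,j_r}|$ for $k_r\leq l_{p-1}$ by at most $1$, because the index-$2$ condition forces the descending chains to carry pairwise non-integer values and hence to meet each lower singular pair in at most one entry (and all chains shift the touched entries in the same direction)---is precisely what makes the induction close, and it is the step the paper's ``follows directly'' leaves implicit, since the conclusion does not follow from the statement of the lemma alone but from its construction.
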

\begin{proof}
The statement follows directly from Lemma \ref{generalization of lemma 5.4}.
\end{proof}

\begin{proposition}\label{We can go from L_m to L_m-1}
Assume that $\Gamma_{l_{p}}$ is  $\mathcal{D}^{v}$-invariant and let $z\in\mathcal{L}^{(l_{p})}_{\geq m}$ for some $m\in\mathbb{Z}_{\geq 0}$. Then there exist $z'\in\mathcal{L}^{(l_{p})}_{\geq m+1}$ and $g\in U_{l_{p}+1}$ such that $\mathcal{D}_{I}T(v+z)\xrightarrow{g}\mathcal{D}_{I}T(v+z')$ for any $I\subseteq R$.
\end{proposition}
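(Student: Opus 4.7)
My plan is to proceed by induction on $p$, leveraging Lemma \ref{generalization of lemma 5.4} and Lemma \ref{transitivity of the relation}. For the base case $p=1$, I would note that $\mathcal{L}^{(l_0)}_{\geq a}=T_{n-1}(\mathbb{Z})$ for every $a\geq 0$, since the defining condition is vacuous: no $r\in\Sigma$ satisfies $k_r\leq l_0=1$. Hence every $z\in\mathcal{L}^{(l_1)}_{\geq m}$ automatically lies in $\mathcal{L}^{(l_1)}_{\geq m}\cap\mathcal{L}^{(l_0)}_{\geq m+2}$, and a direct application of Lemma \ref{generalization of lemma 5.4} (with $p=1$, using the $\mathcal{D}^v$-invariance of $\Gamma_{l_1}$) produces the desired $z'\in\mathcal{L}^{(l_1)}_{\geq m+1}$ and $g\in U_{l_1+1}$.

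For the inductive step, suppose the proposition holds at every level below $p$. Given $z\in\mathcal{L}^{(l_p)}_{\geq m}$, one has $z\in\mathcal{L}^{(l_{p-1})}_{\geq m}$ since $l_{p-1}<l_p$. I would then apply the inductive hypothesis at level $p-1$ twice in succession---first with threshold $m$, then with threshold $m+1$---and chain the resulting arrows via Lemma \ref{transitivity of the relation}, obtaining $z_1\in\mathcal{L}^{(l_{p-1})}_{\geq m+2}$ and $\bar g\in U_{l_{p-1}+1}$ with $\mathcal{D}_IT(v+z)\xrightarrow{\bar g}\mathcal{D}_IT(v+z_1)$. The key observation is that every element of $U_{l_{p-1}+1}$ modifies only entries in rows $\leq l_{p-1}$ of a tableau (as one reads off from the Gelfand--Tsetlin formulas), so the entries of $z_1$ in row $l_p$ coincide with those of $z$; consequently $z_1\in\mathcal{L}^{(l_p)}_{\geq m}\cap\mathcal{L}^{(l_{p-1})}_{\geq m+2}$. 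Lemma \ref{generalization of lemma 5.4} then yields $z'\in\mathcal{L}^{(l_p)}_{\geq m+1}$ and $g_1\in U_{l_p+1}$ with $\mathcal{D}_IT(v+z_1)\xrightarrow{g_1}\mathcal{D}_IT(v+z')$, and one final application of Lemma \ref{transitivity of the relation} (using the $\mathcal{D}^v$-invariance of $\Gamma_{l_p}$) composes these two arrows to produce $g\in U_{l_p+1}$ satisfying $\mathcal{D}_IT(v+z)\xrightarrow{g}\mathcal{D}_IT(v+z')$.

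The main subtlety I anticipate is verifying the hypothesis of Lemma \ref{transitivity of the relation} that all intermediate vectors belong to $\mathcal{L}_{\geq 1}$. This is immediate when $z\in\mathcal{L}_{\geq 1}$, because the operators in $U_{l_{p-1}+1}$ do not touch the singular-pair entries in rows above $l_{p-1}$, and the constructions inside Lemma \ref{generalization of lemma 5.4} preserve nonvanishing of the singular-pair differences at rows above $l_{p-1}$ as well. For the edge case in which $z$ has some singular-pair difference equal to zero at a row $k_r$ with $I_r\neq\emptyset$, relations (\ref{relations satisfied by vectors on this universal module}) force $\mathcal{D}_IT(v+z)=0$, and the statement is vacuously satisfied; the remaining cases reduce, via these same relations and Remark \ref{rem-basis}, to the situation $z\in\mathcal{L}_{\geq 1}$ where the argument above applies directly.
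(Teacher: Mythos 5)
Your argument is essentially the paper's proof reorganized as an induction on $p$. The paper builds one explicit chain: it applies Corollary \ref{generalization of lemma 5.4 with M} at the levels $l_1,\ldots,l_p$ with margins $2^{p-1},2^{p-2},\ldots,1$ (starting from the vacuous condition $\mathcal{L}^{(l_0)}_{\geq m+2^{p}}=T_{n-1}(\mathbb{Z})$), and then composes the arrows with Lemma \ref{transitivity of the relation}. You obtain the same exponential bookkeeping recursively: invoking the proposition twice at level $p-1$ creates exactly the margin $m+2$ needed to apply Lemma \ref{generalization of lemma 5.4} at level $p$, and unrolling your recursion reproduces the paper's margins $2^{p-i}$. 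Both routes use the same ingredients (Lemma \ref{generalization of lemma 5.4}, or its Corollary \ref{generalization of lemma 5.4 with M}; Lemma \ref{transitivity of the relation}; and the observation that elements of $U_{l_{p-1}+1}$ do not alter rows above $l_{p-1}$, so membership in $\mathcal{L}^{(l_p)}_{\geq m}$ is preserved along the way). Your implicit use of the inductive hypothesis at level $p-1$ is legitimate because $\Gamma_{l_{p-1}}\subseteq\Gamma_{l_p}$, so $\mathcal{D}^{v}$-invariance passes down to lower levels.

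One claim in your final paragraph does not hold: if $z$ has a vanishing singular difference in a row $k_r$ with $I_r=\emptyset$, then $\tau_r(z)=z$, so relations (\ref{relations satisfied by vectors on this universal module}) and Remark \ref{rem-basis} give no new representative and cannot ``reduce'' this case to $z\in\mathcal{L}_{\geq 1}$; the tableau is nonzero and nothing has been gained. This only affects the verification of the hypothesis $z_1,z_2,z_3\in\mathcal{L}_{\geq 1}$ in Lemma \ref{transitivity of the relation}, a verification the paper's own proof also omits (its intermediate vectors need not lie in $\mathcal{L}_{\geq 1}$ in the rows above $l_i$, nor anywhere when $m=0$), so this is a shared imprecision rather than a defect peculiar to your route; still, you should either justify Lemma \ref{transitivity of the relation} under the weaker condition involving only the rows actually affected by $U_{l_p+1}$, or simply drop the claimed reduction.
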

\begin{proof}
We will prove the existence of $g_{i}\in U_{l_{i}+1}$ and  $z_{i}\in\mathcal{L}^{(l_{i})}_{\geq m+2^{p-i}}$, $i=1,\ldots,p$,  such that:
\begin{itemize} 
\item[(i)] $z_{i}\in\mathcal{L}^{(l_{i+1})}_{\geq m}$, for any $1\leq i\leq p-1$;
\item[(ii)] $\mathcal{D}_{I}T(v+z)\xrightarrow{g_{1}}\mathcal{D}_{I}T(v+z_{1})\xrightarrow{g_{2}}\cdots\xrightarrow{g_{j}}\mathcal{D}_{I}T(v+z_{j}).$
\end{itemize}

We first note that $z\in\mathcal{L}^{(l_{1})}_{\geq m}=\mathcal{L}^{(l_{1})}_{\geq m}\bigcap\mathcal{L}^{(l_0)}_{\geq m+N}$ for any $N\in\mathbb{Z}_{\geq 0}$. Set $N=2^{p}$. By Corollary \ref{generalization of lemma 5.4 with M}, there exist $z_{1}\in\mathcal{L}^{(l_{1})}_{\geq m+2^{p-1}}$ and  $g_{1}\in U_{l_{1}+1}$ such that $\mathcal{D}_{I}T(v+z)\xrightarrow{g_{1}}\mathcal{D}_{I}T(v+z_1)$. Now, assume that for $j$, $1\leq j\leq p-1$, there exist $g_{i}\in U_{l_{i}+1}$, $z_{i}\in\mathcal{L}^{(l_{i})}_{\geq m+2^{p-i}}$, for all $i=1,\ldots,j$, that satisfy (i) and (ii). In particular, $z_{j}\in\mathcal{L}^{(l_{j})}_{\geq m+2^{p-j}}\bigcap\mathcal{L}^{(l_{j+1})}_{\geq m}$ so, we can use Corollary \ref{generalization of lemma 5.4 with M} and guarantee the existence of $g_{j+1}\in U_{l_{j+1}+1}$ and $z_{j+1}\in\mathcal{L}^{(l_{j+1})}_{\geq m+2^{s-j-1}}$ such that $\mathcal{D}_{I}T(v+z_{j})\xrightarrow{g_{j+1}}\mathcal{D}_{I}T(v+z_{j+1})$. Note that $z_{j+1}\in\mathcal{L}^{(l_{j+2})}_{\geq m}$ because $z\in\mathcal{L}^{(l_{p})}_{\geq m}$, $g_{j+1}\in U_{l_{j+1}+1}$, and $\mathcal{D}_{I}T(v+z_{j})\xrightarrow{g_{j+1}}\mathcal{D}_{I}T(v+z_{j+1})$. Finally, the existence of $g\in U_{l_{p}+1}$ such that $\mathcal{D}_{I}T(v+z)\xrightarrow{g}\mathcal{D}_{I}T(v+z')$ is guaranteed by Lemma \ref{transitivity of the relation}.
\end{proof}

\begin{definition}\label{Def: separation of tableaux}
Given $A_{1},\ A_{2}\in V(T(v))$ and $g\in U$,  we say that $g$ \emph{separates} $A_{1}$ and $A_{2}$ if $g A_{1}=A_{1}$ and $g A_{2}=0$.
\end{definition}

\begin{lemma}\label{action of Gamma_k induces action of Gamma_k+1}
Let $m \leq n$ and let $k=\max\{k_{r}\; | \; k_{r}<m\}$. If  $\Gamma_{k}$ is  $\mathcal{D}^{v}$-invariant, then $\Gamma_{m}$ is  $\mathcal{D}^{v}$-invariant.
\end{lemma}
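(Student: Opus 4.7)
The plan is to reduce to checking the identity on a set of generators and then to use the reduction procedure developed in the preceding propositions. Set
$$
\Xi := \{c \in \Gamma \mid c\,\mathcal{D}_{I}T(v+z) = \mathcal{D}_{I}^{v}(cT(x+z)) \text{ for all } I \subseteq R \text{ and } z \in T_{n-1}(\mathbb{Z})\}.
$$
Since $cT(x+w) = \gamma_{c}(x+w)T(x+w)$ on generic tableaux with $\gamma_{c}$ smooth, a Leibniz-type matching of the triple sums obtained by expanding both sides for a product $c_{1}c_{2}$ shows that $\Xi$ is closed under multiplication. By hypothesis $\Xi \supseteq \Gamma_{k}$, and $\Gamma_{m}$ is generated as a commutative algebra by $\Gamma_{k}$ together with the elements $\{c_{j,s} \mid k < j \leq m,\ 1 \leq s \leq j\}$, so it suffices to verify that each such $c_{j,s}$ lies in $\Xi$.

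First I would collapse the right-hand side. Since $\gamma_{j,s}$ is a symmetric polynomial in the $j$-th row entries alone (Theorem \ref{action of Gamma on fd modules}), each $\mathcal{D}_{r}$ with $r\in\Sigma$ either involves variables from a row $k_{r} < j$ disjoint from those of $\gamma_{j,s}$, or involves the row $k_{r} = j$ (possible only when $j=m$ is itself a singular row), in which case $\mathcal{D}_{r}(\gamma_{j,s})|_{v} = 0$ by $\tau_{r}$-symmetry of $\gamma_{j,s}$. Hence the Leibniz expansion of $\mathcal{D}_{I}^{v}(\gamma_{j,s}(x+z)T(x+z))$ collapses to
$$
\mathcal{D}_{I}^{v}(c_{j,s}T(x+z)) = \gamma_{j,s}(v+z)\,\mathcal{D}_{I}T(v+z),
$$
reducing the desired identity to the statement that $c_{j,s}$ acts as the scalar $\gamma_{j,s}(v+z)$ on $\mathcal{D}_{I}T(v+z)$.

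For $z$ with sufficiently large distances in the singular rows (e.g.\ $z\in\mathcal{L}_{\geq s}$) this is immediate from Proposition \ref{Gamma acts nice almost everywhere} combined with the simplification above. For arbitrary $z$ I would iterate Proposition \ref{We can go from L_m to L_m-1} (which applies because $\Gamma_{k}$ is $\mathcal{D}^{v}$-invariant) to produce $z'$ whose distances $|z'_{k_{r},i_{r}} - z'_{k_{r},j_{r}}|$ are as large as needed for all $r$ with $k_{r}\leq k$, together with $g \in U_{k+1}$ such that $\mathcal{D}_{I}T(v+z) \xrightarrow{g} \mathcal{D}_{I}T(v+z')$. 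Lemma \ref{separation for different characters} then produces $C\in\Gamma_{k}$ with $Cg\,\mathcal{D}_{I}T(v+z') = \mathcal{D}_{I}T(v+z)$. Using commutativity of $c_{j,s}\in Z_{j}$ with $g\in U_{k+1}\subseteq U_{j}$ and with $C\in\Gamma$, and applying the already-established scalar identity to $z'$,
$$
c_{j,s}\mathcal{D}_{I}T(v+z) = Cg\,c_{j,s}\mathcal{D}_{I}T(v+z') = \gamma_{j,s}(v+z')\,Cg\,\mathcal{D}_{I}T(v+z') = \gamma_{j,s}(v+z')\,\mathcal{D}_{I}T(v+z) = \gamma_{j,s}(v+z)\,\mathcal{D}_{I}T(v+z),
$$
the last equality because $g\in U_{k+1}$ only modifies rows $\leq k < j$, so the $j$-th rows of $v+z$ and $v+z'$ coincide.

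The main technical obstacle is calibrating the iteration so that the $z'$ produced genuinely satisfies the hypotheses of Proposition \ref{Gamma acts nice almost everywhere}. When $k = l_{\tilde{t}}$ this is automatic since $\mathcal{L}^{(k)}_{\geq M} = \mathcal{L}_{\geq M}$. When there exist singular rows $k_{r} \geq m$, one must observe that these lie strictly above row $j\leq m$ and are therefore untouched by the Gelfand-Tsetlin formulas defining $c_{j,s}$ (which only involve $E_{ab}$ with $a,b\leq j$); consequently the smoothness argument underlying Proposition \ref{Gamma acts nice almost everywhere} requires only the row-restricted condition $z'\in\mathcal{L}^{(k)}_{\geq s}$, which is precisely what the iterated application of Proposition \ref{We can go from L_m to L_m-1} provides.
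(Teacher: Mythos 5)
Your reduction hinges on the ``collapse'' $\mathcal{D}_{I}^{v}(c_{j,s}T(x+z))=\gamma_{j,s}(v+z)\,\mathcal{D}_{I}T(v+z)$, and specifically on the claim that when $k_{r}=j=m$ is a singular row one has $\mathcal{D}_{r}(\gamma_{j,s})|_{v}=0$ ``by $\tau_{r}$-symmetry''. This is a genuine gap: the function being differentiated is $x\mapsto\gamma_{m,s}(x+z)$, and since $\gamma_{m,s}(\tau_{r}(x)+z)=\gamma_{m,s}(x+\tau_{r}(z))$, it is $\tau_{r}$-invariant only when $z_{m,i_{r}}=z_{m,j_{r}}$. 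But for a nonzero tableau with $I_{r}\neq\emptyset$ the relations (\ref{relations satisfied by vectors on this universal module}) force $z_{m,i_{r}}\neq z_{m,j_{r}}$, and then $\mathcal{D}^{v}_{\{r\}}(\gamma_{m,s}(x+z))$ is in general nonzero --- this is exactly Lemma 5.2(ii) of \cite{FGR2}, invoked in the proof of Theorem B(i), and it is what produces the nontrivial Jordan cells in Proposition \ref{size of jordan blocks} and Theorem B(i). Consequently, when $m$ is itself a singular row (which the lemma allows, and which occurs in all the intermediate applications $(k,m)=(l_{p},l_{p+1})$ in the proof of Theorem \ref{action of Gamma on derivative tableaux}) and $I$ has a component at a row-$m$ pair, the statement you reduce to --- that $c_{m,s}$ acts by the scalar $\gamma_{m,s}(v+z)$ --- is false, both at the original $z$ and at the far-away $z'$: Proposition \ref{Gamma acts nice almost everywhere} gives there the full identity (\ref{Gamma commutes with D_I}), which contains nonzero derivative terms, not scalar action. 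So the base case of your transport argument already breaks, and the scheme would ``prove'' something contradicted by Theorem B(i).

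The parts that do work --- multiplicativity of $\Xi$ via the Leibniz rule built into Definition \ref{def-d-i}, and the treatment of the generators $c_{j,s}$ with $k<j\leq m$ and row $j$ non-singular (transport via Proposition \ref{We can go from L_m to L_m-1}, separation by $\Gamma_{k}$ via Lemma \ref{separation for different characters}, commutativity of $Z_{j}$ with $U_{k+1}$, and invariance of row $j$ under $g$) --- are essentially the paper's own mechanism. But the generators $c_{m,s}$ with $m$ singular are the heart of the lemma, and for them the identity to be proved is the full formula $c_{m,s}\mathcal{D}_{I}T(v+z)=\sum_{J\subseteq I}\mathcal{D}^{v}_{I\setminus J}(\gamma_{m,s}(x+z))\mathcal{D}_{J}T(v+z)$, with genuinely nonzero off-diagonal coefficients. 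Transporting such a triangular action from $z'$ back to $z$ would require the same element $Cg$ to carry every $\mathcal{D}_{J}T(v+z')$, $J\subseteq I$, to the corresponding $\mathcal{D}_{J}T(v+z)$, which Lemma \ref{separation for different characters} does not provide. This is exactly where the paper does additional work that your proposal omits: it keeps the full identity (\ref{Gamma commutes with D_I}) as the target and argues by induction on $|\Sigma_{I}\cap\Sigma(m)|$, first establishing the formula on the Gelfand-Tsetlin $\mathfrak{gl}(m)$-module $W_{0}$ built from tableaux with no derivative components below row $m$ and then passing to the quotients $W/W_{0}$, $|\Sigma_{I}\cap\Sigma(m)|=1,2,\dots$, so that along $Cg_{0}$ only eigenvalue data of the central elements has to be moved. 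Without a replacement for this layer, the proof is incomplete precisely in the case the lemma is needed for.
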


\begin{proof}
 %By Lemma \ref{action of restricted gl_s subset of gl_n is the same as restriction to first s rows}(ii), is enough to prove that for any $c\in\Gamma_{m}$ and any $I(m)\subseteq R(m)$ we have $c\mathcal{D}_{I(m)}T(v+z)=\mathcal{D}^{v}_{I(m)}(cT(v+z))$.

By Proposition \ref{Gamma acts nice almost everywhere}(i), if $z\in\mathcal{L}_{\geq m}$, then the formula (\ref{Gamma commutes with D_I}) holds for any $c\in\Gamma_{m}$. Thus $\mathcal{D}_{I}T(v+z)$ is a common eigenvector of all generators of $\Gamma_{m}$ and the submodule $W^{(I)}_{z}$ of $V(T(v))$ generated by $\mathcal{D}_{I}T(v+z)$ is a Gelfand-Tsetlin $\mathfrak{gl}(m)$-module by  Lemma 3.4 in \cite{FGR2}. Then for each $I\subseteq R$, $W_{I}=\sum\limits_{ z\in \mathcal{L}_{\geq m}}W^{(I)}_{z}$ and  $W=\sum\limits_{I\subseteq R}W_{I}$  are also Gelfand-Tsetlin $\mathfrak{gl}(m)$-modules.  Denote by $W_{k}$ the Gelfand Tsetlin $\mathfrak{gl}(m)$-module $\sum\limits_{|\Sigma_{I}\cap\Sigma(m)|=k}W_{I}$. 

We next show that $W_{0}$ contains all tableaux $\mathcal{D}_{I}T(v+z)$ such that $z\in T_{n-1}(\mathbb{Z})$ and $\Sigma_{I}\cap\Sigma(m)= \emptyset$. Let us first consider $z\in\mathcal{L}^{(k)}_{\geq m-1}$. By Proposition \ref{We can go from L_m to L_m-1}, there exist $z_{0}\in\mathcal{L}^{(k)}_{\geq m}$ and $g_{0}\in U_{k+1}$ such that $\mathcal{D}_{I}T(v+z)\xrightarrow{g_{0}} \mathcal{D}_{I}T(v+z_{0})$ (we  assume without loss of generality that the coefficient of $\mathcal{D}_{I}T(v+z)$ in the decomposition of $g_{0} \mathcal{D}_{I}T(v+z_{0})$ is $1$). By Proposition \ref{Gamma acts nice almost everywhere}(i), all generators of $\Gamma_{m}$, except for the ones in the center of $U_{m}$ satisfy the relation (\ref{Gamma commutes with D_I}). Let $c\in Z_{m}$ and let $(c-\gamma)\mathcal{D}_{I}T(v+z_{0})=0$ for some  $\gamma \in {\mathbb C}$. Then $(c-\gamma)g_{0}\mathcal{D}_{I}T(v+z_{0})=0$.  Since $\Gamma_{k}$ is  $\mathcal{D}^{v}$-invariant, we can use Lemma \ref{separation for different characters}, and choose $C\in \Gamma_{k}$ that separates  $\mathcal{D}_{I}T(v+z)$ and $g_{0}\mathcal{D}_{I}T(v+z_{0})-\mathcal{D}_{I}T(v+z)$ (see Definition \ref{Def: separation of tableaux}). Since $C$ commutes with $(c-\gamma)$ we have $(c-\gamma)(Cg_{0}\mathcal{D}_{I}T(v+z_{0}))=0$, which implies that $c$ acts as multiplication by $\gamma$ on any tableau in the decomposition of $g_{0}\mathcal{D}_{I}T(v+z_{0})$. Hence, the action of $\Gamma$ on any $\mathcal{D}_{I}T(v+z)$ for $z\in \mathcal{L}^{(k)}_{\geq m-1}$ is given by (\ref{Gamma commutes with D_I}). Moreover, $\mathcal{D}_{I}T(v+z)\in W$ for any $z\in \mathcal{L}^{(k)}_{\geq m-1}$. 
  Next we consider a tableau $\mathcal{D}_{I}T(v+z)$ with $z\in \mathcal{L}^{(k)}_{\geq m-2}$. Again by Proposition~\ref{We can go from L_m to L_m-1} one finds a nonzero $g_1\in U_{k+1}$ and $z_{1}\in \mathcal{L}^{(k)}_{\geq m-1}$ such that  $\mathcal{D}_{I}T(v+z)\xrightarrow{g_{1}}\mathcal{D}_{I}T(v+z_1)$. For the generators of the center of $U_i$, $i\leq m-2$ the statement follows from Proposition \ref{Gamma acts nice almost everywhere}(i). If $c$ is in the center of $U_{m}$ or in the center of $U_{m-1}$ then it commutes with $g_1$.  Choose $C\in \Gamma_{k}$ that separates  $\mathcal{D}_{I}T(v+z)$ and $g_{0}\mathcal{D}_{I}T(v+z_{0})-\mathcal{D}_{I}T(v+z)$ and which acts by a scalar on the tableau  $\mathcal{D}_{I}T(v+z_1)$. By applying the argument above we conclude that the action of $\Gamma$ on any $\mathcal{D}_{I}T(v+z)$ with $z\in \mathcal{L}^{(k)}_{\geq m-2}$ is determined  by (\ref{Gamma commutes with D_I}) and $\mathcal{D}_{I}T(v+z)\in W$ for any $z\in \mathcal{L}^{(k)}_{\geq m-2}$. Continuing analogously with the sets  $\mathcal{L}^{(k)}_{\geq m-3}, \ldots, \mathcal{L}^{(k)}_{\geq 0}$ we show that any tableau $\mathcal{D}_{I}T(v+z)$ with $\Sigma_{I}\cap\Sigma(m)=\emptyset$ belongs to $W$.
   
 Now consider the  quotient $W_{\geq 1}=W/W_{0}$ and $I$ such that $|\Sigma_{I}\cap\Sigma(m)|=1$. The vector $\mathcal{D}_{I}T(v+z)+W_0$ of $W_{\geq 1}$ is a common eigenvector of $\Gamma_{m}$ by Proposition \ref{Gamma acts nice almost everywhere}(i) for any $z\in \mathcal{L}^{(k)}_{\geq m}$. We can repeat the argument above and obtain that any tableau $\mathcal{D}_{I}T(v+z)$ with $|\Sigma_{I}\cap\Sigma(m)|=1$ belongs to $W$. Continuing in the same fashion when $|\Sigma_{I}\cap\Sigma(m)| = 2,3,\ldots $, we obtain that $\Gamma_{m}$ is  $\mathcal{D}^{v}$-invariant.   \end{proof}

Now we are in the position to prove Theorem \ref{action of Gamma on derivative tableaux}.\\

\noindent{\bf Proof of Theorem \ref{action of Gamma on derivative tableaux}}
Recall that $l_{1}<\cdots<l_{\tilde{t}}$ are the distinct elements of $\{k_{1},\ldots,k_{t}\}$. By Proposition \ref{Gamma acts nice almost everywhere}(ii), $\Gamma_{l_{1}}$ is  $\mathcal{D}^{v}$-invariant. Now, we apply $\bar{t}$ times Lemma \ref{action of Gamma_k induces action of Gamma_k+1} to complete the proof (here $(k,m)\in\{(l_{1},l_{2}),(l_{2},l_{3}),\ldots,(l_{\tilde{t}},n)\}$). \\

\begin{corollary}\label{separation of tableaux if all singular pairs are in different rows}
Suppose that all singular pairs of $T(v)$ are in different rows (i.e. $k_{1}<k_{2}<\ldots < k_{t}$). If $\mathcal{D}_{J}T(v+z')\xrightarrow{g} \mathcal{D}_{I}T(v+z)$,  then there exists $C\in\Gamma$ such that $Cg \mathcal{D}_{I}T(v+z)=\mathcal{D}_{J}T(v+z')$.
\end{corollary}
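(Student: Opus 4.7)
The plan is to refine Lemma \ref{separation for different characters} by exploiting the decoupling of the differential operators $\mathcal{D}^{v}_{R_{\{r\}}}$ for distinct $r$, which becomes available once the singular rows $k_1<\cdots<k_t$ are all distinct.

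First I would expand $g\mathcal{D}_I T(v+z) = \sum_k a_k \mathcal{D}_{I^{(k)}}T(v+w_k)$ as a linearly independent sum with every $a_k \neq 0$ and pick a $\mathcal{D}$-maximal tableau $\mathcal{D}_{J^*}T(v+z^*)$ among the summands having the same character as $\mathcal{D}_J T(v+z')$; this forces $J \subseteq J^*$ and $z^* = \tau_{\Delta^*}(z')$ for some $\Delta^* \subseteq \Sigma$. Applying Lemma \ref{separation for different characters} to $\mathcal{D}_{J^*}T(v+z^*)$ yields $C_1 \in \Gamma$ annihilating every summand of a different character, so after normalizing via the relations \eqref{relations satisfied by vectors on this universal module},
\[
C_1 g\mathcal{D}_I T(v+z) \;=\; \sum_{J' \subseteq J^*} b_{J'}\,\mathcal{D}_{J'}T(v+z'), \qquad b_{J^*} \neq 0.
\]

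Next I would construct $C_2 \in \Gamma$ such that $C_2\bigl(\sum_{J' \subseteq J^*} b_{J'}\mathcal{D}_{J'}T(v+z')\bigr) = \mathcal{D}_J T(v+z')$. By Theorem \ref{action of Gamma on derivative tableaux} one has
\[
C_2\mathcal{D}_{J'}T(v+z') \;=\; \sum_{J'' \subseteq J'} \mathcal{D}^{v}_{J' \setminus J''}\bigl(\gamma_{C_2}(x+z')\bigr)\,\mathcal{D}_{J''}T(v+z'),
\]
where $\gamma_{C_2}$ denotes the character polynomial of $C_2$; matching coefficients of $\mathcal{D}_{J''}T(v+z')$ for each $J''\subseteq J^*$ produces a triangular system of $2^{|\Sigma_{J^*}|}$ linear equations in the values $\mathcal{D}^{v}_{R_{\Delta_0}}(\gamma_{C_2}(x+z'))$, $\Delta_0 \subseteq \Sigma_{J^*}$, whose diagonal coefficient is always $b_{J^*} \neq 0$. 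To realize the required prescription inside $\Gamma$, the non-vanishing of $\mathcal{D}_{J^*}T(v+z^*)$ (forced by \eqref{relations satisfied by vectors on this universal module}) implies $(v+z')_{k_r i_r} \neq (v+z')_{k_r j_r}$ for every $r \in \Sigma_{J^*}$, so a short Wronskian computation on the generating function of Theorem \ref{action of Gamma on fd modules} furnishes, for each such $r$, an index $p_r \in \{1,\ldots,k_r\}$ with $\beta_r := \mathcal{D}^{v}_{R_{\{r\}}}(\gamma_{k_r,p_r}(x+z')) \neq 0$. Taking the ansatz
\[
C_2 \;:=\; \sum_{\Delta \subseteq \Sigma_{J^*}} \xi_{\Delta}\prod_{r \in \Delta}\bigl(c_{k_r,p_r} - \gamma_{k_r,p_r}(v+z')\bigr),
\]
the hypothesis that singular pairs lie in distinct rows makes $\gamma_{k_r,p_r}$ depend only on row $k_r$, so $\mathcal{D}^{v}_{R_{\{s\}}}$ annihilates the factor indexed by $r$ whenever $s \neq r$; a direct Leibniz calculation then gives $\mathcal{D}^{v}_{R_{\Delta_0}}(\gamma_{C_2}(x+z')) = \xi_{\Delta_0}\prod_{r \in \Delta_0}\beta_r$, and the triangular system above uniquely determines each $\xi_{\Delta}$. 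Setting $C := C_2 C_1$ completes the argument.

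The principal obstacle is the second step: prescribing the $2^{|\Sigma_{J^*}|}$ antisymmetric derivatives of $\gamma_{C_2}$ simultaneously by a single element of $\Gamma$. Without the hypothesis that the rows $k_r$ are distinct, the operators $\mathcal{D}^{v}_{R_{\{r\}}}$ for different $r$ would interact through shared row variables, destroying the factorization $\mathcal{D}^{v}_{R_{\Delta_0}}(\prod_r f_r) = \prod_{r\in\Delta_0} \mathcal{D}^{v}_{R_{\{r\}}}(f_r)$ and with it the triangular structure that makes the linear system solvable.
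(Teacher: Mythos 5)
Your overall strategy coincides with the paper's in its first stage: the paper likewise separates Gelfand-Tsetlin characters using the $\mathcal{D}^{v}$-invariance of $\Gamma$ (Theorem \ref{action of Gamma on derivative tableaux}) and applies Lemma \ref{separation for different characters} to a $\mathcal{D}$-maximal same-character summand. Your second stage --- solving the triangular system via the ansatz $C_2=\sum_{\Delta\subseteq\Sigma_{J^*}}\xi_\Delta\prod_{r\in\Delta}\bigl(c_{k_r,p_r}-\gamma_{k_r,p_r}(v+z')\bigr)$, where distinctness of the rows gives $\mathcal{D}^{v}_{R_{\Delta_0}}(\gamma_{C_2}(x+z'))=\xi_{\Delta_0}\prod_{r\in\Delta_0}\beta_r$ --- is a correct and genuinely more explicit descent from $\mathcal{D}_{J^*}T(v+z')$ to $\mathcal{D}_{J}T(v+z')$ than anything written in the paper: there, Lemma \ref{separation for different characters}(ii) is stated so that the corollary is immediate, and the analogous descent device appears only later, in the proof of Theorem \ref{thm-Verma}, via products of the elements $c_{i2}-\gamma_{i2}$. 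The non-vanishing $\beta_r\neq 0$ that you obtain by a ``Wronskian computation'' is exactly the fact the paper quotes from \cite{FGR2} (Lemma 5.2(ii), with $p_r=2$) in the proof of Theorem B(i), so that ingredient is available.

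The one real soft spot is at the start: you ``pick a $\mathcal{D}$-maximal tableau $\mathcal{D}_{J^*}T(v+z^*)$ among the same-character summands'' and immediately conclude $J\subseteq J^*$, that $C_1 g\mathcal{D}_I T(v+z)$ is supported on $J'\subseteq J^*$, and that $b_{J^*}\neq 0$. All of this presupposes that the same-character summands admit a common dominating element for $\prec_{\mathcal{D}}$, which is not automatic: $\prec_{\mathcal{D}}$ is only a preorder, and a priori two summands such as $\mathcal{D}_{R_{\{1\}}}T(v+z')$ and $\mathcal{D}_{R_{\{2\}}}T(v+z')$ could occur with no summand above both; a merely weakly maximal $J^*$ then gives neither $J\subseteq J^*$ nor the restriction of the support to subsets of $J^*$, and your linear system acquires equations (coefficients of tableaux not below $J^*$) that it need not be able to satisfy. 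This is precisely the point at which the paper invokes the hypothesis $k_1<\cdots<k_t$ (``since all singularities are in different rows, any linearly independent set of tableaux in the same Gelfand-Tsetlin subspace has a $\mathcal{D}$-maximal element''), whereas you locate the use of that hypothesis only in the row-decoupling of the second step. The second use is genuine, but you must also justify (or cite, as the paper does) the existence of the dominating $J^*$; without it the argument does not get off the ground.
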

\begin{proof}
Since $\Gamma$ is $\mathcal{D}^{v}$-invariant, tableaux with different Gelfand-Tsetlin characters can be separated by elements of $\Gamma$. Finally, since all singularities are in different rows, any linearly independent set of vectors in $V(T(v))$ has a $\mathcal{D}$-maximal element.  In particular any linearly independent set of tableaux in the same Gelfand-Tsetlin subspace has a $\mathcal{D}$-maximal element, so using Lemma \ref{separation for different characters} we finish the proof. 
\end{proof}

\begin{proposition}\label{size of jordan blocks}
The following hold.
\begin{itemize}
\item[(i)] The action of $\Gamma$ on $V(T(v))$ is given by the following formulas.
\begin{equation}\label{c_mk commutes with D_I}
c_{ij}\mathcal{D}_{I}T(v+z)=\sum_{J\subseteq I}\mathcal{D}^{v}_{J}\left(\gamma_{ij}(v+z)\right)\mathcal{D}_{I\setminus J}T(v+z)
\end{equation}
\item[(ii)] We have $(c_{ij}-\gamma_{ij}(v+z))^{|I|+1}\mathcal{D}_{I}T(v+z)=0$. In particular, $V(T(v))$ is a Gelfand-Tsetlin module.
\end{itemize}
\end{proposition}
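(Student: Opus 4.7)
The key tool is Theorem \ref{action of Gamma on derivative tableaux}, which asserts that $\Gamma$ is $\mathcal{D}^{v}$-invariant, so for every $c\in\Gamma$, every $I\subseteq R$ and every $z\in T_{n-1}(\mathbb Z)$ we have
$$c\,\mathcal{D}_{I}T(v+z)=\mathcal{D}^{v}_{I}\bigl(c\,T(x+z)\bigr).$$
Applying this with $c=c_{ij}$, and recalling from Theorem \ref{action of Gamma on fd modules} that $c_{ij}$ acts on generic tableaux as the scalar $\gamma_{ij}(x+z)$, i.e.\ $c_{ij}T(x+z)=\gamma_{ij}(x+z)\,T(x+z)$ in $\mathcal V_{\rm gen}$, I would compute
$$c_{ij}\,\mathcal{D}_{I}T(v+z)=\mathcal{D}^{v}_{I}\bigl(\gamma_{ij}(x+z)\,T(x+z)\bigr).$$
Since $\gamma_{ij}(x+z)$ is a symmetric polynomial in the $i$-th row entries of $x+z$, it is in particular smooth on $\overline{\mathcal H}$. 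Thus the Leibniz-type identity in Definition \ref{def-d-i}, extended to $\overline{\mathcal F}\otimes\mathcal V_{\rm gen}$, gives
$$\mathcal{D}^{v}_{I}\bigl(\gamma_{ij}(x+z)\,T(x+z)\bigr)=\sum_{K\subseteq I}\mathcal{D}^{v}_{I\setminus K}\bigl(\gamma_{ij}(x+z)\bigr)\,\mathcal{D}_{K}T(v+z).$$
Re-indexing by $J=I\setminus K$ yields exactly (i).

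For (ii) I would argue by induction on $|I|$. The base case $|I|=0$ is immediate since $\mathcal{D}_{\emptyset}T(v+z)=T(v+z)$ lies in a generic one-dimensional $\Gamma$-eigenspace with eigenvalue $\gamma_{ij}(v+z)$. For the inductive step, notice that the $J=\emptyset$ term in (i) is precisely $\gamma_{ij}(v+z)\,\mathcal{D}_{I}T(v+z)$, so
$$\bigl(c_{ij}-\gamma_{ij}(v+z)\bigr)\,\mathcal{D}_{I}T(v+z)=\sum_{\emptyset\neq J\subseteq I}\mathcal{D}^{v}_{J}(\gamma_{ij}(v+z))\,\mathcal{D}_{I\setminus J}T(v+z).$$
Every tableau on the right has derivative index of size strictly less than $|I|$, so by the inductive hypothesis each summand is annihilated by $(c_{ij}-\gamma_{ij}(v+z))^{|I|}$, and hence the left-hand side is annihilated by $(c_{ij}-\gamma_{ij}(v+z))^{|I|+1}$.

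The main (essentially only) nontrivial ingredient is Theorem \ref{action of Gamma on derivative tableaux}, which has already been established. Given that, the remaining content is bookkeeping with the Leibniz rule for $\mathcal{D}^{v}$ and a straightforward induction. The last assertion, that $V(T(v))$ is a Gelfand-Tsetlin module, follows because each $\mathcal{D}_{I}T(v+z)$ lies in the generalized eigenspace of $\Gamma$ associated with the maximal ideal $\sm_{v+z}$ (with Jordan block of size at most $|I|+1$), and these derivative tableaux span $V(T(v))$.
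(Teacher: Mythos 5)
Your proposal is correct and takes essentially the same route as the paper: part (i) is exactly the $\mathcal{D}^{v}$-invariance of $\Gamma$ (Theorem \ref{action of Gamma on derivative tableaux}) combined with the scalar action of $c_{ij}$ on generic tableaux and the Leibniz rule for $\mathcal{D}^{v}_{I}$, which the paper leaves implicit. Part (ii) is the same induction on $|I|$, isolating the $J=\emptyset$ term of (i) and applying the inductive hypothesis to the remaining summands.
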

\begin{proof} The identity (\ref{c_mk commutes with D_I}) follows by Theorem \ref{action of Gamma on derivative tableaux}.\\
To prove (ii), we apply induction on $|I|$. Suppose first that $|I|=0$. Then from (\ref{c_mk commutes with D_I}) we obtain $c_{ij}\mathcal{D}_{\emptyset}T(v+z)=\gamma_{ij}(v+z)\mathcal{D}_{\emptyset}T(v+z).$
Suppose now that $|I|=s$ and that $(c_{ij}-\gamma_{ij}(v+z))^{|J|+1}\mathcal{D}_{J}T(v+z)=0$ for any $|J|\leq s-1$. By (\ref{c_mk commutes with D_I}) we have
$$(c_{ij}-\gamma_{ij}(v+z))\mathcal{D}_{I}T(v+z)=\sum_{\emptyset\neq J\subseteq I}\mathcal{D}^{v}_{J}\left(\gamma_{ij}(v+z)\right)\mathcal{D}_{I\setminus J}T(v+z).$$
Since all  subsets $I\setminus J$ with $J\neq \emptyset$ satisfy $|I\setminus J|\leq s-1$, by the induction hypothesis $(c_{ij}-\gamma_{ij}(v+z))^{s}\mathcal{D}_{I\setminus J}T(v+z)=0$. Therefore,
$(c_{ij}-\gamma_{ij}(v+z))^{s+1}\mathcal{D}_{I}T(v+z)=0.$
\end{proof}

\subsection{Proof of Theorem \ref{thm-Verma}}\label{subsec-Verma}
We use notations from Section \ref{sec: Irreducible modules of index 2}. 
Consider the module $M$ generated by the tableau $\mathcal{D}_{\emptyset}T(v)$. It is a highest weight module of highest weight $(a_{1},a_{1}+1,a_{2}+2,\ldots, a_{n-1}+n-1)$. Indeed, for any $1\leq i\leq n-1$, $E_{i,i+1}(\mathcal{D}_{\emptyset}T(v))$ is a linear combination of derivative tableaux $\mathcal{D}_{R\setminus J}$ with coefficients $\mathcal{D}^{v}_{R}(P_{\Sigma\setminus\Sigma_{J}}(v)e_{i,i+1}(\sigma(v)))$, and that coefficient is zero for any $J\subseteq R$. Clearly $\mathcal{D}_{\emptyset}T(v)$ is a weight vector with weight $(a_{1},a_{1}+1,a_{2}+2,\ldots, a_{n-1}+n-1)$. Hence, $M$ is isomorphic to the corresponding irreducible Verma module.  
Since all singularities of $T(v)$ are in different rows, we can apply Corollary \ref{separation of tableaux if all singular pairs are in different rows} and obtain a basis of $M$ given by
 $$\{\mathcal{D}_{I}T(v+z)\in V(T(v))\ |\ \mathcal{D}_{I}T(v+z)\rightarrow\mathcal{D}_{\emptyset}T(v)\}.$$
 Checking the coefficients of the formulas in Theorem \ref{Gelfand-Tsetlin module over gl(n)}, we immediately see that 
 $$\mathcal{D}_{\emptyset}T(v)\xrightarrow{E_{21}}\mathcal{D}_{\emptyset}T(v+\delta^{11})\xrightarrow{E_{32}}\mathcal{D}_{R_{\{2\}}}T(v+\delta^{11}+\delta^{21}).$$
 Also, for any $j=2,\ldots,n-1$, we have $$\mathcal{D}_{R_{\{2,\ldots,j\}}}T\left(v+\sum_{i=1}^{j}\delta^{i1}\right)\xrightarrow{E_{j+2,j+1}}\mathcal{D}_{R_{\{2,\ldots,j+1\}}}T\left(v+\sum_{i=1}^{j+1}\delta^{i1}\right).$$ This, together with Corollary \ref{separation of tableaux if all singular pairs are in different rows}, implies that the tableau $\mathcal{D}_{R}T\left(v+\sum_{i=1}^{n-1}\delta^{i1}\right)$ is a basis element of the module $M$. Finally, if for any $I\subseteq R$, 
$$C_{I}:=\prod_{i\in \Sigma\setminus \Sigma_I}\left(c_{i2}-\gamma_{i2}\left(v+\sum_{i=1}^{n-1}\delta^{i1}\right)\right),$$ then $C_{I}\mathcal{D}_{R}T\left(v+\sum_{i=1}^{n-1}\delta^{i1}\right)$ is a nonzero multiple of $\mathcal{D}_{I}T\left(v+\sum_{i=1}^{n-1}\delta^{i1}\right)$. Hence for any $I\subseteq R$, $\mathcal{D}_{I}T\left(v+\sum_{i=1}^{n-1}\delta^{i1}\right)$ is a basis element of $M$. Now, if $\sn\in \Specm \Gamma$ corresponds to the tableau $\mathcal{D}_{R}T\left(v+\sum_{i=1}^{n-1}\delta^{i1}\right)$, then we have $\dim M_{\sn}=2^{n-2}$. The remaining statements  follow directly from the properties of $V(T(v))$.

\section{Proofs of main theorems} \label{sec-proofs}

\

\noindent{\bf Proof of Theorem A.}\label{subsection: Proof of Th A}
Note that $V(T(v))$ is a Gelfand-Tsetlin module by Theorem \ref{Gelfand-Tsetlin module over gl(n)} and Proposition \ref{size of jordan blocks}. Also, the dimension of $V(T(v))_{\sm}$ coincides with the number of tableaux in $V(T(v))$ having the same Gelfand-Tsetlin character as $\sm$. This completes the proof.\\

\bigskip

\noindent{\bf Proof of Theorem B.}\label{subsection: Proof of Th B}    
For part (i), let us consider a tableau $\mathcal{D}_{I}T(v+z)$ associated with $\sm_{L'}$. A straightforward computation shows that $(c_{kj}-\gamma_{kj}(v+z))^{s}(\mathcal{D}_{I}T(v+z))$ equals the following sum $$\sum_{\emptyset\neq J_{s}\subsetneq J_{s-1}\subsetneq\cdots\subsetneq J_{1}\subsetneq I}\mathcal{D}^{v}_{I\setminus J_1}(\gamma_{kj}(x+z))\cdots \mathcal{D}^{v}_{J_{s-1}\setminus J_s}(\gamma_{kj}(x+z))\mathcal{D}^{v}_{J_s}(\gamma_{kj}(x+z))T(v+z).$$ 

Let $(i_{k_{1}}, j_{k_{1}}),\ldots,(i_{k_{s}}, j_{k_{s}})$ be the singular pairs of $v+z$ on row $k$. If $K\subseteq R$ is such that $K_{r}\neq \emptyset$ for some $r\in\Sigma\setminus\{k_{1},\ldots,k_{s}\}$, then $\mathcal{D}^{v}_{K}(\gamma_{kj}(x+z))=0$ (note that $\gamma_{kj}(v+z)$ depends only of the entries of row $k$). Hence,  there is a  nonzero constant $C$, such that $$(c_{kj}-\gamma_{kj}(v+z))^{s}(\mathcal{D}_{I}T(v+z))=C\mathcal{D}^{v}_{I_{k_{1}}}(\gamma_{kj}(x+z))\cdots\mathcal{D}^{v}_{I_{k_{s}}}(\gamma_{kj}(x+z))T(v+z).$$
From the previous equality we obtain $(c_{kj}-\gamma_{kj}(v+z))^{s+1}(\mathcal{D}_{I}T(v+z))=0$. Also, since $\mathcal{D}^{v}_{I_{k_{i}}}(\gamma_{k2}(x+z))\neq 0$ for any $i=1,\ldots, s$ (see \cite{FGR2}, Lemma 5.2(ii)) we have $(c_{k2}-\gamma_{k2}(v+z))^{s}(\mathcal{D}_{I}T(v+z))\neq 0$.\\

We now prove part (ii). Given $t$-singular vectors $v, v'\in \mathbb C^{\frac{n(n+1}{2}}$ there exist   $z,\ z'\in T_{n-1}(\mathbb{Z})$ such that $\mathcal{D}_{\emptyset}T(v+z)$ and $\mathcal{D}_{\emptyset}T(v'+z')$ are in the fiber of the  maximal ideals $\sm_{v}$ and $\sm_{v'}$ respectively. We have $T(v)-\sigma(T(v'))\in T_{n-1}(\mathbb Z)$. Hence, $V(T(v))\simeq V(T(v'))$. Conversely, let $V(T(v))\simeq V(T(v'))$ for some $t$-singular vectors $v$ and $v'$. Let $\phi$ be any isomorphism between $V(T(v))$ and  $V(T(v'))$. The image of $\mathcal{D}_{\emptyset}T(v)$ under $\phi$ need to satisfies $(c_{rs}-\gamma_{rs}(v))\phi(\mathcal{D}_{\emptyset}T(v))=0$ for any $1\leq s\leq r\leq n$. This implies that $\phi(\mathcal{D}_{\emptyset}T(v))=a\mathcal{D}_{\emptyset}T(\sigma'(v))$ for some $\sigma'\in S_{n-1}\times\cdots\times S_{1}$ and $a\in\mathbb{C}$. Therefore $V(T(v'))\simeq V(T(\sigma'(v)))$ implying $v'-\sigma'(v)\in \mathbb C^{\frac{n(n+1}{2}}$. Now, the image of   $T(v')-\sigma'(T(v))$ via the identification between $T_{n-1}(\mathbb{Z})$ and $ \mathbb{Z}^{\frac{n(n-1)}{2}}$ is $v'-\sigma'(v)$. This completes the proof.\\

Next we prove (iii).
Let $T(v)$ be a Gelfand-Tsetlin tableau. By Corollary \ref{separation of tableaux if all singular pairs are in different rows}, in order to prove the irreducibility of $V(T(v))$ it is sufficient to prove that given any two tableaux $\mathcal{D}_{J}T(v+z) , \mathcal{D}_{J'}T(v+w)$ in $V(T(v))$, we have $\mathcal{D}_{J'}T(v+z)\rightarrow\mathcal{D}_{J}T(v+w)$. 

We have the following two important observations.
\begin{itemize}
\item[(i)] From the proof of Corollary \ref{separation of tableaux if all singular pairs are in different rows}, we have $\mathcal{D}_{J}T(v+w)\rightarrow\mathcal{D}_{I}T(v+w)$ for any $J\subseteq I$ and $w\in T_{n-1}(\mathbb{Z})$. 
\item[(ii)] Since $L$ is regular, we have $\mathcal{D}_{J}T(v+w')\rightarrow\mathcal{D}_{J}T(v+w)$ for any $J\subseteq R$ and $w,\ w'\in T_{n-1}(\mathbb{Z})$.
\end{itemize}
 From (i) and (ii) we conclude that $\mathcal{D}_{I}T(v+w'')\rightarrow\mathcal{D}_{R}T(v+w')$ for any $I\subseteq R$ and $w',\ w''\in T_{n-1}(\mathbb{Z})$. Therefore, to finish the proof we need to prove that for any $\mathcal{D}_{J}T(v+w)\in V(T(v))$ there exists $w''\in T_{n-1}(\mathbb{Z})$ such that $\mathcal{D}_{R}T(v+w'')\rightarrow\mathcal{D}_{J}T(v+w)$.

Consider $w'\in T_{n-1}(\mathbb{Z})$ such that $w'_{k_{r},i_{r}}=w'_{k_{r},j_{r}}$ for any $r\in\Sigma\setminus \Sigma_{J}$. By (ii) we have $\mathcal{D}_{J}T(v+w')\rightarrow\mathcal{D}_{J}T(v+w)$. On the other hand, $$E_{n1}(\mathcal{D}_{J}T(v+w'))=\mathcal{D}_{R}(P_{\Sigma\setminus\Sigma_{J}}(v)E_{n1}(T(v+w'))).$$
Since all singularities are in different rows, there exist $\sigma\in S_{n}\times\cdots\times S_{1}$ such that the denominator of $e_{n1}(\sigma(v+w'))$ is a factor of $$\prod_{r\in\Sigma\setminus\Sigma_{J}}((v+w')_{k_{r},i_{r}}-(v+w')_{k_{r},j_{r}})=\prod_{r\in\Sigma\setminus\Sigma_{J}}(v_{k_{r},i_{r}}-v_{k_{r},j_{r}})=P_{\Sigma\setminus\Sigma_{J}}(v).$$
Thus the coefficient of $\mathcal{D}_{R}T(v+w'+\sigma(\varepsilon_{n1}))$ in the expansion of $\mathcal{D}_{R}(P_{\Sigma\setminus\Sigma_{J}}(x)E_{n1}(T(x+w')))$ is $ev(v)(P_{\Sigma\setminus\Sigma_{J}}(x)e_{n1}(\sigma(x+w')))\neq 0$, so $v$ being regular ensures that the numerator of $e_{n1}(\sigma(x+w'))$ is nonzero after the evaluation. Hence we have 
$$\mathcal{D}_{R}T(v+w'')\rightarrow \mathcal{D}_{J}T(v+w')\rightarrow\mathcal{D}_{J}T(v+w),$$
where $w''=w'+\sigma(\varepsilon_{n1})$.\\

\

\noindent{\bf Proof of Theorem C.}\label{subsection: Proof of Th C}
 Since any submodule of a Gelfand-Tsetlin module is also a Gelfand-Tsetlin module (see Lemma 3.4 in \cite{FGR2}), for any $A\in V(T(v))$ the submodule $U\cdot A$ is a Gelfand-Tsetlin submodule of $V(T(v))$.

Let us consider the tableau $\mathcal{D}_{\emptyset}T(v+z)$  associated with $\sm_{v'}$ and denote by $M$ the submodule $U\cdot\mathcal{D}_{\emptyset}T(v+z)$. Set $$W=\{A\in V(T(v)) \; | \; A\in M,\ \text{ and }\ \mathcal{D}_{\emptyset}T(v+w)\notin U\cdot A\}.$$

If $W=\emptyset$ then $M$ is irreducible satisfying $M_{\sm_{v'}}\neq 0$. If $W\neq\emptyset$ then $N=\sum_{A\in W}U\cdot A$ is a nontrivial maximal proper submodule of $M$. Therefore, $M/N$ is an irreducible subquotient of $V(T(v))$ such that $(M/N)_{\sm_{v'}}\neq 0$. We can apply the same reasoning replacing $\mathcal{D}_{\emptyset}T(v+z)$ with $\mathcal{D}_{I}T(v+z) \neq 0$. Since the cardinality of the set $\{I\subseteq R \; | \; \mathcal{D}_{I}T(v+z)\neq 0\}$ is bounded by $2^{t}$, we obtain at most $2^{t}$ irreducible subquotients. This implies part (i).

To prove part (ii)  consider again the tableau  $\mathcal{D}_{\emptyset}T(v+z)$ associated with $\sm_{v'}$. In $V(T(v))$ we have $|\{J\subseteq R\; | \; \mathcal{D}_{J}T(v+z)\neq 0\}|=2^{t-k}$. If we construct an irreducible module $V$ as in part (i), as a quotient of   $U\cdot\mathcal{D}_{I}T(v+z)$, we have: $$\dim V_{\sm_{v'}}=|\{\mathcal{D}_{J}T(v+z)\; | \; \mathcal{D}_{J}T(v+z)\rightarrow \mathcal{D}_{I}T(v+z)\}|\leq 2^{t-k}.$$

Parts (iii) and (iv) follow directly from Theorem B, (i).

\end{document}